\numberwithin{equation}{section}
\theoremstyle{plain}
	\newtheorem{theorem}{Theorem}[section]
	\newtheorem{lemma}[theorem]{Lemma}
	\newtheorem{proposition}[theorem]{Proposition}
	\newtheorem{corollary}[theorem]{Corollary}
\theoremstyle{definition}
	\newtheorem{definition}[theorem]{Definition}
	\newtheorem{example}[theorem]{Example}
	\newtheorem{remark}[theorem]{Remark}
	\newtheorem{open.problem}[theorem]{Open Problem}
\newcommand{\N}{\mathbb{N}}
\newcommand{\R}{\mathbb{R}}
\newcommand*{\transp}[2][-3mu]{\ensuremath{\mskip1mu\prescript{\smash{\mathrm t\mkern#1}}{}{\mathstrut#2}}}
\newcommand{\eps}{\varepsilon}
\newcommand{\closure}[2][3]{%
  {}\mkern#1mu\overline{\mkern-#1mu#2}}
\newcommand{\de}{\partial}
\newcommand{\weakto}{\rightharpoonup}
\renewcommand{\phi}{\varphi}
\renewcommand{\rho}{\varrho}
\renewcommand{\theta}{\vartheta}
\DeclareMathOperator{\dist}{dist}
\DeclareMathOperator{\diam}{diam}
\DeclareMathOperator{\supp}{supp}
\DeclareMathOperator{\sgn}{sgn}
\DeclareMathOperator{\diverg}{div}
\DeclareMathOperator{\tang}{Tan}
\DeclareMathOperator{\Lip}{Lip}
\DeclareMathOperator{\loc}{loc}
\DeclarePairedDelimiter{\set}{\{}{\}}
\DeclarePairedDelimiter{\abs}{|}{|}
\mathchardef\ordinarycolon\mathcode`\:
\newcommand{\Haus}[1]{\mathscr{H}^{#1}} 
\newcommand{\Leb}[1]{\mathscr{L}^{#1}} 
\renewcommand{\div}{\mathrm{div}} 
\newcommand{\redb}{\mathscr{F}} 
\renewcommand{\dim}{{\rm dim}} 
\newcommand{\M}{\mathcal{M}} 
\newcommand{\res}{\mathop{\hbox{\vrule height 7pt width .5pt depth 0pt
\vrule height .5pt width 6pt depth 0pt}}\nolimits}
\begin{document}

\title[A distributional approach to fractional Sobolev spaces and variation]{A distributional approach to fractional Sobolev spaces and fractional variation: existence of blow-up}

\author[G. E. Comi]{Giovanni E. Comi}
\address{Scuola Normale Superiore, Piazza Cavalieri 7, 56126 Pisa, Italy}
\email{giovanni.comi@sns.it}

\author[G. Stefani]{Giorgio Stefani}
\address{Scuola Normale Superiore, Piazza Cavalieri 7, 56126 Pisa, Italy}
\email{giorgio.stefani@sns.it}

\date{\today}

\keywords{Function with bounded fractional variation, fractional perimeter, blow-up, fractional Sobolev spaces, fractional calculus, fractional derivative, fractional gradient, fractional divergence.}

\subjclass[2010]{26A33, 35B44, 47B38, 26B30}

\thanks{\textit{Acknowledgements}. The authors thank Luigi Ambrosio for valuable comments and suggestions during the realisation of this work. The authors also thank Quoc-Hung Nguyen for having shared with them many useful insights on Bessel potential spaces. This research was partially supported by the PRIN2015 MIUR Project ``Calcolo delle Variazioni''.}

\begin{abstract}
We introduce the new space $BV^{\alpha}(\R^n)$ of functions with bounded fractional variation in~$\R^n$ of order $\alpha \in (0, 1)$ via a new distributional approach exploiting suitable notions of fractional gradient and fractional divergence already existing in the literature. In analogy with the classical $BV$ theory, we give a new notion of set~$E$ of (locally) finite fractional Caccioppoli $\alpha$-perimeter and we define its fractional reduced boundary~$\redb^{\alpha} E$. We are able to show that $W^{\alpha,1}(\R^n)\subset BV^\alpha(\R^n)$ continuously and, similarly, that sets with (locally) finite standard fractional $\alpha$-perimeter have (locally) finite fractional Caccioppoli $\alpha$-perimeter, so that our theory provides a natural extension of the known fractional framework. Our main result partially extends De Giorgi's Blow-up Theorem to sets of locally finite fractional Caccioppoli $\alpha$-perimeter, proving existence of blow-ups and giving a first characterisation of these (possibly non-unique) limit sets.
\end{abstract}

\maketitle

\tableofcontents

\section{Introduction}

\subsection{Fractional Sobolev spaces and the quest for a fractional gradient}

In the last decades, fractional Sobolev spaces have been given an increasing attention, see~\cite{DiNPV12}*{Section~1} for a detailed list of references in many directions. If $p\in[1,+\infty)$ and $\alpha\in(0,1)$, the fractional Sobolev space $W^{\alpha,p}(\R^n)$ is the space
\begin{equation*}
W^{\alpha,p}(\R^n):=\set*{u\in L^p(\R^n) : [u]_{W^{\alpha,p}(\R^n)}:=\left(\int_{\R^{n}} \int_{\R^{n}} \frac{|u(x)-u(y)|^p}{|x-y|^{n+p\alpha}}\,dx\,dy\right)^{\frac{1}{p}}<+\infty}
\end{equation*}
endowed with the natural norm
\begin{equation*}
\|u\|_{W^{\alpha,p}(\R^n)}:=\|u\|_{L^p(\R^n)}+[u]_{W^{\alpha,p}(\R^n)}.
\end{equation*}

Differently from the standard Sobolev space $W^{1,p}(\R^n)$, the space $W^{\alpha,p}(\R^n)$ does not have an evident distributional nature, in the sense that the seminorm $[\,\cdot\,]_{W^{\alpha,p}(\R^n)}$ does not seem to be the $L^p$-norm of some kind of weakly-defined gradient of fractional order. 

Recently, the search for a good notion of differential operator in this fractional setting has led several authors to consider the following \emph{fractional gradient} 
\begin{equation}\label{intro_eq:def_D_alpha}
\nabla^\alpha u(x):=\mu_{n,\alpha}\int_{\R^n}\frac{(y-x)(u(y)-u(x))}{|y-x|^{n+\alpha+1}}\,dy, 
\end{equation}
where $\mu_{n,\alpha}$ is a multiplicative normalising constant controlling the behaviour of~$\nabla^{\alpha}$ as $\alpha \to 1^{-}$. For a detailed account on the existing literature on this operator, see~\cite{SS18}*{Section~1}. Here we only refer to~\cites{SSS15,SSVanS17,SSS18,SS15,SS18,S18} for the articles tightly connected to the present work. According to~\cite{SS18}*{Section~1}, it is interesting to notice that~\cite{H59} seems to be the earliest reference for the operator defined in~\eqref{intro_eq:def_D_alpha}. 

From its very definition, it is not difficult to see that the fractional gradient~$\nabla^\alpha$ is well defined in $L^1(\R^n; \R^{n})$ for functions in $W^{\alpha,1}(\R^n)$, since we have the simple estimate
\begin{equation}\label{intro_eq:frac_Sobolev_enough}
\int_{\R^n}|\nabla^\alpha u|\, dx\le \mu_{n,\alpha}\,[u]_{W^{\alpha,1}(\R^n)}.
\end{equation}
In analogy with the standard Sobolev space $W^{1,p}(\R^n)$, this observation leads to consider the space
\begin{equation*}\label{intro_eq:def_distrib_frac_Sobolev_p}
S^{\alpha,p}_0(\R^n):=\closure[-1]{C^\infty_c(\R^n)}^{\|\cdot\|_{S^{\alpha,p}(\R^n)}},
\end{equation*}
where  
\begin{equation*}\label{intro_eq:def_distrib_frac_Sobolev_norm_p}
\|u\|_{S^{\alpha,p}(\R^n)}:= \|u\|_{L^p(\R^n)}+\|\nabla^\alpha u\|_{L^p(\R^n;\R^n)}
\end{equation*}
for all $u\in C^\infty_c(\R^n)$. This is essentially the line followed in~\cite{SS15}, where the space $S^{\alpha,p}_0(\R^n)$ has been introduced (with a different, but equivalent, norm). By~\cite{SS15}*{Theorem~2.2}, it is known that 
\begin{equation}\label{intro_eq:S_space_rel_1}
S^{\alpha+\eps,p}_0(\R^n)\subset W^{\alpha, p}(\R^{n})\subset S^{\alpha-\eps,p}_0(\R^n)
\end{equation}
with continuous embeddings for all $\alpha\in(0,1)$, $p\in(1,+\infty)$ and $0<\eps<\min\set*{\alpha,1-\alpha}$. In the particular case $p=2$, by~\cite{SS15}*{Theorem 2.2} we actually have that 
\begin{equation}\label{intro_eq:S_space_rel_2}
S^{\alpha, 2}_0(\R^{n})=W^{\alpha, 2}(\R^{n})
\end{equation}
for all $\alpha \in (0, 1)$. In addition, as observed in~\cite{S70}*{Chapter~V, Section~5.3}, we have
\begin{equation}\label{intro_eq:S_space_rel_3}
W^{\alpha,p}(\R^n)\subset S^{\alpha,p}_0(\R^n)	
\end{equation}
with continuous embedding for all $\alpha\in(0,1)$ and $p\in(1,2]$. For further properties of the space $S^{\alpha,p}_0(\R^n)$, we refer to \cref{subsec:space_S_alpha_p} below. 

The inclusions~\eqref{intro_eq:S_space_rel_1} and~\eqref{intro_eq:S_space_rel_3}, and the identification~\eqref{intro_eq:S_space_rel_2} may suggest that the spaces $S^{\alpha,p}_0(\R^n)$ can be considered as an interesting distributional-type alternative of the usual fractional Sobolev spaces $W^{\alpha,p}(\R^n)$ and thus as a natural setting for the development of a general theory for solutions to PDEs involving the fractional gradient in~\eqref{intro_eq:def_D_alpha}, proceeding similarly as in the classical Sobolev framework. This is the point of view pursued in~\cites{SS15,SS18}.

Another important aspect of the fractional gradient in~\eqref{intro_eq:def_D_alpha} is that it satisfies three natural `qualitative' requirements as a fractional operator: \emph{invariance} under translations and rotations, \emph{homogeneity} of order~$\alpha$ under dilations and some \emph{continuity} properties in an appropriate functional space, e.g.\ Schwartz's space~$\mathcal{S}(\R^n)$. A fundamental result of~\cite{S18} is that these three requirements actually characterise the fractional gradient in~\eqref{intro_eq:def_D_alpha} (up to multiplicative constants), see~\cite{S18}*{Theorem~2.2}. This shows that the definition in~\eqref{intro_eq:def_D_alpha} is well posed not only from a \emph{mathematical} point of view, but also from a \emph{physical} point of view. 

Besides, the same characterisation holds for the following \emph{fractional divergence}
\begin{equation}\label{intro_eq:def_div_alpha}
\div^\alpha\phi(x):= \mu_{n,\alpha}\int_{\R^{n}} \frac{(y - x) \cdot (\phi(y) - \phi(x)) }{|y - x|^{n + \alpha + 1}} \, dy,
\end{equation} 
see~\cite{S18}*{Theorem~2.4}. Moreover, as it is observed in~\cite{S18}*{Section~6}, the operators $\nabla^\alpha$ and $\div^\alpha$ are \emph{dual}, in the sense that
\begin{equation}\label{intro_eq:duality}
\int_{\R^{n}} u\,\div^{\alpha} \varphi \, dx = - \int_{\R^{n}} \varphi \cdot \nabla^{\alpha} u \, dx
\end{equation}
for all $u\in C^\infty_c(\R^n)$ and $\phi\in C^\infty_c(\R^n;\R^n)$. The fractional integration by parts formula in~\eqref{intro_eq:duality} can be thus taken as the starting point for the development of a general theory of fractional differential operators on the space of Schwartz's distributions. This is the direction of research pursued in~\cite{S18}.

\subsection{De Giorgi's distributional approach to perimeter}

In the classical framework, the Sobolev space $W^{1,1}(\R^n)$ is naturally continuously embedded in $BV(\R^n)$, the space of functions with \emph{bounded variation}, i.e.\
\begin{equation*}
BV(\R^n):=\set*{u\in L^1(\R^n) : |D u|(\R^n)<+\infty},
\end{equation*}
endowed with the norm
\begin{equation*}
\|u\|_{BV(\R^n)}=\|u\|_{L^1(\R^n)}+|D u|(\R^n),
\end{equation*}
where 
\begin{equation*}
|D u|(\R^n)=\sup\set*{\int_{\R^n}u\,\div\phi\,dx : \phi\in C^\infty_c(\R^n;\R^n),\ \|\phi\|_{L^\infty(\R^n;\R^n)}\le1}
\end{equation*}
is the \emph{total variation} of the function $u\in BV(\R^n)$. Thanks to Riesz's Representation Theorem, one can see that a function $u\in L^1(\R^n)$ belongs to $BV(\R^n)$ if and only if there exists a finite vector valued Radon measure $Du\in\mathcal{M}(\R^n;\R^n)$ such that
\begin{equation*}
\int_{\R^n} u\,\div\phi\,dx=-\int_{\R^n}\phi\cdot dDu
\end{equation*}
for all $\phi\in C^\infty_c(\R^n;\R^n)$.

Functions of bounded variation have revealed to be the perfect tool for the development of a deep geometric analysis of sets with finite perimeter, starting directly from the seminal and profound works of R.~Caccioppoli and E.~De~Giorgi. For a modern exposition of this vast subject and a detailed list of references, see~\cites{AFP00,EG15,M12}. 

A measurable set $E\subset\R^n$ has finite Caccioppoli perimeter if 
\begin{equation}\label{intro_eq:def_perimeter}
P(E):=|D\chi_E|(\R^n)<+\infty.
\end{equation}
The perimeter functional in~\eqref{intro_eq:def_perimeter} coincides with the classical surface measure when~$E$ has a sufficiently smooth (topological) boundary and, precisely, one can prove that 
\begin{equation}\label{intro_eq:perimeter_is_Hausdorff}
P(E)=\Haus{n-1}(\de E)
\end{equation}
for all sets~$E$ with Lipschitz boundary, where $\Haus{s}$ denotes the $s$-dimensional Hausdorff measure for all $s\ge0$.  

One of the finest De Giorgi's intuitions is that, for a finite perimeter set~$E$ with non-smooth boundary, the right `boundary object' to keep the validity of~\eqref{intro_eq:perimeter_is_Hausdorff} is a special subset of the topological boundary, the so-called \emph{reduced boundary}~$\redb E$. With this notion in hand, a measurable set $E\subset\R^n$ has finite Caccioppoli perimeter if (and only if) $\Haus{n-1}(\redb E)<+\infty$, in which case we have
\begin{equation}\label{intro_eq:perimeter_is_Hausdorff_redb}
P(E)=\Haus{n-1}(\redb E).
\end{equation} 

Besides the validity of~\eqref{intro_eq:perimeter_is_Hausdorff_redb}, an essential feature of De~Giorgi's reduced boundary is the following \emph{blow-up property}: if $x\in\redb E$, then
\begin{equation}\label{intro_eq:blow-up}
\chi_{\frac{E-x}{r}}\to\chi_{H_{\nu_{E}(x)}}
\quad
\text{in } L^1_{\loc}(\R^n)
\end{equation}
as $r\to0$, where 
\begin{equation*}
H_{\nu_{E}(x)}:=\set*{y\in\R^n : y \cdot\nu_E(x) \ge 0},
\qquad
\nu_E(x)=\lim_{r\to0}\frac{D\chi_E(B_r(x))}{|D\chi_E|(B_r(x))}.
\end{equation*}
The function $\nu_E\colon\redb E\to\mathbb{S}^{n-1}$ denotes the so-called \emph{measure theoretic inner unit normal} of~$E$ and coincides with the usual inner unit normal of~$E$ when the boundary of~$E$ is sufficiently smooth. In other words, the blow-up property in~\eqref{intro_eq:blow-up} shows that, in a neighbourhood of a point $x\in\redb E$, the finite perimeter set~$E$ is infinitesimally close to $x + H_{\nu_{E}(x)} = \{y \in \mathbb{R}^{n} : (y - x) \cdot \nu_{E}(x) \ge 0 \}$.

\subsection{Fractional variation and perimeter: a new distributional approach}

In the fractional framework, an analogue of the space $BV(\R^n)$ is completely missing, since no distributional definition of the space $W^{\alpha,1}(\R^n)$ is available. 

Nevertheless, a theory for sets with finite fractional perimeter has been developed in recent years, with a strong interest on minimal fractional surfaces. We refer to~\cite{CF17}*{Section~7} for a detailed exposition of the most recent results in this direction. 

A measurable set $E\subset\R^n$ has finite fractional perimeter if 
\begin{equation}\label{intro_eq:def_frac_perimeter}
P_\alpha(E):= [\chi_E]_{W^{\alpha,1}(\R^n)}
= 2 \int_{\R^n \setminus E}\int_{E}\frac{1}{|x-y|^{n+\alpha}}\,dx\,dy<+\infty.
\end{equation}
The fractional perimeter functional in~\eqref{intro_eq:def_frac_perimeter} has a strong \emph{non-local nature} in the sense that its value depends also on points which are very far from the boundary of the set~$E$. For this reason, it is not clear if such a perimeter measure may be linked with some kind of fractional analogue of De~Giorgi's reduced boundary (which, a posteriori, cannot be expected to be a special subset of the topological boundary of~$E$).

In this paper, we want to combine the functional approach of~\cites{SS15,SS18} with the distributional point of view of~\cite{S18} to develop a satisfactory theory extending De~Giorgi's approach to variation and perimeter in the fractional setting.

The natural starting point is the duality relation~\eqref{intro_eq:duality}, which motivates the definition of the space
\begin{equation}\label{intro_eq:BV_alpha}
BV^\alpha(\R^n):=\set*{u\in L^1(\R^n) : |D^\alpha u|(\R^n)<+\infty},
\end{equation}
where
\begin{equation}\label{intro_eq:def_frac_variation}
|D^\alpha u|(\R^n)=\sup\set*{\int_{\R^n}u\,\div^\alpha\phi\,dx : \phi\in C^\infty_c(\R^n;\R^n),\ \|\phi\|_{L^\infty(\R^n;\R^n)}\le1}
\end{equation}
stands for the \emph{fractional variation} of the function $u\in BV^\alpha(\R^n)$. Note that the fractional variation in~\eqref{intro_eq:def_frac_variation} is well defined, since one can show that $\div^\alpha\phi\in L^\infty(\R^n)$ for all $\phi\in C^\infty_c(\R^n;\R^n)$ (see \cref{prop:frac_div_repr}).

A different approach to fractional variation was developed in~\cite{Z18}. We do not know if the fractional variation defined in~\eqref{intro_eq:def_frac_variation} is linked to the one introduced in~\cite{Z18} and it would be very interesting to establish a connection between the two.

With definition~\eqref{intro_eq:BV_alpha}, we are able to show that 
\begin{equation*}
W^{\alpha,1}(\R^n)\subset BV^\alpha(\R^n)
\end{equation*}
with continuous embedding, in perfect analogy with the classical framework.

Thus, emulating the classical definition in~\eqref{intro_eq:def_perimeter}, it is very natural to define the fractional analogue of the Caccioppoli perimeter using the total variation in~\eqref{intro_eq:def_frac_variation}. Note that this definition is well posed, since $\div^\alpha\phi\in L^1(\R^n)$ for all $\phi\in W^{\alpha, 1}(\R^{n}; \R^{n})$ arguing similarly as in~\eqref{intro_eq:frac_Sobolev_enough}. With this notion, we are able to show that 
\begin{equation} \label{eq:D_alpha_chi_E_P_alpha_intro}
|D^\alpha\chi_E|(\R^n)\le \mu_{n,\alpha} P_\alpha(E)
\end{equation}
for all measurable sets~$E$ with finite fractional perimeter, so that our approach naturally incorporates the current notion of fractional perimeter.

Following the classical framework, the main results concerning the space $BV^\alpha(\R^n)$ we are able to prove are the following:
\begin{itemize}
\item $BV^\alpha(\R^n)$ is a Banach space and its norm is lower semicontinuous with respect to $L^1$-convergence;
\item the inclusion $W^{\alpha,1}(\R^n)\subset BV^\alpha(\R^n)$ is continuous and strict;
\item the sets $C^\infty(\R^n)\cap BV^\alpha(\R^n)$ and $C^\infty_c(\R^n)$ are dense in $BV^\alpha(\R^n)$ with respect to the distance
\begin{equation*}
d(u,v):=\|u-v\|_{L^1(\R^n)} + \abs*{|D^\alpha u|(\R^n)-|D^\alpha v|(\R^n)};
\end{equation*} 
\item a fractional analogue of Gagliardo--Nirenberg--Sobolev inequality holds, i.e.\ for all $n\ge2$ the embedding 
\begin{equation*}
BV^\alpha(\R^n)\subset L^{\frac{n}{n-\alpha}}(\R^n)
\end{equation*}
is continuous;
\item the natural analogue of the coarea formula does not hold in $BV^{\alpha}(\R^{n})$, since there are functions $u \in BV^{\alpha}(\R^{n})$ such that $\int_{\R}|D^\alpha\chi_{\set{u > t}}|(\R^n)\,dt=+\infty$;
\item any uniformly bounded sequence in $BV^\alpha(\R^n)$ admits limit points in $L^1(\R^n)$ with respect the $L^1_{\loc}$-convergence.
\end{itemize}
Concerning sets with finite fractional Caccioppoli $\alpha$-perimeter, the main results we are able to prove are the following:
\begin{itemize}
\item fractional Caccioppoli $\alpha$-perimeter is lower semicontinuous with respect to $L^1_{\loc}$-convergence;
\item a fractional isoperimetric inequality holds, i.e.\
\begin{equation*}
|E|^{\frac{n-\alpha}{n}} \le c_{n, \alpha} |D^{\alpha} \chi_E|(\R^{n});
\end{equation*} 
\item any sequence of sets with uniformly bounded fractional Caccioppoli $\alpha$-perimeter confined in a fixed ball admits limit points with respect to $L^1$-convergence;
\item a natural analogue of De Giorgi's reduced boundary, that we call \emph{fractional reduced boundary} $\redb^\alpha E$, is well posed for any set~$E$ with finite fractional Caccioppoli $\alpha$-perimeter;
\item if~$E$ has finite fractional Caccioppoli $\alpha$-perimeter, then its fractional Caccioppoli $\alpha$-perimeter measure satisfies $|D^\alpha\chi_E|\ll\Haus{n-\alpha}\res\redb^\alpha E$; 
\item if~$E$ has finite fractional Caccioppoli $\alpha$-perimeter and $x\in\redb^\alpha E$, then the family $(\frac{E-x}{r})_{r>0}$ admits limit points in the $L^1_{\loc}$-topology and any such limit point satisfies a rigidity condition. 
\end{itemize}

Some of the results listed above are proved similarly as in the classical framework. Since we believe that our approach might be interesting also to researchers that may be not familiar with the theory of functions of bounded variation, we tried to keep the exposition the most self-contained as possible.

\subsection{Future developments} 

Thanks to this new approach, a large variety of classical results might be extended to the context of functions with bounded fractional variation. Here we just list some of the most intriguing open problems:
\begin{itemize}
\item investigate the case of equality in~\eqref{eq:D_alpha_chi_E_P_alpha_intro};
\item achieve a better characterisation of the blow-ups (possibly, their uniqueness);
\item prove a Structure Theorem for $\redb^{\alpha} E$ in the spirit of De Giorgi's Theorem;
\item study the fractional isoperimetric inequality and its stability, possibly also for its relative version;
\item develop a \emph{calibration theory} for sets of finite fractional Caccioppoli $\alpha$-perimeter as a useful tool for the study of fractional minimal surfaces;
\item consider the asymptotics as $\alpha \to 1^{-}$ and investigate the $\Gamma$-convergence to the classical perimeter;
\item extend the Gauss--Green and integration by parts formulas to sets of finite fractional Caccioppoli $\alpha$-perimeter;
\item give a good definition of $BV^\alpha$ functions on a general open set.
\end{itemize}  
Some of these open problems will be the subject of a forthcoming paper, see~\cite{CS18}.

\subsection{Organisation of the paper}

The paper is organised as follows. In \cref{sec:Silhavy_calculus}, we introduce and study the fractional gradient and divergence, proving generalised Leibniz's rules and representation formulas. In \cref{sec:frac_BV_func}, we define the space $BV^{\alpha}(\R^{n})$ and we study approximation by smooth functions, embeddings and compactness exploiting a fractional version of the Fundamental Theorem of Calculus. In \cref{sec:frac_Caccioppoli_sets}, we define sets of (locally) finite fractional Caccioppoli $\alpha$-perimeter, we prove some compactness results and we introduce the notion of fractional reduced boundary. Finally, in \cref{sec:blow-ups}, we prove existence of blow-ups of sets with locally finite fractional Caccioppoli $\alpha$-perimeter.

\section{\v{S}ilhav\'{y}'s fractional calculus}
\label{sec:Silhavy_calculus}

\subsection{General notation}

We start with a brief description of the main notation used in this paper.

Given an open set $\Omega$, we say that a set $E$ is compactly contained in $\Omega$, and we write $E \Subset \Omega$, if the $\overline{E}$ is compact and contained in $\Omega$.
We denote by $\Leb{n}$ and $\Haus{\alpha}$ the Lebesgue measure and the $\alpha$-dimensional Hausdorff measure on $\R^n$ respectively, with $\alpha \ge 0$. Unless otherwise stated, a measurable set is a $\Leb{n}$-measurable set. We also use the notation $|E|=\Leb{n}(E)$. All functions we consider in this paper are Lebesgue measurable, unless otherwise stated. We denote by $B_r(x)$ the standard open Euclidean ball with center $x\in\R^n$ and radius $r>0$. We let $B_r=B_r(0)$. Recall that $\omega_{n} := |B_1|=\pi^{\frac{n}{2}}/\Gamma\left(\frac{n+2}{2}\right)$ and $\Haus{n-1}(\partial B_{1}) = n \omega_n$, where $\Gamma$ is Euler's \emph{Gamma function}, see~\cite{A64}. 

For $k \in \N_{0} \cup \set{+ \infty}$ and $m \in \N$ we denote by $C^{k}_{c}(\Omega ; \R^{m})$ and, respectively, by $\Lip_c(\Omega; \R^{m})$, the space of $C^{k}$-regular, respectively, Lipschitz regular, $m$-vector valued functions defined on~$\R^n$ with compact support in~$\Omega$.

For any exponent $p\in[1,+\infty]$, we denote by 
\begin{equation*}
L^p(\Omega;\R^m):=\set*{u\colon\Omega\to\R^m : \|u\|_{L^p(\Omega;\,\R^m)}<+\infty}
\end{equation*}
the space of $m$-vector valued Lebesgue $p$-integrable functions on~$\Omega$. We denote by 
\begin{equation*}
W^{1,p}(\Omega;\R^m):=\set*{u\in L^p(\Omega;\,\R^m) : [u]_{W^{1,p}(\Omega;\,\R^m)}:=\|\nabla u\|_{L^p(\Omega;\,\R^{n+m})}<+\infty}
\end{equation*}
the space of $m$-vector valued Sobolev functions on~$\Omega$, see for instance~\cite{L09}*{Chapter~10} for its precise definition and main properties. We also let
\begin{equation*}
w^{1,p}(\Omega;\R^m):=\set*{u\in L^1_{\loc}(\Omega;\R^m) : [u]_{W^{1,p}(\Omega;\,\R^m)}<+\infty}.
\end{equation*}
We denote by 
\begin{equation*}
BV(\Omega;\R^m):=\set*{u\in L^1(\Omega;\R^m) : [u]_{BV(\Omega;\R^m)}:=|Du|(\Omega)<+\infty}
\end{equation*}
the space of $m$-vector valued functions of bounded variation on~$\Omega$, see for instance~\cite{AFP00}*{Chapter~3} or~\cite{EG15}*{Chapter~5} for its precise definition and main properties. We also let 
\begin{equation*}
bv(\Omega;\R^m):=\set*{u\in L^1_{\loc}(\Omega;\R^m) : [u]_{BV(\Omega;\,\R^m)}<+\infty}.
\end{equation*}
For $\alpha\in(0,1)$ and $p\in[1,+\infty)$, we denote by 
\begin{equation*}
W^{\alpha,p}(\Omega;\R^m):=\set*{u\in L^p(\Omega;\R^m) : [u]_{W^{\alpha,p}(\Omega;\,\R^m)}\!:=\left(\int_\Omega\int_\Omega\frac{|u(x)-u(y)|^p}{|x-y|^{n+p\alpha}}\,dx\,dy\right)^{\frac{1}{p}}\!<+\infty}
\end{equation*}
the space of $m$-vector valued fractional Sobolev functions on~$\Omega$, see~\cite{DiNPV12} for its precise definition and main properties. We also let 
\begin{equation*}
w^{\alpha,p}(\Omega;\R^m):=\set*{u\in L^1_{\loc}(\Omega;\R^m) : [u]_{W^{\alpha,p}(\Omega;\,\R^m)}<+\infty}.
\end{equation*}
For $\alpha\in(0,1)$ and $p=+\infty$, we simply let
\begin{equation*}
W^{\alpha,\infty}(\Omega;\R^m):=\set*{u\in L^\infty(\Omega;\R^m) : \sup_{x,y\in \Omega,\, x\neq y}\frac{|u(x)-u(y)|}{|x-y|^\alpha}<+\infty},
\end{equation*}
so that $W^{\alpha,\infty}(\Omega;\R^m)=C^{0,\alpha}_b(\Omega;\R^m)$, the space of $m$-vector valued bounded $\alpha$-H\"older continuous functions on~$\Omega$.

\subsection{Definition of \texorpdfstring{$\nabla^\alpha$}{nablaˆalpha} and \texorpdfstring{$\diverg^\alpha$}{divergenceˆalpha}}

We now recall and study the non-local operators~$\nabla^\alpha$ and~$\diverg^\alpha$ introduced by \v{S}ilhav\'{y} in~\cite{S18}.

Let $\alpha\in(0,1)$ and set  
\begin{equation}\label{eq:def_mu_alpha} 
\mu_{n, \alpha} := 2^{\alpha} \pi^{- \frac{n}{2}} \frac{\Gamma\left ( \frac{n + \alpha + 1}{2} \right )}{\Gamma\left ( \frac{1 - \alpha}{2} \right )}.
\end{equation}
We let
\begin{equation} \label{eq:def_frac_grad} 
\nabla^{\alpha} f(x) := \mu_{n, \alpha} \lim_{\eps \to 0} \int_{\{ |z| > \eps \}} \frac{z f(x + z)}{|z|^{n + \alpha + 1}} \, dz
\end{equation}
be the \emph{$\alpha$-gradient} of $f\in C^\infty_c(\R^n)$ at $x\in\R^n$. We also let
\begin{equation}\label{eq:def_frac_div}
\div^{\alpha} \varphi(x) := \mu_{n, \alpha} \lim_{\eps \to 0} \int_{\{ |z| > \eps \}} \frac{z \cdot \varphi(x + z)}{|z|^{n + \alpha + 1}} \, dz
\end{equation}
be the \emph{$\alpha$-divergence} of $\phi\in C^\infty_c(\R^n;\R^n)$ at $x\in\R^n$. The non-local operators~$\nabla^\alpha$ and~$\diverg^\alpha$ are well defined in the sense that the involved integrals converge and the limits exist, see~\cite{S18}*{Section~7}.

Since 
\begin{equation}\label{eq:cancellation_kernel}
\int_{\set*{|z| > \eps}} \frac{z}{|z|^{n + \alpha + 1}} \, dz=0,
\qquad
\forall\eps>0,
\end{equation}
it is immediate to check that $\nabla^{\alpha}c=0$ for all $c\in\R$. Moreover, the cancellation in~\eqref{eq:cancellation_kernel} yields  
\begin{subequations}
\begin{align}
\nabla^{\alpha} f(x)
\label{eq:def_frac_grad_1}
&=\mu_{n, \alpha} \lim_{\eps \to 0} \int_{\{ |y -x| > \eps \}} \frac{(y - x)}{|y - x|^{n + \alpha + 1}} f(y) \, dy\\
\label{eq:def_frac_grad_2}
&= \mu_{n, \alpha} \lim_{\eps \to 0} \int_{\{ |x - y| > \eps \}} \frac{(y - x)  (f(y) - f(x)) }{|y - x|^{n + \alpha + 1}} \, dy\\
\label{eq:def_frac_grad_3}
&=\mu_{n, \alpha} \int_{\R^{n}} \frac{(y - x)  (f(y) - f(x)) }{|y - x|^{n + \alpha + 1}} \, dy,
\qquad
\forall x\in\R^n,
\end{align}
\end{subequations}
for all $f\in C^\infty_c(\R^n)$. Indeed, \eqref{eq:def_frac_grad_1} follows by a simple change of variables and \eqref{eq:def_frac_grad_2} is a consequence of~\eqref{eq:cancellation_kernel}. To prove~\eqref{eq:def_frac_grad_3} it is enough to apply Lebesgue's Dominated Convergence Theorem. Indeed, we can estimate
\begin{equation}\label{eq:Lip_nabla_estim_1}
\int_{\{|y-x|\le1\}} \abs*{\frac{(y - x)  (f(y) - f(x)) }{|y - x|^{n + \alpha + 1}} }\, dy\le\Lip(f)\int_0^1r^{-\alpha}\, dr
\end{equation}
and
\begin{equation}\label{eq:Lip_nabla_estim_2}
\int_{\{|y-x|>1\}} \abs*{\frac{(y - x)  (f(y) - f(x)) }{|y - x|^{n + \alpha + 1}} }\, dy\le2\|f\|_{L^\infty(\R^n)}\int_1^{+\infty}r^{-(1+\alpha)}\, dr.
\end{equation}
As a consequence, the operator $\nabla^\alpha f$ defined by~\eqref{eq:def_frac_grad_3} is well defined for all $f\in\Lip_c(\R^n)$ and satisfies~\eqref{eq:def_frac_grad}, \eqref{eq:def_frac_grad_1} and~\eqref{eq:def_frac_grad_2}.

By~\cite{S18}*{Theorem 4.3}, $\nabla^{\alpha}$ is invariant by translations and rotations and is $\alpha$-homoge\-neous. Moreover, for all $f\in C^\infty_c(\R^n)$ and $\lambda \in \R$, we have
\begin{equation}\label{eq:alpha_homogeneous} 
(\nabla^{\alpha} f(\lambda \cdot))(x) = |\lambda|^{\alpha} \sgn{(\lambda)} (\nabla^{\alpha} f)(\lambda x),
\qquad
x\in\R^n.
\end{equation}

Arguing similarly as above, we can write
\begin{subequations}
\begin{align}
\div^{\alpha} \varphi(x) 
\label{eq:def_frac_div_1}
&= \mu_{n, \alpha} \lim_{\eps \to 0} \int_{\{ |x - y| > \eps \}} \frac{(y - x) \cdot \varphi(y) }{|y - x|^{n + \alpha + 1}} \, dy,\\
\label{eq:def_frac_div_2}
&= \mu_{n, \alpha} \lim_{\eps \to 0} \int_{\{ |x - y| > \eps \}} \frac{(y - x) \cdot (\varphi(y) - \varphi(x)) }{|y - x|^{n + \alpha + 1}} \, dy,\\
\label{eq:def_frac_div_3}
&= \mu_{n, \alpha} \int_{\R^{n}} \frac{(y - x) \cdot (\varphi(y) - \varphi(x)) }{|y - x|^{n + \alpha + 1}} \, dy,
\qquad
\forall x\in\R^n,
\end{align}
\end{subequations}
for all $\phi\in\Lip_c(\R^n; \R^{n})$.

Exploiting~\eqref{eq:def_frac_grad_3} and~\eqref{eq:def_frac_div_3}, we can extend the operators $\nabla^\alpha$ and $\diverg^\alpha$ to functions with $w^{\alpha,1}$-regularity.

\begin{lemma}[Extension of $\nabla^\alpha$ and $\diverg^\alpha$ to $w^{\alpha,1}$]\label{result:Sobolev_frac_enough}
Let $\alpha\in(0,1)$. If $f\in w^{\alpha,1}(\R^n)$ and $\phi\in w^{\alpha,1}(\R^n;\R^n)$, then the functions $\nabla^\alpha f(x)$ and $\diverg^\alpha f(x)$ given by~\eqref{eq:def_frac_grad_3} and~\eqref{eq:def_frac_div_3} respectively are well defined for $\Leb{n}$-a.e.\ $x\in\R^n$. As a consequence, $\nabla^\alpha f(x)$ and $\diverg^\alpha f(x)$ satisfy~\eqref{eq:def_frac_grad}, \eqref{eq:def_frac_grad_1}, \eqref{eq:def_frac_grad_2} and~\eqref{eq:def_frac_div}, \eqref{eq:def_frac_div_1}, \eqref{eq:def_frac_div_2} respectively for $\Leb{n}$-a.e.\ $x\in\R^n$.
\end{lemma}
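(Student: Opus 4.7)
The plan is to reduce the well-posedness of $\nabla^\alpha f$ and $\div^\alpha\varphi$ on $w^{\alpha,1}$ to a Fubini argument based on the elementary pointwise identity
\[
\left|\frac{(y-x)(f(y)-f(x))}{|y-x|^{n+\alpha+1}}\right|=\frac{|f(y)-f(x)|}{|y-x|^{n+\alpha}},
\]
which shows that the absolute value of the integrand in \eqref{eq:def_frac_grad_3} is precisely the Gagliardo kernel. Since $f\in w^{\alpha,1}(\R^n)$, the double integral $\int_{\R^n}\int_{\R^n}\frac{|f(y)-f(x)|}{|y-x|^{n+\alpha}}\,dy\,dx=[f]_{W^{\alpha,1}(\R^n)}$ is finite, and Fubini--Tonelli yields finiteness of the inner integral for $\Leb{n}$-a.e.\ $x\in\R^n$. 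For such $x$ the integral in \eqref{eq:def_frac_grad_3} converges absolutely and thus defines $\nabla^\alpha f(x)$.

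Having~\eqref{eq:def_frac_grad_3}, I would recover the remaining three formulas in order. First, \eqref{eq:def_frac_grad_2} follows by Lebesgue's Dominated Convergence Theorem, with the Gagliardo kernel as $\eps$-uniform dominant. To pass to \eqref{eq:def_frac_grad_1}, note that for a.e.\ $x$ and every $\eps>0$ one has both
\[
\int_{\{|y-x|>\eps\}}\frac{|f(y)-f(x)|}{|y-x|^{n+\alpha}}\,dy<+\infty
\qquad\text{and}\qquad
|f(x)|\int_{\{|y-x|>\eps\}}\frac{dy}{|y-x|^{n+\alpha}}<+\infty,
\]
so that, by the triangle inequality, $\int_{\{|y-x|>\eps\}}\frac{|f(y)|}{|y-x|^{n+\alpha}}\,dy$ is finite as well. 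The truncated integral in \eqref{eq:def_frac_grad_2} may therefore be legitimately split into its $f(y)$-part and $f(x)$-part, and the latter vanishes for every $\eps>0$ by the cancellation identity~\eqref{eq:cancellation_kernel} applied after the change of variables $z=y-x$. The same change of variables then converts \eqref{eq:def_frac_grad_1} into \eqref{eq:def_frac_grad}.

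The argument for $\div^\alpha\varphi$ is identical, performed componentwise on $\varphi\in w^{\alpha,1}(\R^n;\R^n)$: the integrand of \eqref{eq:def_frac_div_3} is dominated by $|\varphi(y)-\varphi(x)|/|y-x|^{n+\alpha}$, whose double integral is controlled by $[\varphi]_{W^{\alpha,1}(\R^n;\R^n)}$, and from that point the reasoning above gives \eqref{eq:def_frac_div_2}, \eqref{eq:def_frac_div_1} and~\eqref{eq:def_frac_div} in turn.

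I do not anticipate a substantive obstacle: the lemma is essentially the observation that the absolute value of the integrand equals the Gagliardo kernel, combined with a standard Fubini-plus-symmetry manipulation. The only mildly delicate moment is the justification that the $f(y)$-piece of the truncated integral in \eqref{eq:def_frac_grad_1} converges in its own right, since $w^{\alpha,1}$ supplies only local integrability a priori; but, as observed above, this is immediate from the two one-sided bounds displayed in the previous paragraph.
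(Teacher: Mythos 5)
Your proposal is correct and follows essentially the same route as the paper: the key observation that the absolute value of the integrand in \eqref{eq:def_frac_grad_3} (resp.\ \eqref{eq:def_frac_div_3}) is exactly the Gagliardo kernel, Fubini--Tonelli to get a.e.\ absolute convergence, and then the cancellation \eqref{eq:cancellation_kernel} together with Lebesgue's Dominated Convergence Theorem to recover \eqref{eq:def_frac_grad}, \eqref{eq:def_frac_grad_1}, \eqref{eq:def_frac_grad_2} and their divergence counterparts. Your explicit justification of the splitting in \eqref{eq:def_frac_grad_1} (finiteness of the $f(y)$-piece of the truncated integral via the triangle inequality and the integrability of $|z|^{-n-\alpha}$ away from the origin) is a detail the paper leaves implicit, but it is the same argument.
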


\begin{proof}
Let $f\in w^{\alpha,1}(\R^n)$. Then
\begin{equation*}
\int_{\R^n}\int_{\R^n}\left|\frac{(y - x)  (f(y) - f(x)) }{|y - x|^{n + \alpha + 1}} \right|\, dy\, dx
\le
[f]_{W^{\alpha,1}(\R^n)}
\end{equation*} 
and thus the function $\nabla^\alpha f(x)$ given by~\eqref{eq:def_frac_grad_3} is well defined for $\Leb{n}$-a.e.\ $x\in\R^n$ and satisfies~\eqref{eq:def_frac_grad}, \eqref{eq:def_frac_grad_1} and \eqref{eq:def_frac_grad_2} by~\eqref{eq:cancellation_kernel} and by Lebesgue's Dominated Convergence Theorem. A similar argument proves the result for any $\phi\in w^{\alpha,1}(\R^n;\R^n)$.
\end{proof}

\subsection{Equivalent definition of \texorpdfstring{$\nabla^\alpha$}{nablaˆalpha} and \texorpdfstring{$\div^\alpha$}{divergenceˆalpha} via Riesz potential}

We let
\begin{equation} \label{eq:Riesz_potential_def} 
I_{\alpha} f(x) := 
\frac{\Gamma \left ( \frac{n - \alpha}{2} \right )}{2^{\alpha} \pi^{\frac{n}{2}} \Gamma \left ( \frac{\alpha}{2} \right )} \int_{\R^{n}} \frac{f(y)}{|x - y|^{n - \alpha}} \, dy, 
\qquad
x\in\R^n,
\end{equation}
be the \emph{Riesz potential} of order $\alpha\in(0,n)$ of a function $f\in C^\infty_c(\R^n;\R^m)$. 

Let $\alpha\in(0,1)$. Note that $I_{1-\alpha} f\in C^\infty(\R^n;\R^m)$. Recalling~\eqref{eq:def_mu_alpha}, one easily sees that
\begin{equation*} 
I_{1 - \alpha} f (x)
= \frac{\mu_{n, \alpha}}{n + \alpha - 1} \int_{\R^{n}} \frac{f(x+ y)}{|y|^{n + \alpha - 1}} \, dy
\end{equation*}
and 
\begin{equation*} 
\nabla I_{1 - \alpha} f(x) 
=  \frac{\mu_{n, \alpha}}{n + \alpha - 1} \int_{\R^{n}} \frac{\nabla_{x} f(x + y)}{|y|^{n + \alpha - 1}}  \, dy
= \frac{\mu_{n, \alpha}}{n + \alpha - 1} \int_{\R^{n}} \frac{\nabla_{y} f(x + y)}{|y|^{n + \alpha - 1}}  \, dy,
\end{equation*}
so that 
\begin{equation*}
\nabla I_{1-\alpha}f=I_{1-\alpha}\nabla f
\end{equation*}
for all $f\in C^\infty_c(\R^n)$. A similar argument proves that 
\begin{equation*}
\div I_{1-\alpha}\phi=I_{1-\alpha}\div\phi
\end{equation*}
for all $\phi\in C^\infty_c(\R^n;\R^n)$.
 
Thus, accordingly to the approach developed in~\cites{H59,SS18,SSS18,SSS15,SS15,SSVanS17}, we can consider the operators
\begin{equation*}
\widetilde{\nabla^\alpha}:=\nabla I_{1-\alpha}\colon C^\infty_c(\R^n)\to C^\infty(\R^n;\R^n)
\end{equation*}  
and
\begin{equation*}
\widetilde{\div^\alpha}:=\div I_{1-\alpha}\colon C^\infty_c(\R^n;\R^n)\to C^\infty(\R^n).
\end{equation*}
We can prove that these two operators coincide with the operators defined in~\eqref{eq:def_frac_grad} and~\eqref{eq:def_frac_div}. See also~\cite{SS15}*{Theorem~1.2}.  

\begin{proposition}[Equivalence]\label{rem:equiv_def_Riesz_potential} 
Let $\alpha\in(0,1)$. We have $\widetilde{\nabla^\alpha}=\nabla^\alpha$ on~$\Lip_c(\R^n)$ and $\widetilde{\div^\alpha}=\div^\alpha$ on~$\Lip_c(\R^n;\R^n)$.
\end{proposition}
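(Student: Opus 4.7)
The plan is to prove the identity for $\nabla^{\alpha}$ on $\Lip_c(\R^n)$; the statement for $\div^{\alpha}$ will follow by the same argument applied component by component. Starting from the representation
\begin{equation*}
I_{1-\alpha} f(x) = \frac{\mu_{n,\alpha}}{n+\alpha-1} \int_{\R^n} \frac{f(x+y)}{|y|^{n+\alpha-1}}\,dy
\end{equation*}
derived in the text -- still valid for every $f \in \Lip_c(\R^n)$, since $|y|^{-(n+\alpha-1)}$ is locally integrable (as $n+\alpha-1<n$) and $f$ has compact support -- I would differentiate under the integral sign. Because $\nabla f \in L^\infty(\R^n;\R^n)$ has compact support, this interchange is justified by dominated convergence (or, equivalently, by mollifying $f$ and passing to the limit), yielding
\begin{equation*}
\widetilde{\nabla^{\alpha}} f(x) = \frac{\mu_{n,\alpha}}{n+\alpha-1} \int_{\R^n} \frac{\nabla_{y} f(x+y)}{|y|^{n+\alpha-1}}\,dy.
\end{equation*}

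The core step is an integration by parts on $\R^n \setminus B_\eps(0)$. Applying the divergence theorem component-wise to $y \mapsto f(x+y) e_i/|y|^{n+\alpha-1}$, using $\nabla_y |y|^{-(n+\alpha-1)} = -(n+\alpha-1)\,y/|y|^{n+\alpha+1}$ and the outward unit normal $-y/|y|$ to $\R^n\setminus B_\eps$ on $\partial B_\eps$, I obtain
\begin{equation*}
\int_{\{|y|>\eps\}}\frac{\nabla_{y} f(x+y)}{|y|^{n+\alpha-1}}\,dy = (n+\alpha-1) \int_{\{|y|>\eps\}}\frac{y\,f(x+y)}{|y|^{n+\alpha+1}}\,dy - \eps^{-\alpha}\int_{\partial B_1} f(x+\eps\omega)\,\omega\,d\Haus{n-1}(\omega).
\end{equation*}
Since $\int_{\partial B_1} \omega\,d\Haus{n-1} = 0$, the boundary term may be rewritten with $f(x+\eps\omega)$ replaced by $f(x+\eps\omega) - f(x)$; the Lipschitz bound $|f(x+\eps\omega)-f(x)| \le \Lip(f)\,\eps$ then bounds it by $C\,\eps^{1-\alpha}$, which vanishes as $\eps \to 0^+$ because $\alpha<1$.

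It remains to pass to the limit $\eps \to 0^+$ in the two volume integrals. On the left, the integrand is supported on the fixed compact set $\supp(f) - x$ and is dominated there by $\|\nabla f\|_{L^\infty}\,|y|^{-(n+\alpha-1)}$, which is integrable. On the right, I use the cancellation~\eqref{eq:cancellation_kernel} to first replace $f(x+y)$ by $f(x+y) - f(x)$; the Lipschitz estimate $|f(x+y)-f(x)| \le \Lip(f)|y|$ near the origin, together with the compact support of~$f$ at infinity, produces a globally integrable dominating function. Lebesgue's dominated convergence yields
\begin{equation*}
\int_{\R^n} \frac{\nabla f(x+y)}{|y|^{n+\alpha-1}}\,dy = (n+\alpha-1)\int_{\R^n} \frac{y(f(x+y)-f(x))}{|y|^{n+\alpha+1}}\,dy,
\end{equation*}
which, multiplied by $\mu_{n,\alpha}/(n+\alpha-1)$ and compared with~\eqref{eq:def_frac_grad_3} after the change of variables $z=y$, identifies $\widetilde{\nabla^{\alpha}} f(x)$ with $\nabla^{\alpha} f(x)$. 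The only delicate point is the treatment of the boundary spherical integral on $\partial B_\eps$: without the Lipschitz regularity of~$f$ its magnitude would be only $\eps^{-\alpha}$, so the improvement to $\eps^{1-\alpha}$ provided by Lipschitz continuity is essential for the argument.
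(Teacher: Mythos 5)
Your proof is correct and follows essentially the same route as the paper: write $\widetilde{\nabla^\alpha}f$ via the Riesz-potential representation, integrate by parts on $\{|y|>\eps\}$, and kill the boundary term on $\partial B_\eps$ by subtracting $f(x)$ (using $\int_{\partial B_1}\omega\,d\Haus{n-1}=0$) together with the Lipschitz bound, which gives the decisive $O(\eps^{1-\alpha})$ estimate. The final identification of the principal-value limit with~\eqref{eq:def_frac_grad_3} via~\eqref{eq:cancellation_kernel} is a point the paper had already settled for $\Lip_c$ functions, so your argument matches theirs in substance.
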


\begin{proof}
Let $f\in\Lip_c(\R^n)$ and fix $x\in\R^n$. Integrating by parts, we can compute
\begin{align*} 
\widetilde{\nabla^\alpha}f(x)
&=\frac{\mu_{n, \alpha}}{n + \alpha - 1} \lim_{\eps\to0}\int_{\set*{|y|>\eps}} \frac{\nabla_{y} f(x + y)}{|y|^{n + \alpha - 1}}  \, dy \\ 
& = \mu_{n, \alpha} \lim_{\eps\to0}\int_{\set*{|y|>\eps}} \frac{y f(y + x)}{|y |^{n + \alpha + 1}} \, dy 
= \nabla^\alpha f(x),
\end{align*}
since we can estimate 
\begin{equation*}
\begin{split}
\left | \int_{\set*{|y|=\eps}} \frac{f(x + y)}{|y|^{n + \alpha - 1}}\frac{y}{|y|}\, d \Haus{n - 1}(y) \right |
& = \left | \int_{\set*{|y|=\eps}}  \frac{(f(x + y) - f(x))}{|y|^{n + \alpha - 1}}\frac{y}{|y|}\,  d \Haus{n - 1}(y) \right | \\
&\le n \omega_{n} \|\nabla f\|_{L^{\infty}(\R^{n}; \R^{n})} \eps^{1 - \alpha}.   
\end{split} 
\end{equation*}
The proof of $\widetilde{\div^\alpha}\phi=\div^\alpha\phi$ for all $\phi\in\Lip_c(\R^n;\R^n)$ follows similarly.
\end{proof}

A useful consequence of the equivalence proved in \cref{rem:equiv_def_Riesz_potential} above is the following result.

\begin{corollary}[Representation formula for $\diverg^\alpha$ and $\nabla^\alpha$] \label{prop:frac_div_repr} 
Let $\alpha\in(0,1)$. If $\phi \in\Lip_c(\R^{n}; \R^{n})$ then $\div^{\alpha} \phi \in L^1(\R^n)\cap L^{\infty}(\R^{n})$ with
\begin{equation}\label{eq:frac_div_repres}
\div^{\alpha} \phi(x) 
= \frac{\mu_{n, \alpha}}{n + \alpha - 1} \int_{\R^{n}} \frac{\div \phi(y)}{|y - x|^{n +\alpha-1}} \, dy
\end{equation}
for all $x\in\R^n$,
\begin{equation}\label{eq:frac_div_repr_Lip_L1_estimate} 
\| \div^{\alpha} \phi \|_{L^1(\R^{n})} \le \mu_{n, \alpha}  [\phi]_{W^{\alpha,1}(\R^{n};\R^n)}
\end{equation}
and
\begin{equation} \label{eq:frac_div_repr_Lip_estimate} 
\| \div^{\alpha} \phi \|_{L^{\infty}(\R^{n})} \le C_{n, \alpha, U}  \|\div \phi \|_{L^{\infty}(\R^{n})}
\end{equation}
for any bounded open set $U\subset\R^n$ such that $\supp(\phi) \subset U$, where 
\begin{equation} \label{eq:constant_estimate} 
C_{n, \alpha, U} := 
\frac{n \mu_{n, \alpha}}{(1 - \alpha)(n + \alpha - 1)}
\left(
	\omega_n\diam(U)^{1 - \alpha} 
	+\left(
		\frac{n \omega_{n}}{n+\alpha-1}
	\right)^\frac{n + \alpha - 1}{n}|U|^\frac{1 - \alpha}{n}
\right). 
\end{equation}

Analogously, if $f\in\Lip_c(\R^{n})$, then $\nabla^{\alpha}f \in L^1(\R^n; \R^{n})\cap L^{\infty}(\R^{n}; \R^{n})$ with
\begin{equation}\label{eq:frac_nabla_repres}
\nabla^{\alpha} f(x) 
= \frac{\mu_{n, \alpha}}{n + \alpha - 1} \int_{\R^{n}} \frac{\nabla f(y)}{|y - x|^{n +\alpha-1}} \, dy
\end{equation}
for all $x\in\R^n$,
\begin{equation}\label{eq:frac_nabla_repr_Lip_L1_estimate} 
\| \nabla^{\alpha} f \|_{L^1(\R^{n};\R^n)} \le \mu_{n, \alpha}  [f]_{W^{\alpha,1}(\R^{n})}
\end{equation}
and
\begin{equation} \label{eq:frac_nabla_repr_Lip_estimate} 
\| \nabla^{\alpha} f \|_{L^{\infty}(\R^{n};\R^n)} \le C_{n, \alpha, U}  \|\nabla f\|_{L^{\infty}(\R^{n};\R^n)}
\end{equation}
for any bounded open set $U\subset\R^n$ such that $\supp(f) \subset U$, where $C_{n, \alpha, U}$ is as in~\eqref{eq:constant_estimate}.
\end{corollary}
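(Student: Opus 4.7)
The plan is to derive the representation formula \eqref{eq:frac_div_repres} directly from the definition \eqref{eq:def_frac_div} by an integration by parts, and then read off the two estimates from it and from \eqref{eq:def_frac_div_3}; the statements for $\nabla^{\alpha}$ will follow by the same arguments. For \eqref{eq:frac_div_repres} I would exploit the pointwise identity $\frac{z}{|z|^{n+\alpha+1}}=-\frac{1}{n+\alpha-1}\nabla_{z}|z|^{-(n+\alpha-1)}$ on $\{|z|>\eps\}$ and integrate by parts in $z$ against the field $\phi(x+z)$. The interior contribution becomes $\frac{1}{n+\alpha-1}\int_{\{|z|>\eps\}}\frac{\div\phi(x+z)}{|z|^{n+\alpha-1}}\,dz$, while the boundary term on $\{|z|=\eps\}$ is of order $\eps^{1-\alpha}$ by the same argument used in the proof of \cref{rem:equiv_def_Riesz_potential}: subtracting $\phi(x)$ via the cancellation $\int_{|z|=\eps}z/|z|\,d\Haus{n-1}=0$ reduces it to a $\Lip(\phi)\eps$ oscillation integrated over a surface of size $\eps^{n-1}$ weighted by $\eps^{-(n+\alpha-1)}$. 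Since $\alpha<1$ makes $|z|^{-(n+\alpha-1)}$ locally integrable and $\div\phi\in L^{\infty}$ is compactly supported, dominated convergence in the interior integral yields \eqref{eq:frac_div_repres}.

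For the $L^{1}$ bound \eqref{eq:frac_div_repr_Lip_L1_estimate}, I would first observe that $\Lip_{c}(\R^{n};\R^{n})\subset w^{\alpha,1}(\R^{n};\R^{n})$: in $[\phi]_{W^{\alpha,1}}$, the diagonal region $\{|y-x|\le 1\}$ is handled by $|\phi(y)-\phi(x)|\le\Lip(\phi)|y-x|$ as in \eqref{eq:Lip_nabla_estim_1} and the far region $\{|y-x|>1\}$ by $|\phi(y)-\phi(x)|\le 2\|\phi\|_{L^{\infty}}$ together with the fact that one of $x,y$ must lie in $\supp\phi$, as in \eqref{eq:Lip_nabla_estim_2}. \Cref{result:Sobolev_frac_enough} then yields the pointwise identity \eqref{eq:def_frac_div_3}; bounding $|\div^{\alpha}\phi(x)|\le\mu_{n,\alpha}\int_{\R^{n}}|y-x|^{-n-\alpha}|\phi(y)-\phi(x)|\,dy$ and integrating over $x$ via Fubini produces \eqref{eq:frac_div_repr_Lip_L1_estimate}.

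For the $L^{\infty}$ bound \eqref{eq:frac_div_repr_Lip_estimate}, the representation formula together with $\supp\div\phi\subset U$ reduces the task to estimating $\sup_{x\in\R^{n}}\int_{U}|y-x|^{-(n+\alpha-1)}\,dy$ by the bracket in \eqref{eq:constant_estimate}. I would distinguish whether $x\in U$. When $x\in U$, the inclusion $U\subset B_{\diam(U)}(x)$ and spherical coordinates give $\int_{U}|y-x|^{-(n+\alpha-1)}\,dy\le \tfrac{n\omega_{n}}{1-\alpha}\diam(U)^{1-\alpha}$, which accounts for the first term in the bracket. When $x\notin U$, I would decompose $U=(U\cap B_{r}(x))\cup(U\setminus B_{r}(x))$ and estimate, for every $r>0$,
\begin{equation*}
\int_{U}\frac{dy}{|y-x|^{n+\alpha-1}}\le \frac{n\omega_{n}}{1-\alpha}\,r^{1-\alpha}+r^{-(n+\alpha-1)}|U|,
\end{equation*}
then minimize in $r$: the critical point $r_{*}=\bigl(\tfrac{(n+\alpha-1)|U|}{n\omega_{n}}\bigr)^{1/n}$ collapses the right-hand side to $\tfrac{n}{1-\alpha}\bigl(\tfrac{n\omega_{n}}{n+\alpha-1}\bigr)^{(n+\alpha-1)/n}|U|^{(1-\alpha)/n}$, which is the second term in the bracket. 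Summing the two case estimates and multiplying by $\mu_{n,\alpha}/(n+\alpha-1)$ produces $C_{n,\alpha,U}$ and hence \eqref{eq:frac_div_repr_Lip_estimate}.

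The statements for $\nabla^{\alpha}$ follow by entirely analogous arguments: one can either redo the integration by parts, the $W^{\alpha,1}$-splitting and the kernel estimate starting from \eqref{eq:def_frac_grad_3} with the scalar function $f$ in place of $\phi$, or argue componentwise by applying the divergence statements to the fields $f\,e_{i}$, $i=1,\dots,n$. I expect the main obstacle to be the bookkeeping of the constant in the $L^{\infty}$ bound, specifically the optimization in $r$ that produces exactly the exponent structure $\bigl(\tfrac{n\omega_{n}}{n+\alpha-1}\bigr)^{(n+\alpha-1)/n}|U|^{(1-\alpha)/n}$ appearing in \eqref{eq:constant_estimate}; the representation formula and the $L^{1}$ bound are comparatively routine once the algebraic identity and \cref{result:Sobolev_frac_enough} are in hand.
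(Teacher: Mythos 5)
Your proposal is correct and follows essentially the same route as the paper: the representation formula is obtained by the integration by parts underlying \cref{rem:equiv_def_Riesz_potential} (which the paper simply invokes), the $L^1$ bound comes from \cref{result:Sobolev_frac_enough} after checking $\Lip_c(\R^n;\R^n)\subset w^{\alpha,1}(\R^n;\R^n)$, and the $L^\infty$ bound reduces to estimating $\sup_{x}\int_U|y-x|^{1-n-\alpha}\,dy$ exactly as in \cref{result:riesz_kernel_estimate}. Your case split ($x\in U$ via the diameter, $x\notin U$ via a ball decomposition and minimisation in $r$) is only a harmless variant of the paper's $\delta$-neighbourhood argument and yields the same constant $C_{n,\alpha,U}$.
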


\begin{proof}
The representation formula~\eqref{eq:frac_div_repres} follows directly from \cref{rem:equiv_def_Riesz_potential}. The estimate in~\eqref{eq:frac_div_repr_Lip_L1_estimate} is a consequence of \cref{result:Sobolev_frac_enough}. Finally, if $U\subset\R^n$ is a bounded open set such that $\supp(\phi)\subset U$, then
\begin{equation*}
\begin{split}
|\div^{\alpha} \phi(x)| 
&\le \frac{\mu_{n, \alpha}}{n + \alpha - 1} \int_{\R^{n}} |y - x|^{1 - n - \alpha} \, |\div \phi(y)| \, dy\\
&\le \frac{\mu_{n, \alpha}\|\div \phi\|_{L^\infty(\R^n)}}{n + \alpha - 1} \int_U |y - x|^{1 - n - \alpha} \, dy 
\end{split}
\end{equation*}
and~\eqref{eq:frac_div_repr_Lip_estimate} follows by \cref{result:riesz_kernel_estimate} below. The proof of~\eqref{eq:frac_nabla_repres}, \eqref{eq:frac_nabla_repr_Lip_L1_estimate} and~\eqref{eq:frac_nabla_repr_Lip_estimate} is similar and is left to the reader.  
\end{proof}

\begin{lemma} \label{result:riesz_kernel_estimate} 
Let $\alpha \in (0, 1)$ and let $U\subset\R^n$ be a bounded open set. For all $x \in \R^{n}$, we have
\begin{equation} \label{eq:riesz_kernel_estimate} 
\int_{U} |y - x|^{1 - n - \alpha} \, d y 
\le 
\frac{n}{1 - \alpha}
\left(
	\omega_n\diam(U)^{1 - \alpha} 
	+\left(
		\frac{n \omega_{n}}{n+\alpha-1}
	\right)^\frac{n + \alpha - 1}{n}|U|^\frac{1 - \alpha}{n}
\right).
\end{equation}
\end{lemma}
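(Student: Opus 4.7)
The strategy is to split the integration region and optimize. For any $r>0$, I would write
\begin{equation*}
\int_{U} |y-x|^{1-n-\alpha}\,dy \le \int_{B_r(x)}|y-x|^{1-n-\alpha}\,dy + r^{1-n-\alpha}|U|,
\end{equation*}
using monotonicity of the integral on the piece $U\cap B_r(x)$ and the pointwise bound $|y-x|^{1-n-\alpha}\le r^{1-n-\alpha}$ on $U\setminus B_r(x)$, which is valid precisely because $1-n-\alpha<0$. Polar coordinates evaluate the first summand as $n\omega_n\int_0^r s^{-\alpha}\,ds = \frac{n\omega_n}{1-\alpha}\,r^{1-\alpha}$, producing the $x$-independent estimate
\begin{equation*}
\int_{U} |y-x|^{1-n-\alpha}\,dy \le g(r) := \frac{n\omega_n}{1-\alpha}\,r^{1-\alpha} + r^{1-n-\alpha}|U|,
\qquad \forall\,r>0.
\end{equation*}

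Then I would minimize $g$ over $r>0$. Since $1-\alpha>0$ and $1-n-\alpha<0$ have opposite signs, $g(r)\to+\infty$ as $r\to 0^{+}$ and as $r\to+\infty$, so the unique critical point $r_0 = \bigl(\frac{(n+\alpha-1)|U|}{n\omega_n}\bigr)^{1/n}$ is the minimizer. A direct substitution shows that both terms composing $g(r_0)$ share the common factor $\bigl(\frac{n\omega_n}{n+\alpha-1}\bigr)^{(n+\alpha-1)/n}|U|^{(1-\alpha)/n}$, with coefficients $\frac{n+\alpha-1}{1-\alpha}$ and $1$ respectively, adding up to $\frac{n}{1-\alpha}$. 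Therefore
\begin{equation*}
g(r_0) = \frac{n}{1-\alpha}\left(\frac{n\omega_n}{n+\alpha-1}\right)^{(n+\alpha-1)/n}|U|^{(1-\alpha)/n},
\end{equation*}
which is already exactly the second summand appearing on the right-hand side of \eqref{eq:riesz_kernel_estimate}.

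The claim then follows at once, since the remaining term $\frac{n\omega_n}{1-\alpha}\diam(U)^{1-\alpha}$ on the right-hand side of \eqref{eq:riesz_kernel_estimate} is non-negative and can only enlarge the upper bound. I do not anticipate any serious obstacle: all ingredients are elementary, and the only place requiring a little care is the algebraic bookkeeping of the exponents when plugging $r_0$ into $g$.
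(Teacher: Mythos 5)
Your proof is correct, and it actually establishes a slightly stronger estimate than the one stated. The paper proceeds by a dichotomy on the position of $x$: it introduces the neighbourhood $U^{\delta}=\set{x\in\R^n : \dist(x,U)<\delta}$, bounds the integral by $\tfrac{n\omega_n}{1-\alpha}(\diam(U)+\delta)^{1-\alpha}$ when $x\in U^{\delta}$ (covering the whole of $U$ by the ball $B_{\diam(U)+\delta}(x)$) and by $\delta^{1-n-\alpha}|U|$ when $x\notin U^{\delta}$, then adds the two bounds, uses subadditivity of $s\mapsto s^{1-\alpha}$, and minimises in $\delta$ — this case analysis is precisely why the $\diam(U)^{1-\alpha}$ term appears in \eqref{eq:riesz_kernel_estimate}. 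You instead split the domain of integration at radius $r$ around each fixed $x$, estimating the near part by $\int_{B_r(x)}|y-x|^{1-n-\alpha}\,dy=\tfrac{n\omega_n}{1-\alpha}r^{1-\alpha}$ and the far part pointwise by $r^{1-n-\alpha}|U|$, and then optimise in $r$; your bookkeeping at the critical point $r_0=\bigl(\tfrac{(n+\alpha-1)|U|}{n\omega_n}\bigr)^{1/n}$ is accurate, and the resulting $x$-uniform bound $\tfrac{n}{1-\alpha}\bigl(\tfrac{n\omega_n}{n+\alpha-1}\bigr)^{\frac{n+\alpha-1}{n}}|U|^{\frac{1-\alpha}{n}}$ is exactly the volume term of \eqref{eq:riesz_kernel_estimate}, so the stated inequality follows a fortiori since the diameter term is nonnegative. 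In short, your decomposition (splitting the integration domain for every $x$, rather than splitting the set of points $x$) shows that the diameter contribution in \eqref{eq:riesz_kernel_estimate} is superfluous; the paper's coarser constant is of course equally sufficient for its application in \cref{prop:frac_div_repr}. The only degenerate situation, $U=\varnothing$ (where $r_0$ is not defined), is trivial because the integral vanishes.
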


\begin{proof} 
For $\delta > 0$, set $U^{\delta} := \{ x \in \R^{n} : \dist(x, U) < \delta \}$. Since clearly
\begin{equation*}
x \in U^{\delta}
\implies
B_{(\diam{(U)} + \delta)}(x) \supset U,
\end{equation*}
for all $x\in U^\delta$ we can estimate
\begin{align*} 
\int_{U} |y - x|^{1 - n - \alpha} \, d y 
&\le \int_{B_{(\diam (U) + \delta)}(x)} |y - x|^{1 - n - \alpha} \, d y \\
& = n \omega_{n} \int_{0}^{\diam(U) + \delta} r^{- \alpha} \, dr \\
& = \frac{n \omega_{n}}{1 - \alpha} \left (\diam{(U)} + \delta \right)^{1 - \alpha}. 
\end{align*}
On the other hand, it is plain that 
\begin{equation*}
x \notin U^{\delta},\ y \in U 
\implies
|y - x| > \delta,
\end{equation*}
so that for all $x\notin U^\delta$ we can estimate
\begin{align*} 
\int_{U} |y - x|^{1 - n - \alpha} \, d y \le \delta^{1 - n - \alpha} |U|. 
\end{align*}
Thus, for all $\delta > 0$ and $x \in \R^{n}$, we can estimate
\begin{align*} 
\int_{U} |y - x|^{1 - n - \alpha} \, d y 
& \le \frac{n \omega_{n}}{1 - \alpha} \left (\diam{(U)} + \delta \right)^{1 - \alpha} + \delta^{1 - n - \alpha} |U| \\
& \le \frac{n \omega_{n}}{1 - \alpha} \left (\diam{(U)}^{1 - \alpha} + \delta^{1 - \alpha} \right) +  \delta^{1 - n - \alpha} |U|
\end{align*}
since the function $s \mapsto s^{1 - \alpha}$ is subadditive for all $s>0$. Thus~\eqref{eq:riesz_kernel_estimate} follows minimising in~$\delta>0$ the right-hand side.
\end{proof}

\subsection{Duality and Leibniz's rules}

We now study the properties of the operators $\nabla^\alpha$ and $\diverg^\alpha$. We begin with the following duality relation, see~\cite{S18}*{Section~6}.

\begin{lemma}[Duality]\label{result:duality}
Let $\alpha\in(0,1)$. For all $f\in\Lip_c(\R^n)$ and $\phi\in\Lip_c(\R^n;\R^n)$ it holds
\begin{equation}\label{eq:duality_smooth}
\int_{\R^{n}} f\,\div^{\alpha} \varphi \, dx = - \int_{\R^{n}} \varphi \cdot \nabla^{\alpha} f \, dx.
\end{equation}
\end{lemma}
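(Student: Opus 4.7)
The plan is to prove the duality \eqref{eq:duality_smooth} by symmetrising the pointwise principal-value formulas \eqref{eq:def_frac_grad_3} and \eqref{eq:def_frac_div_3} under Fubini, which reduces the identity to the statement $\int_{\R^n}\div^\alpha(f\phi)\,dx = 0$.

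Set $I := \int_{\R^n} f\,\div^\alpha\phi \, dx$ and $J := \int_{\R^n} \phi \cdot \nabla^\alpha f \, dx$. First I would justify Fubini: plugging \eqref{eq:def_frac_div_3} into $I$ and \eqref{eq:def_frac_grad_3} into $J$, both double integrands are dominated, after the Lipschitz estimate on $|\phi(y)-\phi(x)|$ and $|f(y)-f(x)|$, by a multiple of $|y-x|^{-(n+\alpha-1)}$ near the diagonal and of $|y-x|^{-(n+\alpha)}$ away from it, which is integrable on the compact supports involved.

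Then I would sum and symmetrise: swapping $x$ and $y$ in the double integrals (the kernel is symmetric, and the two sign flips coming from $(y-x) \to (x-y)$ and from the differences cancel) and averaging the original and swapped expressions, the algebraic identity
\begin{equation*}
(f(x)+f(y))(\phi(y)-\phi(x)) + (\phi(x)+\phi(y))(f(y)-f(x)) = 2\bigl[f(y)\phi(y) - f(x)\phi(x)\bigr]
\end{equation*}
collapses the sum to
\begin{equation*}
I + J = \mu_{n,\alpha}\int_{\R^n}\int_{\R^n} \frac{(y-x)\cdot[(f\phi)(y) - (f\phi)(x)]}{|y-x|^{n+\alpha+1}} \, dy\,dx = \int_{\R^n} \div^\alpha(f\phi) \, dx,
\end{equation*}
where the last equality is \eqref{eq:def_frac_div_3} applied to the Lipschitz compactly supported field $f\phi$.

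It remains to show $\int_{\R^n} \div^\alpha g \, dx = 0$ for every $g\in\Lip_c(\R^n;\R^n)$. Here I would invoke \cref{prop:frac_div_repr} to write $\div^\alpha g = \div I_{1-\alpha}g$ pointwise. The field $I_{1-\alpha}g$ is $C^1$ on $\R^n$ (the gradient passes through the convolution because $\nabla g \in L^\infty$ has compact support and the kernel $|x|^{-(n+\alpha-1)}$ is locally integrable) and decays at infinity as $|x|^{-(n+\alpha-1)}$. Applying the classical divergence theorem on a ball $B_R\supset\supp g$ and letting $R\to+\infty$, one gets
\begin{equation*}
\int_{\R^n}\div^\alpha g \, dx = \lim_{R\to+\infty} \int_{\partial B_R} I_{1-\alpha}g \cdot \nu \, d\Haus{n-1} = 0,
\end{equation*}
since the surface integral is of order $R^{n-1}\cdot R^{-(n+\alpha-1)} = R^{-\alpha}$, and the convergence of $\int_{B_R}\div^\alpha g \, dx$ to $\int_{\R^n}\div^\alpha g \, dx$ is ensured by $\div^\alpha g \in L^1(\R^n)$, cf.\ \eqref{eq:frac_div_repr_Lip_L1_estimate}. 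The most delicate point of the argument is the sign bookkeeping in the symmetrisation step; everything else is then routine.
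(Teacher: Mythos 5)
Your proof is correct, but it takes a genuinely different route from the paper's. The paper proves \eqref{eq:duality_smooth} in a few lines: it uses the truncated single-function representations \eqref{eq:def_frac_div_1} and \eqref{eq:def_frac_grad_1} (which rest on the cancellation \eqref{eq:cancellation_kernel}), pulls the limit in $\eps$ out of the $x$-integral by dominated convergence, applies Fubini's Theorem at fixed $\eps$, and reads off $-\int_{\R^n}\phi\cdot\nabla^\alpha f\,dx$ from the antisymmetry of the kernel. You instead work with the absolutely convergent difference forms \eqref{eq:def_frac_grad_3} and \eqref{eq:def_frac_div_3}, symmetrise, and reduce the duality to the identity $\int_{\R^n}\div^\alpha(f\phi)\,dx=0$, which you then establish through the Riesz-potential representation on $\Lip_c$ (\cref{rem:equiv_def_Riesz_potential} together with \cref{prop:frac_div_repr}, giving $\div^\alpha g=\div I_{1-\alpha}g$) and the classical divergence theorem on large balls, exploiting the decay $|I_{1-\alpha}g(x)|=O(|x|^{1-n-\alpha})$ and the fact that $\div^\alpha g\in L^1(\R^n)$ from \eqref{eq:frac_div_repr_Lip_L1_estimate}. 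What your route buys: no principal values and no limit--integral interchange, every double integral is absolutely convergent, and you isolate as a by-product the fact $\int_{\R^n}\div^\alpha g\,dx=0$ for $g\in\Lip_c(\R^n;\R^n)$, a fractional Gauss--Green identity of independent interest. What it costs: it is longer and leans on the representation machinery of Section 2 (already available in the paper), and the vanishing of $\int_{\R^n}\div^\alpha g\,dx$ could alternatively be obtained by the same PV--Fubini--cancellation argument the paper uses for the full duality, so the detour is optional rather than forced. One small presentational point: the domination justifying Fubini is best phrased as a bound on the inner $y$-integral that is uniform in $x$, via $|\phi(y)-\phi(x)|\le\min\{\Lip(\phi)|y-x|,\,2\|\phi\|_{L^\infty(\R^n;\R^n)}\}$, with the outer variable confined to the compact support of $f$ (resp.\ $\phi$); as written, ``integrable on the compact supports involved'' is slightly loose because $y$ ranges over all of $\R^n$, but the estimate you intend is correct.
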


\begin{proof} 
Recalling \cref{result:Sobolev_frac_enough}  and exploiting~\eqref{eq:def_frac_grad_1} and~\eqref{eq:def_frac_div_1}, we can write
\begin{align*} 
\int_{\R^{n}} f\,\div^{\alpha} \varphi \, dx 
& = \mu_{n, \alpha}\int_{\R^{n}} f(x) \lim_{\eps \to 0} \int_{ \{ |x - y| > \eps \}} \frac{(y - x) \cdot \varphi(y) }{|y - x|^{n + \alpha + 1}} \, dy \, dx \\
& = \mu_{n, \alpha} \lim_{\eps \to 0} \int_{\R^{n}} \int_{\{ |x - y| > \eps \}}  f(x) \, \frac{(y - x) \cdot \varphi(y) }{|y - x|^{n + \alpha + 1}} \, dy \, dx  \\
& = - \mu_{n, \alpha} \lim_{\eps \to 0} \int_{\R^{n}}  \int_{\{ |x - y| > \eps \}}  \phi(y)\cdot \frac{(x - y)\,f(x) }{|x - y|^{n + \alpha + 1}} \, dx \, d y \\
& = - \int_{\R^{n}} \varphi(y) \cdot \nabla^{\alpha} f(y) \, dy 
\end{align*}
by the Lebesgue's Dominated Convergence Theorem and Fubini's Theorem.
\end{proof}

We now prove two Leibniz-type rules for the operators $\nabla^\alpha$ and $\diverg^\alpha$, which in particular show the strong non-local nature of these two operators.

\begin{lemma}[Leibniz's rule for $\nabla^\alpha$] \label{lem:Leibniz_frac_grad}
Let $\alpha\in(0,1)$. For all $f,g\in\Lip_c(\R^n)$ it holds
\begin{equation*}
\nabla^{\alpha}(f g)=f \nabla^{\alpha} g + g \nabla^{\alpha} f+\nabla^{\alpha}_{\rm NL}(f,g),
\end{equation*}
where
\begin{equation*}
\nabla^{\alpha}_{\rm NL}(f, g)(x):=\mu_{n, \alpha} \int_{\R^{n}} \frac{(y - x)  (f(y) - f(x)) ( g(y) - g(x) ) }{|y - x|^{n + \alpha + 1}} \, dy,
\quad
\forall x\in\R^n,
\end{equation*}
with~$\mu_{n, \alpha}$ as in~\eqref{eq:def_mu_alpha}. Moreover, it holds
\begin{equation*} 
\|\nabla^{\alpha}_{\rm NL} (f, g)\|_{L^{1}(\R^{n};\R^{n})}\le \mu_{n, \alpha} [f]_{W^{\frac{\alpha}{p}, p}(\R^{n})} [g]_{W^{\frac{\alpha}{q}, q}(\R^{n})} 
\end{equation*} 
with $p,q\in(1,\infty)$ such that $\frac{1}{p} +\frac{1}{q}=1$ and similarly
\begin{equation*} 
\| \nabla^{\alpha}_{\rm NL} (f, g)\|_{L^{1}(\R^{n}; \R^{n})}\le 2 \mu_{n, \alpha} \|f\|_{L^{\infty}(\R^{n})} [g]_{W^{\alpha, 1}(\R^{n})}. 
\end{equation*}
\end{lemma}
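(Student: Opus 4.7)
The plan is to start from the pointwise representation \eqref{eq:def_frac_grad_3}, which is applicable since $f,g\in\Lip_c(\R^n)$ implies $fg\in\Lip_c(\R^n)$, so
\begin{equation*}
\nabla^{\alpha}(fg)(x)=\mu_{n,\alpha}\int_{\R^n}\frac{(y-x)\,(f(y)g(y)-f(x)g(x))}{|y-x|^{n+\alpha+1}}\,dy
\end{equation*}
for every $x\in\R^n$. The key algebraic identity is the pointwise splitting
\begin{equation*}
f(y)g(y)-f(x)g(x)=g(x)(f(y)-f(x))+f(x)(g(y)-g(x))+(f(y)-f(x))(g(y)-g(x)),
\end{equation*}
which, once inserted into the integral defining $\nabla^{\alpha}(fg)(x)$, formally yields the three summands $g(x)\nabla^{\alpha}f(x)$, $f(x)\nabla^{\alpha}g(x)$, and $\nabla^{\alpha}_{\rm NL}(f,g)(x)$.

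To make the splitting rigorous I first need to check that each of the three resulting integrands is absolutely integrable in $y$ for every fixed $x$, so that the decomposition is legitimate. For the two ``linear'' pieces this is immediate from the representation \eqref{eq:def_frac_grad_2} together with the Lipschitz estimates \eqref{eq:Lip_nabla_estim_1}--\eqref{eq:Lip_nabla_estim_2}. For the nonlocal remainder I split the region of integration into $\{|y-x|\le 1\}$ and $\{|y-x|>1\}$: on the first region I use $|f(y)-f(x)||g(y)-g(x)|\le\Lip(f)\Lip(g)|y-x|^{2}$, which against the kernel $|y-x|^{-n-\alpha}$ gives an integrand of order $|y-x|^{2-n-\alpha}$ (integrable near $x$ since $\alpha<2$); on the second region I use $|f(y)-f(x)||g(y)-g(x)|\le 4\|f\|_{L^\infty}\|g\|_{L^\infty}$ against $|y-x|^{-n-\alpha}$, which is integrable at infinity. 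Hence the pointwise Leibniz identity holds everywhere on $\R^n$.

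Next I turn to the two $L^{1}$ bounds for $\nabla^{\alpha}_{\rm NL}(f,g)$. Taking absolute values inside the integral I have
\begin{equation*}
\|\nabla^{\alpha}_{\rm NL}(f,g)\|_{L^{1}(\R^n;\R^n)}\le \mu_{n,\alpha}\int_{\R^n}\!\int_{\R^n}\frac{|f(y)-f(x)|\,|g(y)-g(x)|}{|y-x|^{n+\alpha}}\,dx\,dy.
\end{equation*}
For the first estimate, for conjugate exponents $p,q$ I factor the kernel as
\begin{equation*}
\frac{1}{|y-x|^{n+\alpha}}=\frac{1}{|y-x|^{(n+\alpha)/p}}\cdot\frac{1}{|y-x|^{(n+\alpha)/q}},
\end{equation*}
and apply H\"older's inequality with exponents $p,q$ on $\R^n\times\R^n$. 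Since $(n+\alpha)/p=n/p+(\alpha/p)p\cdot 1/p$ is exactly the exponent matching the $W^{\alpha/p,p}$ seminorm of $f$, and analogously for $g$, the two resulting factors are precisely $[f]_{W^{\alpha/p,p}(\R^n)}$ and $[g]_{W^{\alpha/q,q}(\R^n)}$. For the second estimate I simply bound $|f(y)-f(x)|\le 2\|f\|_{L^\infty(\R^n)}$ pointwise and recognize the remaining double integral as $[g]_{W^{\alpha,1}(\R^n)}$.

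The only delicate step is the matching of exponents in the H\"older argument, which must be done carefully so that both factors produce Gagliardo seminorms with the correct differentiability order; once this bookkeeping is in place, everything else is elementary manipulation with the definitions \eqref{eq:def_frac_grad_3}.
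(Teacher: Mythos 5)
Your proposal is correct and follows essentially the same route as the paper: the pointwise identity is obtained from the representation \eqref{eq:def_frac_grad_3} by the elementary add-and-subtract splitting of $f(y)g(y)-f(x)g(x)$ (yours is the symmetric one-step version of the paper's two-step splitting, and your explicit check of absolute convergence of each piece is exactly what the paper delegates to \cref{result:Sobolev_frac_enough} and the estimates \eqref{eq:Lip_nabla_estim_1}--\eqref{eq:Lip_nabla_estim_2}), and the two $L^1$ bounds are proved as in the paper by H\"older's inequality with the factored kernel and by the trivial $L^\infty$ bound, respectively.
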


\begin{proof}
Given $f, g \in\Lip_c(\R^{n})$, by \cref{result:Sobolev_frac_enough} and by \eqref{eq:def_frac_grad_3} we have
\begin{align*} 
\nabla^{\alpha}(f g)(x) & = \mu_{n, \alpha} \int_{\R^{n}} \frac{(y - x)  (f(y) g(y) - f(x) g(x)) }{|y - x|^{n + \alpha + 1}} \, dy \\
& = \mu_{n, \alpha} \int_{\R^{n}} \frac{(y - x)  (f(y) g(y) - f(y) g(x) + f(y) g(x) - f(x) g(x)  ) }{|y - x|^{n + \alpha + 1}} \, dy \\
& = \mu_{n, \alpha} \int_{\R^{n}} \frac{(y - x)  f(y) ( g(y) - g(x) ) }{|y - x|^{n + \alpha + 1}} \, dy + g(x) \nabla^{\alpha} f(x) \\
& = \mu_{n, \alpha} \int_{\R^{n}} \frac{(y - x)  (f(y) - f(x)) ( g(y) - g(x) ) }{|y - x|^{n + \alpha + 1}} \, dy + f(x) \nabla^{\alpha} g(x) + g(x) \nabla^{\alpha} f(x). 
\end{align*}
We also have that
\begin{align*} 
\| \nabla^{\alpha}_{\rm NL} (f, g) &\|_{L^{1}(\R^{n}; \R^{n})} 
\le \mu_{n, \alpha} \int_{\R^{n}} \int_{\R^{n}} \frac{|f(y) - f(x)|}{|x - y|^{\frac{n + \alpha}{p}}} \frac{|g(y) - g(x)|}{|y - x|^{\frac{n + \alpha}{q}}} \, dy \, dx, \\
& \le \mu_{n, \alpha} \left ( \int_{\R^{n}} \int_{\R^{n}} \frac{|f(y) - f(x)|^{p}}{|x - y|^{n + \alpha}} \, dy \, dx \right )^{\frac{1}{p}} \left ( \int_{\R^{n}} \int_{\R^{n}} \frac{|g(y) - g(x)|^{q}}{|x - y|^{n + \alpha}} \, dy \, dx \right )^{\frac{1}{q}} 
\end{align*}
for any $p, q \in (1, \infty)$ such that $\frac{1}{p} + \frac{1}{q} = 1$. The case $p = \infty$, $q = 1$ follows similarly.
\end{proof}

\begin{lemma}[Leibniz's rule for $\div^\alpha$] \label{lem:Leibniz_frac_div}
Let $\alpha\in(0,1)$. For all $f \in\Lip_c(\R^n)$ and $\phi\in\Lip_c(\R^{n};\R^{n})$ it holds
\begin{equation*}
\div^{\alpha}(f \phi)=f \div^{\alpha} \phi + \phi \cdot \nabla^{\alpha} f+\div^{\alpha}_{\rm NL}(f, \phi),
\end{equation*}
where
\begin{equation} \label{eq:div_NL_term}
\div^{\alpha}_{\rm NL}(f, \phi)(x):=\mu_{n, \alpha} \int_{\R^{n}} \frac{(y - x)  \cdot ( \phi(y) - \phi(x) ) (f(y) - f(x)) }{|y - x|^{n + \alpha + 1}} \, dy,
\quad
\forall x\in\R^n,
\end{equation}
with~$\mu_{n, \alpha}$ as in~\eqref{eq:def_mu_alpha}. Moreover, it holds
\begin{equation*} 
\|\div^{\alpha}_{\rm NL} (f, \phi)\|_{L^{1}(\R^{n})}\le \mu_{n, \alpha} [f]_{W^{\frac{\alpha}{p}, p}(\R^{n})} [\phi]_{W^{\frac{\alpha}{q}, q}(\R^{n}; \R^{n})} 
\end{equation*} 
with $p,q\in(1,\infty)$ such that $\frac{1}{p}+\frac{1}{q}=1$ and similarly
\begin{align*} 
\| \div^{\alpha}_{\rm NL} (f, \phi)\|_{L^{1}(\R^{n})} & \le 2 \mu_{n, \alpha} \|f\|_{L^{\infty}(\R^{n})} [\phi]_{W^{\alpha, 1}(\R^{n}; \R^{n})}, \\
\| \div^{\alpha}_{\rm NL} (f, \phi)\|_{L^{1}(\R^{n})} & \le 2 \mu_{n, \alpha} \|\phi \|_{L^{\infty}(\R^{n}; \R^{n})} [f]_{W^{\alpha, 1}(\R^{n})}. 
\end{align*}
\end{lemma}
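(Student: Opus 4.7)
The plan is to mirror verbatim the argument used for $\nabla^\alpha$ in \cref{lem:Leibniz_frac_grad}. Since $f\in\Lip_c(\R^n)$ and $\phi\in\Lip_c(\R^n;\R^n)$, the product $f\phi$ belongs to $\Lip_c(\R^n;\R^n)$, so \cref{result:Sobolev_frac_enough} together with the representation~\eqref{eq:def_frac_div_3} gives
\begin{equation*}
\div^\alpha(f\phi)(x)=\mu_{n,\alpha}\int_{\R^n}\frac{(y-x)\cdot\bigl(f(y)\phi(y)-f(x)\phi(x)\bigr)}{|y-x|^{n+\alpha+1}}\,dy.
\end{equation*}
I would then algebraically decompose the numerator as
\begin{equation*}
f(y)\phi(y)-f(x)\phi(x)=f(x)\bigl(\phi(y)-\phi(x)\bigr)+\bigl(f(y)-f(x)\bigr)\bigl(\phi(y)-\phi(x)\bigr)+\bigl(f(y)-f(x)\bigr)\phi(x),
\end{equation*}
and split the integral accordingly into three pieces. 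By~\eqref{eq:def_frac_div_3} the first piece equals $f(x)\div^\alpha\phi(x)$, by~\eqref{eq:def_frac_grad_3} the third equals $\phi(x)\cdot\nabla^\alpha f(x)$, and the middle piece is exactly $\div^\alpha_{\rm NL}(f,\phi)(x)$ by~\eqref{eq:div_NL_term}. The separation of the three integrals is legitimate because $\Lip_c(\R^n)\subset w^{\alpha,1}(\R^n)$ and hence each kernel is absolutely integrable in view of \cref{result:Sobolev_frac_enough}.

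For the norm estimates, I would begin with the pointwise inequality
\begin{equation*}
|\div^\alpha_{\rm NL}(f,\phi)(x)|\le\mu_{n,\alpha}\int_{\R^n}\frac{|f(y)-f(x)|\,|\phi(y)-\phi(x)|}{|y-x|^{n+\alpha}}\,dy,
\end{equation*}
obtained by using $|(y-x)\cdot v|\le|y-x|\,|v|$ to cancel one factor of $|y-x|$ from the kernel. Integrating in $x$ over $\R^n$ and factoring the weight as $|y-x|^{-(n+\alpha)/p}\cdot|y-x|^{-(n+\alpha)/q}$, a direct application of Hölder's inequality on $\R^{n}\times\R^{n}$ yields the bound $\mu_{n,\alpha}[f]_{W^{\alpha/p,p}(\R^n)}[\phi]_{W^{\alpha/q,q}(\R^n;\R^n)}$, because $n+\alpha=n+p\cdot(\alpha/p)=n+q\cdot(\alpha/q)$ is precisely the exponent appearing in the corresponding Gagliardo seminorms. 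The two $L^\infty$ bounds follow in the limiting case $(p,q)=(\infty,1)$ and $(p,q)=(1,\infty)$, replacing either $|f(y)-f(x)|$ by $2\|f\|_{L^\infty(\R^n)}$ or $|\phi(y)-\phi(x)|$ by $2\|\phi\|_{L^\infty(\R^n;\R^n)}$, and recognising the remaining double integral as $[\phi]_{W^{\alpha,1}(\R^n;\R^n)}$ or $[f]_{W^{\alpha,1}(\R^n)}$, respectively.

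I do not anticipate a genuine obstacle: the structure of the proof is identical to that of \cref{lem:Leibniz_frac_grad}, the only difference being bookkeeping on the vector-valued factor $\phi$ and the placement of the dot product with $(y-x)$. The one point that needs slight care is ensuring absolute convergence so that the three integrals can be separated, but this is an immediate consequence of the Lipschitz and compact-support hypotheses on $f$ and $\phi$ via \cref{result:Sobolev_frac_enough}.
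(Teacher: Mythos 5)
Your proposal is correct and follows essentially the same route as the paper: the same three-term decomposition of $f(y)\phi(y)-f(x)\phi(x)$ (the paper merely performs it in two successive steps, adding and subtracting $f(y)\phi(x)$ and then splitting $f(y)=f(x)+(f(y)-f(x))$), justified via \cref{result:Sobolev_frac_enough} and \eqref{eq:def_frac_div_3}, followed by the same H\"older estimate with the weight split $|y-x|^{-(n+\alpha)/p}\,|y-x|^{-(n+\alpha)/q}$ and the same treatment of the limiting cases $(p,q)=(\infty,1)$ and $(1,\infty)$. No gaps.
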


\begin{proof}
Given $f \in\Lip_c(\R^n)$ and $\phi\in\Lip_c(\R^n;\R^n)$, by \cref{result:Sobolev_frac_enough} and by \eqref{eq:def_frac_grad_3} we have
\begin{align*} 
\div^{\alpha}(f \phi)(x) & = \mu_{n, \alpha} \int_{\R^{n}} \frac{(y - x) \cdot  (f(y) \phi(y) - f(x) \phi(x)) }{|y - x|^{n + \alpha + 1}} \, dy \\
& = \mu_{n, \alpha} \int_{\R^{n}} \frac{(y - x) \cdot (f(y) \phi(y) - f(y) \phi(x) + f(y) \phi(x) - f(x) \phi(x)  ) }{|y - x|^{n + \alpha + 1}} \, dy \\
& = \mu_{n, \alpha} \int_{\R^{n}} \frac{(y - x) \cdot ( \phi(y) - \phi(x) )  f(y) }{|y - x|^{n + \alpha + 1}} \, dy + \phi(x) \cdot \nabla^{\alpha} f(x) \\
& = \mu_{n, \alpha} \int_{\R^{n}} \frac{(y - x)  \cdot ( \phi(y) - \phi(x) ) (f(y) - f(x))}{|y - x|^{n + \alpha + 1}} \, dy + f(x) \div^{\alpha} \phi(x) + \\
& \quad + \phi(x) \cdot \nabla^{\alpha} f(x). 
\end{align*}
We also have that
\begin{align*} 
\| \div^{\alpha}_{\rm NL} (f, \phi) &\|_{L^{1}(\R^{n})}
\le \mu_{n, \alpha} \int_{\R^{n}} \int_{\R^{n}} \frac{|f(y) - f(x)|}{|x - y|^{\frac{n + \alpha}{p}}} \frac{|\phi(y) - \phi(x)|}{|y - x|^{\frac{n + \alpha}{q}}} \, dy \, dx, \\
& \le \mu_{n, \alpha} \left ( \int_{\R^{n}} \int_{\R^{n}} \frac{|f(y) - f(x)|^{p}}{|x - y|^{n + \alpha}} \, dy \, dx \right )^{\frac{1}{p}} \left ( \int_{\R^{n}} \int_{\R^{n}} \frac{|\phi(y) - \phi(x)|^{q}}{|x - y|^{n + \alpha}} \, dy \, dx \right )^{\frac{1}{q}} 
\end{align*}
for any $p, q \in (1, \infty)$ such that $\frac{1}{p} + \frac{1}{q} = 1$. The case $p = \infty$, $q = 1$ follows similarly.
\end{proof}

\begin{remark}[Extension of $\nabla^\alpha_{\mathrm{NL}}$ and $\diverg^\alpha_{\mathrm{NL}}$ to fractional Sobolev spaces] \label{rem:div_nabla_NL_extension} 
Thanks to the estimates in \cref{lem:Leibniz_frac_grad}, for all $\alpha \in (0, 1)$ the bilinear operator 
\begin{equation*}
\nabla^{\alpha}_{\rm NL} \colon
\Lip_c(\R^{n}) \times \Lip_c(\R^{n}) 
\to 
L^{1}(\R^{n}; \R^{n})
\end{equation*}
can be continuously extended to a bilinear operator 
\begin{equation*}
\nabla^{\alpha}_{\rm NL} \colon 
w^{\frac{\alpha}{p}, p}(\R^{n})\times w^{\frac{\alpha}{q}, q}(\R^{n}) 
\to 
L^{1}(\R^{n}; \R^{n})
\end{equation*}
for any $p, q \in[1,\infty]$ such that $\frac{1}{p} + \frac{1}{q} = 1$, for which we retain the same notation (we tacitly adopt the convention $w^{\frac{\alpha}{\infty},\infty}=L^\infty$). Analogously, because of the estimates in \cref{lem:Leibniz_frac_div}, the bilinear operator 
\begin{equation*}
\div^{\alpha}_{\rm NL} \colon 
\Lip_c(\R^{n}) \times \Lip_c(\R^{n}; \R^{n}) 
\to 
L^{1}(\R^{n})
\end{equation*}
can be continuously extended to a bilinear operator 
\begin{equation*}
\div^{\alpha}_{\rm NL} \colon 
w^{\frac{\alpha}{p}, p}(\R^{n}) \times w^{\frac{\alpha}{q}, q}(\R^{n}; \R^{n})
\to 
L^{1}(\R^{n})
\end{equation*} 
for any $p, q \in [1, \infty]$ such that $\frac{1}{p} + \frac{1}{q} = 1$, for which we retain the same notation. 
\end{remark}

\section{Fractional \texorpdfstring{$BV$}{BV} functions}
\label{sec:frac_BV_func}

In this section we introduce and study the fractional $BV$ space naturally induced by the operators $\nabla^\alpha$ and $\diverg^\alpha$ defined in \cref{sec:Silhavy_calculus} following De Giorgi's distributional approach. In the presentation of the results, we will frequently refer to~\cite{EG15}*{Chapter~5}.

\subsection{Definition of \texorpdfstring{$BV^\alpha(\R^n)$}{BVˆalpha(Rˆn)} and Structure Theorem}

In analogy with the classical case (see~\cite{EG15}*{Definition~5.1} for instance), we start with the following definition. 

\begin{definition}[$BV^\alpha(\R^n)$ space]\label{def:BV_alpha_space}
Let $\alpha\in(0,1)$. A function $f\in L^1(\R^n)$ belongs to the space $BV^\alpha(\R^n)$ if 
\begin{equation*}
\sup\set*{\int_{\R^n} f\,\div^\alpha\phi\ dx : \phi\in C^\infty_c(\R^n;\R^n),\ \|\phi\|_{L^\infty(\R^n;\R^n)}\le1}<+\infty.
\end{equation*}
\end{definition}

We can now state the following fundamental result relating non-local distributional gradients of $BV^\alpha$ functions to vector valued Radon measures. 

\begin{theorem}[Structure Theorem for $BV^\alpha$ functions]\label{th:structure_BV_alpha}
Let $\alpha\in(0,1)$ and $f \in L^{1}(\R^{n})$. Then, $f \in BV^\alpha(\R^n)$ if and only if there exists a finite vector valued Radon measure $D^{\alpha} f \in \M(\R^n; \R^{n})$ such that 
\begin{equation}\label{eq:BV_alpha_duality} 
\int_{\R^{n}} f\, \div^{\alpha} \varphi \, dx = - \int_{\R^n} \varphi \cdot d D^{\alpha} f 
\end{equation}
for all $\varphi \in C^{\infty}_{c}(\R^n; \R^{n})$. In addition, for any open set $U\subset\R^n$ it holds
\begin{equation}\label{eq:BV_alpha_weak_grad_tot_var}
|D^{\alpha} f|(U) = \sup\set*{\int_{\R^n}f\,\div^\alpha\phi\ dx : \phi\in C^\infty_c(U;\R^n),\ \|\phi\|_{L^\infty(U;\R^n)}\le1}.
\end{equation}
\end{theorem}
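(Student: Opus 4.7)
My plan is to prove the theorem as a direct application of the vector-valued Riesz Representation Theorem, so the structure mirrors the classical $BV$ case (compare \cite{EG15}*{Theorem 5.1}).

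\textbf{Forward implication.} Suppose $f \in BV^\alpha(\R^n)$ in the sense of \cref{def:BV_alpha_space}. I would define the linear functional
\begin{equation*}
L\colon C^\infty_c(\R^n;\R^n) \longto \R,
\qquad
L(\phi) := -\int_{\R^n} f \,\div^\alpha \phi \, dx,
\end{equation*}
which is well defined since $\div^\alpha \phi \in L^\infty(\R^n)$ by \cref{prop:frac_div_repr} and $f \in L^1(\R^n)$. By the $BV^\alpha$ hypothesis, $L$ is bounded with respect to the uniform norm on $C^\infty_c(\R^n;\R^n)$. Since $C^\infty_c(\R^n;\R^n)$ is dense in $C_0(\R^n;\R^n)$ with respect to the sup norm, $L$ extends uniquely to a bounded linear functional $\widetilde{L}$ on $C_0(\R^n;\R^n)$. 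The vector-valued Riesz Representation Theorem then yields a unique finite Radon measure $D^\alpha f \in \M(\R^n;\R^n)$ such that
\begin{equation*}
\widetilde{L}(\phi) = \int_{\R^n} \phi \cdot dD^\alpha f \qquad \forall \phi \in C_0(\R^n;\R^n),
\end{equation*}
which specialises to \eqref{eq:BV_alpha_duality} on test vector fields.

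\textbf{Reverse implication.} If such a measure $D^\alpha f$ exists, then for any $\phi \in C^\infty_c(\R^n;\R^n)$ with $\|\phi\|_{L^\infty} \le 1$ we immediately estimate
\begin{equation*}
\abs*{\int_{\R^n} f\, \div^\alpha \phi\, dx} = \abs*{\int_{\R^n} \phi \cdot dD^\alpha f} \le |D^\alpha f|(\R^n),
\end{equation*}
showing $f \in BV^\alpha(\R^n)$.

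\textbf{Localised variation formula.} For \eqref{eq:BV_alpha_weak_grad_tot_var}, the inequality ``$\ge$'' follows immediately by combining \eqref{eq:BV_alpha_duality} with the usual estimate $\int \phi \cdot dD^\alpha f \le |D^\alpha f|(U)$ when $\supp \phi \subset U$ and $\|\phi\|_\infty \le 1$. For ``$\le$'', I would invoke the standard characterisation of the total variation of a vector-valued Radon measure, namely $|D^\alpha f|(U) = \sup\set{\int_U \phi \cdot dD^\alpha f : \phi \in C_c(U;\R^n),\, \|\phi\|_{L^\infty}\le 1}$, and then approximate any such continuous $\phi$ in the sup norm (and with support in a slightly larger compact subset of $U$) by a sequence $\phi_k \in C^\infty_c(U;\R^n)$ obtained via mollification and cutoff, with $\|\phi_k\|_{L^\infty} \le 1$. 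Passing to the limit via \eqref{eq:BV_alpha_duality} yields the claim.

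\textbf{Main obstacle.} The conceptual content is standard, and the only point that needs care is ensuring that the extension of $L$ from $C^\infty_c$ to $C_0$ is consistent and that the resulting Radon measure is \emph{finite}; both boil down to the uniform bound provided by \cref{def:BV_alpha_space}. A secondary subtlety lies in the approximation step for \eqref{eq:BV_alpha_weak_grad_tot_var}: one must arrange mollified vector fields to remain compactly supported inside $U$ without inflating the sup norm, which is achieved by composing mollification with a cutoff supported in a slightly smaller open set and letting the parameters tend to zero. Beyond these routine points, nothing genuinely hard arises, since all the non-local analytic work (boundedness of $\div^\alpha \phi$, duality) has already been carried out in \cref{sec:Silhavy_calculus}.
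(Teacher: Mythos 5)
Your proposal is correct and follows essentially the same route as the paper: both define the functional $L(\phi)=-\int_{\R^n} f\,\div^\alpha\phi\,dx$, use the boundedness coming from \cref{def:BV_alpha_space} together with \cref{prop:frac_div_repr}, extend by density to continuous compactly supported (resp.\ $C_0$) vector fields, and conclude via the Riesz Representation Theorem. The only cosmetic difference is that the paper works with local bounds $C(U)$ and the $C_c$-version of Riesz (as in \cite{EG15}*{Theorem~5.1}), so that \eqref{eq:BV_alpha_weak_grad_tot_var} comes packaged with the representation, whereas you recover it afterwards by a routine mollification-and-cutoff approximation, which is fine.
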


\begin{proof} 
If $f \in L^{1}(\R^{n})$ and if there exists a finite vector valued Radon measure $D^{\alpha} f \in \M(\R^n; \R^{n})$ such that \eqref{eq:BV_alpha_duality} holds, then $f \in BV^{\alpha}(\R^{n})$ by \cref{def:BV_alpha_space}.

If $f \in BV^{\alpha}(\R^{n})$, then the proof is identical to the one of~\cite{EG15}*{Theorem~5.1}, with minor modifications. Define the linear functional $L\colon C_{c}^\infty(\R^n; \R^{n})\to\R$ setting
\begin{equation*} 
L(\phi) := - \int_{\R^{n}} f\, \div^{\alpha}\phi \, dx
\qquad
\forall\phi \in C^{\infty}_{c}(\R^n; \R^{n}).
\end{equation*}
Note that $L$ is well defined thanks to \cref{prop:frac_div_repr}. Since $f\in BV^\alpha(\R^n)$, we have
\begin{equation*}
C(U):=\sup\set*{L(\phi) : \phi\in C^\infty_c(U;\R^n),\ \|\phi\|_{L^\infty(U;\R^n)}\le 1}<+\infty
\end{equation*}
for each open set $U\subset\R^n$, so that
\begin{equation*} 
\left | L(\phi) \right | \le C(U) \|\phi\|_{L^\infty(U;\R^n)}
\qquad
\forall \phi \in C^{\infty}_{c}(U; \R^{n}).
\end{equation*}
Thus, by the density of $C^{\infty}_{c}(\R^n; \R^{n})$ in $C_{c}(\R^n; \R^{n})$, the functional $L$ can be uniquely extended to a continuous linear functional $\tilde{L}\colon C_{c}(\R^n; \R^{n})\to\R$ and the conclusion follows by Riesz's Representation Theorem.
\end{proof}

\subsection{Lower semicontinuity of fractional variation}

Similarly to the classical case, the \emph{fractional variation measure} given by \cref{th:structure_BV_alpha} in~\eqref{eq:BV_alpha_weak_grad_tot_var} is lower semicontinuous with respect to $L^1$-convergence.

\begin{proposition}[Lower semicontinuity of fractional variation measure]\label{result:frac_var_meas_is_lsc}
Let $\alpha\in(0,1)$. If $(f_k)_{k\in\N}\subset BV^\alpha(\R^n)$ and $f_k\to f$ in $L^1(\R^n)$ as $k\to+\infty$, then $f\in BV^\alpha(\R^n)$ with
\begin{equation*}
|D^\alpha f|(U)\le\liminf_{k\to+\infty}|D^\alpha f_k|(U)
\end{equation*}
for any open set $U\subset\R^n$.
\end{proposition}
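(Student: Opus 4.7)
The plan is to mimic the classical lower semicontinuity argument for $BV$, using the variational characterisation of $|D^\alpha f|(U)$ given in \eqref{eq:BV_alpha_weak_grad_tot_var} and the fact that $\div^\alpha \phi$ is bounded for compactly supported smooth test fields.

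The key observation is the $L^1$-$L^\infty$ duality. Fix an open set $U \subset \R^n$ and a test field $\phi \in C^\infty_c(U;\R^n)$ with $\|\phi\|_{L^\infty(U;\R^n)} \le 1$. By \cref{prop:frac_div_repr}, the fractional divergence $\div^\alpha \phi$ belongs to $L^\infty(\R^n)$, with $\|\div^\alpha \phi\|_{L^\infty(\R^n)} \le C_{n,\alpha,U}\|\div \phi\|_{L^\infty(\R^n)}$. Since $f_k \to f$ in $L^1(\R^n)$, we can pass to the limit in the duality pairing:
\begin{equation*}
\int_{\R^n} f\,\div^\alpha \phi\,dx = \lim_{k \to +\infty} \int_{\R^n} f_k\,\div^\alpha \phi\,dx.
\end{equation*}
By \cref{th:structure_BV_alpha} applied to each $f_k \in BV^\alpha(\R^n)$, the right-hand side is bounded above (in absolute value) by $|D^\alpha f_k|(U)$ for every $k$, so
\begin{equation*}
\int_{\R^n} f\,\div^\alpha \phi\,dx = \lim_{k \to +\infty} \int_{\R^n} f_k\,\div^\alpha \phi\,dx \le \liminf_{k \to +\infty} |D^\alpha f_k|(U).
\end{equation*}

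The proof then concludes by taking the supremum of the left-hand side over all admissible $\phi$, invoking the characterisation \eqref{eq:BV_alpha_weak_grad_tot_var}. If $\liminf_{k\to +\infty}|D^\alpha f_k|(\R^n) = +\infty$, the stated inequality is trivial; if instead this liminf is finite, then taking $U = \R^n$ above shows that the supremum appearing in \cref{def:BV_alpha_space} is finite, so $f \in BV^\alpha(\R^n)$ and \cref{th:structure_BV_alpha} provides the measure $D^\alpha f$, after which the same supremum characterisation applied to arbitrary open $U$ yields the claimed bound.

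There is no real obstacle here; the entire argument rests on the $L^\infty$-bound for $\div^\alpha \phi$ supplied by \cref{prop:frac_div_repr}, which is what makes $L^1$-convergence of $f_k$ sufficient to pass to the limit in the duality. The only mild subtlety is remembering that $\phi$ is compactly supported in $U$ but $\div^\alpha \phi$ is \emph{not}, owing to the non-local nature of $\div^\alpha$; however, this plays no role in the estimate because the integrand $f_k \, \div^\alpha \phi$ is globally integrable thanks to $f_k \in L^1(\R^n)$ and $\div^\alpha \phi \in L^\infty(\R^n)$.
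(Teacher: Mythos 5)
Your proof is correct and follows essentially the same route as the paper: test against $\phi\in C^\infty_c(U;\R^n)$ with $\|\phi\|_{L^\infty}\le 1$, use the $L^\infty$-bound on $\div^\alpha\phi$ from \cref{prop:frac_div_repr} to pass to the limit via $L^1$-convergence, bound the pairing by $|D^\alpha f_k|(U)$ through \cref{th:structure_BV_alpha} and \eqref{eq:BV_alpha_weak_grad_tot_var}, and take the supremum over $\phi$. Your explicit handling of the case $\liminf_k|D^\alpha f_k|(\R^n)=+\infty$ is in fact slightly more careful than the paper's own write-up.
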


\begin{proof}
Let $\phi\in C^\infty_c(\R^n;\R^n)$ with $\|\phi\|_{L^\infty(\R^n; \R^{n})}\le1$. Then $\div^\alpha\phi\in L^\infty(\R^n)$ by \cref{prop:frac_div_repr} and so we can estimate
\begin{equation*}
\begin{split}
\int_{\R^n}f\,\div^\alpha\phi\ dx
=\lim_{k\to+\infty}\int_{\R^n}f_k\,\div^\alpha\phi\ dx
=-\lim_{k\to+\infty}\int_{\R^n}\phi\ dD^\alpha f_k
\le\liminf_{k\to+\infty} |D^\alpha f_k|(\R^n).
\end{split}
\end{equation*}
This shows that 
\begin{equation*} 
|D^{\alpha} f|(\R^{n}) \le \liminf_{k \to + \infty} |D^\alpha f_k|(\R^n),
\end{equation*}
thanks to \cref{th:structure_BV_alpha}. Finally, if $U$ is an open set in $\R^{n}$, it is enough to take $\phi \in C^\infty_c(U;\R^n)$ and to argue as above, applying \eqref{eq:BV_alpha_weak_grad_tot_var}.
\end{proof}

From \cref{result:frac_var_meas_is_lsc} we immediately deduce the following result, whose standard proof is left to the reader.

\begin{corollary}[$BV^\alpha$ is a Banach space]
Let $\alpha\in(0,1)$. The linear space $BV^{\alpha}(\R^n)$ equipped with the norm
\begin{equation*}
\|f\|_{BV^\alpha(\R^n)}:=\|f\|_{L^1(\R^n)}+|D^\alpha f|(\R^n),
\qquad
f\in BV^\alpha(\R^n),
\end{equation*}
where $D^\alpha f$ is given by \cref{th:structure_BV_alpha}, is a Banach space.
\end{corollary}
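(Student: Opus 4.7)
The plan is to verify the norm axioms and then establish completeness, using the lower semicontinuity result (\cref{result:frac_var_meas_is_lsc}) as the main technical tool.

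First I would check that $\|\cdot\|_{BV^\alpha(\R^n)}$ is indeed a norm. Positivity and homogeneity are immediate from \cref{th:structure_BV_alpha}, and the triangle inequality follows because the map $f \mapsto D^\alpha f$ given by~\eqref{eq:BV_alpha_duality} is linear (two measures $D^\alpha f_1$ and $D^\alpha f_2$ satisfying~\eqref{eq:BV_alpha_duality} for $f_1, f_2$ together yield $D^\alpha f_1 + D^\alpha f_2$ as the measure associated to $f_1 + f_2$, by linearity of the duality integral in $f$). The fact that $\|f\|_{BV^\alpha(\R^n)} = 0$ forces $f = 0$ a.e.\ is inherited from the $L^1$-norm.

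Next I would prove completeness. Let $(f_k)_{k \in \N} \subset BV^\alpha(\R^n)$ be a Cauchy sequence with respect to $\|\cdot\|_{BV^\alpha(\R^n)}$. Since $\|\cdot\|_{L^1(\R^n)} \le \|\cdot\|_{BV^\alpha(\R^n)}$, the sequence is Cauchy in $L^1(\R^n)$, so by completeness of $L^1$ there exists $f \in L^1(\R^n)$ with $f_k \to f$ in $L^1(\R^n)$. Fix $\eps > 0$ and choose $N \in \N$ such that $\|f_k - f_m\|_{BV^\alpha(\R^n)} < \eps$ for all $k, m \ge N$. For fixed $k \ge N$, the sequence $(f_k - f_m)_{m \in \N}$ converges in $L^1(\R^n)$ to $f_k - f$ as $m \to +\infty$, and each $f_k - f_m$ lies in $BV^\alpha(\R^n)$ by linearity. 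Applying \cref{result:frac_var_meas_is_lsc} with $U = \R^n$ yields
\begin{equation*}
|D^\alpha(f_k - f)|(\R^n) \le \liminf_{m \to +\infty} |D^\alpha(f_k - f_m)|(\R^n) \le \eps.
\end{equation*}
In particular $f_k - f \in BV^\alpha(\R^n)$, hence $f = f_k - (f_k - f) \in BV^\alpha(\R^n)$, and combining with $\|f_k - f\|_{L^1(\R^n)} \to 0$ gives $\|f_k - f\|_{BV^\alpha(\R^n)} \to 0$ as $k \to +\infty$.

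There is no real obstacle here; the only point requiring care is that one must apply lower semicontinuity to the \emph{differences} $f_k - f_m$ rather than to $(f_k)$ itself, and then use the Cauchy property to control the liminf uniformly for $k \ge N$. This is the standard completeness argument, made possible by the fact that \cref{th:structure_BV_alpha} identifies $|D^\alpha \cdot|(\R^n)$ with the total variation of a Radon measure, so that the norm inherits the good lower semicontinuity property under $L^1$-convergence.
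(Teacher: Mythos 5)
Your proof is correct and follows exactly the route the paper intends: the paper leaves the proof to the reader, noting only that it is a standard consequence of \cref{result:frac_var_meas_is_lsc}, and your argument (Cauchy in $L^1$, pass to the limit, then apply lower semicontinuity to the differences $f_k - f_m$ to control $|D^\alpha(f_k-f)|(\R^n)$) is precisely that standard argument.
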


\subsection{Approximation by smooth functions}

Here and in the following, we let $\rho\in C^\infty_c(\R^n)$ be a function such that
\begin{equation}\label{eq:def_rho}
\supp\rho\subset B_1,
\qquad
\rho\ge0,
\qquad
\int_{\R^n}\rho(x)\ dx=1,
\end{equation}
see~\cite{EG15}*{Section~4.2.1} for an example. We thus let $(\rho_\eps)_{\eps>0}\subset C^\infty_c(\R^n)$ be defined as
\begin{equation}\label{eq:def_rho_eps}
\rho_\eps(x):=\frac{1}{\eps^n}\rho\left(\frac{x}{\eps}\right)
\quad
\forall x\in\R^n.
\end{equation}  
We call $(\rho_\eps)_{\eps>0}$ a family of \emph{standard mollifiers}. We have the following result.

\begin{lemma}[Convolution with standard mollifiers] \label{result:commutation_mollifier} 
Let $\alpha\in(0,1)$ and let $(\rho_\eps)_{\eps>0}$ as in~\eqref{eq:def_rho_eps}. If $\phi \in \Lip_{c}(\R^{n}; \R^{n})$, then
\begin{equation} \label{eq:commutation_mollifier_frac_div} 
\div^{\alpha} (\rho_{\eps} \ast \phi) = \rho_{\eps} \ast \div^{\alpha} \phi 
\end{equation}
for any $\eps>0$. Thus, if $f \in BV^{\alpha}(\R^n)$, then
\begin{equation} \label{eq:commutation_mollifier_frac_grad} 
D^{\alpha} (\rho_{\eps} \ast f) = (\rho_{\eps} \ast D^{\alpha} f) \Leb{n}
\end{equation}
for any $\eps>0$, and
\begin{equation} \label{eq:commutation_mollifier_weak_conv} 
D^{\alpha} (\rho_{\eps} \ast f) \weakto D^{\alpha} f 
\end{equation}
in $\M(\R^n; \R^{n})$ as $\eps\to0$.
\end{lemma}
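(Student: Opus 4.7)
The plan is to establish the three assertions in sequence, deriving each from the previous one.

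\textbf{Step 1: pointwise commutation for $\phi\in\Lip_c$.} Since $\phi\in\Lip_c(\R^n;\R^n)$, the convolution $\rho_\eps\ast\phi$ still lies in $\Lip_c(\R^n;\R^n)$, so $\div^\alpha(\rho_\eps\ast\phi)$ is well defined by~\eqref{eq:def_frac_div_3}; moreover $\div^\alpha\phi\in L^1\cap L^\infty$ by \cref{prop:frac_div_repr}, so the right-hand side of~\eqref{eq:commutation_mollifier_frac_div} is well defined as well. Substituting $(\rho_\eps\ast\phi)(y)=\int\rho_\eps(z)\phi(y-z)\,dz$ into the representation~\eqref{eq:def_frac_div_3} for $\div^\alpha(\rho_\eps\ast\phi)(x)$, one obtains a double integral in $(y,z)$ to which I would apply Fubini's Theorem. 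Absolute convergence is justified by the bound $|\phi(y-z)-\phi(x-z)|\le\min\set*{\Lip(\phi)|y-x|,\,2\|\phi\|_\infty}$ uniformly in $z$, combined with the splitting argument of~\eqref{eq:Lip_nabla_estim_1}--\eqref{eq:Lip_nabla_estim_2}. After interchanging, the inner $y$-integral becomes $\div^\alpha[\phi(\cdot-z)](x)$, and a change of variable $y\mapsto y+z$ (exploiting translation invariance of the kernel) rewrites it as $\div^\alpha\phi(x-z)$. Integration in $z$ against $\rho_\eps$ yields~\eqref{eq:commutation_mollifier_frac_div}.

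\textbf{Step 2: measure identity for $f\in BV^\alpha(\R^n)$.} Set $\tilde\rho_\eps(x):=\rho_\eps(-x)$, which satisfies the same normalisation and support assumptions as~$\rho_\eps$. For $\phi\in C^\infty_c(\R^n;\R^n)$, Fubini's Theorem gives
\begin{equation*}
\int_{\R^n}(\rho_\eps\ast f)\,\div^\alpha\phi\,dx
=\int_{\R^n} f\,(\tilde\rho_\eps\ast\div^\alpha\phi)\,dx
=\int_{\R^n} f\,\div^\alpha(\tilde\rho_\eps\ast\phi)\,dx,
\end{equation*}
where in the last equality I apply Step~1 with $\tilde\rho_\eps$ in place of $\rho_\eps$. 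Since $\tilde\rho_\eps\ast\phi\in C^\infty_c(\R^n;\R^n)$, the duality~\eqref{eq:BV_alpha_duality} rewrites this as $-\int_{\R^n}(\tilde\rho_\eps\ast\phi)\cdot dD^\alpha f$; a further application of Fubini (now for measures, legitimate because $|D^\alpha f|(\R^n)<+\infty$ and $\phi$ is bounded with compact support) moves $\tilde\rho_\eps$ onto $D^\alpha f$, giving $-\int_{\R^n}\phi\cdot(\rho_\eps\ast D^\alpha f)\,dx$. By the characterisation in \cref{th:structure_BV_alpha}, this proves that $\rho_\eps\ast f\in BV^\alpha(\R^n)$ with $D^\alpha(\rho_\eps\ast f)=(\rho_\eps\ast D^\alpha f)\Leb{n}$.

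\textbf{Step 3: weak$^*$ convergence.} Using~\eqref{eq:commutation_mollifier_frac_grad}, for any $\phi\in C_c(\R^n;\R^n)$ Fubini gives
\begin{equation*}
\int_{\R^n}\phi\cdot dD^\alpha(\rho_\eps\ast f)=\int_{\R^n}\phi\cdot(\rho_\eps\ast D^\alpha f)\,dx=\int_{\R^n}(\tilde\rho_\eps\ast\phi)\cdot dD^\alpha f.
\end{equation*}
Since $\tilde\rho_\eps\ast\phi\to\phi$ uniformly as $\eps\to0$ for $\phi\in C_c(\R^n;\R^n)$ and $|D^\alpha f|(\R^n)<+\infty$, the right-hand side tends to $\int_{\R^n}\phi\cdot dD^\alpha f$, yielding~\eqref{eq:commutation_mollifier_weak_conv}.

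The main technical obstacle lies in Step~1: the Fubini interchange demands a careful majorisation of the iterated integral in spite of the singular kernel, and it must be carried out with enough uniformity in $z$ so that the outer integration against $\rho_\eps$ causes no trouble. A secondary subtlety in Step~2 is that the paper does not assume $\rho$ to be even, so the reflected mollifier $\tilde\rho_\eps$ appears in the duality manipulations; this is harmless because Step~1 applies equally to $\tilde\rho_\eps$.
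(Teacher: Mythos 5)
Your proof is correct, but it takes a partly different route from the paper's. For the commutation identity~\eqref{eq:commutation_mollifier_frac_div} the paper does not touch the singular integral directly: it invokes the representation formula~\eqref{eq:frac_div_repres}, writing $\div^\alpha\phi=K_{n,\alpha}\ast\div\phi$ with $K_{n,\alpha}(x)=\tfrac{\mu_{n,\alpha}}{n+\alpha-1}|x|^{1-n-\alpha}$, and then concludes by associativity and commutativity of convolution, $K_{n,\alpha}\ast\div(\rho_\eps\ast\phi)=\rho_\eps\ast(K_{n,\alpha}\ast\div\phi)$; your direct Fubini interchange on the definition~\eqref{eq:def_frac_div_3}, with the uniform-in-$z$ majorant $\min\set*{\Lip(\phi)|y-x|,2\|\phi\|_\infty}$ and the splitting of~\eqref{eq:Lip_nabla_estim_1}--\eqref{eq:Lip_nabla_estim_2}, is a more self-contained alternative that bypasses \cref{prop:frac_div_repr} at the cost of a slightly longer justification of absolute convergence. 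Your Step~2 is essentially the paper's duality computation, but you are in fact more scrupulous than the paper on one point: since $\rho$ in~\eqref{eq:def_rho} is not assumed even, moving the mollifier across the pairing produces the reflected kernel $\tilde\rho_\eps$, which you track explicitly, whereas the paper writes $\int(\rho_\eps\ast f)\,\div^\alpha\phi\,dx=\int f\,(\rho_\eps\ast\div^\alpha\phi)\,dx$ without comment (harmless, but strictly an abuse unless $\rho$ is even). For Step~3 the paper simply cites standard properties of mollification of Radon measures (\cite{AFP00}*{Theorem~2.2}), while you prove the weak-star convergence directly from~\eqref{eq:commutation_mollifier_frac_grad} and the uniform convergence $\tilde\rho_\eps\ast\phi\to\phi$ on $C_c(\R^n;\R^n)$; both are fine, and your version makes the argument independent of that reference.
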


\begin{proof} 
Let $\phi \in \Lip_{c}(\R^n; \R^{n})$ and $x \in \R^{n}$. Recalling~\eqref{eq:frac_div_repres}, we can write
\begin{equation*}
\div^\alpha\phi=K_{n,\alpha}*\div\phi,
\end{equation*}
where
\begin{equation*}
K_{n,\alpha}(x)=\frac{\mu_{n,\alpha}}{n+\alpha-1}\,|x|^{1-n-\alpha},
\quad
x\in\R^n \setminus \{0\}.
\end{equation*}
Since $\rho_{\eps}*\phi\in \Lip_{c}(\R^n; \R^{n})$, we can compute
\begin{equation*}
\begin{split}
\div^\alpha(\rho_{\eps}*\phi)
&=K_{n,\alpha}*\div(\rho_{\eps}*\phi)\\
&=K_{n,\alpha}*(\rho_{\eps}*\div\phi)\\
&=\rho_{\eps}*(K_{n,\alpha}*\div\phi)\\
&=\rho_{\eps}*\div^\alpha\phi
\end{split}
\end{equation*} 
and~\eqref{eq:commutation_mollifier_frac_div} follows.
Now let $f \in BV^{\alpha}(\R^n)$ and $\phi \in C^{\infty}_{c}(\R^n; \R^{n})$. By~\eqref{eq:BV_alpha_duality} and~\eqref{eq:commutation_mollifier_frac_div}, for all $\eps>0$ we can compute
\begin{align*} 
- \int_{\R^{n}} (\rho_{\eps} \ast f)\, \div^{\alpha} \phi \, dx 
&= - \int_{\R^{n}} f\, (\rho_{\eps} \ast \div^{\alpha} \phi) \, dx \\
& = - \int_{\R^{n}} f \,\div^{\alpha} (\rho_{\eps} \ast \phi) \, dx \\
&= \int_{\R^{n}} (\rho_{\eps} \ast \phi) \, d D^{\alpha} f \\
& = \int_{\R^{n}} \phi \cdot (\rho_{\eps} \ast D^{\alpha} f) \, dx, 
\end{align*}
proving~\eqref{eq:commutation_mollifier_frac_grad}. The convergence in~\eqref{eq:commutation_mollifier_weak_conv} thus follows from standard properties of the mollification of Radon measures, see~\cite{AFP00}*{Theorem 2.2} for instance. 
\end{proof}

As an immediate application of \cref{result:commutation_mollifier}, we can prove that a function in $BV^\alpha(\R^n)$ can be tested against the fractional divergence of any $\Lip_c$-regular vector field.

\begin{proposition}[$\Lip_c$-regular test] \label{result:Lip_test} 
Let $\alpha \in (0, 1)$. If $f \in BV^{\alpha}(\R^{n})$, then~\eqref{eq:BV_alpha_duality} holds for all $\phi \in \Lip_{c}(\R^{n}; \R^{n})$.
\end{proposition}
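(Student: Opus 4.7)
The plan is to obtain the identity by smooth approximation of $\phi \in \Lip_c(\R^n;\R^n)$ via mollification, passing to the limit in the duality relation~\eqref{eq:BV_alpha_duality} (which we know for $C^\infty_c$-test vector fields) by exploiting that $\div^\alpha \phi$ is bounded thanks to \cref{prop:frac_div_repr} and that $D^\alpha f$ is a finite measure.

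More precisely, let $(\rho_\eps)_{\eps>0}$ be a family of standard mollifiers as in~\eqref{eq:def_rho_eps} and set $\phi_\eps := \rho_\eps \ast \phi$. For every $\eps>0$ the convolution $\phi_\eps$ belongs to $C^\infty_c(\R^n;\R^n)$, since its support is contained in $\supp(\phi) + \closure{B}_\eps$, which is a fixed compact set for $\eps$ small. Consequently, \cref{th:structure_BV_alpha} applies to give
\begin{equation*}
\int_{\R^n} f\,\div^\alpha \phi_\eps \, dx = - \int_{\R^n} \phi_\eps \cdot dD^\alpha f
\end{equation*}
for all $\eps>0$ sufficiently small. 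The goal is now to pass to the limit as $\eps \to 0^+$ on both sides.

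For the right-hand side, since $\phi \in \Lip_c(\R^n;\R^n)$ is uniformly continuous, $\phi_\eps \to \phi$ uniformly, and the vector fields $\phi_\eps$ are uniformly bounded by $\|\phi\|_{L^\infty(\R^n;\R^n)}$ and supported in a common compact set. Because $|D^\alpha f|$ is a finite Radon measure on~$\R^n$, Lebesgue's Dominated Convergence Theorem gives $\int_{\R^n} \phi_\eps \cdot dD^\alpha f \to \int_{\R^n} \phi \cdot dD^\alpha f$. For the left-hand side, \cref{result:commutation_mollifier} yields $\div^\alpha \phi_\eps = \rho_\eps \ast \div^\alpha \phi$. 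By \cref{prop:frac_div_repr}, $\div^\alpha \phi \in L^1(\R^n) \cap L^\infty(\R^n)$, so $\rho_\eps \ast \div^\alpha \phi \to \div^\alpha \phi$ pointwise $\Leb{n}$-a.e.\ (at every Lebesgue point of $\div^\alpha \phi$), with the uniform bound $\|\rho_\eps \ast \div^\alpha \phi\|_{L^\infty(\R^n)} \le \|\div^\alpha \phi\|_{L^\infty(\R^n)}$. Since $f \in L^1(\R^n)$, the dominant $\|\div^\alpha \phi\|_{L^\infty(\R^n)}\,|f|$ is integrable and another application of Dominated Convergence yields
\begin{equation*}
\int_{\R^n} f\,\div^\alpha \phi_\eps \, dx \;=\; \int_{\R^n} f\,(\rho_\eps \ast \div^\alpha \phi) \, dx \;\longrightarrow\; \int_{\R^n} f\,\div^\alpha \phi \, dx.
\end{equation*}
Combining the two limits gives~\eqref{eq:BV_alpha_duality} for $\phi$, as desired.

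The only subtle point is ensuring integrable domination on the left-hand side, which is handled precisely by the $L^\infty$-bound in~\eqref{eq:frac_div_repr_Lip_estimate}; without \cref{prop:frac_div_repr}, a direct estimate would only give $\div^\alpha \phi \in L^1(\R^n)$, which would not be enough to dominate against the $L^1$-function $f$. Everything else reduces to standard properties of mollification together with the already established commutation identity~\eqref{eq:commutation_mollifier_frac_div}.
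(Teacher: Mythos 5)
Your proof is correct and follows essentially the same route as the paper: mollify the test field, apply the $C^\infty_c$ duality to $\rho_\eps\ast\phi$, use the commutation identity~\eqref{eq:commutation_mollifier_frac_div} together with the $L^\infty$-bound from \cref{prop:frac_div_repr}, and pass to the limit. The only cosmetic difference is that the paper shifts the mollification onto $f$ and uses $\rho_\eps\ast f\to f$ in $L^1(\R^n)$, while you keep it on $\div^\alpha\phi$ and invoke dominated convergence via Lebesgue points; both limits are equally valid.
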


\begin{proof} 
Fix $\phi\in\Lip_c(\R^n;\R^n)$ and let $(\rho_\eps)_{\eps>0}\subset C^\infty_c(\R^n)$ be as in~\eqref{eq:def_rho_eps}. Then $\rho_\eps*\phi\in C^\infty_c(\R^n;\R^n)$ and so, by~\cref{result:commutation_mollifier} and~\eqref{eq:BV_alpha_duality}, we have
\begin{equation}\label{eq:duality_Lip_eps} 
\int_{\R^{n}} (\rho_{\eps} \ast f)\, \div^{\alpha}  \phi \, dx 
=\int_{\R^{n}} f\, \div^{\alpha} (\rho_{\eps} \ast \phi) \, dx 
= - \int_{\R^{n}} (\rho_{\eps} \ast \phi) \cdot d D^{\alpha} f. 
\end{equation}
Since $\rho_{\eps} *\phi \to \phi$ uniformly and $\rho_{\eps} \ast f\to f$ in $L^1(\R^n)$ as $\eps\to0$, and $\diverg^\alpha\phi\in L^\infty(\R^n)$ by \cref{prop:frac_div_repr}, we can pass to the limit as $\eps\to0$ in~\eqref{eq:duality_Lip_eps} getting
\begin{equation*}
\int_{\R^{n}} f\, \div^{\alpha} \phi \, dx 
= - \int_{\R^{n}} \phi \cdot d D^{\alpha} f
\end{equation*}
for any $\phi\in\Lip_c(\R^n;\R^n)$.
\end{proof}

As in the classical case, we can prove the density of $C^\infty(\R^n)\cap BV^\alpha(\R^n)$ in $BV^\alpha(\R^n)$.

\begin{theorem}[Approximation by $C^\infty\cap BV^\alpha$ functions]\label{result:approx_by_smooth_BV}
Let $\alpha\in(0,1)$. If $f\in BV^\alpha(\R^n)$, then there exists $(f_k)_{k\in\N}\subset BV^\alpha(\R^n)\cap C^\infty(\R^n)$ such that
\begin{enumerate}[\indent(i)]
\item $f_{k} \to f$ in $L^{1}(\R^{n})$;
\item $|D^{\alpha} f_{k}|(\R^{n}) \to |D^{\alpha} f|(\R^{n})$.
\end{enumerate}
\end{theorem}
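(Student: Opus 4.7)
The plan is to take $f_k := \rho_{\eps_k} \ast f$ for some sequence $\eps_k \to 0^+$, where $(\rho_\eps)_{\eps>0}$ is the family of standard mollifiers from~\eqref{eq:def_rho_eps}. Because $\rho_{\eps_k} \in C^\infty_c(\R^n)$, it is immediate that $f_k \in C^\infty(\R^n) \cap L^1(\R^n)$ and, by standard properties of mollification (e.g.\ \cite{EG15}*{Theorem~4.1}), $f_k \to f$ in $L^1(\R^n)$ as $k \to +\infty$, which already settles~(i).

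Next, I would invoke \cref{result:commutation_mollifier}, specifically the identity~\eqref{eq:commutation_mollifier_frac_grad}, to deduce that
\begin{equation*}
D^{\alpha} f_k = (\rho_{\eps_k} \ast D^{\alpha} f)\, \Leb{n}.
\end{equation*}
This shows on one hand that $f_k \in BV^\alpha(\R^n)$, and on the other hand it yields the key estimate
\begin{equation*}
|D^{\alpha} f_k|(\R^n) = \|\rho_{\eps_k} \ast D^{\alpha} f\|_{L^1(\R^n;\R^n)} \le |D^{\alpha} f|(\R^n),
\end{equation*}
the last inequality being the classical bound on the $L^1$-norm of the convolution of a finite vector valued Radon measure with a non-negative unit-mass kernel (an application of Fubini's Theorem to $|D^\alpha f|$). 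Passing to the $\limsup$ as $k \to +\infty$ thus gives $\limsup_k |D^{\alpha} f_k|(\R^n) \le |D^{\alpha} f|(\R^n)$.

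For the reverse inequality, I would simply apply the lower semicontinuity provided by \cref{result:frac_var_meas_is_lsc}, which together with the $L^1$-convergence established in the first step yields $|D^{\alpha} f|(\R^n) \le \liminf_k |D^{\alpha} f_k|(\R^n)$. Combining the two bounds produces~(ii). The argument is essentially routine given the machinery already assembled: the only mildly delicate point is the convolution estimate for the total variation of $D^\alpha f$, but this is standard and is in fact implicit in the weak-$\ast$ convergence statement~\eqref{eq:commutation_mollifier_weak_conv} of \cref{result:commutation_mollifier}, so no genuine obstacle is expected.
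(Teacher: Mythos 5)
Your proposal is correct and follows essentially the same route as the paper: mollify, use \cref{result:commutation_mollifier} to write $D^\alpha(\rho_\eps\ast f)=(\rho_\eps\ast D^\alpha f)\,\Leb{n}$ and bound $\int_{\R^n}|\rho_\eps\ast D^\alpha f|\,dx\le|D^\alpha f|(\R^n)$, then get the reverse inequality from the lower semicontinuity in \cref{result:frac_var_meas_is_lsc}. No changes needed.
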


\begin{proof}
Let $(\rho_\eps)_{\eps>0}\subset C^\infty_c(\R^n)$ be as in~\eqref{eq:def_rho_eps}. Fix $f\in BV^\alpha(\R^n)$ and consider $f_\eps:=f*\rho_\eps$ for all $\eps>0$. Since $f_\eps\to f$ in $L^1(\R^n)$, by \cref{result:frac_var_meas_is_lsc} we get that
\begin{equation*}
|D^\alpha f|(\R^n)\le\liminf_{\eps\to0}|D^\alpha f_\eps|(\R^n).
\end{equation*}
By \cref{result:commutation_mollifier} we also have that
\begin{equation*}
|D^\alpha f_\eps|(\R^n)
=\int_{\R^n}|\rho_\eps*D^\alpha f|\,dx
\le|D^\alpha f|(\R^n)
\end{equation*}
and the proof is complete.
\end{proof}

Let $(\eta_R)_{R>0}\subset C^\infty_c(\R^n)$ be such that
\begin{equation}\label{eq:def_cut-off}
0\le\eta_R\le 1,
\qquad
\eta_R=1\text{ on $B_R$},
\qquad
\supp(\eta_R)\subset B_{R+1},
\qquad
\Lip(\eta_R)\le 2.
\end{equation} 
We call $\eta_R$ a \emph{cut-off function}. As in the classical case, we can prove the density of $C^\infty_c(\R^n)$ in $BV^\alpha(\R^n)$.

\begin{theorem}[Approximation by $C^\infty_c$ functions]\label{result:approx_by_smooth_c_BV}
Let $\alpha\in(0,1)$. If $f\in BV^\alpha(\R^n)$, then there exists $(f_k)_{k\in\N}\subset C^\infty_c(\R^n)$ such that
\begin{enumerate}[\indent(i)]
\item $f_{k} \to f$ in $L^{1}(\R^{n})$;
\item $|D^{\alpha} f_{k}|(\R^{n}) \to |D^{\alpha} f|(\R^{n})$.
\end{enumerate}
\end{theorem}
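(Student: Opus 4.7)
The plan is to combine the mollification argument of \cref{result:approx_by_smooth_BV} with truncation by the cut-offs $\eta_R$ from \eqref{eq:def_cut-off}, and finish by a diagonal extraction. First, I would apply \cref{result:approx_by_smooth_BV} to produce $(h_k)_{k\in\N} \subset C^\infty(\R^n)\cap BV^\alpha(\R^n)$ with $h_k \to f$ in $L^1$ and $|D^\alpha h_k|(\R^n) \to |D^\alpha f|(\R^n)$. Since these $h_k$ arise from mollification $h_k = f*\rho_{\eps_k}$, they also lie in $L^1 \cap L^\infty$, and \cref{result:commutation_mollifier} gives $D^\alpha h_k = (\rho_{\eps_k}*D^\alpha f)\,\Leb{n}$ with $L^1$-density $\nabla^\alpha h_k$. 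It will then suffice to show that, for each fixed $h = h_k$, the family $g_R := \eta_R h \in C^\infty_c(\R^n)$ satisfies $g_R \to h$ in $L^1$ (immediate by dominated convergence) and $|D^\alpha g_R|(\R^n) \to |D^\alpha h|(\R^n)$ as $R \to \infty$, whereafter a standard diagonal choice $f_k := \eta_{R_k}h_k$ yields the required $C^\infty_c$ approximants.

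For the convergence of variations, lower semicontinuity (\cref{result:frac_var_meas_is_lsc}) handles one direction, so the heart of the matter is an upper bound on $|D^\alpha g_R|(\R^n)$. I plan to derive it from the pointwise Leibniz identity
$$\nabla^\alpha(\eta_R h) = \eta_R\,\nabla^\alpha h + h\,\nabla^\alpha\eta_R + \nabla^{\alpha}_{\rm NL}(\eta_R, h),$$
which holds a.e.\ by the algebraic decomposition underlying \cref{lem:Leibniz_frac_grad}, each of the three integrands being absolutely convergent since $h \in L^1\cap L^\infty$, $\eta_R \in \Lip_c(\R^n)$, and $\nabla^\alpha h \in L^1(\R^n;\R^n)$. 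Taking $L^1$-norms then gives
$$|D^\alpha g_R|(\R^n) \le \int_{\R^n}\eta_R|\nabla^\alpha h|\,dx + \|h\,\nabla^\alpha\eta_R\|_{L^1} + \|\nabla^{\alpha}_{\rm NL}(\eta_R, h)\|_{L^1},$$
and the first term tends to $|D^\alpha h|(\R^n) = \|\nabla^\alpha h\|_{L^1}$ by monotone convergence; the crux is to show that the other two vanish as $R \to \infty$.

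The hard part is exactly this final step, and it is where the fractional case genuinely departs from the classical one: unlike $\nabla\eta_R$, neither $\nabla^\alpha\eta_R$ nor $\nabla^{\alpha}_{\rm NL}(\eta_R, h)$ is concentrated in the thin transition annulus $B_{R+1}\setminus B_R$. My plan is to combine an $R$-uniform $L^1$-majorant with pointwise decay and conclude by dominated convergence. Starting from the elementary bound $|\eta_R(y) - \eta_R(x)| \le 2\min(1, |y-x|)$ (which follows from $0\le\eta_R\le 1$ and $\Lip(\eta_R)\le 2$), a direct estimate of the defining integrals yields $\|\nabla^\alpha\eta_R\|_{L^\infty(\R^n)} \le C$ and
$$|\nabla^{\alpha}_{\rm NL}(\eta_R, h)(x)| \le C\bigl(|h(x)| + (I\ast|h|)(x)\bigr),\qquad I(z) := \min(1,|z|)\,|z|^{-(n+\alpha)} \in L^1(\R^n),$$
so both majorants lie in $L^1(\R^n)$ uniformly in $R$. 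On the other hand, for each fixed $x$ and $R$ so large that $x\in B_{R/2}$, the integrand in $\nabla^\alpha\eta_R(x)$ is supported in $\{|y|>R\}$ (where $|y-x|>R/2$), yielding $|\nabla^\alpha\eta_R(x)| \le CR^{-\alpha}$, and an analogous argument gives $|\nabla^{\alpha}_{\rm NL}(\eta_R, h)(x)| \le CR^{-(n+\alpha)}\|h\|_{L^1} + CR^{-\alpha}|h(x)|$; both thus tend to zero pointwise a.e. Dominated convergence then delivers $\|h\,\nabla^\alpha\eta_R\|_{L^1}\to 0$ and $\|\nabla^{\alpha}_{\rm NL}(\eta_R, h)\|_{L^1}\to 0$, completing the upper bound and hence the proof after the diagonal step.
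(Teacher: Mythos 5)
Your strategy is genuinely different from the paper's and, in outline, it works: you apply the Leibniz rule on the function side, decomposing $\nabla^\alpha(\eta_R h)$ for the mollified approximants $h=\rho_\eps\ast f$, whereas the paper argues entirely in duality, pairing $f\eta_R$ with $\div^\alpha\phi$ and applying the Leibniz rule for $\div^\alpha$ to the compactly supported field $\eta_R\phi\in C^\infty_c$, so that the leading term is controlled by $\left|\int_{\R^n} f\,\div^\alpha(\eta_R\phi)\,dx\right|\le\|\phi\|_{L^\infty(\R^n;\R^n)}|D^\alpha f|(\R^n)$ straight from the definition of the fractional variation. Your treatment of the two cut-off error terms (the uniform $L^1$-majorant coming from $|\eta_R(y)-\eta_R(x)|\le2\min(1,|y-x|)$, the pointwise decay for $x\in B_{R/2}$, and dominated convergence) is correct and is essentially the same mechanism the paper uses, in integrated form, to show $\int_{\R^n}|f(x)|\int_{\R^n}|\eta_R(y)-\eta_R(x)|\,|y-x|^{-n-\alpha}\,dy\,dx\to0$. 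The dual route buys two things: it works for every $f\in C^\infty(\R^n)\cap BV^\alpha(\R^n)$, not only for mollified approximants, and it never has to interpret the fractional gradient of a non-compactly-supported function pointwise.

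That last point is exactly where your write-up has a gap. In your pointwise Leibniz identity the symbol $\nabla^\alpha h$ must be, simultaneously, the singular-integral expression $G(x):=\mu_{n,\alpha}\int_{\R^n}(y-x)(h(y)-h(x))\,|y-x|^{-n-\alpha-1}\,dy$ (this is what the algebraic decomposition of $\nabla^\alpha(\eta_R h)$ produces) and the $L^1$-density $\rho_{\eps}\ast D^\alpha f$ of $D^\alpha h$ given by \cref{result:commutation_mollifier} (this is what you need for the bound $\int_{\R^n}\eta_R|\nabla^\alpha h|\,dx\le|D^\alpha h|(\R^n)$). The coincidence $G=\rho_\eps\ast D^\alpha f$ a.e.\ is true but is not covered by the results you invoke: \cref{lem:Leibniz_frac_grad} and \cref{result:duality} are stated for $\Lip_c$ functions, \cref{result:Sobolev_frac_enough} requires $w^{\alpha,1}$-regularity (which $h$ is not known to have at this stage; \cref{result:BV_alpha_W_beta_embedding} comes later and only gives $W^{\beta,1}$ for $\beta<\alpha$), and \cref{rem:div_nabla_NL_extension} extends only the nonlocal bilinear term. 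You can close the gap by extending the duality formula \eqref{eq:duality_smooth} to bounded Lipschitz functions in $L^1(\R^n)$: since $\int_{\R^n}|\phi(y)-\phi(x)|\,|y-x|^{-n-\alpha}dy$ and $\int_{\R^n}|h(y)-h(x)|\,|y-x|^{-n-\alpha}dy$ are bounded uniformly in $x$, the truncation–Fubini–cancellation argument of \cref{result:duality} goes through and yields $\int_{\R^n} h\,\div^\alpha\phi\,dx=-\int_{\R^n}\phi\cdot G\,dx$ for all $\phi\in C^\infty_c(\R^n;\R^n)$, whence $G=\rho_\eps\ast D^\alpha f$ a.e.; alternatively, switch to the paper's dual formulation and the issue disappears. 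A minor further remark: the absolute convergence of the term $\eta_R(x)G(x)$ uses that $h$ is Lipschitz (or that its integrand is the difference of the other absolutely convergent ones); $h\in L^1\cap L^\infty$ alone does not control the integrand near the diagonal.
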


\begin{proof}
Let $(\eta_R)_{R>0}\subset C^\infty_c(\R^n)$ be as in~\eqref{eq:def_cut-off}. Thanks to \cref{result:approx_by_smooth_BV}, it is enough to prove that $f\eta_R\to f$ in $BV^\alpha(\R^n)$ as $R\to+\infty$ for all $f\in C^\infty(\R^n)\cap BV^\alpha(\R^n)$. Clearly, $f\eta_R\to f$ in $L^1(\R^n)$ as $R\to+\infty$. Thus, by \cref{result:frac_var_meas_is_lsc}, we just need to prove that 
\begin{equation}\label{eq:approx_by_smooth_c_BV_limsup}
\limsup_{R\to+\infty}|D^\alpha (f\eta_R)|(\R^n)\le |D^\alpha f|(\R^n).
\end{equation}
Fix $\phi\in C^\infty_c(\R^n;\R^n)$. Then, by \cref{lem:Leibniz_frac_div}, we get
\begin{equation*}
\begin{split}
\int_{\R^n}f\eta_R\,\div^\alpha\phi\,dx
&=\int_{\R^n}f\,\div^\alpha(\eta_R\phi)\,dx
-\int_{\R^n}f\,\phi\cdot\nabla^\alpha\eta_R\,dx
-\int_{\R^n}f\,\div^\alpha_{\mathrm{NL}}(\eta_R, \phi)\,dx.
\end{split}
\end{equation*}
Since $f\in BV^\alpha(\R^n)$ and $0\le\eta_R\le 1$, we have
\begin{equation*}
\abs*{\int_{\R^n}f\,\div^\alpha(\eta_R\phi)\,dx}
\le\|\phi\|_{L^\infty(\R^n;\R^n)}|D^\alpha f|(\R^n).
\end{equation*}
Moreover, we have
\begin{equation*}
\abs*{\int_{\R^n}f\,\phi\cdot\nabla^\alpha\eta_R\,dx}
\le 
\mu_{n, \alpha} \|\phi\|_{L^\infty(\R^n;\R^n)}\int_{\R^n}|f(x)|\int_{\R^n}\frac{|\eta_R(y)-\eta_R(x)|}{|y-x|^{n+\alpha}}\,dy\,dx
\end{equation*}
and, similarly,
\begin{equation*}
\abs*{\int_{\R^n}f\,\div^\alpha_{\mathrm{NL}}(\eta_R,\phi)\,dx}
\le 
2\mu_{n, \alpha}\|\phi\|_{L^\infty(\R^n;\R^n)}\int_{\R^n}|f(x)|\int_{\R^n}\frac{|\eta_R(y)-\eta_R(x)|}{|y-x|^{n+\alpha}}\,dy\,dx.
\end{equation*}
Combining these three estimates, we conclude that
\begin{equation*}
\begin{split}
\left | \int_{\R^n}f\eta_R\,\div^\alpha\phi\,dx \right |
&\le
\|\phi\|_{L^\infty(\R^n;\R^n)}|D^\alpha f|(\R^n)\\
&\quad+3\mu_{n, \alpha}\|\phi\|_{L^\infty(\R^n;\R^n)}\int_{\R^n}|f(x)|\int_{\R^n}\frac{|\eta_R(y)-\eta_R(x)|}{|y-x|^{n+\alpha}}\,dy\,dx
\end{split}
\end{equation*}
and~\eqref{eq:approx_by_smooth_c_BV_limsup} follows by \cref{th:structure_BV_alpha}. Indeed, we have
\begin{equation*}
\lim_{R\to+\infty}\int_{\R^n}|f(x)|\int_{\R^n}\frac{|\eta_R(y)-\eta_R(x)|}{|y-x|^{n+\alpha}}\,dy\,dx=0
\end{equation*}
combining~\eqref{eq:Lip_nabla_estim_1}, \eqref{eq:Lip_nabla_estim_2} and~\eqref{eq:def_cut-off} with Lebesgue's Dominated Convergence Theorem.
\end{proof}

\subsection{Gagliardo--Nirenberg--Sobolev inequality}

Thanks to \cref{result:approx_by_smooth_c_BV}, we are able to prove the analogous of the Gagliardo--Nirenberg--Sobolev inequality for the space $BV^{\alpha}(\R^{n})$.

\begin{theorem}[Gagliardo--Nirenberg--Sobolev inequality] \label{thm:GNS_immersion} 
Let $\alpha \in (0, 1)$ and $n \ge 2$. There exists a constant $c_{n, \alpha} > 0$ such that
\begin{equation} \label{eq:GNS_inequality} 
\|f\|_{L^{\frac{n}{n - \alpha}}(\R^{n})} \le c_{n, \alpha} |D^{\alpha} f|(\R^{n}) 
\end{equation}
for any $f \in BV^{\alpha}(\R^{n})$. As a consequence, $BV^{\alpha}(\R^{n})$ is continuously embedded in $L^q(\R^n)$ for any $q\in[1,\frac{n}{n-\alpha}]$.
\end{theorem}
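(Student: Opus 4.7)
The proof proceeds by density, reducing to the case of smooth compactly supported $f$ where the assertion becomes the $p=1$ endpoint fractional Sobolev inequality for the pointwise fractional gradient $\nabla^\alpha$. The first step is to observe that for any $f \in C^\infty_c(\R^n)$, \cref{prop:frac_div_repr} gives $\nabla^\alpha f \in L^1(\R^n;\R^n)$, while \cref{result:duality} combined with \cref{th:structure_BV_alpha} identifies $D^\alpha f = \nabla^\alpha f\,\Leb{n}$ (so that in particular $f \in BV^\alpha(\R^n)$) and hence
$$|D^\alpha f|(\R^n) = \|\nabla^\alpha f\|_{L^1(\R^n;\R^n)}.$$
Thus, for smooth compactly supported $f$, \eqref{eq:GNS_inequality} reduces to the inequality
$$\|f\|_{L^{n/(n-\alpha)}(\R^n)} \le c_{n,\alpha}\|\nabla^\alpha f\|_{L^1(\R^n;\R^n)}, \qquad f \in C^\infty_c(\R^n),$$
which I would invoke as a black box from the fractional-calculus literature: the papers \cite{SS15} and \cite{SSVanS17} cover exactly this endpoint, proving it through the representation $f = -I_\alpha(\mathcal R \cdot \nabla^\alpha f)$, which arises from the identity $\nabla^\alpha = \nabla I_{1-\alpha}$ of \cref{rem:equiv_def_Riesz_potential}, together with the curl-free structure of $\nabla^\alpha f$ (placing $\mathcal R \cdot \nabla^\alpha f$ in the real Hardy space $H^1(\R^n)$) and the $H^1\to L^{n/(n-\alpha)}$ boundedness of the Riesz potential $I_\alpha$.

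Granted the smooth inequality, the second step is a standard density argument. Given $f \in BV^\alpha(\R^n)$, \cref{result:approx_by_smooth_c_BV} produces a sequence $(f_k)_{k \in \N} \subset C^\infty_c(\R^n)$ with $f_k \to f$ in $L^1(\R^n)$ and $|D^\alpha f_k|(\R^n) \to |D^\alpha f|(\R^n)$. Extracting a subsequence converging $\Leb{n}$-a.e.\ to $f$ and applying Fatou's Lemma together with the smooth inequality yields
$$\|f\|_{L^{n/(n-\alpha)}(\R^n)} \le \liminf_{k\to+\infty}\|f_k\|_{L^{n/(n-\alpha)}(\R^n)} \le c_{n,\alpha}\lim_{k\to+\infty}|D^\alpha f_k|(\R^n) = c_{n,\alpha}|D^\alpha f|(\R^n).$$
The continuous embedding $BV^\alpha(\R^n)\hookrightarrow L^q(\R^n)$ for $q \in [1,n/(n-\alpha)]$ then follows at once from H\"older interpolation between the trivial bound $BV^\alpha(\R^n)\hookrightarrow L^1(\R^n)$ and \eqref{eq:GNS_inequality}.

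The only delicate point is the smooth-case inequality imported in the first step. It is a genuinely endpoint statement: classical Hardy--Littlewood--Sobolev only gives the weak-$L^{n/(n-\alpha)}$ bound on $I_\alpha\colon L^1\to L^{n/(n-\alpha),\infty}$, and upgrading to a strong-type estimate relies on the fact that $\nabla^\alpha f$ is a gradient, hence curl-free, which is not visible to elementary arguments and requires Hardy-space or Van Schaftingen-type machinery. Once that input is accepted as a black box from \cite{SS15}/\cite{SSVanS17}, the passage from $C^\infty_c(\R^n)$ to $BV^\alpha(\R^n)$ is a routine lower-semicontinuity plus approximation exercise, with the exponent $n/(n-\alpha)$ tracking directly the $\alpha$-homogeneity recorded in \eqref{eq:alpha_homogeneous}.
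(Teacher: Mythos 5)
Your argument is essentially the paper's own proof: the smooth-case endpoint inequality is taken from \cite{SSVanS17}*{Theorem~A'}, and the passage to general $f\in BV^\alpha(\R^n)$ uses exactly the approximation by $C^\infty_c(\R^n)$ functions of \cref{result:approx_by_smooth_c_BV} together with (a.e.\ convergence of a subsequence and) Fatou's Lemma. Your additional remarks --- the identification $|D^\alpha f|(\R^n)=\|\nabla^\alpha f\|_{L^1(\R^n;\R^n)}$ for smooth $f$ and the interpolation giving the $L^q$ embedding for $q\in[1,\tfrac{n}{n-\alpha}]$ --- are correct and only make explicit what the paper leaves implicit.
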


\begin{proof}
By \cite{SSVanS17}*{Theorem A'}, we know that~\eqref{eq:GNS_inequality} holds for any $f \in C^{\infty}_{c}(\R^{n})$. So let $f \in BV^{\alpha}(\R^{n})$ and let $(f_k)_{k\in\N}\subset C^\infty_c(\R^n)$ be as in \cref{result:approx_by_smooth_c_BV}.  By Fatou's Lemma and \cref{result:frac_var_meas_is_lsc}, we thus obtain
\begin{equation*} 
\|f\|_{L^{\frac{n}{n - \alpha}}(\R^{n})} \le \liminf_{k \to + \infty} \|f_{k}\|_{L^{\frac{n}{n - \alpha}}(\R^{n})} \le c_{n, \alpha} \lim_{k \to + \infty} |D^{\alpha} f_{k}|(\R^{n}) = c_{n, \alpha} |D^{\alpha} f|(\R^{n})
\end{equation*}
and the proof is complete.
\end{proof}

\begin{remark}
We stress the fact that \cref{thm:GNS_immersion} does not hold for $n = 1$, as will be shown in \cref{rem:no_GNS_one_dim} below. It is worth to notice that an analogous restriction holds for~\cite{SSVanS17}*{Theorem A}, for which the authors provide a counterexample in the case $n = 1$ (see~\cite{SSVanS17}*{Counterexample 3.2}). The authors then derive~\cite{SSVanS17}*{Theorem A'} as a consequence of~\cite{SSVanS17}*{Theorem A}, without proving the necessity of the restriction to $n \ge 2$ in this second case, as we do in \cref{rem:no_GNS_one_dim}.
\end{remark}

\subsection{Coarea inequality} 

In analogy with the classical case, we can prove a coarea inequality formula for functions in $BV^{\alpha}(\R^{n})$.

\begin{theorem}[Coarea inequality] \label{th:coarea_inequality} 
Let $\alpha \in (0, 1)$. If $f \in BV^{\alpha}(\R^{n})$ is such that
\begin{equation}\label{eq:coarea_int_finite}
\int_{\R}|D^\alpha\chi_{\set{f>t}}|(\R^n)\,dt<+\infty,
\end{equation}
then
\begin{equation} \label{eq:weak_coarea_formula} 
D^{\alpha} f = \int_{\R} D^{\alpha} \chi_{\{ f > t \}} \, dt 
\end{equation}
and
\begin{equation} \label{eq:tot_var_coarea_inequality} 
|D^{\alpha} f| \le \int_{\R} |D^{\alpha} \chi_{\{f > t \}}| \, d t. 
\end{equation}
\end{theorem}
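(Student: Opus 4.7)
My approach is the classical layer-cake strategy adapted to the fractional setting. One writes
\[f(x) = \int_0^{\infty}\chi_{\{f>t\}}(x)\, dt - \int_{-\infty}^0\chi_{\{f\le t\}}(x)\, dt\]
and, for a test $\phi \in C^\infty_c(\R^n;\R^n)$, pairs with $\div^\alpha\phi$ and exchanges integration order via Fubini. The required joint integrability is $\int_{\R^n}|f|\,|\div^\alpha\phi|\, dx < +\infty$, which holds since $f\in L^1(\R^n)$ and $\div^\alpha\phi\in L^\infty(\R^n)$ by \cref{prop:frac_div_repr}.

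A key auxiliary fact is the cancellation $\int_{\R^n}\div^\alpha\phi\, dx = 0$ for every $\phi\in C^\infty_c(\R^n;\R^n)$. This follows directly from the Riesz representation $\div^\alpha\phi = \div I_{1-\alpha}\phi$ established in \cref{rem:equiv_def_Riesz_potential}: since $I_{1-\alpha}\phi\in C^\infty(\R^n;\R^n)$ decays like $|x|^{1-n-\alpha}$ at infinity, applying the divergence theorem on $B_R$ and letting $R\to+\infty$ kills the boundary flux. Equivalently, one tests the duality of \cref{result:duality} against a cut-off $\eta_R$ as in \eqref{eq:def_cut-off} and checks that $\|\nabla^\alpha\eta_R\|_{L^\infty(\supp\phi;\R^n)}\to 0$ as $R\to +\infty$.

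Thanks to this cancellation, the integral over $t<0$ collapses: $\int\chi_{\{f\le t\}}\,\div^\alpha\phi\, dx = -\int\chi_{\{f>t\}}\,\div^\alpha\phi\, dx$, so the two half-line contributions merge into
\[\int_{\R^n} f\,\div^\alpha\phi\, dx = \int_\R\int_{\R^n}\chi_{\{f>t\}}\,\div^\alpha\phi\, dx\, dt.\]
For a.e.\ $t>0$, Markov's inequality gives $\chi_{\{f>t\}}\in L^1(\R^n)$, which together with \eqref{eq:coarea_int_finite} forces $\chi_{\{f>t\}}\in BV^\alpha(\R^n)$, so by \cref{th:structure_BV_alpha} the inner integral equals $-\int\phi\, dD^\alpha\chi_{\{f>t\}}$. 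For a.e.\ $t<0$, one applies the structure theorem instead to $\chi_{\{f\le t\}}\in L^1\cap BV^\alpha$ and sets $D^\alpha\chi_{\{f>t\}} := -D^\alpha\chi_{\{f\le t\}}$; the cancellation above makes this assignment consistent with the distributional pairing.

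Finally, define the candidate vector-valued Radon measure $\nu$ via the continuous linear functional $\phi\mapsto\int_\R\int\phi\, dD^\alpha\chi_{\{f>t\}}\, dt$ on $C_c(\R^n;\R^n)$, which is finite since $\left|\int_\R\int\phi\, dD^\alpha\chi_{\{f>t\}}\, dt\right|\le\|\phi\|_\infty\int_\R|D^\alpha\chi_{\{f>t\}}|(\R^n)\, dt<+\infty$ by \eqref{eq:coarea_int_finite}; Riesz's Representation Theorem then yields the measure. A second Fubini application produces $\int_{\R^n} f\,\div^\alpha\phi\, dx = -\int_{\R^n}\phi\, d\nu$ for all $\phi\in C^\infty_c(\R^n;\R^n)$, and uniqueness in \cref{th:structure_BV_alpha} forces $D^\alpha f = \nu$, giving \eqref{eq:weak_coarea_formula}. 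The total variation bound \eqref{eq:tot_var_coarea_inequality} then follows by estimating, over any countable Borel partition $(A_i)$ of an arbitrary Borel set $A$, $\sum_i|\nu(A_i)|\le\sum_i\int_\R|D^\alpha\chi_{\{f>t\}}(A_i)|\, dt\le\int_\R|D^\alpha\chi_{\{f>t\}}|(A)\, dt$ and taking the supremum. The main technical subtlety lies in the range $t<0$, where $\chi_{\{f>t\}}$ need not lie in $L^1(\R^n)$; the cancellation $\int\div^\alpha\phi\, dx=0$ is precisely what bridges this gap.
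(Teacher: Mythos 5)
Your proof is correct and follows essentially the same route as the paper's: the layer-cake decomposition of $f$, Fubini justified by $f\in L^1(\R^n)$, $\div^\alpha\phi\in L^\infty(\R^n)$ and \eqref{eq:coarea_int_finite}, the identification of the inner integrals with $-\int_{\R^n}\phi\cdot dD^\alpha\chi_{\{f>t\}}$ via the structure/Gauss--Green theorems, and the representation of $\int_{\R}D^\alpha\chi_{\{f>t\}}\,dt$ as a finite vector-valued Radon measure dominating $|D^\alpha f|$. Your explicit verification of the cancellation $\int_{\R^n}\div^\alpha\phi\,dx=0$ (via $\div^\alpha\phi=\div I_{1-\alpha}\phi$ and the decay of $I_{1-\alpha}\phi$, or via cut-offs), used to handle the range $t<0$ where $\chi_{\{f>t\}}\notin L^1(\R^n)$, is a detail the paper's proof leaves implicit, and your argument for it is correct.
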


\begin{proof} 
Let $\phi \in C^{\infty}_{c}(\R^{n}; \R^{n})$. By~\eqref{eq:coarea_int_finite} and applying Fubini's Theorem twice, we can compute
\begin{align*} 
\int_{\R^{n}} \phi \cdot \, d D^{\alpha} f 
& = - \int_{\R^{n}} f\, \div^{\alpha} \phi(x) \, dx \\
& = - \int_{\R^{n}} \div^{\alpha}\phi(x) \left ( \int_\R \chi_{(- \infty, f(x))}(t) - \chi_{(- \infty, 0)}(t) \, dt \right )  dx \\
& = - \int_{\R} \int_{\R^{n}} \div^{\alpha} \phi(x) \left ( \chi_{\{f > t \}}(x) - \chi_{(- \infty, 0)}(t) \right ) \, dx \, dt \\
& = \int_{\R} \int_{\R^{n}} \phi\cdot \, d D^{\alpha} \chi_{\{f > t \}} \, dt \\
& = \int_{\R^{n}}\phi \cdot d\left(\int_{\R} D^{\alpha} \chi_{\{ f > t \}} \, dt\right) 
\end{align*}
proving~\eqref{eq:weak_coarea_formula}. Thus
\begin{equation*} 
|D^{\alpha} f |
=\abs*{\int_{\R} D^{\alpha} \chi_{\{f > t \}} \, d t}
\le\int_{\R} |D^{\alpha} \chi_{\{f > t \}}| \, d t 
\end{equation*}
and the proof is complete.
\end{proof}

\subsection{A fractional version of the Fundamental Theorem of Calculus}

Let $\alpha\in(0,1)$ and let $\mu_{n, -\alpha}$ be given by~\eqref{eq:def_mu_alpha} (note that the expression in~\eqref{eq:def_mu_alpha} makes sense for all $\alpha\in(-1,1)$). We let
\begin{equation}\label{eq:def_T_space}
\mathcal{T}(\R^n):=\set*{f\in C^\infty(\R^n): D^\mathsf{a} f\in L^1(\R^n)\cap C_0(\R^n)\text{ for all multi-indices } \mathsf{a}\in\N^n_0} 
\end{equation}
and 
\begin{equation*}
\mathcal{T}(\R^n;\R^n):=\set*{\phi\in C^\infty(\R^n;\R^n) : \phi_i\in\mathcal{T}(\R^n),\ i=1,\dots, n}.
\end{equation*}
By~\cite{S18}*{Section~5}, the operator
\begin{equation} \label{eq:div_negative_alpha_def}
\div^{- \alpha} \phi(x) := \mu_{n, - \alpha} \int_{\R^{n}} \frac{z\cdot \phi(x + z)}{|z|^{n + 1 - \alpha}}  \, dz
\end{equation}
is well defined for any $\phi \in \mathcal{T}(\R^{n}; \R^{n})$. Moreover, by \cite{S18}*{Theorem 5.3}, we have the following \emph{inversion formula} 
\begin{equation} \label{eq:div_-alpha_nabla_alpha_id}
- \div^{- \alpha} \nabla^{\alpha} =\mathrm{id}_{\mathcal{T}(\R^n)}.
\end{equation} 
Exploiting~\eqref{eq:div_negative_alpha_def} and~\eqref{eq:div_-alpha_nabla_alpha_id} we can prove the following fractional version of the Fundamental Theorem of Calculus. See~\cite{SS15}*{Theorem~2.1} for a similar approach.

\begin{theorem}[Fractional Fundamental Theorem of Calculus] \label{thm:fund_theorem_calculus_frac}
Let $\alpha \in (0, 1)$. If $f \in C^{\infty}_{c}(\R^{n})$, then
\begin{equation} \label{eq:fund_theorem_calculus_frac}
f(y) - f(x) = \mu_{n, - \alpha} \int_{\R^{n}} \left ( \frac{z-x}{|z-x|^{n + 1 - \alpha}} - \frac{z - y}{|z - y|^{n + 1 - \alpha}} \right ) \cdot \nabla^{\alpha} f(z) \, dz
\end{equation}
for any $x, y \in \R^{n}$.
\end{theorem}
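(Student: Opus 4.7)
The plan is to derive~\eqref{eq:fund_theorem_calculus_frac} from \v{S}ilhav\'y's inversion formula~\eqref{eq:div_-alpha_nabla_alpha_id}. Once we know that $\nabla^{\alpha}f\in\mathcal{T}(\R^n;\R^n)$, the identity $f=-\div^{-\alpha}(\nabla^{\alpha}f)$ combined with the definition~\eqref{eq:div_negative_alpha_def} gives
\begin{equation*}
f(x)=-\mu_{n,-\alpha}\int_{\R^n}\frac{z\cdot\nabla^{\alpha}f(x+z)}{|z|^{n+1-\alpha}}\,dz,
\end{equation*}
and the change of variable $z\mapsto z-x$ rewrites this as
\begin{equation*}
f(x)=-\mu_{n,-\alpha}\int_{\R^n}\frac{(z-x)\cdot\nabla^{\alpha}f(z)}{|z-x|^{n+1-\alpha}}\,dz.
\end{equation*}
The analogous formula for $f(y)$, subtracted from the previous one, produces~\eqref{eq:fund_theorem_calculus_frac} directly, with the sign reversal giving the displayed difference of the two kernels.

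The only substantive task is thus to verify $\nabla^{\alpha}f\in\mathcal{T}(\R^n;\R^n)$. Smoothness together with the commutation $D^{\mathsf{a}}(\nabla^{\alpha}f)=I_{1-\alpha}(\nabla D^{\mathsf{a}}f)$ for every multi-index $\mathsf{a}\in\N_0^n$ follow from \cref{rem:equiv_def_Riesz_potential}, so it suffices to show that $I_{1-\alpha}g\in L^1(\R^n;\R^n)\cap C_0(\R^n;\R^n)$ whenever $g=\nabla h$ with $h\in C^\infty_c(\R^n)$; note that such a $g$ automatically satisfies $\int_{\R^n}g\,dx=0$. Local boundedness of $I_{1-\alpha}g$ is given by the $L^{\infty}$ part of \cref{prop:frac_div_repr}, so the main obstacle is the decay at infinity.

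For $|x|$ large compared to $\diam(\supp g)$, the cancellation $\int_{\R^n}g\,dy=0$ allows one to rewrite
\begin{equation*}
I_{1-\alpha}g(x)=C_{n,\alpha}\int_{\R^n}\Bigl(|x-y|^{1-\alpha-n}-|x|^{1-\alpha-n}\Bigr)g(y)\,dy,
\end{equation*}
and a first-order Taylor expansion of the kernel in $y$ yields $|I_{1-\alpha}g(x)|\lesssim|x|^{-n-\alpha}$. Since $\int_{|x|>R}|x|^{-n-\alpha}\,dx<+\infty$ for $\alpha>0$, this bound gives both $I_{1-\alpha}g\in L^1(\R^n;\R^n)$ and $I_{1-\alpha}g\in C_0(\R^n;\R^n)$. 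Applying the argument componentwise to each $\nabla D^{\mathsf{a}}f$ for every $\mathsf{a}\in\N_0^n$ places $\nabla^{\alpha}f$ in $\mathcal{T}(\R^n;\R^n)$, and the computation from the first paragraph then concludes the proof of~\eqref{eq:fund_theorem_calculus_frac}.
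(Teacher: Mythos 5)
Your proof is correct and its core mechanism is the same as the paper's: both deduce~\eqref{eq:fund_theorem_calculus_frac} from the inversion formula~\eqref{eq:div_-alpha_nabla_alpha_id} applied at the two points $x$ and $y$, followed by a change of variables (the paper phrases this as one combined integral that is then split, which is the same computation). The only real divergence is how the hypothesis $\nabla^{\alpha}f\in\mathcal{T}(\R^n;\R^n)$ is secured: the paper simply cites \cite{S18}*{Theorem~4.3}, whereas you verify it by hand, writing $D^{\mathsf a}(\nabla^{\alpha}f)=I_{1-\alpha}(\nabla D^{\mathsf a}f)$ via the Riesz-potential representation of \cref{rem:equiv_def_Riesz_potential}, using the zero mean of $\nabla D^{\mathsf a}f$ and a first-order Taylor expansion of the kernel to get the decay $|x|^{-n-\alpha}$, which gives both the $L^1$ and the $C_0$ membership. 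That argument is sound (the local boundedness from \cref{prop:frac_div_repr} together with the far-field bound does the job), and it buys self-containedness within the paper's own toolkit, at the cost of a paragraph that the paper avoids by leaning on \v{S}ilhav\'y's result; conversely, the paper's citation is shorter but imports the mapping property as a black box. Either way the final splitting of the integral is legitimate since $\nabla^{\alpha}f\in L^1\cap L^\infty$ makes each of the two integrals absolutely convergent.
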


\begin{proof} 
Since clearly $C^\infty_c(\R^n)\subset\mathcal{T}(\R^n)$, we have $\nabla^\alpha f\in\mathcal{T}(\R^n;\R^n)$ by~\cite{S18}*{Theorem~4.3}.  Applying~\eqref{eq:div_-alpha_nabla_alpha_id}, we have
\begin{align*}
f(y) - f(x) & = (- \div^{- \alpha} \nabla^{\alpha} f)( y) - (- \div^{- \alpha} \nabla^{\alpha} f)(x) \\
& = \mu_{n, - \alpha} \int_{\R^{n}} \frac{z}{|z|^{n + 1 - \alpha}} \cdot \big( \nabla^{\alpha} f(x + z) - \nabla^{\alpha} f(y + z) \big ) \, dz 
\end{align*}
for all $x, y \in \R^{n}$. Then~\eqref{eq:fund_theorem_calculus_frac} follows splitting the integral and changing variables. 
\end{proof}

An easy consequence of \cref{thm:fund_theorem_calculus_frac} is that the distributional $\alpha$-divergence of the kernel appearing in~\eqref{eq:fund_theorem_calculus_frac} is a difference of Dirac deltas.

\begin{proposition} \label{prop:div_alpha_delta}
Let $\alpha \in (0, 1)$. If $x, y \in \R^{n}$, then
\begin{equation} \label{eq:div_alpha_delta} 
\mu_{n, - \alpha} \div^{\alpha} \left ( \frac{\cdot - y}{|\cdot - y|^{n + 1 - \alpha}} - \frac{\cdot - x}{|\cdot - x|^{n + 1 - \alpha}} \right ) = \delta_{y} - \delta_{x}
\end{equation} 
in the sense of Radon measures.
\end{proposition}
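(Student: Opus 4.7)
The plan is to prove the identity distributionally: show that, for every test function $f \in C^\infty_c(\R^n)$, the pairing of both sides against $f$ agrees, and that the resulting linear functional is indeed the Radon measure $\delta_y - \delta_x$.

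First, I would set $K(z) := \dfrac{z-y}{|z-y|^{n+1-\alpha}} - \dfrac{z-x}{|z-x|^{n+1-\alpha}}$ and check that $K \in L^1(\R^n;\R^n)$. The singularities at $z=x$ and $z=y$ are of order $|z|^{-(n-\alpha)}$, hence locally integrable; for $|z|$ large, a first-order Taylor expansion of $z \mapsto z/|z|^{n+1-\alpha}$ around $z$ gives a cancellation, yielding the decay $|K(z)| = O(|z|^{-(n+1-\alpha)})$, which is integrable at infinity. Thus $K \in L^1(\R^n;\R^n)$, and we may define $\diverg^\alpha K$ as a distribution by setting
\begin{equation*}
\langle \diverg^\alpha K, f \rangle := -\int_{\R^n} K(z)\cdot \nabla^\alpha f(z)\,dz, \qquad f\in C^\infty_c(\R^n).
\end{equation*}
This pairing is well-defined since by \cref{prop:frac_div_repr} we have $\nabla^\alpha f \in L^1(\R^n;\R^n)\cap L^\infty(\R^n;\R^n)$.

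Next, for any $f \in C^\infty_c(\R^n)$, \cref{thm:fund_theorem_calculus_frac} gives directly
\begin{equation*}
f(y) - f(x) = \mu_{n,-\alpha}\int_{\R^n}\left(\frac{z-x}{|z-x|^{n+1-\alpha}} - \frac{z-y}{|z-y|^{n+1-\alpha}}\right)\cdot \nabla^\alpha f(z)\,dz = -\mu_{n,-\alpha}\int_{\R^n}K\cdot \nabla^\alpha f\,dz.
\end{equation*}
Combining this with the distributional definition of $\diverg^\alpha K$, we obtain
\begin{equation*}
\langle \mu_{n,-\alpha}\diverg^\alpha K, f\rangle = -\mu_{n,-\alpha}\int_{\R^n} K\cdot\nabla^\alpha f\,dz = f(y) - f(x) = \langle \delta_y-\delta_x, f\rangle
\end{equation*}
for every $f\in C^\infty_c(\R^n)$. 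Since the right-hand side $\delta_y - \delta_x$ is a finite Radon measure and two distributions agreeing on $C^\infty_c(\R^n)$ coincide, the equality \eqref{eq:div_alpha_delta} follows in the sense of Radon measures.

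The main conceptual step is the first one: justifying that $\diverg^\alpha K$ even makes sense as a distribution, which requires checking integrability of $K$ globally (the decay at infinity being the only nontrivial point, as the local singularity is of subcritical order). Once this is in place, the identity is simply a distributional reformulation of \cref{thm:fund_theorem_calculus_frac} via the duality between $\nabla^\alpha$ and $\diverg^\alpha$.
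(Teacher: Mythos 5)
Your argument is correct and is essentially the paper's proof: Proposition \ref{prop:div_alpha_delta} is obtained there directly from the fractional Fundamental Theorem of Calculus \eqref{eq:fund_theorem_calculus_frac}, read through the duality pairing between $\nabla^\alpha$ and $\div^\alpha$, exactly as you do. Your additional verification that the kernel lies in $L^1(\R^n;\R^n)$ (subcritical singularities, decay $O(|z|^{-(n+1-\alpha)})$ at infinity) is a sound way to make the phrase ``in the sense of Radon measures'' precise, and is consistent with the estimate carried out in \cref{prop:Holder_estimate}.
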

\begin{proof} 
It follows immediately from~\eqref{eq:fund_theorem_calculus_frac}.
\end{proof}

\subsection{Compactness}

We start with the following H\"older estimate on the $L^1$-norm of translations of functions in $C^\infty_c(\R^n)$.

\begin{proposition}[$L^1$-estimate on translations] \label{prop:Holder_estimate} 
Let $\alpha \in (0,1)$. If $f \in C^{\infty}_{c}(\R^{n})$, then
\begin{equation} \label{eq:Holder_estimate}
\int_{\R^n}|f(x + y) - f(x)|\,dx \le \gamma_{n, \alpha}\, |y|^{\alpha}\, \|\nabla^{\alpha} f\|_{L^{1}(\R^{n}; \R^{n})}
\end{equation}
for all $y \in \R^{n}$, where
\begin{equation}\label{eq:def_gamma_n,alpha}
\gamma_{n, \alpha} := \mu_{n, - \alpha} \int_{\R^{n}} \left | \frac{z}{|z|^{n + 1 - \alpha}} - \frac{z - \mathrm{e}_{1}}{|z - \mathrm{e}_{1}|^{n + 1- \alpha}} \right | \, dz.
\end{equation}
\end{proposition}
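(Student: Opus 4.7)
The natural plan is to apply the Fractional Fundamental Theorem of Calculus (\cref{thm:fund_theorem_calculus_frac}) to the translated pair $(x+y, x)$, so that
\begin{equation*}
f(x+y) - f(x) = \mu_{n, - \alpha} \int_{\R^{n}} \left( \frac{z-x}{|z-x|^{n + 1 - \alpha}} - \frac{z - x - y}{|z - x - y|^{n + 1 - \alpha}} \right) \cdot \nabla^{\alpha} f(z) \, dz.
\end{equation*}
Taking absolute values and integrating in $x$ over $\R^n$, Fubini's Theorem (applied once the finiteness of the double integral is checked) yields
\begin{equation*}
\int_{\R^n}|f(x+y) - f(x)|\,dx \le \mu_{n,-\alpha} \int_{\R^n} |\nabla^\alpha f(z)|\, K(y)\, dz,
\end{equation*}
where, after the change of variables $w = z - x$ (whose Jacobian is $1$ and which removes the $z$-dependence),
\begin{equation*}
K(y) := \int_{\R^n} \left| \frac{w}{|w|^{n+1-\alpha}} - \frac{w - y}{|w - y|^{n+1-\alpha}} \right| dw.
\end{equation*}

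Next I would evaluate $K(y)$ by scaling and rotational invariance. Setting $w = |y|\, u$ gives $dw = |y|^n\, du$, and a direct computation shows
\begin{equation*}
\frac{|y|u}{|y|^{n+1-\alpha}|u|^{n+1-\alpha}} - \frac{|y|(u - y/|y|)}{|y|^{n+1-\alpha}|u - y/|y||^{n+1-\alpha}} = |y|^{\alpha - n}\left(\frac{u}{|u|^{n+1-\alpha}} - \frac{u - y/|y|}{|u - y/|y||^{n+1-\alpha}}\right),
\end{equation*}
so $K(y) = |y|^\alpha \int_{\R^n} \bigl| \frac{u}{|u|^{n+1-\alpha}} - \frac{u - y/|y|}{|u - y/|y||^{n+1-\alpha}} \bigr|\, du$. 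Since the integrand depends on $y$ only through the unit vector $y/|y|$ and the kernel is rotationally invariant, one can replace $y/|y|$ by $\mathrm{e}_1$ via an orthogonal change of variables, obtaining $K(y) = |y|^\alpha \gamma_{n,\alpha}/\mu_{n,-\alpha}$. Combining this with the previous inequality gives exactly \eqref{eq:Holder_estimate}.

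The main obstacle is showing that $\gamma_{n,\alpha}$ is finite, which also justifies the use of Fubini. Near $w = 0$ the first summand behaves like $|w|^{\alpha - n}$ and the second is smooth, so the integrand is $L^1$ near the origin; symmetrically near $w = \mathrm{e}_1$. At infinity, expanding each term and using cancellation one sees that
\begin{equation*}
\frac{w}{|w|^{n+1-\alpha}} - \frac{w - \mathrm{e}_1}{|w - \mathrm{e}_1|^{n+1-\alpha}} = O(|w|^{\alpha - n - 1}),
\end{equation*}
which is integrable at infinity precisely because $\alpha < 1$. This three-region estimate makes both $\gamma_{n,\alpha} < +\infty$ and the Fubini step rigorous, and completes the proof.
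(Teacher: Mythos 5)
Your proposal is correct and follows essentially the same route as the paper: the fractional Fundamental Theorem of Calculus applied to $x$ and $x+y$, Tonelli plus a translation to factor out $\|\nabla^\alpha f\|_{L^1}$, the scaling/rotation argument reducing the kernel integral to $|y|^\alpha\gamma_{n,\alpha}/\mu_{n,-\alpha}$, and a splitting of the kernel integral into the singular region and the region at infinity, where the paper makes your ``cancellation'' estimate $O(|w|^{\alpha-n-1})$ precise by differentiating $t\mapsto (w-t\mathrm{e}_1)/|w-t\mathrm{e}_1|^{n+1-\alpha}$ along the segment. No gaps to report.
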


\begin{proof}
By \eqref{eq:fund_theorem_calculus_frac}, we have
\begin{align*}
\int_{\R^n}|f(x + y) - f(x)|\,dx
& \le \mu_{n, - \alpha} \int_{\R^{n}} \int_{\R^{n}} \left | \frac{z}{|z|^{n + 1 - \alpha}} - \frac{z - y}{|z - y|^{n + 1- \alpha}} \right | |\nabla^{\alpha} f(x + z)| \, dz \, dx \\
& = \mu_{n, - \alpha} \|\nabla^{\alpha} f\|_{L^{1}(\R^{n}; \R^{n})} \int_{\R^{n}} \left | \frac{z}{|z|^{n + 1 - \alpha}} - \frac{z - y}{|z - y|^{n + 1- \alpha}} \right | \, dz. 
\end{align*}
Now we notice that the integral appearing in the last term is actually a radial function of~$y$. Indeed, let $\mathrm{R} \in {\rm SO}(n)$ be such that $\mathrm{R} y = |y| \nu$, for some $\nu \in \mathbb{S}^{n - 1}$. Making the change of variable $z = |y|\transp{\mathrm{R}}w$, we obtain
\begin{align*} 
\int_{\R^{n}} \left | \frac{z}{|z|^{n + 1 - \alpha}} - \frac{z - y}{|z - y|^{n + 1- \alpha}} \right | \, dz 
& = |y|^{\alpha} \int_{\R^{n}} \left | \frac{\transp{\mathrm{R}} w}{|w|^{n + 1 - \alpha}} - \frac{\transp{\mathrm{R}}(w - \nu)}{|w - \nu|^{n + 1- \alpha}} \right | \, dw \\
& = |y|^{\alpha} \int_{\R^{n}} \left | \frac{w}{|w|^{n + 1 - \alpha}} - \frac{(w - \nu)}{|w - \nu|^{n + 1- \alpha}} \right | \, dw.
\end{align*}
Since $\nu$ is arbitrary, we may choose $\nu = \mathrm{e}_{1}$. We now prove that 
\begin{equation*}
\int_{\R^{n}} \left | \frac{z}{|z|^{n + 1 - \alpha}} - \frac{z - \mathrm{e}_{1}}{|z - \mathrm{e}_{1}|^{n + 1- \alpha}} \right | \, dz < + \infty.
\end{equation*}
To this purpose, we notice that
\begin{align*}
\int_{B_{2}} \left | \frac{z}{|z|^{n + 1 - \alpha}} - \frac{z - \mathrm{e}_{1}}{|z - \mathrm{e}_{1}|^{n + 1- \alpha}} \right | \, dz & \le \int_{B_{2}} \frac{1}{|z|^{n - \alpha}} \, dz + \int_{B_{2}} \frac{1}{|z - \mathrm{e}_{1}|^{n - \alpha}} \, dz \\
& \le 2 \int_{B_{3}} \frac{1}{|z|^{n - \alpha}}\, dz = 2 n \omega_{n} \frac{3^{\alpha}}{\alpha}. 
\end{align*}
On the other hand, for all $z\in\R^n\setminus B_2$ we have
\begin{align*}
\frac{z - \mathrm{e}_{1}}{|z - \mathrm{e}_{1}|^{n + 1- \alpha}} - \frac{z}{|z|^{n + 1 - \alpha}} & = \int_{0}^{1} \frac{d}{dt} \left ( \frac{(z - t \mathrm{e}_{1})}{|z - t \mathrm{e}_{1}|^{n + 1 - \alpha}} \right) \, dt \\
& = \int_{0}^{1} - \frac{\mathrm{e}_{1}}{|z - t \mathrm{e}_{1}|^{n + 1 - \alpha}} + (n + 1 - \alpha) (z_{1} - t) \frac{(z - t \mathrm{e}_{1})}{|z - t \mathrm{e}_{1}|^{n + 3 - \alpha}} \, dt
\end{align*}
so that
\begin{align*}
\int_{\R^{n} \setminus B_{2}} \left | \frac{z}{|z|^{n + 1 - \alpha}} - \frac{z - \mathrm{e}_{1}}{|z - \mathrm{e}_{1}|^{n + 1- \alpha}} \right | \, dz & \le  \int_{\R^{n} \setminus B_{2}} \int_{0}^{1} \frac{|z - t \mathrm{e}_{1}|+  (n - \alpha +1)|z_{1} - t|}{|z - t \mathrm{e}_{1}|^{n + 2 - \alpha}} \, dt \, dz \\
& \le (n - \alpha + 2) \int_{0}^{1} \int_{\R^{n} \setminus B_{2}} \frac{1}{|z - t \mathrm{e}_{1}|^{n + 1 - \alpha}} \, dz \, dt \\
& \le (n - \alpha + 2) \int_{0}^{1} \int_{\R^{n} \setminus B_{1}} \frac{1}{|z|^{n + 1 - \alpha}} \, dz \, dt \\
& = (n - \alpha + 2)  \frac{n \omega_{n}}{1 - \alpha}.
\end{align*}
We conclude that
\begin{equation*}
\int_{\R^{n}} \left | \frac{z}{|z|^{n + 1 - \alpha}} - \frac{z - \mathrm{e}_{1}}{|z - \mathrm{e}_{1}|^{n + 1- \alpha}} \right | \, dz \le n \omega_{n} \left ( 2 \frac{3^{\alpha}}{\alpha} + \frac{(n - \alpha + 2)}{1 - \alpha} \right ) < + \infty.
\end{equation*}
Thus, the proof is complete.
\end{proof} 

Similarly to the classical case, as a consequence of the previous result we can prove the following key  estimate of the $L^1$-distance of a function in $BV^\alpha(\R^n)$ and its convolution with a mollifier.

\begin{corollary}[$L^1$-distance with convolution] \label{result:mollifier_conv_estimate_Holder}
Let $\alpha \in (0, 1)$. If $f \in BV^{\alpha}(\R^{n})$, then
\begin{equation} \label{eq:mollifier_conv_estimate_Holder}
\|\rho_{\eps} \ast f - f\|_{L^{1}(\R^{n})} 
\le \gamma_{n, \alpha}\, \eps^{\alpha} |D^{\alpha} f|(\R^{n})
\end{equation}
for all $\eps > 0$, where $(\rho_\eps)_{\eps>0}\subset C^\infty_c(\R^n)$ is as in~\eqref{eq:def_rho_eps} and $\gamma_{n, \alpha}$ as in \cref{prop:Holder_estimate}.
\end{corollary}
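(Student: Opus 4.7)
The plan is to first establish the bound for test functions via \cref{prop:Holder_estimate}, then pass to the limit using the density result \cref{result:approx_by_smooth_c_BV}.

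Step 1 (smooth case). Fix $f \in C^\infty_c(\R^n)$. Writing the convolution as $\rho_\eps \ast f(x) = \int_{\R^n}\rho_\eps(z)\,f(x-z)\,dz$ and using $\int_{\R^n}\rho_\eps\,dz=1$, we get
\begin{equation*}
\rho_\eps \ast f(x) - f(x) = \int_{\R^n} \rho_\eps(z)\,\bigl(f(x-z)-f(x)\bigr)\,dz.
\end{equation*}
Applying Minkowski's integral inequality in $L^1(\R^n)$ and then \cref{prop:Holder_estimate} (with $y = -z$), we obtain
\begin{equation*}
\|\rho_\eps \ast f - f\|_{L^1(\R^n)}
\le \int_{\R^n} \rho_\eps(z)\,\|f(\cdot - z) - f\|_{L^1(\R^n)}\,dz
\le \gamma_{n,\alpha}\,\|\nabla^\alpha f\|_{L^1(\R^n;\R^n)} \int_{\R^n} \rho_\eps(z)\,|z|^\alpha\,dz.
\end{equation*}
Since $\supp \rho_\eps \subset B_\eps$ and $\int_{\R^n}\rho_\eps=1$, the last integral is bounded by $\eps^\alpha$. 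For $f \in C^\infty_c(\R^n)$, \cref{result:duality} together with \cref{prop:frac_div_repr} and \cref{th:structure_BV_alpha} give $D^\alpha f = \nabla^\alpha f\,\Leb{n}$, so $\|\nabla^\alpha f\|_{L^1(\R^n;\R^n)} = |D^\alpha f|(\R^n)$, and hence
\begin{equation*}
\|\rho_\eps \ast f - f\|_{L^1(\R^n)} \le \gamma_{n,\alpha}\,\eps^\alpha\,|D^\alpha f|(\R^n)
\end{equation*}
for every $f\in C^\infty_c(\R^n)$.

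Step 2 (extension to $BV^\alpha$). Given $f\in BV^\alpha(\R^n)$, apply \cref{result:approx_by_smooth_c_BV} to produce $(f_k)_{k\in\N}\subset C^\infty_c(\R^n)$ with $f_k\to f$ in $L^1(\R^n)$ and $|D^\alpha f_k|(\R^n)\to|D^\alpha f|(\R^n)$. By Step~1,
\begin{equation*}
\|\rho_\eps \ast f_k - f_k\|_{L^1(\R^n)} \le \gamma_{n,\alpha}\,\eps^\alpha\,|D^\alpha f_k|(\R^n)
\end{equation*}
for every $k$. Since $\rho_\eps\ast f_k \to \rho_\eps\ast f$ in $L^1(\R^n)$ by Young's inequality and $f_k\to f$ in $L^1(\R^n)$, we may pass to the limit $k\to+\infty$ on both sides to conclude~\eqref{eq:mollifier_conv_estimate_Holder}.

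There is no real obstacle here: the key analytic input is \cref{prop:Holder_estimate} (already proved), and the rest is bookkeeping. The only subtle point is the identification $D^\alpha f_k = \nabla^\alpha f_k\,\Leb{n}$ for $f_k\in C^\infty_c(\R^n)$, but this is immediate from the duality relation~\eqref{eq:duality_smooth} together with \cref{th:structure_BV_alpha} and the fact that $\nabla^\alpha f_k\in L^1(\R^n;\R^n)$ by \cref{prop:frac_div_repr}.
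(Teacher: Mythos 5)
Your proposal is correct and follows essentially the same route as the paper: reduce to $f\in C^\infty_c(\R^n)$ via \cref{result:approx_by_smooth_c_BV}, then apply the translation estimate \eqref{eq:Holder_estimate} together with $\int_{\R^n}\rho_\eps(z)\,|z|^\alpha\,dz\le\eps^\alpha$ (the paper does this by Fubini rather than by invoking Minkowski's integral inequality, which is the same computation). Your explicit justification of $D^\alpha f_k=\nabla^\alpha f_k\,\Leb{n}$ for smooth compactly supported functions is a detail the paper leaves implicit, and it is correct.
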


\begin{proof}
By \cref{result:approx_by_smooth_c_BV}, it is enough to prove~\eqref{eq:mollifier_conv_estimate_Holder} for $f \in C^{\infty}_{c}(\R^{n})$. By~\eqref{eq:Holder_estimate}, we get
\begin{align*}
\|\rho_{\eps} \ast f - f\|_{L^{1}(\R^{n})} 
& \le \int_{\R^{n}} \int_{\R^{n}} \rho(y) |f(x - \eps y) - f(x)| \, dy \, dx \\
& = \int_{\R^{n}} \rho(y)\int_{\R^{n}}  |f(x - \eps y) - f(x)| \, dx \, dy \\
& \le \gamma_{n, \alpha}\, \eps^{\alpha} \|\nabla^{\alpha}f\|_{L^{1}(\R^{n}; \R^{n})} \int_{B_1} \rho(y)  |y|^{\alpha} \, dy\\
& \le \gamma_{n, \alpha}\, \eps^{\alpha} \|\nabla^{\alpha}f\|_{L^{1}(\R^{n}; \R^{n})}
\end{align*}
and the proof is complete.
\end{proof}

We are now ready to prove following compactness result for the space $BV^\alpha(\R^n)$.

\begin{theorem}[Compactness for $BV^\alpha(\R^n)$]\label{result:compactness_BV_alpha}
Let $\alpha \in (0, 1)$. If $(f_k)_{k\in\N}\subset BV^\alpha(\R^n)$ satisfies
\begin{equation*}
\sup_{k\in\N}\|f_k\|_{BV^\alpha(\R^n)}<+\infty,
\end{equation*}
then there exists a subsequence $(f_{k_j})_{j\in\N}\subset BV^\alpha(\R^n)$ and a function $f\in L^1(\R^n)$ such that
\begin{equation*}
f_{k_j}\to f
\text{ in } L^1_{\loc}(\R^n)
\end{equation*}
as $j\to+\infty$.
\end{theorem}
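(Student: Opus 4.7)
The plan is to apply the classical Riesz-Fréchet-Kolmogorov compactness criterion in $L^1(B_R)$ for each $R > 0$, then diagonalize. The two ingredients required are uniform $L^1$-boundedness (given) and equicontinuity of translations in $L^1(\R^n)$ uniform in $k$.

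Only the second ingredient needs real work. I would first extend the translation estimate \eqref{eq:Holder_estimate} of \cref{prop:Holder_estimate} from $C^\infty_c(\R^n)$ to the full space $BV^\alpha(\R^n)$. Given $f \in BV^\alpha(\R^n)$, \cref{result:approx_by_smooth_c_BV} yields a sequence $(g_j)_{j\in\N} \subset C^\infty_c(\R^n)$ with $g_j \to f$ in $L^1(\R^n)$ and $\|\nabla^\alpha g_j\|_{L^1(\R^n;\R^n)} = |D^\alpha g_j|(\R^n) \to |D^\alpha f|(\R^n)$. Since translation is an $L^1$-isometry, $\|g_j(\cdot+y) - g_j\|_{L^1(\R^n)} \to \|f(\cdot+y) - f\|_{L^1(\R^n)}$, and passing to the limit in \eqref{eq:Holder_estimate} gives
\begin{equation*}
\|f(\cdot + y) - f\|_{L^1(\R^n)} \le \gamma_{n,\alpha}\, |y|^\alpha\, |D^\alpha f|(\R^n) \quad \text{for all } y \in \R^n, \ f \in BV^\alpha(\R^n).
\end{equation*}

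Setting $M := \sup_k \|f_k\|_{BV^\alpha(\R^n)} < +\infty$, this produces the uniform estimate $\sup_k \|f_k(\cdot+y) - f_k\|_{L^1(\R^n)} \le \gamma_{n,\alpha} M |y|^\alpha \to 0$ as $|y| \to 0$. Combined with $\sup_k \|f_k\|_{L^1(B_R)} \le M$, the Riesz-Fréchet-Kolmogorov theorem gives precompactness of $\{f_k|_{B_R}\}_k$ in $L^1(B_R)$ for each $R > 0$. A diagonal extraction over $R = 1, 2, \ldots$ then yields a subsequence $(f_{k_j})_{j\in\N}$ and a function $f \in L^1_{\loc}(\R^n)$ with $f_{k_j} \to f$ in $L^1_{\loc}(\R^n)$. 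Finally, extracting a further a.e.-convergent subsequence and applying Fatou's Lemma gives $\|f\|_{L^1(\R^n)} \le \liminf_j \|f_{k_j}\|_{L^1(\R^n)} \le M$, so $f \in L^1(\R^n)$ as required.

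The essentially only nontrivial step is the extension of the translation estimate, but it follows routinely from the $C^\infty_c$-density result. Note also that one cannot in general upgrade this to $L^1(\R^n)$ convergence without some tightness hypothesis, as translates $f_k(x) = g(x - k\mathrm{e}_1)$ of a fixed $g \in C^\infty_c(\R^n)$ clearly illustrate.
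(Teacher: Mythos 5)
Your proof is correct, and every step you leave as ``routine'' does in fact go through: for $g_j\in C^\infty_c(\R^n)$ one has $D^\alpha g_j=\nabla^\alpha g_j\,\Leb{n}$ by \cref{result:Sobolev_subset_BV}, so $\|\nabla^\alpha g_j\|_{L^1(\R^n;\R^n)}=|D^\alpha g_j|(\R^n)\to|D^\alpha f|(\R^n)$, and since translation is an $L^1$-isometry the estimate \eqref{eq:Holder_estimate} passes to the limit and holds for every $f\in BV^\alpha(\R^n)$; the Riesz--Fr\'echet--Kolmogorov criterion, diagonal extraction and Fatou argument are then exactly as you say. The route is, however, genuinely different from the paper's. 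The paper follows the classical $BV$ compactness proof (in the spirit of \cite{AFP00}*{Theorem~3.23}): it mollifies, shows that $(\rho_\eps\ast f_k)_k$ is locally equibounded and equicontinuous for each fixed $\eps$, extracts by Arzel\`a--Ascoli and a diagonal argument, and then uses \cref{result:mollifier_conv_estimate_Holder} --- the uniform bound $\|\rho_\eps\ast f_k-f_k\|_{L^1(\R^n)}\le\gamma_{n,\alpha}\,\eps^\alpha|D^\alpha f_k|(\R^n)$, itself a consequence of the same translation estimate \eqref{eq:Holder_estimate} --- to show the extracted subsequence is Cauchy in $L^1(U)$ for every $U\Subset\R^n$. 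So both proofs rest on the same key ingredient coming from the fractional Fundamental Theorem of Calculus, but you quote the Fr\'echet--Kolmogorov theorem as a black box (which requires your preliminary density step to extend \eqref{eq:Holder_estimate} to all of $BV^\alpha$), whereas the paper in effect re-proves that compactness criterion by hand via mollification, keeping the argument self-contained and needing the translation estimate only for $C^\infty_c$ functions through \cref{result:mollifier_conv_estimate_Holder}. Your approach is shorter on the page; the paper's avoids invoking an external compactness theorem. Your closing remark that global $L^1(\R^n)$ convergence fails without tightness (translates of a fixed bump) is also correct.
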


\begin{proof}
We follow the line of the proof of~\cite{AFP00}*{Theorem~3.23}. Let $(\rho_\eps)_{\eps>0}\subset C^\infty_c(\R^n)$ be as in~\eqref{eq:def_rho_eps} and set $f_{k,\eps}:=\rho_\eps*f_k$. Clearly $f_{k,\eps}\in C^\infty(\R^n)$ and 
\begin{equation*}
\|f_{k,\eps}\|_{L^\infty(U)}\le\|\rho_\eps\|_{L^\infty(\R^n)} \|f_k\|_{L^1(\R^n)},
\qquad
\|\nabla f_{k,\eps}\|_{L^\infty(U; \R^{n})}\le\|\nabla\rho_{\eps}\|_{L^\infty(\R^n; \R^{n})}\|f_k\|_{L^1(\R^n)}
\end{equation*}
for any open set $U\Subset\R^n$. Thus $(f_{k,\eps})_{k\in\N}$ is locally equibounded and locally equicontinuous for each $\eps>0$ fixed. By a diagonal argument, we can find a sequence $(k_j)_{j\in\N}$ such that $(f_{k_j,\eps})_{j\in\N}$ converges in $C(U)$ for any open set $U\Subset\R^n$ with $\eps=1/p$ for all $p\in\N$. By \cref{result:mollifier_conv_estimate_Holder}, we thus get
\begin{equation*}
\begin{split}
\limsup_{h,j\to+\infty}\int_U|f_{k_h}-f_{k_j}|\,dx
&=\limsup_{h,j\to+\infty}\int_U|f_{{k_h},1/p}-f_{{k_j},1/p}|\,dx\\
&\quad+\limsup_{h,j\to+\infty}\int_U|f_{k_h}-f_{{k_h},1/p}|+|f_{k_j}-f_{{k_j},1/p}|\,dx\\
&\le\frac{2\gamma_{n,\alpha}}{p^\alpha}\,\sup_{k\in\N}|D^\alpha f_k|(\R^n)
\end{split}
\end{equation*}
for all open set $U\Subset\R^n$. Since $p\in\N$ is arbitrary and $L^1(U)$ is a Banach space, this shows that $(f_{k_j})_{j\in\N}$ converges in $L^1(U)$ for all open set $U\Subset\R^n$. Up to extract a further subsequence (which we do not relabel for simplicity), we also have that $f_{k_j}(x)\to f(x)$ for $\Leb{n}$-a.e.\ $x\in\R^n$. By Fatou's Lemma, we can thus infer that
\begin{equation*}
\|f\|_{L^1(\R^n)}
\le\liminf_{j\to+\infty}\|f_{k_j}\|_{L^1(\R^n)}
\le\sup_{k\in\N}\|f_k\|_{BV^\alpha(\R^n)}.
\end{equation*}
Hence $f\in L^1(\R^n)$ and the proof is complete. 
\end{proof}

\begin{remark}[Improvement of~\cite{SS15}*{Theorem~2.1}]
The argument presented above can be used to extend the validity of~\cite{SS15}*{Theorem~2.1} to all exponents $p\in[1,\frac{n}{\alpha})$, since our strategy does not rely on the boundedness of Riesz's transform but only on the inversion formula~\eqref{eq:div_-alpha_nabla_alpha_id}. We leave the details of the proof of this improvement of~\cite{SS15}*{Theorem~2.1} to the interested reader. 
\end{remark}

\subsection{The inclusion \texorpdfstring{$W^{\alpha,1}(\R^n)\subset BV^\alpha(\R^n)$}{Wˆ{alpha,1}(Rˆn) in BVˆalpha(Rˆn)}}

As in the classical case, fractional $BV$ functions naturally include fractional Sobolev functions. 

\begin{theorem}[$W^{\alpha,1}(\R^n)\subset BV^\alpha(\R^n)$]\label{result:Sobolev_subset_BV} 
Let $\alpha\in(0,1)$. If $f\in W^{\alpha,1}(\R^n)$ then $f\in BV^\alpha(\R^n)$, with
\begin{equation}\label{eq:BV_subset_Sobolev} 
|D^{\alpha} f|(\R^n) \le \mu_{n, \alpha} [ f ]_{W^{\alpha, 1}(\R^n)}
\end{equation}
and
\begin{equation}\label{eq:duality_Sobolev_frac}
\int_{\R^n}f\,\div^\alpha\phi\, dx=-\int_{\R^n}\phi\cdot\nabla^\alpha f\, dx
\end{equation}
for all $\phi\in\Lip_c(\R^n;\R^n)$, so that $D^\alpha f=\nabla^\alpha f\,\Leb{n}$. 

Moreover, if $f \in BV(\R^{n})$, then $f \in W^{\alpha, 1}(\R^{n})$ for any $\alpha \in (0, 1)$, with 
\begin{equation} \label{eq:W_alpha_norm_bound_BV}
\|f\|_{W^{\alpha,1}(\R^n)} \le c_{n,\alpha}\|f\|_{BV(\R^n)} 
\end{equation}
for some $c_{n, \alpha} > 0$ and
\begin{equation} \label{eq:weak_frac_grad_repr} 
\nabla^{\alpha} f(x) 
= \frac{\mu_{n, \alpha}}{n + \alpha - 1} \int_{\R^n} \frac{d D f(y)}{|y - x|^{n + \alpha - 1}} 
\end{equation}
for $\Leb{n}$-a.e.\ $x\in\R^n$.  
\end{theorem}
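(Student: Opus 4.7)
The statement splits into three items: the inclusion $W^{\alpha,1}(\R^n)\subset BV^\alpha(\R^n)$ together with the identification $D^\alpha f = \nabla^\alpha f\,\Leb{n}$ and duality~\eqref{eq:duality_Sobolev_frac}; the reverse inclusion $BV(\R^n)\subset W^{\alpha,1}(\R^n)$ with estimate~\eqref{eq:W_alpha_norm_bound_BV}; and the Riesz-potential representation~\eqref{eq:weak_frac_grad_repr}. I would handle them in this order, the third being the delicate one.

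For the first item, \cref{result:Sobolev_frac_enough} combined with the elementary bound~\eqref{intro_eq:frac_Sobolev_enough} shows that $\nabla^\alpha$ is bounded from $W^{\alpha,1}$ into $L^1$, with $\|\nabla^\alpha g\|_{L^1} \le \mu_{n,\alpha}\,[g]_{W^{\alpha,1}}$. I would approximate $f\in W^{\alpha,1}(\R^n)$ by $f_k\in C^\infty_c(\R^n)$ converging to $f$ in $W^{\alpha,1}$-norm (the density is obtained by combining mollification with truncation via the cut-offs in~\eqref{eq:def_cut-off}), apply the duality~\cref{result:duality} to obtain $\int f_k\,\div^\alpha\phi\,dx = -\int\phi\cdot\nabla^\alpha f_k\,dx$ for every $\phi\in\Lip_c(\R^n;\R^n)$, and pass to the limit in $k$: the left-hand side converges because $f_k\to f$ in $L^1$ and $\div^\alpha\phi\in L^\infty$ by~\cref{prop:frac_div_repr}, while the right-hand side converges because $\phi$ is bounded and $\nabla^\alpha f_k\to \nabla^\alpha f$ in $L^1$ by the linear bound above. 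This produces~\eqref{eq:duality_Sobolev_frac}. Restricting to $\phi\in C^\infty_c$ with $\|\phi\|_\infty\le 1$ and passing to the supremum gives $f \in BV^\alpha(\R^n)$ together with~\eqref{eq:BV_subset_Sobolev}, while the identification $D^\alpha f = \nabla^\alpha f\,\Leb{n}$ follows from uniqueness in~\cref{th:structure_BV_alpha}.

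For the second item I would invoke the classical translation estimate $\int_{\R^n}|f(x+h) - f(x)|\,dx \le |h|\,|Df|(\R^n)$ valid for $f\in BV(\R^n)$. After the substitution $y=x+h$ one has
\begin{equation*}
[f]_{W^{\alpha,1}(\R^n)} = \int_{\R^n}\frac{1}{|h|^{n+\alpha}}\int_{\R^n}|f(x+h) - f(x)|\,dx\,dh,
\end{equation*}
and splitting the outer integral at $|h|=1$ (using the translation estimate on $\set{|h|\le 1}$ and the trivial bound $2\|f\|_{L^1}$ on $\set{|h|>1}$) reduces the problem to the convergent radial integrals $\int_0^1 r^{-\alpha}\,dr$ and $\int_1^{+\infty} r^{-1-\alpha}\,dr$, immediately yielding~\eqref{eq:W_alpha_norm_bound_BV}.

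The delicate step is~\eqref{eq:weak_frac_grad_repr}. Since $f\in BV\subset W^{\alpha,1}$, the first item applies and gives $D^\alpha f = \nabla^\alpha f\,\Leb{n}$ and the duality $\int f\,\div^\alpha\phi\,dx = -\int\phi\cdot\nabla^\alpha f\,dx$ for $\phi\in C^\infty_c(\R^n;\R^n)$. I would then use~\cref{rem:equiv_def_Riesz_potential} to rewrite $\div^\alpha\phi = \mathrm{div}(I_{1-\alpha}\phi)$, set $\psi := I_{1-\alpha}\phi\in C^\infty(\R^n;\R^n)\cap L^\infty$, and record its polynomial decay $|\psi(x)|\lesssim |x|^{1-n-\alpha}$ at infinity (read off from the convolution kernel for $|x|$ much larger than $\diam(\supp\phi)$). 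Truncating $\psi_R := \eta_R\psi\in C^\infty_c$ with the cut-offs from~\eqref{eq:def_cut-off} and applying the classical Gauss--Green identity for BV gives $\int f\,\mathrm{div}\,\psi_R\,dx = -\int \psi_R\,dDf$. The hard analytical point is the Leibniz error $\int f\,(\nabla\eta_R\cdot\psi)\,dx$ that arises after expanding $\mathrm{div}\,\psi_R$: on $\supp\nabla\eta_R\subset B_{R+1}\setminus B_R$ one has $|\psi|\lesssim R^{1-n-\alpha}$ and $|\nabla\eta_R|\le 2$, so this term is bounded by $CR^{1-n-\alpha}\|f\|_{L^1}$ and vanishes as $R\to+\infty$; dominated convergence on the remaining terms (using $\psi\in L^\infty$, $|Df|$ finite, $\div^\alpha\phi\in L^\infty$, $f\in L^1$) produces $\int f\,\div^\alpha\phi\,dx = -\int I_{1-\alpha}\phi\cdot dDf$. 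Substituting the explicit kernel of $I_{1-\alpha}$, swapping integrations via Fubini--Tonelli (justified by combining~\cref{result:riesz_kernel_estimate} with the finiteness of $|Df|$), and matching with $-\int\phi\cdot\nabla^\alpha f\,dx$ gives~\eqref{eq:weak_frac_grad_repr} by the arbitrariness of $\phi\in C^\infty_c$.
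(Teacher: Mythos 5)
Your proof is correct, but it departs from the paper's argument at each of the three steps, in ways worth noting. For the inclusion $W^{\alpha,1}(\R^n)\subset BV^\alpha(\R^n)$, the paper uses no density argument at all: it proves \eqref{eq:duality_Sobolev_frac} directly for $f\in W^{\alpha,1}(\R^n)$ by dominated convergence, Fubini's Theorem and the cancellation \eqref{eq:cancellation_kernel}, exactly as in the smooth duality lemma; your route through approximation by $C^\infty_c$ functions (the appendix density result, \cref{remark:density_test_in_frac_Sobolev}) together with the continuity of $\nabla^\alpha$ from $W^{\alpha,1}$ to $L^1$ is equally valid and more modular, at the price of invoking a density theorem the paper proves only in the appendix. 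For $BV(\R^n)\subset W^{\alpha,1}(\R^n)$, the paper approximates $f$ by smooth $BV$ functions, invokes the embedding $W^{1,1}(\R^n)\subset W^{\alpha,1}(\R^n)$ and concludes by Fatou, whereas your translation estimate $\int_{\R^n}|f(x+h)-f(x)|\,dx\le |h|\,|Df|(\R^n)$ combined with the split of the Gagliardo seminorm at $|h|=1$ is more self-contained and gives an explicit constant. For \eqref{eq:weak_frac_grad_repr}, both arguments ultimately transfer the Riesz kernel onto $\phi$ and integrate by parts against $Df$, but the paper works from the representation \eqref{eq:frac_div_repres} and applies the classical $BV$ integration by parts at each fixed translation inside a double application of Fubini's Theorem (so every integrand is compactly supported in the relevant variable), while you apply it globally to $\psi=I_{1-\alpha}\phi$, which is not compactly supported, and compensate with the cut-off $\eta_R$, the decay $|\psi(x)|\lesssim|x|^{1-n-\alpha}$ and the vanishing Leibniz error of order $R^{1-n-\alpha}$; the two mechanisms are interchangeable, your version making the decay of the potential explicit and the paper's avoiding any truncation.
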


\begin{proof} 
Let $f\in W^{\alpha,1}(\R^n)$. For any $\phi\in\Lip_{c}(\R^n;\R^{n})$, by Lebesgue's Dominated Convergence Theorem, Fubini's Theorem and \cref{result:Sobolev_frac_enough}, and recalling~\eqref{eq:cancellation_kernel}, we can compute
\begin{align*} 
\int_{\R^{n}} f \div^{\alpha} \varphi \, dx  
& = \mu_{n, \alpha} \lim_{\eps \to 0} \int_{\R^{n}} \int_{\{|x -y| > \eps \}} f(x) \frac{(y - x) \cdot \varphi(y) }{|y - x|^{n + \alpha + 1}} \, dy \, dx \\
& = \mu_{n, \alpha} \lim_{\eps \to 0} \int_{\R^{n}} \int_{\{|x -y| > \eps \}} \varphi(y) \cdot \frac{(y - x) f(x) }{|y - x|^{n + \alpha + 1}} \, dx \, dy\\
& = \mu_{n, \alpha} \lim_{\eps \to 0} \int_{\R^{n}} \int_{\{|x -y| > \eps \}} \varphi(y) \cdot \frac{(y - x) (f(x) - f(y)) }{|y - x|^{n + \alpha + 1}} \, dx \, dy\\
& = - \int_{\R^{n}} \varphi(y) \cdot\nabla^\alpha f(y) \, dy.
\end{align*}
This proves~\eqref{eq:duality_Sobolev_frac}, so that $f\in BV^\alpha(\R^n)$. Inequality~\eqref{eq:BV_subset_Sobolev} follows as in \cref{result:Sobolev_frac_enough}.

Now let $f \in BV(\R^{n})$. We claim that $f \in W^{\alpha, 1}(\R^{n})$. Indeed, take $(f_k)_{k\in\N}\subset C^\infty(\R^n)\cap BV(\R^n)$ such that $f_k\to f$ in $L^1(\R^n)$ and $\|\nabla f_k\|_{L^1(\R^n; \R^{n})}\to|Df|(\R^n)$ as $k\to+\infty$ (for instance, see~\cite{EG15}*{Theorem~5.3}). Since $W^{1,1}(\R^n) \subset W^{\alpha,1}(\R^n)$ (the proof of this inclusion is similar to the one of~\cite{DiNPV12}*{Proposition~2.2}, for example), by Fatou's Lemma we get that
\begin{equation*}
\begin{split}
\|f\|_{W^{\alpha,1}(\R^n)}
&\le\liminf_{k\to+\infty}\|f_k\|_{W^{\alpha,1}(\R^n)}\\
&\le c_{n,\alpha}\liminf_{k\to+\infty}\|f_k\|_{W^{1,1}(\R^n)}\\
&=c_{n,\alpha}\lim_{k\to+\infty}(\|f_k\|_{L^1(\R^n)}+|Df_k|(\R^n))\\
&=c_{n,\alpha}\|f\|_{BV(\R^n)}.
\end{split}
\end{equation*} 
Since $|Df|(\R^n)<+\infty$, by \cref{result:riesz_kernel_estimate} the function in~\eqref{eq:weak_frac_grad_repr} is well defined in~$L^1_{\loc}(\R^n)$. Fix $\phi \in C^{\infty}_{c}(\R^n; \R^{n})$. By \cref{prop:frac_div_repr}, we can write
\begin{equation*}
\int_{\R^{n}} f(x) \, \div^{\alpha} \phi(x) \, dx  
= \frac{\mu_{n, \alpha}}{n + \alpha - 1} \int_{\R^{n}} \int_{\R^n} f(x)\,\frac{\div \phi(y)}{|y - x|^{n + \alpha-1}} \, dy \, dx.
\end{equation*}
Recalling \cref{result:riesz_kernel_estimate}, applying Fubini's Theorem twice and integrating by parts, we obtain
\begin{align*}
\int_{\R^{n}} \int_{\R^n} f(x)\,\frac{\div \phi(y)}{|y - x|^{n + \alpha-1}} \, dy \, dx 
& = \int_{\R^{n}} \int_{\R^n} f(x)\,\frac{\div_y \phi(x+y)}{|y|^{n + \alpha-1}} \, dy \, dx \\ 
& = \int_{\R^{n}} \int_{\R^n} f(x)\,\frac{\div_x \phi(x+y)}{|y|^{n + \alpha-1}} \, dy \, dx \\ 
& = \int_{\R^{n}} |y|^{1-n - \alpha}\int_{\R^n} f(x)\,\div\phi(x+y) \, dx \, dy \\ 
& = - \int_{\R^n} |y|^{1 - n - \alpha} \int_{\R^{n}} \phi(y + x) \cdot \, d Df(x) \, dy \\
& = - \int_{\R^{n}} \int_{\R^n} \frac{\phi(y)}{|y - x|^{n + \alpha -1}} \, dy \cdot \, d Df(x) \\
& = - \int_{\R^{n}} \phi(y) \cdot \int_{\R^n} \frac{d Df(x)}{|x-y|^{n + \alpha -1}} \, dy.
\end{align*}
Thus we conclude that
\begin{equation*}  
\int_{\R^{n}} f(x) \, \div^{\alpha} \phi(x) \, dx 
= 
- \frac{\mu_{n, \alpha}}{n + \alpha - 1}  \int_{\R^{n}} \phi(y) \cdot \int_{\R^n} \frac{d Df(x)}{|x-y|^{n + \alpha -1}} \, dy.
\end{equation*}
Recalling~\eqref{eq:duality_Sobolev_frac}, this proves~\eqref{eq:weak_frac_grad_repr} and the proof is complete.
\end{proof}

\subsection{The space \texorpdfstring{$S^{\alpha,p}(\R^n)$}{Sˆ{alpha,p}(Rˆn)} and the inclusion \texorpdfstring{$S^{\alpha,1}(\R^n)\subset BV^\alpha(\R^n)$}{Sˆ{alpha,1}(Rˆn) in BVˆalpha(Rˆn)}}
\label{subsec:space_S_alpha_p}
 
It is now tempting to approach fractional Sobolev spaces from a distributional point of view. Recalling \cref{prop:frac_div_repr}, we can give the following definition.

\begin{definition}[Weak $\alpha$-gradient]
Let $\alpha\in(0,1)$, $p\in[1,+\infty]$, $f\in L^p(\R^n)$. We say that $g\in L^1_{\loc}(\R^n;\R^n)$ is a \emph{weak $\alpha$-gradient} of $f$, and we write $g=\nabla^\alpha_w f$, if 
\begin{equation*}
\int_{\R^n}f\,\div^\alpha\phi\, dx
=-\int_{\R^n}g\cdot\phi\, dx
\end{equation*} 
for all $\phi\in C^\infty_c(\R^n;\R^n)$. 
\end{definition} 

For $\alpha\in(0,1)$ and $p\in[1,+\infty]$, we can thus introduce the \emph{distributional fractional Sobolev space} $(S^{\alpha,p}(\R^n),\|\cdot\|_{S^{\alpha,p}(\R^n)})$ letting
\begin{equation}\label{eq:def_distrib_frac_Sobolev}
S^{\alpha,p}(\R^n):=\set*{f\in L^p(\R^n) : \exists\, \nabla^\alpha_w f \in L^p(\R^n;\R^n)}
\end{equation}
and
\begin{equation}\label{eq:def_distrib_frac_Sobolev_norm}
\|f\|_{S^{\alpha,p}(\R^n)}:=\|f\|_{L^p(\R^n)}+\|\nabla^\alpha_w f\|_{L^p(\R^n; \R^{n})},
\qquad
\forall f\in S^{\alpha,p}(\R^n).
\end{equation}

We omit the standard proof of the following result.

\begin{proposition}[$S^{\alpha,p}$ is a Banach space]
Let $\alpha\in(0,1)$ and $p\in[1,+\infty]$. The space $(S^{\alpha,p}(\R^n),\|\cdot\|_{S^{\alpha,p}(\R^n)})$ is a Banach space.
\end{proposition}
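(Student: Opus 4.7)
The proof is the standard one for distributional Sobolev-type spaces; the plan is to reduce everything to the completeness of $L^p$ and then pass to the limit in the weak formulation defining $\nabla^\alpha_w$.

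First I would verify that $\|\cdot\|_{S^{\alpha,p}(\R^n)}$ is indeed a norm. Non-negativity, positive homogeneity and the triangle inequality are inherited directly from the corresponding properties of the $L^p$-norm; the only subtle point is the uniqueness of the weak $\alpha$-gradient, which follows by a routine argument: if $g_1, g_2 \in L^1_{\loc}(\R^n;\R^n)$ both satisfy the defining identity for the same $f$, then $\int (g_1-g_2)\cdot\phi\,dx=0$ for every $\phi\in C^\infty_c(\R^n;\R^n)$, hence $g_1=g_2$ almost everywhere by the Fundamental Lemma of the Calculus of Variations. Linearity of the map $f\mapsto\nabla^\alpha_w f$ is immediate from the defining identity, so $S^{\alpha,p}(\R^n)$ is a normed vector space.

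Next I would check completeness. Let $(f_k)_{k\in\N}\subset S^{\alpha,p}(\R^n)$ be a Cauchy sequence. By definition of the norm, $(f_k)_{k\in\N}$ is Cauchy in $L^p(\R^n)$ and $(\nabla^\alpha_w f_k)_{k\in\N}$ is Cauchy in $L^p(\R^n;\R^n)$. Since $L^p$ is complete, there exist $f\in L^p(\R^n)$ and $g\in L^p(\R^n;\R^n)$ such that $f_k\to f$ in $L^p(\R^n)$ and $\nabla^\alpha_w f_k\to g$ in $L^p(\R^n;\R^n)$. It remains to identify $g$ with $\nabla^\alpha_w f$, and this is the only step with any content.

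To this end, fix $\phi\in C^\infty_c(\R^n;\R^n)$. By \cref{prop:frac_div_repr} we have $\div^\alpha\phi\in L^1(\R^n)\cap L^\infty(\R^n)$, hence by interpolation $\div^\alpha\phi\in L^{q}(\R^n)$ for every $q\in[1,+\infty]$, and in particular for the conjugate exponent $p'$ of $p$ (with the usual conventions $1'=\infty$, $\infty'=1$). By Hölder's inequality,
\begin{equation*}
\abs*{\int_{\R^n}(f_k-f)\,\div^\alpha\phi\,dx}\le \|f_k-f\|_{L^p(\R^n)}\|\div^\alpha\phi\|_{L^{p'}(\R^n)}\longto 0
\end{equation*}
and
\begin{equation*}
\abs*{\int_{\R^n}(\nabla^\alpha_w f_k-g)\cdot\phi\,dx}\le\|\nabla^\alpha_w f_k-g\|_{L^p(\R^n;\R^n)}\|\phi\|_{L^{p'}(\R^n;\R^n)}\longto 0
\end{equation*}
as $k\to+\infty$. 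Passing to the limit in the identity
\begin{equation*}
\int_{\R^n}f_k\,\div^\alpha\phi\,dx=-\int_{\R^n}\nabla^\alpha_w f_k\cdot\phi\,dx,
\end{equation*}
we obtain $\int_{\R^n}f\,\div^\alpha\phi\,dx=-\int_{\R^n}g\cdot\phi\,dx$ for every $\phi\in C^\infty_c(\R^n;\R^n)$. Therefore $g=\nabla^\alpha_w f$, so $f\in S^{\alpha,p}(\R^n)$ and $f_k\to f$ in $S^{\alpha,p}(\R^n)$.

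The only mildly nontrivial ingredient is the fact that $\div^\alpha\phi\in L^{p'}(\R^n)$ for every $\phi\in C^\infty_c(\R^n;\R^n)$ and every $p\in[1,+\infty]$, which is precisely what the representation formula and estimates in \cref{prop:frac_div_repr} guarantee. Everything else is the abstract pattern used to show that Sobolev-type spaces defined via testing against smooth compactly supported functions are Banach.
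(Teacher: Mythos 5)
Your proof is correct and is precisely the standard argument the paper has in mind when it omits the proof: reduce to completeness of $L^p$ and pass to the limit in the defining identity, using that $\div^\alpha\phi\in L^1(\R^n)\cap L^\infty(\R^n)\subset L^{p'}(\R^n)$ for $\phi\in C^\infty_c(\R^n;\R^n)$ by \cref{prop:frac_div_repr}. The preliminary check that the weak $\alpha$-gradient is unique (so the norm is well defined) is also handled correctly.
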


We leave the proof of the following interpolation result to the reader.

\begin{lemma}[Interpolation]
Let $\alpha\in(0,1)$ and $p_1,p_2\in[1,+\infty]$, with $p_1\le p_2$. Then
\begin{equation*}
S^{\alpha,p_1}(\R^n)\cap S^{\alpha,p_2}(\R^n)\subset S^{\alpha,q}(\R^n)
\end{equation*}
with continuous embedding for all $q\in[p_1,p_2]$.
\end{lemma}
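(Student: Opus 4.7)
The plan is to reduce the statement to the standard $L^p$-interpolation inequality applied separately to $f$ and to its weak $\alpha$-gradient, after observing that the weak $\alpha$-gradient is unique and hence coincides whether we view $f$ as an element of $S^{\alpha,p_1}(\R^n)$ or of $S^{\alpha,p_2}(\R^n)$.

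First, I would record uniqueness: if $f\in L^{p}(\R^n)$ admits a weak $\alpha$-gradient $g\in L^1_{\loc}(\R^n;\R^n)$, then $g$ is uniquely determined $\Leb{n}$-a.e. Indeed, if $g_1,g_2$ were two such weak $\alpha$-gradients, then $\int_{\R^n}(g_1-g_2)\cdot\phi\,dx=0$ for every $\phi\in C^\infty_c(\R^n;\R^n)$, and the fundamental lemma of the calculus of variations gives $g_1=g_2$ a.e. Consequently, for $f\in S^{\alpha,p_1}(\R^n)\cap S^{\alpha,p_2}(\R^n)$ there is a single vector field $\nabla^\alpha_w f$ which lies in both $L^{p_1}(\R^n;\R^n)$ and $L^{p_2}(\R^n;\R^n)$.

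Next, for $q\in[p_1,p_2]$ choose $\theta\in[0,1]$ so that $\tfrac{1}{q}=\tfrac{\theta}{p_1}+\tfrac{1-\theta}{p_2}$. The classical $L^p$-interpolation inequality (a direct consequence of H\"older's inequality, with the usual conventions when $p_2=+\infty$) gives
\begin{equation*}
\|h\|_{L^q(\R^n;\R^m)}\le\|h\|_{L^{p_1}(\R^n;\R^m)}^\theta\,\|h\|_{L^{p_2}(\R^n;\R^m)}^{1-\theta}
\end{equation*}
for any measurable $h$, in particular for $h=f$ and $h=\nabla^\alpha_w f$ (with $m=1$ and $m=n$ respectively). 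Both conclusions belong to $L^q$, so $f\in S^{\alpha,q}(\R^n)$ with the same weak $\alpha$-gradient $\nabla^\alpha_w f$.

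Finally, to prove the continuity of the embedding, I apply Young's inequality $a^\theta b^{1-\theta}\le \theta a+(1-\theta)b$ (for $a,b\ge 0$) to both interpolation estimates and sum them, obtaining
\begin{equation*}
\|f\|_{S^{\alpha,q}(\R^n)}=\|f\|_{L^q(\R^n)}+\|\nabla^\alpha_w f\|_{L^q(\R^n;\R^n)}\le \|f\|_{S^{\alpha,p_1}(\R^n)}+\|f\|_{S^{\alpha,p_2}(\R^n)},
\end{equation*}
which is the desired continuous embedding. There is essentially no obstacle here; the only minor care needed is the uniqueness step (so that writing $\nabla^\alpha_w f$ is unambiguous across the two spaces) and handling the borderline cases $p_1=1$ or $p_2=+\infty$ in H\"older's inequality, both of which are routine.
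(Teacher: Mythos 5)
Your proof is correct: the paper leaves this lemma to the reader, and your argument (uniqueness of the weak $\alpha$-gradient via the fundamental lemma of the calculus of variations, then the $L^p$-interpolation inequality applied to $f$ and to $\nabla^\alpha_w f$, with Young's inequality to obtain the norm bound) is precisely the standard route the authors intend. The handling of the endpoint cases $p_1=1$ and $p_2=+\infty$ is indeed routine, so there is nothing to add.
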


Taking advantage of the techniques developed in the study of the space $BV^\alpha(\R^n)$ above, we are able to prove the following approximation result.

\begin{theorem}[Approximation by $C^\infty\cap S^{\alpha,p}$ functions]\label{result:approx_by_smooth_S_alpha_p}
Let $\alpha\in(0,1)$ and $p\in[1,+\infty)$. The set $C^\infty(\R^n)\cap S^{\alpha,p}(\R^n)$ is dense in $S^{\alpha,p}(\R^n)$.
\end{theorem}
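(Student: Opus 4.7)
The natural strategy is mollification, exactly as in the classical Sobolev case and as already used in \cref{result:approx_by_smooth_BV}. Fix $f \in S^{\alpha,p}(\R^n)$, let $(\rho_\eps)_{\eps>0} \subset C^\infty_c(\R^n)$ be the standard mollifiers defined in~\eqref{eq:def_rho_eps}, and set $f_\eps := \rho_\eps \ast f$ for $\eps > 0$. Clearly $f_\eps \in C^\infty(\R^n) \cap L^p(\R^n)$, and the classical property of mollifiers gives $f_\eps \to f$ in $L^p(\R^n)$ as $\eps \to 0$, since $p < +\infty$.

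The key point is to identify the weak $\alpha$-gradient of $f_\eps$. The plan is to show that
\begin{equation*}
\nabla^\alpha_w f_\eps = \rho_\eps \ast \nabla^\alpha_w f
\end{equation*}
as an $L^p$ function, and this is where the work lies. Given any test field $\phi \in C^\infty_c(\R^n;\R^n)$, I would compute
\begin{align*}
\int_{\R^n} f_\eps \, \div^\alpha \phi \, dx
&= \int_{\R^n} f\, (\rho_\eps \ast \div^\alpha \phi) \, dx \\
&= \int_{\R^n} f \, \div^\alpha(\rho_\eps \ast \phi) \, dx \\
&= - \int_{\R^n} \nabla^\alpha_w f \cdot (\rho_\eps \ast \phi) \, dx \\
&= - \int_{\R^n} (\rho_\eps \ast \nabla^\alpha_w f) \cdot \phi \, dx,
\end{align*}
where the second equality is precisely~\eqref{eq:commutation_mollifier_frac_div} from \cref{result:commutation_mollifier}, applicable because $\rho_\eps \ast \phi \in C^\infty_c(\R^n;\R^n) \subset \Lip_c(\R^n;\R^n)$, and the third equality uses the definition of $\nabla^\alpha_w f \in L^p(\R^n;\R^n)$ tested against $\rho_\eps \ast \phi \in C^\infty_c(\R^n;\R^n)$. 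The first and last equalities are just Fubini's theorem combined with the symmetry $\rho_\eps(x) = \rho_\eps(-x)$ of the mollifier (which is assumed implicitly, or one reflects the mollifier first; alternatively just pass the convolution across using $\int (\rho_\eps \ast u) v = \int u (\check\rho_\eps \ast v)$). Hence $\rho_\eps \ast \nabla^\alpha_w f$ is a weak $\alpha$-gradient of $f_\eps$, and it lies in $L^p(\R^n;\R^n)$ by Young's inequality since $\nabla^\alpha_w f \in L^p(\R^n;\R^n)$.

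With this identification, standard approximation by mollification yields $\nabla^\alpha_w f_\eps = \rho_\eps \ast \nabla^\alpha_w f \to \nabla^\alpha_w f$ in $L^p(\R^n;\R^n)$ as $\eps \to 0$, again because $p < +\infty$. Combining with $f_\eps \to f$ in $L^p(\R^n)$ gives $\|f_\eps - f\|_{S^{\alpha,p}(\R^n)} \to 0$, proving density.

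The only real obstacle is the commutation identity $\div^\alpha(\rho_\eps \ast \phi) = \rho_\eps \ast \div^\alpha \phi$; fortunately this is already in hand from \cref{result:commutation_mollifier}, whose proof relied on the Riesz-potential representation~\eqref{eq:frac_div_repres} of $\div^\alpha$ on $\Lip_c$ fields. Everything else is a routine combination of Young's inequality and the convergence of mollifications in $L^p$ for $p < +\infty$; the hypothesis $p < +\infty$ is used precisely here and cannot be dropped.
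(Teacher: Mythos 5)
Your proof is correct and follows essentially the same route as the paper: mollify, use the commutation identity of \cref{result:commutation_mollifier} to identify $\nabla^\alpha_w(\rho_\eps\ast f)=\rho_\eps\ast\nabla^\alpha_w f$, and conclude by the standard $L^p$-convergence of mollifications (with $p<+\infty$). The only extra care you rightly flag, the lack of symmetry of $\rho$, is handled exactly as you suggest by using the reflected mollifier $\check\rho_\eps$ in the duality pairing, so there is no gap.
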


\begin{proof}
Let $(\rho_\eps)_{\eps>0}\subset C^\infty_c(\R^n)$ be as in~\eqref{eq:def_rho_eps}. Fix $f\in S^{\alpha,p}(\R^n)$ and consider $f_\eps:=f*\rho_\eps$ for all $\eps>0$. By \cref{result:commutation_mollifier}, it is easy to check that $f_\eps\in C^\infty(\R^n)\cap S^{\alpha,p}(\R^n)$ with $\nabla^\alpha_w f_\eps=\rho_\eps*\nabla^\alpha_w f$ for all $\eps>0$, so that the conclusion follows by standard properties of the convolution.
\end{proof}

Given $\alpha\in(0,1)$ and $p\in[1,+\infty]$, it is easy to see that, if $f\in C^\infty_c(\R^n)$, then, by \cref{result:duality}, $f\in S^{\alpha,p}(\R^n)$ with $\nabla^\alpha_w f=\nabla^\alpha f$. In the case $p=1$, we can prove that $C^\infty_c(\R^n)$ is also a dense subset of~$S^{\alpha,1}(\R^n)$.

\begin{theorem}[Approximation by $C^\infty_c$ functions]\label{result:approx_by_smooth_c_S_alpha_1}
Let $\alpha\in(0,1)$. The set $C^\infty_c(\R^n)$ is dense in $S^{\alpha,1}(\R^n)$.
\end{theorem}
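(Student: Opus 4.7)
The plan is to combine the mollification argument from \cref{result:approx_by_smooth_S_alpha_p} with a cut-off procedure based on the Leibniz rule \cref{lem:Leibniz_frac_grad}. Given $f\in S^{\alpha,1}(\R^n)$, I would first approximate it by $f_\eps:=\rho_\eps*f \in C^\infty(\R^n)\cap S^{\alpha,1}(\R^n)$ as in \cref{result:approx_by_smooth_S_alpha_p}, then truncate each $f_\eps$ with a smooth cut-off $\eta_R$ from~\eqref{eq:def_cut-off} to land in $C^\infty_c(\R^n)$, and finally extract a diagonal sequence. The crux lies in controlling, for fixed $\eps$, the $S^{\alpha,1}$-error introduced by the truncation.

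A key preliminary observation is that the mollified function $f_\eps$ automatically inherits more regularity than $S^{\alpha,1}$: since $\nabla f_\eps = (\nabla\rho_\eps)*f \in L^1(\R^n;\R^n)$, we have $f_\eps\in W^{1,1}(\R^n)\subset BV(\R^n)$, so by the second half of \cref{result:Sobolev_subset_BV} we obtain $f_\eps \in W^{\alpha,1}(\R^n)$. In particular, $\nabla^{\alpha}_{w} f_\eps$ coincides $\Leb{n}$-a.e.\ with the pointwise singular integral provided by \cref{result:Sobolev_frac_enough}. The task therefore reduces to showing that any $g \in C^\infty(\R^n)\cap W^{\alpha,1}(\R^n)$ can be approximated in $S^{\alpha,1}(\R^n)$ by $C^\infty_c(\R^n)$ functions.

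Fix such a $g$ and set $g_R:=g\eta_R\in C^\infty_c(\R^n)$; clearly $g_R\to g$ in $L^1(\R^n)$. By repeating the algebraic splitting in the proof of \cref{lem:Leibniz_frac_grad} (now justified a.e.\ thanks to $g\in W^{\alpha,1}$ and $\eta_R\in\Lip_c$, cf.\ \cref{rem:div_nabla_NL_extension}), we get the pointwise identity
\begin{equation*}
\nabla^\alpha g_R = \eta_R\,\nabla^\alpha g + g\,\nabla^\alpha\eta_R + \nabla^\alpha_{\rm NL}(g,\eta_R) \qquad \Leb{n}\text{-a.e.},
\end{equation*}
which agrees with $\nabla^{\alpha}_{w} g_R$ since $g_R\in C^\infty_c$. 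It remains to prove that each of the three error pieces vanishes in $L^1$ as $R\to+\infty$. For $(\eta_R-1)\nabla^\alpha g$ this is immediate by dominated convergence. For $g\,\nabla^\alpha\eta_R$, I would bound $|\nabla^\alpha\eta_R(x)|\le \mu_{n,\alpha}\int|\eta_R(y)-\eta_R(x)|\,|y-x|^{-n-\alpha}\,dy$ and use $|\eta_R(y)-\eta_R(x)|\le 2\min\{1,|y-x|\}$ to obtain a uniform pointwise bound vanishing as $R\to+\infty$, then multiply by $|g|\in L^1$ and apply dominated convergence; this is the same estimate used in \cref{result:approx_by_smooth_c_BV}. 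For $\nabla^\alpha_{\rm NL}(g,\eta_R)$, Fubini bounds its $L^1$ norm by
\begin{equation*}
\mu_{n,\alpha}\int_{\R^n}\int_{\R^n}\frac{|g(y)-g(x)|\,|\eta_R(y)-\eta_R(x)|}{|y-x|^{n+\alpha}}\,dy\,dx,
\end{equation*}
whose integrand is dominated by $2|g(y)-g(x)|\,|y-x|^{-n-\alpha}\in L^1(\R^{2n})$ thanks to $g\in W^{\alpha,1}$, and converges pointwise to~$0$; one more dominated convergence closes the argument.

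The main obstacle is precisely the non-local remainder $\nabla^\alpha_{\rm NL}(g,\eta_R)$: the bounds of \cref{lem:Leibniz_frac_grad} require some $W^{\alpha,p}$-type regularity of one of the two factors, which a generic $f\in S^{\alpha,1}(\R^n)$ does not possess. The mollification step supplies this missing ingredient via the chain $C^\infty\cap W^{1,1}\subset BV\subset W^{\alpha,1}$ from \cref{result:Sobolev_subset_BV}, providing the integrable majorant $|g(y)-g(x)|\,|y-x|^{-n-\alpha}$ without which the dominated-convergence argument in the third step would lack a uniform bound.
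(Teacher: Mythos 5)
Your argument is correct, but it runs on a different engine than the paper's. The paper also reduces, via \cref{result:approx_by_smooth_S_alpha_p}, to showing $f\eta_R\to f$ in $S^{\alpha,1}(\R^n)$ for $f\in C^\infty(\R^n)\cap S^{\alpha,1}(\R^n)$, but it never leaves the $S^{\alpha,1}$ framework: it tests against $\phi\in C^\infty_c(\R^n;\R^n)$, applies the Leibniz rule of \cref{lem:Leibniz_frac_div} to the pair $(\eta_R,\phi)$, and identifies $\nabla^\alpha(f\eta_R)-\nabla^\alpha_w f$ by duality, so that the non-local remainder $\div^\alpha_{\rm NL}(\eta_R,\phi)$ is controlled using only $\|\phi\|_{L^\infty}$, $f\in L^1(\R^n)$ and the structure of $\eta_R$. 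You instead upgrade the mollified function through the chain $C^\infty\cap W^{1,1}\subset BV\subset W^{\alpha,1}$ of \cref{result:Sobolev_subset_BV} and then truncate on the function side, using the pointwise Leibniz identity for $\nabla^\alpha$ applied to $(g,\eta_R)$; in effect you reduce the theorem to the $W^{\alpha,1}$ cut-off argument of \cref{remark:density_test_in_frac_Sobolev} combined with the embedding $W^{\alpha,1}(\R^n)\subset S^{\alpha,1}(\R^n)$. This is perfectly legitimate, and the $W^{\alpha,1}$ seminorm does hand you a clean integrable majorant for $\nabla^\alpha_{\rm NL}(g,\eta_R)$; the only inaccuracy is your closing claim that this extra regularity is indispensable. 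Bounding $|g(y)-g(x)|\le|g(y)|+|g(x)|$ and symmetrising gives
\begin{equation*}
\|\nabla^\alpha_{\rm NL}(g,\eta_R)\|_{L^1(\R^n;\R^n)}
\le 2\mu_{n,\alpha}\int_{\R^n}|g(x)|\int_{\R^n}\frac{|\eta_R(y)-\eta_R(x)|}{|y-x|^{n+\alpha}}\,dy\,dx,
\end{equation*}
which vanishes as $R\to+\infty$ by the same argument you use for the term $g\,\nabla^\alpha\eta_R$, requiring only $g\in L^1(\R^n)$; this is exactly why the paper's duality route needs no $W^{\alpha,1}$ information at all. So your proof trades a heavier preliminary (the $BV\subset W^{\alpha,1}$ embedding) for a more hands-on dominated-convergence step, while the paper's argument is intrinsic to $S^{\alpha,1}$ and applies verbatim to any smooth $S^{\alpha,1}$ function, not just to mollifications.
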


\begin{proof}
Let $(\eta_R)_{R>0}\subset C^\infty_c(\R^n)$ be as in~\eqref{eq:def_cut-off}. Thanks to \cref{result:approx_by_smooth_S_alpha_p}, it is enough to prove that $f\eta_R\to f$ in $S^{\alpha,1}(\R^n)$ as $R\to+\infty$ for all $f\in C^\infty(\R^n)\cap S^{\alpha,1}(\R^n)$. Clearly, $f\eta_R\to f$ in $L^1(\R^n)$ as $R\to+\infty$. We now argue as in the proof of \cref{result:approx_by_smooth_c_BV}. Fix $\phi\in C^\infty_c(\R^n;\R^n)$. Then, by \cref{lem:Leibniz_frac_div}, we get
\begin{equation*}
\begin{split}
\int_{\R^n}f\eta_R\,\div^\alpha\phi\,dx
&=\int_{\R^n}f\,\div^\alpha(\eta_R\phi)\,dx
-\int_{\R^n}f\,\phi\cdot\nabla^\alpha\eta_R\,dx
-\int_{\R^n}f\,\div^\alpha_{\mathrm{NL}}(\eta_R, \phi)\,dx.
\end{split}
\end{equation*}
Since $f\in S^{\alpha,1}(\R^n)$, we have
\begin{equation*}
\int_{\R^n}f\,\div^\alpha(\eta_R\phi)\,dx
=-\int_{\R^n}\eta_R\phi\cdot\nabla^\alpha_w f\,dx.
\end{equation*}
Since $f\eta_R\in C^\infty_c(\R^n)$, we also have
\begin{equation*}
\int_{\R^n}f\eta_R\,\div^\alpha\phi\,dx
=-\int_{\R^n}\phi\cdot\nabla^\alpha(\eta_R f)\,dx.
\end{equation*}
Thus we can write
\begin{equation*}
\begin{split}
\int_{\R^n}(\nabla^\alpha_w f-\nabla^\alpha(\eta_R f))\cdot\phi\,dx
& = \int_{\R^n}(1-\eta_R)\phi\cdot\nabla^\alpha_w f\,dx\\
&\quad-\int_{\R^n}f\,\phi\cdot\nabla^\alpha\eta_R\,dx
-\int_{\R^n}f\,\div^\alpha_{\mathrm{NL}}(\eta_R, \phi)\,dx.
\end{split}
\end{equation*}
Moreover, we have
\begin{equation*}
\abs*{\int_{\R^n}f\,\phi\cdot\nabla^\alpha\eta_R\,dx}
\le 
\mu_{n, \alpha} \|\phi\|_{L^\infty(\R^n;\R^n)}\int_{\R^n}|f(x)|\int_{\R^n}\frac{|\eta_R(y)-\eta_R(x)|}{|y-x|^{n+\alpha}}\,dy\, dx
\end{equation*}
and, similarly,
\begin{equation*}
\abs*{\int_{\R^n}f\,\div^\alpha_{\mathrm{NL}}(\eta_R, \phi)\,dx}
\le 
2\mu_{n, \alpha}\|\phi\|_{L^\infty(\R^n;\R^n)}\int_{\R^n}|f(x)|\int_{\R^n}\frac{|\eta_R(y)-\eta_R(x)|}{|y-x|^{n+\alpha}}\,dy\,dx.
\end{equation*}
Combining these two estimates, we get that
\begin{equation*}
\begin{split}
\left | \int_{\R^n}(\nabla^\alpha_w f-\nabla^\alpha(\eta_R f))\cdot\phi\,dx\right |
&\le
\|\phi\|_{L^\infty(\R^n;\R^n)}\int_{\R^n}(1-\eta_R)|\nabla^\alpha_w f|\,dx\\
&\quad+3\mu_{n, \alpha}\|\phi\|_{L^\infty(\R^n;\R^n)}\int_{\R^n}|f(x)|\int_{\R^n}\frac{|\eta_R(y)-\eta_R(x)|}{|y-x|^{n+\alpha}}\,dy\,dx.
\end{split}
\end{equation*}
We thus conclude that 
\begin{align*}
\|\nabla^\alpha_w f-\nabla^\alpha(\eta_R f)\|_{L^1(\R^n; \R^n)}
\le & \int_{\R^n}(1-\eta_R)|\nabla^\alpha_w f|\,dx \\
& +3\mu_{n, \alpha}\int_{\R^n}|f(x)|\int_{\R^n}\frac{|\eta_R(y)-\eta_R(x)|}{|y-x|^{n+\alpha}}\,dy\,dx.
\end{align*}
Therefore $\nabla^\alpha(\eta_R f)\to\nabla^\alpha_w f$ in $L^1(\R^n; \R^{n})$ as $R\to+\infty$. Indeed, we have
\begin{equation*}
\lim_{R\to+\infty}\int_{\R^n}(1-\eta_R)|\nabla^\alpha_w f|\,dx=0
\end{equation*}
combining~\eqref{eq:def_cut-off} with Lebesgue's Dominated Convergence Theorem and
\begin{equation*}
\lim_{R\to+\infty}\int_{\R^n}|f(x)|\int_{\R^n}\frac{|\eta_R(y)-\eta_R(x)|}{|y-x|^{n+\alpha}}\,dy\,dx=0
\end{equation*}
combining~\eqref{eq:Lip_nabla_estim_1}, \eqref{eq:Lip_nabla_estim_2} and~\eqref{eq:def_cut-off} with Lebesgue's Dominated Convergence Theorem.
\end{proof}

We do not know if $C^\infty_c(\R^n)$ is a dense subset of $S^{\alpha,p}(\R^n)$ for $\alpha\in(0,1)$ and $p\in(1,+\infty)$. In other words, defining
\begin{equation*}
S^{\alpha,p}_0(\R^n):=\closure[-1]{C^\infty_c(\R^n)}^{\|\cdot\|_{S^{\alpha,p}(\R^n)}},
\end{equation*}
we do not know if the (continuous) inclusion $S^{\alpha,p}_0(\R^n)\subset S^{\alpha,p}(\R^n)$ is strict. 

The space $(S^{\alpha,p}_0(\R^n),\|\cdot\|_{S^{\alpha,p}(\R^n)})$ was introduced in~\cite{SS15} (with a different, but equivalent, norm). Thanks to~\cite{SS15}*{Theorem 1.7}, for all $\alpha \in (0, 1)$ and $p \in (1, + \infty)$ we have $S^{\alpha,p}_0(\R^n) = L^{\alpha, p}(\R^{n})$, where $L^{\alpha, p}(\R^{n})$ is the \emph{Bessel potential space}, see~\cite{SS15}*{Definition~2.1}. It is known that $L^{\alpha+\eps,p}(\R^n)\subset W^{\alpha, p}(\R^{n})\subset L^{\alpha-\eps,p}(\R^n)$ with continuous embeddings for all $\alpha\in(0,1)$, $p\in(1,+\infty)$ and $0<\eps<\min\set*{\alpha,1-\alpha}$, see~\cite{SS15}*{Theorem~2.2}. In the particular case $p=2$, it holds that $L^{\alpha, 2}(\R^{n})=W^{\alpha, 2}(\R^{n})$ for all $\alpha \in (0, 1)$, see~\cite{SS15}*{Theorem 2.2}. In addition, $W^{\alpha,p}(\R^n)\subset L^{\alpha,p}(\R^n)$ with continuous embedding for all $\alpha\in(0,1)$ and $p\in(1,2]$, see~\cite{S70}*{Chapter~V, Section~5.3}.

\begin{proposition}[Relation between $W^{\alpha,p}$ and $S^{\alpha,p}$]
The following properties hold.
\begin{enumerate}[\indent (i)]
\item\label{item:embedding_W_S_1} If $\alpha\in(0,1)$ and $p\in[1,2]$, then $W^{\alpha,p}(\R^n)\subset S^{\alpha,p}(\R^n)$ with continuous embedding.
\item\label{item:embedding_W_S_2} If $0<\alpha<\beta<1$ and $p\in(2,+\infty]$, then $W^{\beta,p}(\R^n)\subset S^{\alpha,p}(\R^n)$ with continuous embedding.
\end{enumerate}
\end{proposition}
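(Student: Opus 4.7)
The four regimes $p=1$, $p\in(1,2]$, $p\in(2,+\infty)$, and $p=+\infty$ call for slightly different arguments, so the plan is to treat them in turn. For part~(i) with $p=1$, \cref{result:Sobolev_subset_BV} does the job directly: it provides $\nabla^{\alpha}f\in L^1(\R^n;\R^n)$ with $\|\nabla^\alpha f\|_{L^1}\le\mu_{n,\alpha}[f]_{W^{\alpha,1}(\R^n)}$, and identity~\eqref{eq:duality_Sobolev_frac} identifies $\nabla^{\alpha}f$ as the weak $\alpha$-gradient of $f$, so $f\in S^{\alpha,1}(\R^n)$ continuously. For $p\in(1,2]$, I would merely chain the continuous inclusions recalled just after \cref{result:approx_by_smooth_c_S_alpha_1}, namely
\begin{equation*}
W^{\alpha,p}(\R^n)\subset L^{\alpha,p}(\R^n)=S^{\alpha,p}_0(\R^n)\subset S^{\alpha,p}(\R^n),
\end{equation*}
the first being Stein's embedding from~\cite{S70}*{Chapter~V, Section~5.3}, the equality~\cite{SS15}*{Theorem~1.7}, and the third trivial by definition of $S^{\alpha,p}_0$.

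For part~(ii) with $p\in(2,+\infty)$, the same Bessel-potential detour works provided one first descends to a slightly smaller smoothness exponent. Picking any $\eps$ with $0<\eps<\min\{\beta-\alpha,1-\beta\}$, the chain
\begin{equation*}
W^{\beta,p}(\R^n)\subset L^{\beta-\eps,p}(\R^n)\subset L^{\alpha,p}(\R^n)=S^{\alpha,p}_0(\R^n)\subset S^{\alpha,p}(\R^n)
\end{equation*}
yields the claim: the first inclusion is from~\cite{SS15}*{Theorem~2.2}, the second is the standard monotonicity $L^{s,p}\subset L^{t,p}$ (for $s\ge t$) of Bessel potential spaces, which follows at once from the fact that the Bessel kernel of positive order lies in $L^1(\R^n)$, while the remaining two inclusions are as in part~(i).

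The delicate regime is part~(ii) with $p=+\infty$, where $S^{\alpha,\infty}_0$ is not known to coincide with a Bessel potential space and the argument must be carried out by hand; this is where I expect the main obstacle. Given $f\in W^{\beta,\infty}(\R^n)=C^{0,\beta}_b(\R^n)$, I would first define $\nabla^\alpha f$ through the pointwise formula~\eqref{eq:def_frac_grad_3} and derive the uniform bound
\begin{equation*}
\|\nabla^\alpha f\|_{L^\infty(\R^n;\R^n)}\le C_{n,\alpha,\beta}\bigl([f]_{C^{0,\beta}_b}+\|f\|_{L^\infty(\R^n)}\bigr)
\end{equation*}
by splitting the defining integral at $|y-x|=1$: the inner piece is controlled by the H\"older seminorm and converges precisely because $\beta>\alpha$, while the outer piece is controlled by $\|f\|_{L^\infty}$. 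To identify this pointwise $\nabla^\alpha f$ with the weak $\alpha$-gradient, I would approximate $f$ by $f_{R,\eps}:=(\eta_R f)\ast\rho_\eps\in C^\infty_c(\R^n)$, for which~\eqref{eq:duality_Sobolev_frac} applies, and pass to the limit $\eps\to 0$, $R\to+\infty$. The left-hand side $\int f_{R,\eps}\,\div^\alpha\phi\,dx$ converges to $\int f\,\div^\alpha\phi\,dx$ because $\div^\alpha\phi\in L^1(\R^n)$ by \cref{prop:frac_div_repr} and the tail error is controlled by $\|f\|_{L^\infty}\int_{|x|>R}|\div^\alpha\phi|\,dx\to 0$. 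For the right-hand side, the uniform bound $[f_{R,\eps}]_{C^{0,\beta}_b}\le C\bigl([f]_{C^{0,\beta}_b}+\|f\|_{L^\infty}\bigr)$ furnishes an integrable majorant for the integrand defining $\nabla^\alpha f_{R,\eps}(x)$, so dominated convergence gives $\nabla^\alpha f_{R,\eps}\to\nabla^\alpha f$ pointwise with a uniform $L^\infty$ bound on $\supp\phi$, and the integral against $\phi$ passes to the limit.
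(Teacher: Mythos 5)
Your proof is correct and follows essentially the same route as the paper's: the Bessel-potential chain $W^{\cdot,p}(\R^n)\subset L^{\cdot,p}(\R^n)=S^{\cdot,p}_0(\R^n)\subset S^{\cdot,p}(\R^n)$ for $p\in(1,+\infty)$, \cref{result:Sobolev_subset_BV} together with~\eqref{eq:duality_Sobolev_frac} for $p=1$, and for $p=+\infty$ the very same splitting of the defining integral at $|y-x|=1$, using $\beta>\alpha$ near the singularity and $\|f\|_{L^\infty(\R^n)}$ at infinity. Your extra approximation step identifying the pointwise $\nabla^\alpha f$ with the weak $\alpha$-gradient in the $p=+\infty$ case is sound and fills in a verification the paper leaves implicit.
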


\begin{proof}
Property~\eqref{item:embedding_W_S_1} follows from the discussion above for the case $p\in(1,2]$ and from \cref{result:Sobolev_subset_BV} for the case $p=1$. Property~\eqref{item:embedding_W_S_2} follows from the discussion above for the case $p\in(2,+\infty)$, while for the case $p=+\infty$ it is enough to observe that
\begin{equation*}
\begin{split}
\|\nabla^\alpha f\|_{L^\infty(\R^n; \R^{n})}
&\le \mu_{n, \alpha} \sup_{x\in\R^n}\int_{\R^n}\frac{|f(y)-f(x)|}{|y-x|^{n+\alpha}}\,dy\\
&\le2 \mu_{n, \alpha} \|f\|_{L^\infty(\R^n)}\int_{\set{|y|>1}}\frac{dy}{|y|^{n+\alpha}}
+ \mu_{n, \alpha} [f]_{W^{\beta,\infty}(\R^n)}\int_{\set{|y|\le1}}\frac{dy}{|y|^{n+\alpha-\beta}}\\
&\le c_{n,\alpha,\beta}\|f\|_{W^{\beta,\infty}(\R^n)} 
\end{split}
\end{equation*}
for all $f\in W^{\beta,\infty}(\R^n)$.
\end{proof}

As in the classical case, we have $S^{\alpha,1}(\R^n)\subset BV^\alpha(\R^n)$ with continuous embedding.

\begin{theorem}[$S^{\alpha,1}(\R^n)\subset BV^\alpha(\R^n)$]\label{result:S_alpha_1_in_BV_alpha}
Let $\alpha\in(0,1)$. If $f\in BV^\alpha(\R^n)$, then $f\in S^{\alpha,1}(\R^n)$ if and only if $|D^\alpha f|\ll\Leb{n}$, in which case 
\begin{equation*}
D^\alpha f=\nabla^\alpha_w f\,\Leb{n} 
\quad
\text{in $\M(\R^n;\R^n)$}.
\end{equation*}
\end{theorem}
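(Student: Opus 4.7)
The plan is to deduce this equivalence directly from the Structure Theorem (Theorem~\ref{th:structure_BV_alpha}) together with the Radon--Nikodym Theorem, so the argument is essentially a book-keeping exercise. No new approximation or compactness is needed.

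For the implication ``$f \in S^{\alpha,1}(\R^n) \implies |D^\alpha f| \ll \Leb{n}$'', I would start from the very definition of $S^{\alpha,1}$: there exists $\nabla^\alpha_w f \in L^1(\R^n;\R^n)$ such that
\begin{equation*}
\int_{\R^n} f\,\div^\alpha \phi \, dx = -\int_{\R^n} \nabla^\alpha_w f \cdot \phi \, dx
\qquad \forall \phi \in C^\infty_c(\R^n;\R^n).
\end{equation*}
Setting $D^\alpha f := \nabla^\alpha_w f\,\Leb{n}$ produces a finite vector-valued Radon measure for which \eqref{eq:BV_alpha_duality} holds, so by the uniqueness part of the Structure Theorem this is the fractional variation measure of $f$. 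In particular $f \in BV^\alpha(\R^n)$ and $|D^\alpha f| = |\nabla^\alpha_w f|\,\Leb{n} \ll \Leb{n}$.

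For the converse, assume $f \in BV^\alpha(\R^n)$ and $|D^\alpha f| \ll \Leb{n}$. Since $D^\alpha f \in \M(\R^n;\R^n)$ is a finite vector-valued Radon measure whose total variation is absolutely continuous with respect to $\Leb{n}$, the Radon--Nikodym Theorem yields a density $g \in L^1(\R^n;\R^n)$ with $D^\alpha f = g\,\Leb{n}$; finiteness of the total variation gives precisely $\|g\|_{L^1(\R^n;\R^n)} = |D^\alpha f|(\R^n) < +\infty$. Plugging this identity into~\eqref{eq:BV_alpha_duality} gives, for every $\phi \in C^\infty_c(\R^n;\R^n)$,
\begin{equation*}
\int_{\R^n} f\,\div^\alpha \phi \, dx = -\int_{\R^n} \phi \cdot dD^\alpha f = -\int_{\R^n} g \cdot \phi\,dx,
\end{equation*}
so $g$ is exactly the weak $\alpha$-gradient $\nabla^\alpha_w f$ in the sense of the definition preceding~\eqref{eq:def_distrib_frac_Sobolev}. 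Hence $f \in S^{\alpha,1}(\R^n)$ and $D^\alpha f = \nabla^\alpha_w f\,\Leb{n}$.

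There is no genuine obstacle here: the only point worth noting is that the Structure Theorem guarantees both the existence and the uniqueness of the measure $D^\alpha f$ representing the distributional action, which is what allows us to identify the Radon--Nikodym density with the weak $\alpha$-gradient without any further regularisation or test-function argument.
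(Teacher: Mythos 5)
Your proposal is correct and follows essentially the same route as the paper: identify the Radon--Nikodym density of $D^\alpha f$ with the weak $\alpha$-gradient via the duality relation of the Structure Theorem, and conversely read off $D^\alpha f = \nabla^\alpha_w f\,\Leb{n}$ from the definition of $S^{\alpha,1}(\R^n)$. The appeal to uniqueness of the representing measure (which follows since a finite Radon measure is determined by its action on $C^\infty_c$ fields) is exactly the point the paper uses implicitly, so no gap remains.
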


\begin{proof}
Let $f\in BV^\alpha(\R^n)$ and assume that $|D^\alpha f|\ll\Leb{n}$. Then $D^\alpha f=g\,\Leb{n}$ for some $g\in L^1(\R^n;\R^n)$. But then, by \cref{th:structure_BV_alpha}, we must have
\begin{equation*}
\int_{\R^n}f\,\div^\alpha\phi\,dx=-\int_{\R^n} g\cdot\phi\,dx 
\end{equation*}
for all $\phi\in C^\infty_c(\R^n;\R^n)$, so that $f\in S^{\alpha,1}(\R^n)$ with $\nabla^\alpha_w f=g$. Viceversa, if $f\in S^{\alpha,1}(\R^n)$ then
\begin{equation*}
\int_{\R^n}f\,\div^\alpha\phi\,dx=-\int_{\R^n} \phi\cdot\nabla^\alpha_w f\,dx
\end{equation*}
for all $\phi\in C^\infty_c(\R^n;\R^n)$, so that $f\in BV^\alpha(\R^n)$ with $D^\alpha f=\nabla^\alpha_w f\,\Leb{n}$ in $\M(\R^n;\R^n)$.
\end{proof}

\subsection{The inclusion \texorpdfstring{$S^{\alpha,1}(\R^n)\subset BV^\alpha(\R^n)$}{Sˆ{alpha,1}(Rˆn) in BVˆalpha(Rˆn)} is strict}

It seems natural to ask whether the inclusion $S^{\alpha,1}(\R^n)\subset BV^\alpha(\R^n)$ is strict as in the classical case. We start to solve this problem in the case $n = 1$.

\begin{theorem}[$BV^{\alpha}(\R)\setminus S^{\alpha, 1}(\R)\neq\varnothing$] \label{prop:BV_alpha_W_alpha_inclusion_R}
Let $\alpha \in (0, 1)$. The inclusion $S^{\alpha, 1}(\R) \subset BV^{\alpha}(\R)$ is strict, since for any $a, b \in \R$, with $a \neq b$, the function 
\begin{equation*}
f_{a, b, \alpha}(x) := |x - b|^{\alpha - 1} \sgn(x - b) - |x - a|^{\alpha - 1} \sgn(x - a)
\end{equation*}
satisfies $f_{a, b, \alpha} \in BV^{\alpha}(\R)$ with 
\begin{equation} \label{eq:strict_inclusion_example_nabla_1}
D^{\alpha} f_{a, b, \alpha} = \frac{\delta_{b} - \delta_{a}}{\mu_{1, - \alpha}}
\end{equation}
in the sense of finite Radon measures.
\end{theorem}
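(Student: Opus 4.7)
The plan is to recognize $f_{a,b,\alpha}$, up to a sign, as the explicit kernel appearing in the Fractional Fundamental Theorem of Calculus (\cref{thm:fund_theorem_calculus_frac}) when $n=1$, and to read off the result from the duality characterization in \cref{th:structure_BV_alpha}. Two elementary observations drive everything. First, the pointwise identity $\frac{z-c}{|z-c|^{2-\alpha}}=\sgn(z-c)\,|z-c|^{\alpha-1}$ for $z\ne c$ yields
\begin{equation*}
\frac{z-a}{|z-a|^{2-\alpha}}-\frac{z-b}{|z-b|^{2-\alpha}}=-f_{a,b,\alpha}(z).
\end{equation*}
Second, comparing~\eqref{eq:def_frac_grad_3} and~\eqref{eq:def_frac_div_3} shows that for $n=1$ the operators $\nabla^\alpha$ and $\div^\alpha$ coincide on scalar functions.

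Before invoking duality I would check that $f_{a,b,\alpha}\in L^1(\R)$. The singularities at $x=a$ and $x=b$ are of order $|x-a|^{\alpha-1}$ and $|x-b|^{\alpha-1}$, which are integrable since $\alpha>0$; for $|x|$ large the cancellation between the two summands combined with a mean value estimate yields the decay $O(|x|^{\alpha-2})$, integrable since $\alpha<1$.

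The core step is then to fix $\phi\in C^\infty_c(\R)$ and apply \cref{thm:fund_theorem_calculus_frac} with $n=1$, $x=a$, $y=b$ and $f=\phi$. Using the kernel identity above this reads
\begin{equation*}
\phi(b)-\phi(a)=-\mu_{1,-\alpha}\int_\R f_{a,b,\alpha}(z)\,\nabla^\alpha\phi(z)\,dz,
\end{equation*}
and since $\nabla^\alpha\phi=\div^\alpha\phi$ in one dimension this rearranges to
\begin{equation*}
\int_\R f_{a,b,\alpha}\,\div^\alpha\phi\,dz=-\int_\R\phi\,d\!\left(\frac{\delta_b-\delta_a}{\mu_{1,-\alpha}}\right).
\end{equation*}
Since $(\delta_b-\delta_a)/\mu_{1,-\alpha}$ is a finite Radon measure, \cref{th:structure_BV_alpha} gives $f_{a,b,\alpha}\in BV^\alpha(\R)$ together with formula~\eqref{eq:strict_inclusion_example_nabla_1}. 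The strictness of the inclusion $S^{\alpha,1}(\R)\subset BV^\alpha(\R)$ then follows from \cref{result:S_alpha_1_in_BV_alpha}, because this Dirac-type measure is not absolutely continuous with respect to $\Leb{1}$.

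I expect the only mildly delicate point to be the $L^1$-integrability of $f_{a,b,\alpha}$ at infinity, which genuinely relies on the cancellation between the two summands; everything else reduces to an algebraic rewriting of \cref{thm:fund_theorem_calculus_frac} followed by the structure theorem.
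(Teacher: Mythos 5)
Your proposal is correct and follows essentially the same route as the paper: the paper identifies $f_{a,b,\alpha}$ as the kernel of the fractional Fundamental Theorem of Calculus and cites \cref{prop:div_alpha_delta} (itself an immediate consequence of \cref{thm:fund_theorem_calculus_frac}) together with $\nabla^\alpha=\div^\alpha$ in dimension one and \cref{result:S_alpha_1_in_BV_alpha}, which is exactly the computation you unfold explicitly. The only difference is that you carry out the duality pairing by hand rather than invoking \cref{prop:div_alpha_delta}, and you spell out the $L^1$-integrability check that the paper leaves to the reader.
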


\begin{proof} 
Let $a, b \in \R$ be fixed with $a \neq b$. One can easily check that $f_{a, b, \alpha} \in L^{1}(\R)$. Since $n=1$, we have $\nabla^{\alpha}=\div^{\alpha}$. Thus, \eqref{eq:strict_inclusion_example_nabla_1} follows from~\eqref{eq:div_alpha_delta}, proving that $f\in BV^\alpha(\R)$. But $|D^\alpha f_{a, b, \alpha}|\perp\Leb{1}$, so that $f_{a, b, \alpha}\notin S^{\alpha,1}(\R)$ by \cref{result:S_alpha_1_in_BV_alpha}.
\end{proof}

\begin{remark} \label{rem:no_GNS_one_dim}
Note that $f_{a, b, \alpha} \in BV^{\alpha}(\R)\setminus L^{\frac{1}{1 - \alpha}}(\R)$, since
\begin{equation*}
|f_{a, b, \alpha}(x)|^{\frac{1}{1 - \alpha}} 
\sim
\begin{cases}
|x - a|^{-1} & \text{as $x \to a$},\\[3mm]
|x - b|^{-1} & \text{as $x \to b$}.
\end{cases}
\end{equation*}
Thus, \cref{thm:GNS_immersion} cannot hold for $n = 1$.
\end{remark}

For the case $n>1$, we need to recall the definition of the \emph{fractional Laplacian operator~$(- \Delta)^{\frac{\alpha}{2}}$} and some of its properties. 

Following~\cite{S18}, for any $f \in C^{\infty}_{c}(\R^{n})$ we set
\begin{equation} \label{eq:fract_Laplacian}
(- \Delta)^{\frac{\alpha}{2}} f(x) := 
\begin{cases} 
\displaystyle \nu_{n, \alpha} \int_{\R^{n}} \frac{f(x + h)}{|h|^{n + \alpha}} \, dh & \text{if} \ \alpha \in (- 1, 0), \\[5mm]
f(x) & \text{if} \ \alpha = 0, \\[5mm]
\displaystyle \nu_{n, \alpha} \int_{\R^{n}} \frac{f(x + h) - f(x)}{|h|^{n + \alpha}}\,dh & \text{if} \ \alpha \in (0, 1), \\[5mm]
\displaystyle \nu_{n, \alpha} \lim_{\eps \to 0} \int_{\{ |h| > \eps \}} \frac{f(x + h) - f(x)}{|h|^{n + \alpha}}\,dh & \text{if} \ \alpha \in [1, 2),
\end{cases}
\end{equation}
where
\begin{equation} \label{eq:nu_alpha_constant}
\nu_{n, \alpha} := 2^{\alpha} \pi^{- \frac{n}{2}} \frac{\Gamma \left ( \frac{n + \alpha}{2} \right )}{\Gamma \left ( - \frac{ \alpha}{2} \right )}.
\end{equation}

We stress the fact that this definition is consistent with the previous definitions of fractional gradient and divergence in the sense that
\begin{equation*}
-\div^{\alpha} \nabla^{\beta} = (- \Delta)^{\frac{\alpha + \beta}{2}}
\end{equation*}
for any $\alpha \in (-1, 1)$ and $\beta \in (0, 1)$ (see \cite{S18}*{Theorem 5.3}), so that, in particular,
\begin{equation*}
-\div^{\alpha} \nabla^{\alpha} = (- \Delta)^{\alpha}
\end{equation*}
for any $\alpha \in (0, 1)$.

In the case $\alpha\in(-1,0)$, 
we have 
\begin{equation*}
(- \Delta)^{\frac{\alpha}{2}}=I_{-\alpha}
\quad
\text{on $C^\infty_c(\R^n)$},
\end{equation*}
where $I_\alpha$ is as in~\eqref{eq:Riesz_potential_def}.

In the case $\alpha\in(0,1)$, notice that
\begin{equation} \label{eq:fract_Laplacian_L_1}
\| (- \Delta)^{\frac{\alpha}{2}} f \|_{L^{1}(\R^{n})} \le \nu_{n, \alpha} [f]_{W^{\alpha, 1}(\R^{n})}
\end{equation}
for all $f \in C^{\infty}_{c}(\R^{n})$. Thus the linear operator 
\begin{equation*}
(- \Delta)^{\frac{\alpha}{2}}\colon C^{\infty}_{c}(\R^{n}) \to L^1(\R^n) 
\end{equation*}
can be continuously extended to a linear operator 
\begin{equation*}
(- \Delta)^{\frac{\alpha}{2}} \colon W^{\alpha, 1}(\R^{n}) \to L^{1}(\R^{n}),
\end{equation*}
for which we retain the same notation. 

Given $\alpha\in(0,1)$ and $\eps>0$, for all $f\in W^{\alpha,1}(\R^n)$ we also set
\begin{equation*}
(- \Delta)^{\alpha/2}_\eps f(x):=
\nu_{n, \alpha} \int_{\set{|h|>\eps}} \frac{f(x + h) - f(x)}{|h|^{n + \alpha}}\,dh.
\end{equation*}
By Lebesgue's Dominate Convergence Theorem, we have that
\begin{equation*}
\lim_{\eps\to0}\|(- \Delta)^{\alpha/2}_\eps f-(- \Delta)^{\frac{\alpha}{2}}f\|_{L^1(\R^n)}=0
\end{equation*}
for all $f\in W^{\alpha,1}(\R^n)$. Thus, arguing as in the proof of~\cite{S98}*{Lemma~2.4} (see also~\cite{SKM93}*{Section~25.1}), for all $f\in W^{\alpha,1}(\R^n)$ we have 
\begin{equation}\label{eq:riesz_inversion_laplacian_sobolev}
I_{\alpha}(- \Delta)^{\frac{\alpha}{2}}f= f 
\quad
\text{in $L^1(\R^n)$}.
\end{equation}

Taking advantage of the identity in~\eqref{eq:riesz_inversion_laplacian_sobolev}, we can prove the following result. 

\begin{lemma}[Relation between $BV^\alpha(\R^n)$ and $bv(\R^n)$]\label{result:correspondence_BV_alpha_bv}
Let $\alpha\in(0,1)$. The following properties hold.
\begin{enumerate}[\indent (i)]
\item\label{item:BV_alpha_bv_1} If $f\in BV^\alpha(\R^n)$, then $u:=I_{1-\alpha}f\in bv(\R^n)$ with $Du=D^\alpha f$ in $\M(\R^{n}; \R^{n})$.
\item\label{item:BV_alpha_bv_2} If $u\in BV(\R^n)$, then $f:= (-\Delta)^{\frac{1-\alpha}{2}}u\in BV^\alpha(\R^n)$ with 
\begin{equation*}
\|f\|_{L^1(\R^n)}\le c_{n,\alpha}\|u\|_{BV(\R^n)}
\quad\text{and}\quad
D^\alpha f=D u
\quad\text{in $\M(\R^{n}; \R^{n})$}.
\end{equation*}
As a consequence, the operator $(-\Delta)^{\frac{1-\alpha}{2}}\colon BV(\R^n)\to BV^\alpha(\R^n)$ is continuous.
\end{enumerate}
\end{lemma}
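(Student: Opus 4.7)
The plan is to treat both implications using the identity
\begin{equation*}
\div^\alpha\phi = I_{1-\alpha}\div\phi,
\qquad \phi\in C^\infty_c(\R^n;\R^n),
\end{equation*}
which is \cref{rem:equiv_def_Riesz_potential} combined with the commutation $\div I_{1-\alpha}=I_{1-\alpha}\div$ recalled just before it, together with the self-adjointness of the symmetric-kernel operator $I_{1-\alpha}$, and, for part~(ii), the inversion formula~\eqref{eq:riesz_inversion_laplacian_sobolev}.

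For part~(i), first I would check that $u:=I_{1-\alpha}f$ belongs to $L^1_{\loc}(\R^n)$: by Tonelli and \cref{result:riesz_kernel_estimate},
\begin{equation*}
\int_B |u|\,dx \le C_{n,\alpha}\int_{\R^n}|f(y)|\int_B|x-y|^{1-n-\alpha}\,dx\,dy \le C_{n,\alpha,B}\,\|f\|_{L^1(\R^n)}
\end{equation*}
for every ball $B\subset\R^n$, since the inner integral is bounded uniformly in $y\in\R^n$. Given $\phi\in C^\infty_c(\R^n;\R^n)$, the same estimate (using that $\div\phi$ has compact support) shows that the function $(x,y)\mapsto |f(x)||\div\phi(y)||x-y|^{1-n-\alpha}$ is integrable on $\R^n\times\R^n$; Fubini's theorem then justifies the chain of equalities
\begin{equation*}
\int_{\R^n}u\,\div\phi\,dx = \int_{\R^n}f\,(I_{1-\alpha}\div\phi)\,dx = \int_{\R^n}f\,\div^\alpha\phi\,dx = -\int_{\R^n}\phi\cdot dD^\alpha f.
\end{equation*}
Because $D^\alpha f\in\M(\R^n;\R^n)$ is finite, this identifies the distributional derivative of $u$ with $D^\alpha f$, so $u\in bv(\R^n)$ and $Du=D^\alpha f$.

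For part~(ii), \cref{result:Sobolev_subset_BV} gives $u\in W^{1-\alpha,1}(\R^n)$ with $\|u\|_{W^{1-\alpha,1}(\R^n)}\le c_{n,\alpha}\|u\|_{BV(\R^n)}$, so the continuous extension of $(-\Delta)^{\frac{1-\alpha}{2}}$ to $W^{1-\alpha,1}(\R^n)$ recalled via~\eqref{eq:fract_Laplacian_L_1} ensures that $f:=(-\Delta)^{\frac{1-\alpha}{2}}u\in L^1(\R^n)$ satisfies the claimed norm estimate. The inversion formula~\eqref{eq:riesz_inversion_laplacian_sobolev}, applied with exponent $1-\alpha$ to $u$, yields $I_{1-\alpha}f=u$ in $L^1(\R^n)$. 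Repeating the Fubini/self-adjointness computation of part~(i), for every $\phi\in C^\infty_c(\R^n;\R^n)$ I obtain
\begin{equation*}
\int_{\R^n}f\,\div^\alpha\phi\,dx = \int_{\R^n}f\,(I_{1-\alpha}\div\phi)\,dx = \int_{\R^n}(I_{1-\alpha}f)\,\div\phi\,dx = \int_{\R^n}u\,\div\phi\,dx = -\int_{\R^n}\phi\cdot dDu,
\end{equation*}
and \cref{th:structure_BV_alpha} then certifies $f\in BV^\alpha(\R^n)$ with $D^\alpha f=Du$. The continuity of $(-\Delta)^{\frac{1-\alpha}{2}}\colon BV(\R^n)\to BV^\alpha(\R^n)$ follows from linearity together with $\|f\|_{BV^\alpha(\R^n)}\le c_{n,\alpha}\|u\|_{BV(\R^n)}+|Du|(\R^n)\le C_{n,\alpha}\|u\|_{BV(\R^n)}$.

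The only technical delicacy is the Fubini-based symmetry $\int f\cdot (I_{1-\alpha}\div\phi)\,dx=\int(I_{1-\alpha}f)\,\div\phi\,dx$: once the uniform bound on $\int_K|x-y|^{1-n-\alpha}\,dy$ provided by \cref{result:riesz_kernel_estimate} is combined with $f\in L^1(\R^n)$ and the compact support of $\div\phi$, the remaining steps are purely algebraic manipulations relying on the identities already available in \cref{sec:Silhavy_calculus} and the earlier parts of this section.
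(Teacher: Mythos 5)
Your proof is correct and follows essentially the same route as the paper: identify $\div^\alpha\phi=I_{1-\alpha}\div\phi$, move the Riesz potential across the pairing by Fubini, and for part~(ii) combine $BV(\R^n)\subset W^{1-\alpha,1}(\R^n)$ with the $L^1$-bound \eqref{eq:fract_Laplacian_L_1} and the inversion formula \eqref{eq:riesz_inversion_laplacian_sobolev}. The only difference is that you spell out the local integrability of $I_{1-\alpha}f$ and the Fubini justification via \cref{result:riesz_kernel_estimate}, which the paper leaves implicit.
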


\begin{proof}
We prove the two properties separately.

\smallskip

\textit{Proof of~\eqref{item:BV_alpha_bv_1}}. Let $f\in BV^\alpha(\R^n)$. Since $f \in L^{1}(\R^{n})$, we have $I_{1 - \alpha} f \in L^{1}_{\rm loc}(\R^{n})$. By Fubini's Theorem, for any $\phi\in C^\infty_c(\R^n;\R^n)$ we have
\begin{equation} \label{eq:proof_correspondence_BV_alpha_bv}
\int_{\R^n}f\,\div^\alpha\phi\,dx
=\int_{\R^n}f\,I_{1-\alpha}\div\phi\,dx
=\int_{\R^n}u\,\div\phi\,dx,
\end{equation}
proving that $u:=I_{1-\alpha}f\in bv(\R^n)$ with $Du=D^\alpha f$ in $\M(\R^{n}; \R^{n})$.

\smallskip

\textit{Proof of~\eqref{item:BV_alpha_bv_2}}. Let $u\in BV(\R^n)$. By \cref{result:Sobolev_subset_BV}, we know that $u\in W^{1-\alpha,1}(\R^n)$, so that $f:= (-\Delta)^{\frac{1-\alpha}{2}}u\in L^1(\R^n)$ with $\|f\|_{L^1(\R^n)}\le c_{n,\alpha}\|u\|_{BV(\R^n)}$ by~\eqref{eq:W_alpha_norm_bound_BV} and~\eqref{eq:fract_Laplacian_L_1}. Then, arguing as before, for any $\phi\in C^\infty_c(\R^n;\R^n)$ we get \eqref{eq:proof_correspondence_BV_alpha_bv}, since we have $I_{1-\alpha}f=u$ in~$L^1(\R^n)$ by \eqref{eq:riesz_inversion_laplacian_sobolev}. The proof is complete. 
\end{proof}

\begin{remark}[Integrability issues]
Note that the inclusion $I_{1-\alpha}(BV^\alpha(\R^n))\subset L^1_{\loc}(\R^n)$ in \cref{result:correspondence_BV_alpha_bv} above is sharp. Indeed, by Tonelli's Theorem it is easily seen that $I_{1-\alpha}\chi_E\notin L^1(\R^n)$ whenever $\chi_E\in W^{\alpha,1}(\R^n)$. However, when $n\ge2$, by \cref{thm:GNS_immersion} and by Hardy--Littlewood--Sobolev inequality (see~\cite{S70}*{Chapter~V, Section~1.2} for instance), the map $I_{1-\alpha}\colon BV^\alpha\to L^p(\R^n)$ is continuous for each $p\in\left(\frac{n}{n-1+\alpha},\frac{n}{n-1}\right]$.
\end{remark}

As a consequence of \cref{result:correspondence_BV_alpha_bv}, we can prove that the inclusion $S^{\alpha, 1}(\R^{n})\subset BV^{\alpha}(\R^{n})$ is strict for all $\alpha\in(0,1)$ and $n\ge1$. 

\begin{theorem}[$BV^{\alpha}(\R^{n}) \setminus S^{\alpha, 1}(\R^{n}) \neq \varnothing$] \label{thm:strict_inclusion_W_BV_alpha}
Let $\alpha \in (0, 1)$. The inclusion $S^{\alpha, 1}(\R^{n}) \subset BV^{\alpha}(\R^{n})$ is strict.
\end{theorem}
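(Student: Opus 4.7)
The plan is to exhibit, for every $n \ge 1$, an explicit element of $BV^{\alpha}(\R^{n}) \setminus S^{\alpha,1}(\R^{n})$. The case $n=1$ is already settled by \cref{prop:BV_alpha_W_alpha_inclusion_R}, so the only genuinely new content concerns $n \ge 2$. The key idea is to transfer the classical strict inclusion $W^{1,1}(\R^n) \subsetneq BV(\R^n)$ to the fractional setting by means of the operator $(-\Delta)^{(1-\alpha)/2}$, which, according to \cref{result:correspondence_BV_alpha_bv}\,\eqref{item:BV_alpha_bv_2}, maps $BV(\R^n)$ into $BV^{\alpha}(\R^n)$ while preserving the total variation measure.

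Concretely, I would pick a classical $BV$ function whose distributional gradient is singular with respect to $\Leb{n}$; the prototypical choice is $u = \chi_{B_1} \in BV(\R^n)$, for which $Du = -x \, \Haus{n-1}\res \partial B_1$ is concentrated on the unit sphere and hence satisfies $|Du| \perp \Leb{n}$. Set
\begin{equation*}
f := (-\Delta)^{\frac{1-\alpha}{2}} \chi_{B_1}.
\end{equation*}
By \cref{result:correspondence_BV_alpha_bv}\,\eqref{item:BV_alpha_bv_2}, $f \in BV^{\alpha}(\R^{n})$ with $\|f\|_{L^1(\R^n)} \le c_{n,\alpha} \|\chi_{B_1}\|_{BV(\R^n)}$ and, crucially, $D^{\alpha} f = D\chi_{B_1}$ as elements of $\M(\R^n;\R^n)$. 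In particular, $|D^\alpha f| = \Haus{n-1}\res \partial B_1$, which is singular with respect to $\Leb{n}$.

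Finally, by the characterisation in \cref{result:S_alpha_1_in_BV_alpha}, absolute continuity of $|D^\alpha f|$ with respect to $\Leb{n}$ is precisely what distinguishes $S^{\alpha,1}(\R^n)$ inside $BV^\alpha(\R^n)$; since this fails for our $f$, we conclude $f \notin S^{\alpha,1}(\R^n)$, so $f \in BV^\alpha(\R^n) \setminus S^{\alpha,1}(\R^n)$. There is no real obstacle here: the argument is a one-line application of \cref{result:correspondence_BV_alpha_bv}\,\eqref{item:BV_alpha_bv_2} combined with \cref{result:S_alpha_1_in_BV_alpha}, and the only choice to make is that of a classical $BV$ function with singular derivative — any set of finite perimeter with positive $\Haus{n-1}$-measure of its reduced boundary would work equally well, and in fact the very same recipe applied to $u = \chi_{(a,b)}$ on $\R$ reproduces (up to a multiplicative constant) the function $f_{a,b,\alpha}$ used in \cref{prop:BV_alpha_W_alpha_inclusion_R}, giving a unified treatment for all $n \ge 1$.
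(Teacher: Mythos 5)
Your argument is correct and is essentially the paper's own proof: the paper also applies \cref{result:correspondence_BV_alpha_bv}\,\eqref{item:BV_alpha_bv_2} to a function $u\in BV(\R^n)\setminus W^{1,1}(\R^n)$ (you simply make the concrete choice $u=\chi_{B_1}$, whose derivative is purely singular) and then invokes \cref{result:S_alpha_1_in_BV_alpha} to conclude $f=(-\Delta)^{\frac{1-\alpha}{2}}u\notin S^{\alpha,1}(\R^n)$. No further comment is needed.
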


\begin{proof}
Let $u \in BV(\R^n)\setminus W^{1,1}(\R^n)$. By \cref{result:correspondence_BV_alpha_bv}, we know that $f:=(-\Delta)^{\frac{1-\alpha}{2}}u\in BV^\alpha(\R^n)$ with $Du=D^\alpha f$ in $\M(\R^{n}; \R^{n})$. But then $|D^\alpha f|$ is not absolutely continuous with respect to $\Leb{n}$, so that $f\notin S^{\alpha, 1}(\R^{n})$ by \cref{result:S_alpha_1_in_BV_alpha}.
\end{proof}

\subsection{The inclusion \texorpdfstring{$W^{\alpha, 1}(\R^{n}) \subset S^{\alpha,1}(\R^{n})$}{Wˆ{alpha,1}(Rˆn) in Sˆ{alpha,1}(Rˆn)} is strict}

By \cref{thm:strict_inclusion_W_BV_alpha}, we know that the inclusion $W^{\alpha, 1}(\R^{n}) \subset BV^{\alpha}(\R^{n})$ is strict. In the following result we prove that also the inclusion $W^{\alpha, 1}(\R^{n}) \subset S^{\alpha,1}(\R^{n})$ is strict. 

\begin{theorem}[$S^{\alpha,1}(\R^{n}) \setminus W^{\alpha, 1}(\R^{n}) \neq \varnothing$] \label{thm:strict_inclusion_W_S_alpha_1}
Let $\alpha \in (0, 1)$. The inclusion $W^{\alpha, 1}(\R^{n}) \subset S^{\alpha,1}(\R^{n})$ is strict.
\end{theorem}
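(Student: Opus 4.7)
The plan is to construct $f \in S^{\alpha,1}(\R^n) \setminus W^{\alpha,1}(\R^n)$ by applying \cref{result:correspondence_BV_alpha_bv} to a carefully chosen $u \in W^{1,1}(\R^n)$, and then to block $f \in W^{\alpha,1}(\R^n)$ via the composition identity $(-\Delta)^{\alpha/2}(-\Delta)^{(1-\alpha)/2} = (-\Delta)^{1/2}$. If $f$ belonged to $W^{\alpha,1}$, this composition would yield an $L^1$ bound on $(-\Delta)^{1/2} u$ that is incompatible with the well-known failure of $L^1$-boundedness of Riesz transforms.

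First, I would select $u \in W^{1,1}(\R^n)$ whose half-Laplacian fails to be integrable, that is $(-\Delta)^{1/2} u \notin L^1(\R^n)$. Such a function exists classically: on Schwartz functions one has $(-\Delta)^{1/2} u = - \sum_{j=1}^n R_j \partial_j u$, where $R_j$ denotes the $j$-th Riesz transform, and since the Riesz transforms do not map $L^1$ into $L^1$, the operator $(-\Delta)^{1/2} \colon W^{1,1}(\R^n) \to L^1(\R^n)$ is unbounded. Applying the Banach--Steinhaus theorem to truncated operators $(-\Delta)^{1/2}_\eps$ produces a $u \in W^{1,1}(\R^n)$ for which the limit distribution $(-\Delta)^{1/2} u$ is not represented by an $L^1$ function; alternatively, one may exhibit $u$ explicitly by taking $\nabla u$ to be an $L^1$ vector field outside the real Hardy space $H^1(\R^n; \R^n)$.

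Having fixed $u$, I would set $f := (-\Delta)^{(1-\alpha)/2} u$. Since $W^{1,1}(\R^n) \subset BV(\R^n)$, \cref{result:correspondence_BV_alpha_bv} gives $f \in BV^\alpha(\R^n)$ together with $D^\alpha f = Du = \nabla u \, \Leb{n}$, so $|D^\alpha f| \ll \Leb{n}$ and \cref{result:S_alpha_1_in_BV_alpha} yields $f \in S^{\alpha,1}(\R^n)$ with $\nabla^\alpha_w f = \nabla u$. To complete the proof I would argue by contradiction: if $f \in W^{\alpha,1}(\R^n)$, then the continuous extension of $(-\Delta)^{\alpha/2}$ to $W^{\alpha,1}$ based on \eqref{eq:fract_Laplacian_L_1} would produce $(-\Delta)^{\alpha/2} f \in L^1(\R^n)$. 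On Schwartz functions the Fourier transform trivialises $(-\Delta)^{\alpha/2}(-\Delta)^{(1-\alpha)/2} = (-\Delta)^{1/2}$, and a density argument extends this identity to the present $u$, so $(-\Delta)^{1/2} u \in L^1(\R^n)$, contradicting the choice of $u$.

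The main obstacle I foresee is the rigorous justification of the composition identity applied to $u$ living merely in $W^{1,1}(\R^n)$, since the extensions of the two fractional Laplacians are defined on different function spaces and their composition on rough data requires a careful approximation step; a clean way to carry it out is to rely on the inversion formula \eqref{eq:riesz_inversion_laplacian_sobolev} together with the semigroup property $I_\alpha I_\beta = I_{\alpha+\beta}$ of Riesz potentials, which circumvents most of the delicacy of composing fractional Laplacians directly on non-smooth inputs.
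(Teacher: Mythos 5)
Your strategy is genuinely different from the paper's, and its first half is fine: for $u\in W^{1,1}(\R^n)$ the function $f:=(-\Delta)^{\frac{1-\alpha}{2}}u$ lies in $S^{\alpha,1}(\R^n)$ with $\nabla^\alpha_w f=\nabla u$, exactly as you say, by \cref{result:correspondence_BV_alpha_bv} and \cref{result:S_alpha_1_in_BV_alpha}. The genuine gaps are all in the second half, where you must show $f\notin W^{\alpha,1}(\R^n)$. First, the existence of $u\in W^{1,1}(\R^n)$ with $(-\Delta)^{\frac12}u\notin L^1(\R^n)$ is not established by either of your sketches: unboundedness of the Riesz transforms on $L^1$ does not formally give failure of the estimate $\|(-\Delta)^{\frac12}\phi\|_{L^1}\lesssim\|\nabla\phi\|_{L^1}$ on the restricted class of \emph{gradient} fields, and the Banach--Steinhaus argument only produces a $u$ at which the truncated operators $(-\Delta)^{\frac12}_\eps u$ blow up in $L^1$, which does not by itself show that the limiting \emph{distribution} $(-\Delta)^{\frac12}u$ fails to be an $L^1$ function (one would still have to prove that integrability of the limit forces boundedness of the truncations). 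Making this step rigorous essentially requires Hardy space theory ($\nabla u\notin H^1(\R^n;\R^n)$ together with the characterisation of $H^1$ via Riesz transforms), i.e.\ machinery well beyond what the paper develops; note also that the paper defines the extension of $(-\Delta)^{\frac{\beta}{2}}$ to $W^{\beta,1}$ only for $\beta\in(0,1)$, so even the meaning of ``$(-\Delta)^{\frac12}u\notin L^1$'' for $u\in W^{1,1}$ has to be fixed (distributionally) before it can be contradicted. Second, the composition identity cannot be obtained ``by density'': $(-\Delta)^{\frac12}$ is precisely \emph{not} continuous from $W^{1,1}$ to $L^1$ (this is what you are exploiting), so the identity must be run distributionally; and your proposed repair via the semigroup law $I_{1-\alpha}I_\alpha=I_1$ genuinely fails for $n=1$, where the orders sum to $n$, the composition integral diverges and $I_1$ is not even defined, while for $n\ge2$ it still leaves you to identify $I_1 g$ with $u$ and then $g$ with the distributional half-Laplacian used in the first step.

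For comparison, the paper sidesteps all of this harmonic analysis with a soft argument: assuming $W^{\alpha,1}(\R^n)=S^{\alpha,1}(\R^n)$, the Inverse Mapping Theorem gives $[g]_{W^{\alpha,1}(\R^n)}\le C\|g\|_{S^{\alpha,1}(\R^n)}$; one then takes the function $f\in BV^\alpha(\R^n)\setminus S^{\alpha,1}(\R^n)$ already produced in \cref{thm:strict_inclusion_W_BV_alpha} (which only needs some $u\in BV(\R^n)\setminus W^{1,1}(\R^n)$, e.g.\ a characteristic function), approximates it by $C^\infty_c$ functions as in \cref{result:approx_by_smooth_c_BV}, and concludes via Fatou that $f\in W^{\alpha,1}(\R^n)\subset S^{\alpha,1}(\R^n)$, a contradiction. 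Your constructive route would exhibit an explicit element of $S^{\alpha,1}\setminus W^{\alpha,1}$, which is more informative, but to be a proof it needs the Hardy space input and a dimension-uniform justification of the composition step; as written, those are real gaps.
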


\begin{proof}
We argue by contradiction. If $W^{\alpha, 1}(\R^{n}) = S^{\alpha,1}(\R^{n})$, then the inclusion map $W^{\alpha, 1}(\R^{n})\hookrightarrow S^{\alpha,1}(\R^{n})$ is a linear and continuous bijection. Thus, by the Inverse Mapping Theorem, there must exist a constant $C>0$ such that
\begin{equation}\label{eq:contradiction_S_W}
[g]_{W^{\alpha,1}(\R^n)}\le C\|g\|_{S^{\alpha,1}(\R^n)}
\end{equation}
for all $g \in S^{\alpha, 1}(\R^{n})$. Now let $f\in BV^{\alpha}(\R^{n}) \setminus S^{\alpha, 1}(\R^{n})$ be given by \cref{thm:strict_inclusion_W_BV_alpha}. By \cref{result:approx_by_smooth_c_BV}, there exists $(f_k)_{k\in\N}\subset C^\infty_c(\R^n)$ such that $f_k\to f$ in $L^1(\R^n)$ and $|D^\alpha f_k|(\R^n)\to|D^\alpha f|(\R^n)$ as $k\to+\infty$. Up to extract a subsequence (which we do not relabel for simplicity), we can assume that $f_k(x)\to f(x)$ as $k\to+\infty$ for $\Leb{n}$-a.e.\ $x\in\R^n$. By~\eqref{eq:contradiction_S_W} and Fatou's Lemma, we have that
\begin{equation*}
\begin{split}
[f]_{W^{\alpha,1}(\R^n)}
&\le\liminf_{k\to+\infty}[f_k]_{W^{\alpha,1}(\R^n)}\\
&\le C\liminf_{k\to+\infty}\|f_k\|_{S^{\alpha,1}(\R^n)}\\
&= C\lim_{k\to+\infty}\|f_k\|_{BV^{\alpha}(\R^n)}\\
&=C\,\|f\|_{BV^{\alpha}(\R^n)}<+\infty.
\end{split}
\end{equation*} 
Therefore $f\in W^{\alpha,1}(\R^n)$, in contradiction with \cref{thm:strict_inclusion_W_BV_alpha}. We thus must have that the inclusion map $W^{\alpha, 1}(\R^{n})\hookrightarrow S^{\alpha,1}(\R^{n})$ cannot be surjective.
\end{proof}

\subsection{The inclusion \texorpdfstring{$BV^{\alpha}(\R^{n})\subset W^{\beta, 1}(\R^{n})$}{BVˆalpha(Rˆn) in Wˆ{beta,1}(Rˆn)} for \texorpdfstring{$\beta<\alpha$}{beta<alpha}}

Even though the inclusion $W^{\alpha, 1}(\R^{n}) \subset BV^{\alpha}(\R^{n})$ is strict, it is interesting to notice that $BV^{\alpha}(\R^{n})\subset W^{\beta, 1}(\R^{n})$ for all $0<\beta<\alpha < 1$ with continuous embedding.

\begin{theorem}[$BV^{\alpha}(\R^{n}) \subset W^{\beta, 1}(\R^{n})$ for $\beta < \alpha$] \label{result:BV_alpha_W_beta_embedding}
Let $\alpha,\beta\in(0,1)$ with $\beta<\alpha$. Then $BV^{\alpha}(\R^{n}) \subset W^{\beta, 1}(\R^{n})$, with
\begin{equation} \label{eq:BV_alpha_W_beta_embedding}
[f]_{W^{\beta, 1}(\R^{n})} \le C_{n, \alpha, \beta} \, \|f\|_{BV^{\alpha}(\R^{n})},
\end{equation}
for all $f\in BV^{\alpha}(\R^{n})$, where 
\begin{equation} \label{eq:BV_alpha_W_beta_embedding_constant}
C_{n, \alpha, \beta} := n \omega_{n}  \frac{\alpha2^{\frac{\alpha - \beta}{\beta}} \gamma_{n, \alpha}^{\beta/\alpha}}{\beta (\alpha - \beta)}
\end{equation}
and $\gamma_{n,\alpha}$ is as in~\eqref{eq:def_gamma_n,alpha}.
\end{theorem}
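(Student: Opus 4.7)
The strategy is to combine the $L^{1}$--estimate on translations from \cref{prop:Holder_estimate} with a dyadic split of the Gagliardo double integral, in the spirit of \cref{result:mollifier_conv_estimate_Holder}.

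First I would upgrade the estimate~\eqref{eq:Holder_estimate} from $C^\infty_c(\R^n)$ to the entire space $BV^\alpha(\R^n)$, in the form
\begin{equation*}
\int_{\R^n}|f(x+y)-f(x)|\,dx\le\gamma_{n,\alpha}\,|y|^\alpha\,|D^\alpha f|(\R^n)\qquad\forall y\in\R^n.
\end{equation*}
Given $f\in BV^\alpha(\R^n)$, \cref{result:approx_by_smooth_c_BV} yields a sequence $(f_k)_{k\in\N}\subset C^\infty_c(\R^n)$ with $f_k\to f$ in $L^1(\R^n)$ and $|D^\alpha f_k|(\R^n)\to|D^\alpha f|(\R^n)$; since $f_k\in C^\infty_c(\R^n)\subset S^{\alpha,1}(\R^n)$, by \cref{result:S_alpha_1_in_BV_alpha} we have $\|\nabla^\alpha f_k\|_{L^1(\R^n;\R^n)}=|D^\alpha f_k|(\R^n)$, and the continuity of translations on $L^1(\R^n)$ then allows us to pass to the limit in~\eqref{eq:Holder_estimate} for $f_k$.

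Second, via the change of variables $z=y-x$ I rewrite
\begin{equation*}
[f]_{W^{\beta,1}(\R^n)}=\int_{\R^n}\int_{\R^n}\frac{|f(x+z)-f(x)|}{|z|^{n+\beta}}\,dx\,dz,
\end{equation*}
and for $r>0$ I split the outer integral into $\set*{|z|\le r}$ and $\set*{|z|>r}$. On the first region the extended Hölder estimate yields, using $\beta<\alpha$,
\begin{equation*}
\int_{\set*{|z|\le r}}\int_{\R^n}\frac{|f(x+z)-f(x)|}{|z|^{n+\beta}}\,dx\,dz\le\gamma_{n,\alpha}|D^\alpha f|(\R^n)\int_{\set*{|z|\le r}}|z|^{\alpha-\beta-n}\,dz=\frac{n\omega_n\gamma_{n,\alpha}\,r^{\alpha-\beta}}{\alpha-\beta}\,|D^\alpha f|(\R^n),
\end{equation*}
while on the second region the trivial bound $\int_{\R^n}|f(x+z)-f(x)|\,dx\le 2\|f\|_{L^1(\R^n)}$ and a radial computation give
\begin{equation*}
\int_{\set*{|z|>r}}\int_{\R^n}\frac{|f(x+z)-f(x)|}{|z|^{n+\beta}}\,dx\,dz\le\frac{2n\omega_n r^{-\beta}}{\beta}\,\|f\|_{L^1(\R^n)}.
\end{equation*}

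Third, I would optimise in $r>0$. The resulting minimiser satisfies $r^\alpha\sim\|f\|_{L^1(\R^n)}/\!\bigl(\gamma_{n,\alpha}|D^\alpha f|(\R^n)\bigr)$ and produces an interpolation--type bound proportional to $\gamma_{n,\alpha}^{\beta/\alpha}|D^\alpha f|(\R^n)^{\beta/\alpha}\,\|f\|_{L^1(\R^n)}^{(\alpha-\beta)/\alpha}$, with a prefactor of the form $n\omega_n\alpha/(\beta(\alpha-\beta))$ up to a purely numerical power of~$2$. The elementary Young--type inequality $s^{\beta/\alpha}t^{(\alpha-\beta)/\alpha}\le s+t$ for $s,t\ge 0$ then controls this interpolation product by $\|f\|_{BV^\alpha(\R^n)}=\|f\|_{L^1(\R^n)}+|D^\alpha f|(\R^n)$, yielding~\eqref{eq:BV_alpha_W_beta_embedding} with the constant in~\eqref{eq:BV_alpha_W_beta_embedding_constant}.

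There is no substantial obstacle: the argument is a clean interpolation between the Hölder--type translation estimate and the $L^1$--bound. The only points requiring minor care are (i) passing \eqref{eq:Holder_estimate} to $BV^\alpha$ through the approximation in \cref{result:approx_by_smooth_c_BV}, and (ii) the convergence of $\int_{\set*{|z|\le r}}|z|^{\alpha-\beta-n}\,dz$, which is exactly where the hypothesis $\beta<\alpha$ is used and which is also the source of the blow--up of $C_{n,\alpha,\beta}$ as $\beta\uparrow\alpha$.
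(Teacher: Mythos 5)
Your proposal is correct and follows essentially the same route as the paper: the translation estimate of \cref{prop:Holder_estimate}, a split of the Gagliardo integral at radius $r$, optimisation in $r$, and the approximation of \cref{result:approx_by_smooth_c_BV} to reach all of $BV^{\alpha}(\R^{n})$. The only (harmless) differences are that you transfer \eqref{eq:Holder_estimate} to $BV^{\alpha}(\R^{n})$ before splitting, whereas the paper works on $C^{\infty}_{c}(\R^{n})$ and concludes by Fatou on $[f]_{W^{\beta,1}(\R^{n})}$, and that your $f$-dependent choice of $r$ plus Young's inequality actually yields the slightly better factor $2^{\frac{\alpha-\beta}{\alpha}}\le 2^{\frac{\alpha-\beta}{\beta}}$, so \eqref{eq:BV_alpha_W_beta_embedding} with the constant \eqref{eq:BV_alpha_W_beta_embedding_constant} follows a fortiori.
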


\begin{proof}
Let $f \in C^{\infty}_{c}(\R^{n})$ and $r>0$. By~\eqref{eq:Holder_estimate}, we get
\begin{align*}
[f]_{W^{\beta, 1}(\R^{n})} 
& = \int_{\R^{n}} \int_{\R^{n}} \frac{|f(x + y) - f(x)|}{|y|^{n + \beta}} \, dx \, dy\\
& \le \int_{\R^{n}} \frac{1}{|y|^{n + \beta}} \left ( 2 \|f\|_{L^{1}(\R^{n})} \chi_{\R^{n} \setminus B_{r}}(y) + \gamma_{n, \alpha} |y|^{\alpha} \|\nabla^{\alpha} f\|_{L^{1}(\R^{n}; \R^{n})} \chi_{B_{r}}(y) \right ) \, dy \\
& = 2 \frac{n \omega_{n}}{\beta} r^{- \beta} \|f\|_{L^{1}(\R^{n})} + \frac{n \omega_{n}}{\alpha - \beta} \gamma_{n, \alpha} r^{\alpha - \beta} \|\nabla^{\alpha} f\|_{L^{1}(\R^{n}; \R^{n})} \\
& \le \left ( 2 \frac{n \omega_{n}}{\beta} r^{- \beta} + \frac{n \omega_{n}}{\alpha - \beta} \gamma_{n, \alpha} r^{\alpha - \beta} \right ) \| f\|_{BV^{\alpha}(\R^{n})},
\end{align*}
so that both~\eqref{eq:BV_alpha_W_beta_embedding} and~\eqref{eq:BV_alpha_W_beta_embedding_constant} are proved by minimising in $r > 0$ for all $f\in C^\infty_c(\R^n)$. Now let $f\in BV^\alpha(\R^n)$. By \cref{result:approx_by_smooth_c_BV}, there exists $(f_k)_{k\in\N}\subset C^\infty_c(\R^n)$ such that $\|f_k\|_{BV^\alpha(\R^n)}\to\|f\|_{BV^\alpha(\R^n)}$ and $f_k\to f$ a.e.\ as $k\to+\infty$. Thus, by Fatou's Lemma, we get that
\begin{equation*}
[f]_{W^{\beta, 1}(\R^{n})}
\le\liminf_{k\to+\infty}\, [f_k]_{W^{\beta, 1}(\R^{n})}
\le\lim_{k\to+\infty} C_{n, \alpha, \beta}\, \|f_k\|_{BV^{\alpha}(\R^{n})}
=C_{n, \alpha, \beta}\, \|f\|_{BV^{\alpha}(\R^{n})}
\end{equation*}
and the conclusion follows.
\end{proof}

\noindent
Note that the constant in~\eqref{eq:BV_alpha_W_beta_embedding_constant} satisfies
\begin{equation*}
\lim_{\beta\to\alpha^-}C_{n, \alpha, \beta}=+\infty,
\end{equation*}
accordingly to the strict inclusion $W^{\alpha, 1}(\R^{n}) \subset BV^{\alpha}(\R^{n})$. In particular, the function in \cref{prop:BV_alpha_W_alpha_inclusion_R} is such that $f_{a, b, \alpha} \in W^{\beta, 1}(\R)$ for all $\beta \in (0, \alpha)$.

As an immediate consequence of  \cref{result:BV_alpha_W_beta_embedding}, we have the following result.

\begin{corollary}
Let $0<\beta<\alpha<1$. Then $BV^\alpha(\R^n)\subset BV^\beta(\R^n)$ and $S^{\alpha,1}(\R^n)\subset S^{\beta,1}(\R^n)$ with continuous embeddings.
\end{corollary}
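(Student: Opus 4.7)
The plan is to obtain both inclusions as an essentially immediate combination of \cref{result:BV_alpha_W_beta_embedding} (giving $BV^\alpha\subset W^{\beta,1}$) with \cref{result:Sobolev_subset_BV} (giving $W^{\beta,1}\subset BV^\beta$ with an \emph{absolutely continuous} fractional variation) and \cref{result:S_alpha_1_in_BV_alpha} (characterising $S^{\beta,1}$ inside $BV^\beta$ as the subspace whose fractional variation measure is absolutely continuous with respect to $\Leb{n}$).

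\textbf{First inclusion.} Fix $f\in BV^\alpha(\R^n)$. By \cref{result:BV_alpha_W_beta_embedding} we have $f\in W^{\beta,1}(\R^n)$ with
\begin{equation*}
[f]_{W^{\beta,1}(\R^n)}\le C_{n,\alpha,\beta}\,\|f\|_{BV^\alpha(\R^n)}.
\end{equation*}
By \cref{result:Sobolev_subset_BV}, we then have $f\in BV^\beta(\R^n)$ with $D^\beta f=\nabla^\beta f\,\Leb{n}$ and
\begin{equation*}
|D^\beta f|(\R^n)\le\mu_{n,\beta}\,[f]_{W^{\beta,1}(\R^n)}\le\mu_{n,\beta}\,C_{n,\alpha,\beta}\,\|f\|_{BV^\alpha(\R^n)}.
\end{equation*}
Summing with $\|f\|_{L^1(\R^n)}\le\|f\|_{BV^\alpha(\R^n)}$ yields the continuity of $BV^\alpha(\R^n)\hookrightarrow BV^\beta(\R^n)$.

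\textbf{Second inclusion.} Fix $f\in S^{\alpha,1}(\R^n)$. Thanks to \cref{result:S_alpha_1_in_BV_alpha}, $f\in BV^\alpha(\R^n)$ with $D^\alpha f=\nabla^\alpha_w f\,\Leb{n}$, and in particular $\|f\|_{BV^\alpha(\R^n)}=\|f\|_{S^{\alpha,1}(\R^n)}$. By the first part of the proof, $f\in BV^\beta(\R^n)$ and its fractional $\beta$-variation is absolutely continuous with respect to $\Leb{n}$ (being equal to $\nabla^\beta f\,\Leb{n}$ via \cref{result:Sobolev_subset_BV}). Invoking \cref{result:S_alpha_1_in_BV_alpha} in the reverse direction, we conclude that $f\in S^{\beta,1}(\R^n)$ with $\nabla^\beta_w f=\nabla^\beta f$, and
\begin{equation*}
\|f\|_{S^{\beta,1}(\R^n)}=\|f\|_{L^1(\R^n)}+|D^\beta f|(\R^n)\le(1+\mu_{n,\beta}\,C_{n,\alpha,\beta})\,\|f\|_{S^{\alpha,1}(\R^n)},
\end{equation*}
which proves continuity.

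I do not expect any genuine obstacle: the whole argument is a two-line bookkeeping exercise provided the earlier results are in hand. The only minor subtlety is remembering that for $f\in S^{\alpha,1}(\R^n)$ the $BV^\alpha$-norm and the $S^{\alpha,1}$-norm coincide (so that \cref{result:BV_alpha_W_beta_embedding} can be applied and then transferred back to an $S^{\alpha,1}$-bound), and that the absolute continuity of $D^\beta f$ needed for the reverse step in \cref{result:S_alpha_1_in_BV_alpha} is supplied for free by the representation $D^\beta f=\nabla^\beta f\,\Leb{n}$ coming from \cref{result:Sobolev_subset_BV}.
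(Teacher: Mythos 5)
Your proof is correct and follows exactly the route the paper intends: the paper states this corollary without proof as an immediate consequence of \cref{result:BV_alpha_W_beta_embedding}, combined (as you do) with \cref{result:Sobolev_subset_BV} and \cref{result:S_alpha_1_in_BV_alpha}. The bookkeeping of the norm estimates, including the identity $\|f\|_{BV^\alpha(\R^n)}=\|f\|_{S^{\alpha,1}(\R^n)}$ for $f\in S^{\alpha,1}(\R^n)$ and the absolute continuity $D^\beta f=\nabla^\beta f\,\Leb{n}$, is accurate.
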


\section{Fractional Caccioppoli sets} 
\label{sec:frac_Caccioppoli_sets}

\subsection{Definition of fractional Caccioppoli sets and the Gauss--Green formula}

As in the classical case (see~\cite{AFP00}*{Definition~3.3.5} for instance), we start with the following definition.

\begin{definition}[Fractional Caccioppoli set]\label{def:frac_Caccioppoli_set}
Let $\alpha\in(0,1)$ and let $E\subset\R^n$ be a measurable set. For any open set $\Omega\subset\R^n$, the \emph{fractional Caccioppoli $\alpha$-perimeter in $\Omega$} is the \emph{fractional variation} of $\chi_E$ in $\Omega$, i.e.\ 
\begin{equation*}
|D^\alpha\chi_E|(\Omega)=\sup\set*{\int_E\div^\alpha\phi\,dx : \phi\in C^\infty_c(\Omega;\R^n),\ \|\phi\|_{L^\infty(\Omega;\R^n)}\le1}.
\end{equation*}
We say that $E$ is a set with \emph{finite fractional Caccioppoli $\alpha$-perimeter in $\Omega$} if $|D^\alpha\chi_E|(\Omega)<+\infty$. We say that $E$ is a set with \emph{locally finite fractional Caccioppoli $\alpha$-perimeter in $\Omega$} if $|D^\alpha\chi_E|(U)<+\infty$ for any $U\Subset\Omega$.
\end{definition}

We can now state the following fundamental result relating non-local distributional gradients of characteristic functions of fractional Caccioppoli sets and vector valued Radon measures.

\begin{theorem}[Gauss--Green formula for fractional Caccioppoli sets]\label{result:Gauss-Green}
Let $\alpha\in(0,1)$ and let $\Omega\subset\R^n$ be an open set. A measurable set $E \subset \R^{n}$ is a set with finite fractional Caccioppoli $\alpha$-perimeter in $\Omega$ if and only if $D^{\alpha} \chi_E \in \M(\Omega; \R^{n})$ and 
\begin{equation}\label{eq:Gauss-Green} 
\int_E \div^{\alpha} \phi \, dx = - \int_{\Omega} \varphi \cdot d D^{\alpha} \chi_E 
\end{equation}
for all $\phi \in C^{\infty}_{c}(\Omega; \R^{n})$. In addition, for any open set $U\subset\Omega$ it holds
\begin{equation}\label{eq:Caccioppoli-measure}
|D^{\alpha} \chi_{E}|(U) = \sup\set*{\int_E\,\div^\alpha\phi\ dx : \phi\in C^\infty_c(U;\R^n),\ \|\phi\|_{L^\infty(U;\R^n)}\le1}.
\end{equation}
\end{theorem}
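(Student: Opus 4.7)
The argument mirrors the proof of \cref{th:structure_BV_alpha} essentially verbatim, so the plan is simply to repeat that strategy with $\chi_E$ in place of a generic $f \in L^1(\R^n)$, noting that $\chi_E$ need not lie in $L^1(\R^n)$ but this causes no issue thanks to the integrability of $\div^\alpha \phi$.

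For the easy direction, assume $D^\alpha \chi_E \in \M(\Omega; \R^n)$ satisfies \eqref{eq:Gauss-Green}. Then for any admissible $\phi \in C^\infty_c(\Omega; \R^n)$ with $\|\phi\|_{L^\infty(\Omega;\R^n)} \le 1$, one has
\[
\left|\int_E \div^\alpha \phi\, dx\right| = \left|\int_\Omega \phi \cdot dD^\alpha \chi_E\right| \le |D^\alpha \chi_E|(\Omega),
\]
so taking the supremum over such $\phi$ shows that $E$ has finite fractional Caccioppoli $\alpha$-perimeter in $\Omega$.

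For the converse, introduce the linear functional $L\colon C^\infty_c(\Omega; \R^n) \to \R$ given by $L(\phi) := -\int_E \div^\alpha \phi\, dx$. The key point is that $L$ is well-defined: by \cref{prop:frac_div_repr}, $\div^\alpha \phi \in L^1(\R^n) \cap L^\infty(\R^n)$ for any $\phi \in C^\infty_c(\Omega; \R^n) \subset \Lip_c(\R^n; \R^n)$, so $\int_E \div^\alpha \phi\, dx$ makes sense regardless of $|E|$. For each open $U \subset \Omega$, set
\[
C(U) := \sup\set*{L(\phi) : \phi \in C^\infty_c(U; \R^n),\ \|\phi\|_{L^\infty(U;\R^n)} \le 1},
\]
which is finite for $U = \Omega$ by hypothesis; a homogeneity argument then yields $|L(\phi)| \le C(U) \|\phi\|_{L^\infty(U;\R^n)}$ for every $\phi \in C^\infty_c(U; \R^n)$. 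By the density of $C^\infty_c(\Omega; \R^n)$ in $C_c(\Omega; \R^n)$, $L$ extends uniquely to a continuous linear functional $\tilde{L}\colon C_c(\Omega; \R^n) \to \R$, and Riesz's Representation Theorem produces a finite vector valued Radon measure $D^\alpha \chi_E \in \M(\Omega; \R^n)$ fulfilling \eqref{eq:Gauss-Green}.

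Finally, the localization formula \eqref{eq:Caccioppoli-measure} for arbitrary open $U \subset \Omega$ follows by repeating the duality computation with $U$ in place of $\Omega$, i.e.\ from the standard identification of the total variation of a vector valued Radon measure via duality against compactly supported continuous vector fields (which coincide with the $C^\infty_c$ ones by density). Since every technical ingredient is already available---in particular the crucial $L^1 \cap L^\infty$ bounds for $\div^\alpha$ in \cref{prop:frac_div_repr}---I do not expect any substantive obstacle; the only point deserving a brief mention is that $\chi_E \notin L^1(\R^n)$ is handled precisely by the $L^1$-integrability of $\div^\alpha\phi$, otherwise the proof is a direct transcription of the one of \cref{th:structure_BV_alpha}.
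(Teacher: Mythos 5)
Your proposal is correct and follows essentially the same route as the paper: both directions are handled exactly as in the proof of \cref{th:structure_BV_alpha}, with the well-definedness of $L(\phi)=-\int_E\div^\alpha\phi\,dx$ secured by the $L^1\cap L^\infty$ bounds of \cref{prop:frac_div_repr} (precisely the point that makes $\chi_E\notin L^1(\R^n)$ harmless), finiteness of $C(U)$ for every open $U\subset\Omega$, density of $C^\infty_c$ in $C_c$, and Riesz's Representation Theorem yielding both \eqref{eq:Gauss-Green} and \eqref{eq:Caccioppoli-measure}. No gaps to report.
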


\begin{proof}
The proof is similar to the one of \cref{th:structure_BV_alpha}. If $D^{\alpha} \chi_E \in \M(\Omega; \R^{n})$ and~\eqref{eq:Gauss-Green} holds, then~$E$ has finite fractional Caccioppoli $\alpha$-perimeter in~$\Omega$ by \cref{def:frac_Caccioppoli_set}. 

If~$E$ is a set with finite fractional Caccioppoli $\alpha$-perimeter in $\Omega$, then define the linear functional $L\colon C_{c}^\infty(\Omega; \R^{n})\to\R$ setting
\begin{equation*} 
L(\phi) := - \int_E \div^{\alpha}\phi \, dx
\qquad
\forall\phi \in C^{\infty}_{c}(\Omega; \R^{n}).
\end{equation*}
Note that $L$ is well defined thanks to \cref{prop:frac_div_repr}. Since $E$ has finite fractional Caccioppoli $\alpha$-perimeter in $\Omega$, we have
\begin{equation*}
C(U):=\sup\set*{L(\phi) : \phi\in C^\infty_c(U;\R^n),\ \|\phi\|_{L^\infty(U;\R^n)}\le 1}<+\infty
\end{equation*}
for each open set $U\subset\Omega$, so that
\begin{equation*} 
\left | L(\phi) \right | \le C(U) \|\phi\|_{L^\infty(U;\R^n)}
\qquad
\forall \phi \in C^{\infty}_{c}(U; \R^{n}).
\end{equation*}
Thus, by the density of $C^{\infty}_{c}(\Omega; \R^{n})$ in $C_{c}(\Omega; \R^{n})$, the functional $L$ can be uniquely extended to a continuous linear functional $\tilde{L}\colon C_{c}(\Omega; \R^{n})\to\R$ and the conclusion follows by Riesz's Representation Theorem.
\end{proof}

\subsection{Lower semicontinuity of fractional variation}

As in the classical case, the variation measure of a set with finite fractional Caccioppoli $\alpha$-perimeter is lower semicontinuous with respect to the local convergence in measure. We also achieve a weak convergence result.

\begin{proposition}[Lower semicontinuity of fractional variation measure]
\label{result:frac_Caccioppoli_perimeter_is_lsc}
Let $\alpha\in(0,1)$ and let $\Omega\subset\R^n$ be an open set. If $(E_k)_{k\in\N}$ is a sequence of sets with finite fractional Caccioppoli $\alpha$-perimeter in $\Omega$ and $\chi_{E_k}\to\chi_E$ in $L^1_{\loc}(\R^n)$, then 
\begin{equation} \label{eq:weak_conv}
D^{\alpha} \chi_{E_{k}} \weakto D^{\alpha} \chi_{E} \ \ \text{in} \ \mathcal{M}(\Omega; \R^{n}),
\end{equation}
and
\begin{equation}\label{eq:lsc_frac_Caccioppoli_perim}
|D^\alpha\chi_E|(\Omega)\le\liminf_{k\to+\infty}|D^\alpha\chi_{E_k}|(\Omega).
\end{equation}
\end{proposition}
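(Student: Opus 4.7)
The plan is to mimic the proof of \cref{result:frac_var_meas_is_lsc} applied to characteristic functions, using \cref{result:Gauss-Green} to represent the fractional variations as Radon measures and exploiting the regularity of $\diverg^\alpha \phi$ established in \cref{prop:frac_div_repr}. Both conclusions will follow from the single fact that, for every $\phi \in C^\infty_c(\Omega;\R^n)$,
\begin{equation*}
\lim_{k\to+\infty} \int_{E_k} \diverg^\alpha \phi \, dx = \int_E \diverg^\alpha \phi \, dx.
\end{equation*}

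First I would fix $\phi \in C^\infty_c(\Omega;\R^n)$. By \cref{prop:frac_div_repr} (applied with the set $U = \supp \phi$), one has $\diverg^\alpha \phi \in L^1(\R^n) \cap L^\infty(\R^n)$, \emph{globally} on $\R^n$, even though $\phi$ itself has compact support in $\Omega$. To pass to the limit against $\chi_{E_k}$ under only $L^1_{\loc}$-convergence, I would split the integral on a large ball $B_R$ and its complement:
\begin{equation*}
\left| \int_{\R^n} (\chi_{E_k} - \chi_E) \diverg^\alpha \phi \, dx \right|
\le \|\diverg^\alpha \phi\|_{L^\infty(\R^n)} \|\chi_{E_k} - \chi_E\|_{L^1(B_R)} + 2 \int_{\R^n \setminus B_R} |\diverg^\alpha \phi| \, dx.
\end{equation*}
Sending $k\to+\infty$ first (using $L^1_{\loc}$-convergence) and then $R\to+\infty$ (using $\diverg^\alpha \phi \in L^1(\R^n)$) gives the desired limit.

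For the weak convergence~\eqref{eq:weak_conv}, I would then write, via the Gauss--Green formula~\eqref{eq:Gauss-Green} applied to both $E_k$ and $E$,
\begin{equation*}
\int_{\Omega} \phi \cdot dD^\alpha \chi_{E_k}
= -\int_{E_k} \diverg^\alpha \phi \, dx
\xrightarrow[k\to+\infty]{} -\int_E \diverg^\alpha \phi \, dx
= \int_{\Omega} \phi \cdot dD^\alpha \chi_E.
\end{equation*}
This is exactly the definition of $D^\alpha \chi_{E_k} \weakto D^\alpha \chi_E$ tested against $C^\infty_c(\Omega;\R^n)$, and one extends to $C_c(\Omega;\R^n)$ by density together with the uniform bound $\sup_k |D^\alpha \chi_{E_k}|(\Omega) < +\infty$ (which, if infinite along a subsequence, makes~\eqref{eq:lsc_frac_Caccioppoli_perim} trivial). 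For~\eqref{eq:lsc_frac_Caccioppoli_perim}, I would take any $\phi \in C^\infty_c(\Omega;\R^n)$ with $\|\phi\|_{L^\infty(\Omega;\R^n)}\le 1$ and estimate
\begin{equation*}
\int_E \diverg^\alpha \phi \, dx = \lim_{k\to+\infty} \int_{E_k} \diverg^\alpha \phi \, dx \le \liminf_{k\to+\infty} |D^\alpha \chi_{E_k}|(\Omega),
\end{equation*}
after which taking the supremum over such $\phi$ and invoking the representation~\eqref{eq:Caccioppoli-measure} yields~\eqref{eq:lsc_frac_Caccioppoli_perim}.

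The only genuinely delicate point is the splitting argument, since $L^1_{\loc}$-convergence is strictly weaker than $L^1$-convergence; once one notices that $\diverg^\alpha \phi$ belongs to $L^1(\R^n)$ globally (a nontrivial nonlocal fact, but one already secured by \cref{prop:frac_div_repr}), the rest is routine and entirely parallel to the $BV^\alpha$ case treated in \cref{result:frac_var_meas_is_lsc}.
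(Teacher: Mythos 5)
Your proof is correct, and on the key limiting step it takes a slightly different route from the paper. The paper passes to a subsequence along which $\chi_{E_k}\to\chi_E$ $\Leb{n}$-a.e.\ and then applies Lebesgue's Dominated Convergence Theorem with the dominating function $|\div^\alpha\phi|\in L^1(\R^n)$ (from \cref{prop:frac_div_repr}); you avoid subsequences altogether by splitting $\R^n$ into a large ball, where the $L^\infty$-bound on $\div^\alpha\phi$ and the $L^1_{\loc}$-convergence apply, and its complement, where the bound $|\chi_{E_k}-\chi_E|\le 2$ together with the integrable tail of $\div^\alpha\phi$ gives a contribution small uniformly in $k$. Both arguments rest on exactly the same nonlocal fact, namely $\div^\alpha\phi\in L^1(\R^n)\cap L^\infty(\R^n)$, and from the resulting limit $\int_{E_k}\div^\alpha\phi\,dx\to\int_E\div^\alpha\phi\,dx$ the deduction of \eqref{eq:lsc_frac_Caccioppoli_perim} via \eqref{eq:Caccioppoli-measure} is identical to the paper's; your version has the small advantage of yielding the convergence for the full sequence without the usual liminf/subsequence bookkeeping. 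One caveat: your extension of \eqref{eq:weak_conv} from $C^\infty_c(\Omega;\R^n)$ to $C_c(\Omega;\R^n)$ test fields genuinely needs a locally uniform bound on $|D^\alpha\chi_{E_k}|$, which is not among the hypotheses, and your parenthetical remark (that an unbounded subsequence trivialises \eqref{eq:lsc_frac_Caccioppoli_perim}) does not repair this, since the liminf can stay finite while the sup blows up; the paper's proof is equally silent here (``easily follows''), the convergence being understood in duality with smooth compactly supported fields, so this is a shared, minor looseness rather than a defect specific to your argument.
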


\begin{proof}
Up to extract a further subsequence, we can assume that $\chi_{E_k}(x)\to\chi_E(x)$ as $k\to+\infty$ for $\Leb{n}$-a.e.\  $x\in\R^n$. Now let $\phi\in C^\infty_c(\Omega;\R^n)$ be such that $\|\phi\|_{L^{\infty}(\Omega; \R^{n})}\le1$. Then $\div^\alpha\phi\in L^1(\R^n)$ by \cref{prop:frac_div_repr} and so, by Lebesgue's Dominated Convergence Theorem, we have
\begin{align*}
\int_E\div^\alpha\phi\ dx 
=\lim_{k\to+\infty}\int_{E_k}\div^\alpha\phi\ dx
=-\lim_{k\to+\infty}\int_\Omega\phi \cdot \ dD^\alpha\chi_{E_k}
\le \liminf_{k\to+\infty} |D^\alpha\chi_{E_k}|(\Omega).
\end{align*}
By \cref{result:Gauss-Green}, we get~\eqref{eq:lsc_frac_Caccioppoli_perim}. The convergence in~\eqref{eq:weak_conv} easily follows.
\end{proof}

\subsection{Fractional isoperimetric inequality}

As a simple application of \cref{thm:GNS_immersion}, we can prove the following fractional isoperimetric inequality.

\begin{theorem}[Fractional isoperimetric inequality] \label{result:isoperimetric_ineq} 
Let $\alpha \in (0, 1)$ and $n \ge 2$. There exists a constant $c_{n, \alpha} > 0$ such that
\begin{equation}\label{eq:insoperimetric_ineq} 
|E|^{\frac{n-\alpha}{n}} \le c_{n, \alpha} |D^{\alpha} \chi_E|(\R^{n}) 
\end{equation}
for any set $E\subset\R^n$ such that $|E|<+\infty$ and $|D^{\alpha} \chi_E|(\R^{n})<+\infty$.
\end{theorem}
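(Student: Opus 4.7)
The plan is to deduce the fractional isoperimetric inequality directly from the fractional Gagliardo--Nirenberg--Sobolev inequality established in \cref{thm:GNS_immersion}, applied to the characteristic function $\chi_{E}$.

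First I would check that $\chi_{E} \in BV^{\alpha}(\R^{n})$ under the hypotheses. Since $|E|<+\infty$, we have $\chi_{E}\in L^{1}(\R^{n})$ with $\|\chi_{E}\|_{L^{1}(\R^{n})}=|E|$; combined with $|D^{\alpha}\chi_{E}|(\R^{n})<+\infty$, \cref{def:BV_alpha_space} (via \cref{result:Gauss-Green}) yields $\chi_{E}\in BV^{\alpha}(\R^{n})$. This is where the assumption $n\ge 2$ becomes relevant, since \cref{thm:GNS_immersion} requires it.

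Next, applying \cref{thm:GNS_immersion} to $f=\chi_{E}$, there exists $c_{n,\alpha}>0$ such that
\begin{equation*}
\|\chi_{E}\|_{L^{\frac{n}{n-\alpha}}(\R^{n})}\le c_{n,\alpha}\,|D^{\alpha}\chi_{E}|(\R^{n}).
\end{equation*}
Since $\chi_{E}$ is $\{0,1\}$-valued, a direct computation gives
\begin{equation*}
\|\chi_{E}\|_{L^{\frac{n}{n-\alpha}}(\R^{n})}=\left(\int_{\R^{n}}\chi_{E}(x)\,dx\right)^{\!\frac{n-\alpha}{n}}=|E|^{\frac{n-\alpha}{n}},
\end{equation*}
and inserting this identity into the previous inequality produces exactly \eqref{eq:insoperimetric_ineq}, with the same constant $c_{n,\alpha}$ as in \cref{thm:GNS_immersion}.

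There is essentially no obstacle here: the whole content of the isoperimetric inequality is already packed into the functional inequality \eqref{eq:GNS_inequality}, and the proof is simply the observation that the $L^{n/(n-\alpha)}$-norm of a characteristic function is a power of the measure of the underlying set. The restriction $n\ge 2$ is inherited from \cref{thm:GNS_immersion} and cannot be removed, consistently with \cref{rem:no_GNS_one_dim}.
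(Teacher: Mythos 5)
Your proposal is correct and coincides with the paper's own argument: the paper likewise observes that $\chi_E\in BV^{\alpha}(\R^n)$ under the stated hypotheses and derives \eqref{eq:insoperimetric_ineq} directly from \cref{thm:GNS_immersion}, using that $\|\chi_E\|_{L^{\frac{n}{n-\alpha}}(\R^n)}=|E|^{\frac{n-\alpha}{n}}$. Nothing to add.
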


\begin{proof}
Since $\chi_E\in BV^\alpha(\R^n)$, the result follows directly by \cref{thm:GNS_immersion}.
\end{proof}

\subsection{Compactness}

As an application of \cref{result:compactness_BV_alpha}, we can prove the following compactness result for sets with finite fractional Caccioppoli $\alpha$-perimeter in~$\R^n$ (see for instance~\cite{M12}*{Theorem~12.26} for the analogous result in the classical case).

\begin{theorem}[Compactness for sets with finite fractional Caccioppoli $\alpha$-perimeter]\label{result:compactness_Caccioppoli}
Let $\alpha\in(0,1)$ and $R>0$. If $(E_k)_{k\in\N}$ is a sequence of sets with finite fractional Caccioppoli $\alpha$-perimeter in~$\R^n$ such that
\begin{equation*}
\sup_{k\in\N}|D^\alpha\chi_{E_k}|(\R^n)<+\infty
\quad\text{and}\quad
E_k\subset B_R\quad \forall k\in\N,
\end{equation*} 
then there exist a subsequence $(E_{k_j})_{j\in\N}$ and a set $E\subset B_R$ with finite fractional Caccioppoli $\alpha$-perimeter in~$\R^n$ such that 
\begin{equation*}
\chi_{E_{k_j}}\to\chi_E\text{ in~$L^1(\R^n)$}
\end{equation*}
as $j\to+\infty$. 
\end{theorem}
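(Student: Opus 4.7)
The plan is to reduce the statement to the compactness theorem for $BV^\alpha(\R^n)$ established in \cref{result:compactness_BV_alpha}, combined with the lower semicontinuity property proved in \cref{result:frac_Caccioppoli_perimeter_is_lsc}. First, I would verify that the sequence $(\chi_{E_k})_{k\in\N}$ is uniformly bounded in $BV^\alpha(\R^n)$. Since $E_k \subset B_R$ for every $k$, we have
\begin{equation*}
\|\chi_{E_k}\|_{L^1(\R^n)} = |E_k| \le |B_R| = \omega_n R^n,
\end{equation*}
and by hypothesis $\sup_{k\in\N}|D^\alpha\chi_{E_k}|(\R^n)<+\infty$. Therefore $\sup_{k\in\N}\|\chi_{E_k}\|_{BV^\alpha(\R^n)}<+\infty$, and \cref{result:compactness_BV_alpha} yields a subsequence $(\chi_{E_{k_j}})_{j\in\N}$ and a function $f\in L^1(\R^n)$ such that $\chi_{E_{k_j}}\to f$ in $L^1_{\loc}(\R^n)$ as $j\to+\infty$.

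The next step is to identify $f$ as the characteristic function of a subset of $B_R$. Up to extracting a further subsequence (which I do not relabel), I can assume $\chi_{E_{k_j}}(x)\to f(x)$ for $\Leb{n}$-a.e.\ $x\in\R^n$. Since each $\chi_{E_{k_j}}$ takes values in $\{0,1\}$, the pointwise limit $f$ must satisfy $f(x)\in\{0,1\}$ for $\Leb{n}$-a.e.\ $x$, so $f = \chi_E$ for some measurable set $E\subset\R^n$. Moreover, because $\chi_{E_{k_j}}\equiv 0$ on $\R^n\setminus B_R$, the pointwise limit $\chi_E$ vanishes a.e.\ outside $B_R$, and thus $E\subset B_R$ up to a $\Leb{n}$-negligible set.

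To upgrade the local convergence to global $L^1$-convergence, I would observe that both $\chi_{E_{k_j}}$ and $\chi_E$ are supported in $\overline{B_R}$, hence
\begin{equation*}
\|\chi_{E_{k_j}} - \chi_E\|_{L^1(\R^n)} = \|\chi_{E_{k_j}} - \chi_E\|_{L^1(B_{R+1})}\longrightarrow 0
\end{equation*}
as $j\to+\infty$, by the $L^1_{\loc}$-convergence applied to the bounded open set $B_{R+1}$. Finally, \cref{result:frac_Caccioppoli_perimeter_is_lsc} gives
\begin{equation*}
|D^\alpha\chi_E|(\R^n) \le \liminf_{j\to+\infty}|D^\alpha\chi_{E_{k_j}}|(\R^n) \le \sup_{k\in\N}|D^\alpha\chi_{E_k}|(\R^n)<+\infty,
\end{equation*}
which shows that $E$ has finite fractional Caccioppoli $\alpha$-perimeter and concludes the proof.

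I do not expect any substantive obstacle: the argument is essentially the standard one from the classical $BV$ theory, and all the ingredients (compactness in $BV^\alpha$, lower semicontinuity, and the trivial improvement from $L^1_{\loc}$ to $L^1$ due to the uniform support constraint $E_k\subset B_R$) have already been established in the preceding sections. The only point that requires a bit of care is passing to a sub-subsequence to obtain pointwise a.e.\ convergence and thereby conclude that the $L^1_{\loc}$-limit is itself a characteristic function.
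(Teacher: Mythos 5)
Your proof is correct and follows essentially the same route as the paper: uniform $BV^\alpha$ bound, compactness from \cref{result:compactness_BV_alpha}, identification of the limit as $\chi_E$ with $E\subset B_R$ via a.e.\ convergence, upgrade to $L^1(\R^n)$ convergence from the common support in $B_R$, and lower semicontinuity (\cref{result:frac_Caccioppoli_perimeter_is_lsc}) to conclude finiteness of the fractional perimeter. Your ordering (a.e.\ identification before the global $L^1$ upgrade) is, if anything, slightly more careful than the paper's, but the argument is the same.
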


\begin{proof}
Since $E_k\subset B_R$ for all $k\in\N$, we clearly have that $(\chi_{E_k})_{k\in\R^n}\subset BV^\alpha(\R^n)$. By \cref{result:compactness_BV_alpha}, there exist a subsequence $(E_{k_j})_{j\in\N}$ and a function $f\in L^1(\R^n)$ such that $\chi_{E_{k_j}}\to f$ in $L^1_{\loc}(\R^n)$ as $j\to+\infty$. Since again $E_{k_j}\subset B_R$ for all $j\in\N$, we have that $\chi_{E_{k_j}}\to f$ in $L^1(\R^n)$ as $j\to+\infty$. Up to extract a further subsequence (which we do not relabel for simplicity), we can assume that $\chi_{E_{k_j}}(x)\to f(x)$ for $\Leb{n}$-a.e.\ $x\in\R^n$ as $j\to+\infty$, so that $f=\chi_E$ for some $E\subset B_R$. By \cref{result:frac_Caccioppoli_perimeter_is_lsc} we conclude that $E$ has finite fractional Caccioppoli $\alpha$-perimeter in~$\R^n$.
\end{proof}

\cref{result:compactness_Caccioppoli} can be applied to prove the following compactness result for sets with locally finite fractional Caccioppoli $\alpha$-perimeter.

\begin{corollary}[Compactness for locally finite fractional Caccioppoli $\alpha$-perimeter sets] \label{result:compactness_Caccioppoli_local}
Let $\alpha \in (0,1)$. If $(E_k)_{k\in\N}$ is a sequence of sets with locally finite fractional Caccioppoli $\alpha$-perimeter in~$\R^n$ such that
\begin{equation}\label{eq:uniform_bound_compactness_loc_Caccioppoli}
\sup_{k\in\N}|D^\alpha\chi_{E_k}|(B_R)<+\infty
\quad \forall R>0,
\end{equation} 
then there exist a subsequence $(E_{k_j})_{j\in\N}$ and a set $E$ with locally finite fractional Caccioppoli $\alpha$-perimeter in~$\R^n$ such that 
\begin{equation*}
\chi_{E_{k_j}}\to\chi_E\text{ in~$L^1_{\loc}(\R^n)$}
\end{equation*}
as $j\to+\infty$. 
\end{corollary}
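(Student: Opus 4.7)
The plan is to reduce to \cref{result:compactness_BV_alpha} via a cutoff argument combined with a diagonal extraction. Fix $R>0$ and let $\eta_{R+1} \in C^{\infty}_{c}(\R^{n})$ be a cutoff function as in~\eqref{eq:def_cut-off}, so that $\eta_{R+1} \equiv 1$ on $B_{R+1}$ and $\supp \eta_{R+1} \subset B_{R+2}$. Set $g_{k} := \eta_{R+1}\,\chi_{E_{k}}$; clearly $\|g_{k}\|_{L^{1}(\R^{n})} \le |B_{R+2}|$ for every $k\in\N$.

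The crucial step is to show $\sup_{k\in\N} |D^{\alpha} g_{k}|(\R^{n}) \le C(R) < +\infty$. For any test field $\phi \in C^{\infty}_{c}(\R^{n}; \R^{n})$ with $\|\phi\|_{L^{\infty}(\R^{n};\R^{n})} \le 1$, the nonlocal Leibniz rule in \cref{lem:Leibniz_frac_div} yields
\begin{equation*}
\eta_{R+1}\,\div^{\alpha}\phi = \div^{\alpha}(\eta_{R+1}\,\phi) - \phi \cdot \nabla^{\alpha} \eta_{R+1} - \div^{\alpha}_{\mathrm{NL}}(\eta_{R+1},\phi),
\end{equation*}
so that, integrating against $\chi_{E_{k}}$,
\begin{equation*}
\int_{\R^{n}} g_{k}\,\div^{\alpha}\phi\,dx = \int_{E_{k}} \div^{\alpha}(\eta_{R+1}\phi)\,dx - \int_{E_{k}} \phi \cdot \nabla^{\alpha} \eta_{R+1}\,dx - \int_{E_{k}} \div^{\alpha}_{\mathrm{NL}}(\eta_{R+1},\phi)\,dx.
\end{equation*}
Since $\eta_{R+1}\,\phi \in C^{\infty}_{c}(B_{R+3}; \R^{n})$ with $L^{\infty}$-norm at most one, the first integral is bounded by $|D^{\alpha}\chi_{E_{k}}|(B_{R+3})$, which is uniformly bounded in $k$ by the hypothesis~\eqref{eq:uniform_bound_compactness_loc_Caccioppoli}. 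The remaining two terms are controlled respectively by $\|\nabla^{\alpha}\eta_{R+1}\|_{L^{1}(\R^{n};\R^{n})}$ (finite by \cref{prop:frac_div_repr}) and $2\mu_{n,\alpha}\,[\eta_{R+1}]_{W^{\alpha,1}(\R^{n})}$ (finite by \cref{lem:Leibniz_frac_div}, as $\eta_{R+1} \in \Lip_{c}(\R^{n})$); both bounds are independent of $k$ and of~$\phi$.

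Thus $(g_{k})_{k\in\N}$ is uniformly bounded in $BV^{\alpha}(\R^{n})$, and \cref{result:compactness_BV_alpha} extracts a subsequence converging in $L^{1}_{\loc}(\R^{n})$ to some $g^{(R)} \in L^{1}(\R^{n})$. Since $g_{k} = \chi_{E_{k}}$ on $B_{R}$, this produces a subsequence $(\chi_{E_{k_{j}}})_{j\in\N}$ converging in $L^{1}(B_{R})$. A diagonal extraction along $R\in\N$ then yields a single subsequence, still denoted $(\chi_{E_{k_{j}}})_{j}$, converging in $L^{1}_{\loc}(\R^{n})$; passing to a further subsequence we may assume pointwise a.e.\ convergence, so that the limit equals $\chi_{E}$ for some measurable $E \subset \R^{n}$. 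Finally, \cref{result:frac_Caccioppoli_perimeter_is_lsc} gives $|D^{\alpha}\chi_{E}|(B_{R}) \le \liminf_{j\to+\infty} |D^{\alpha}\chi_{E_{k_{j}}}|(B_{R}) < +\infty$ for every $R>0$, so $E$ has locally finite fractional Caccioppoli $\alpha$-perimeter.

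The only non-routine point is the uniform $BV^{\alpha}$ bound on the truncation $g_{k}$, which rests on the nonlocal Leibniz rule and the integrability of $\nabla^{\alpha}\eta_{R+1}$; everything else is a standard diagonal packaging, and I foresee no serious obstacle.
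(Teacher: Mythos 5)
Your proof is correct, but it follows a genuinely different (and somewhat leaner) route than the paper. The paper keeps everything at the level of sets: it first proves the quantitative estimate $|D^\alpha\chi_{F\cap B_R}|(\R^n)\le|D^\alpha\chi_F|(B_R)+3\mu_{n,\alpha}P_\alpha(B_R)$ for the sharp truncations $F\cap B_R$, which requires approximating $\chi_{B_{R'}}$ by smooth functions in $W^{\alpha,1}(\R^n)$ (via \cref{remark:density_test_in_frac_Sobolev}), applying \cref{lem:Leibniz_frac_div} to those approximations, and then letting $R'\to R$ by lower semicontinuity; it then feeds the sets $E_k\cap B_j$ into the set-level compactness result \cref{result:compactness_Caccioppoli} (which needs the confinement $E_k\cap B_j\subset B_j$) and diagonalises, obtaining nested limit sets $F_j$ whose union is $E$. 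You instead replace the sharp cutoff by the smooth cutoff $\eta_{R+1}$, so the truncations $g_k=\eta_{R+1}\chi_{E_k}$ are no longer characteristic functions, and you apply the function-level compactness \cref{result:compactness_BV_alpha} directly; the Leibniz decomposition and the bounds on $\nabla^\alpha\eta_{R+1}$ and on the nonlocal term are the same ingredients the paper uses, but applied once to $\eta_{R+1}$ rather than to a $W^{\alpha,1}$-approximating sequence of $\chi_{B_{R'}}$, so you avoid both the density theorem of the appendix and the $R'\to R$ limiting step. The price is that you must recover the set structure of the limit at the end via pointwise a.e.\ convergence of the $\{0,1\}$-valued functions $\chi_{E_{k_j}}$, which you do; the paper's price is the extra Step~1, whose byproduct is the explicit bound on $|D^\alpha\chi_{E\cap B_R}|(\R^n)$, a fact of independent interest that your argument does not yield. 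The final passage (diagonal extraction over $R\in\N$ and lower semicontinuity via \cref{result:frac_Caccioppoli_perimeter_is_lsc} to conclude local finiteness of $|D^\alpha\chi_E|$) is common to both arguments, and all your citations are used within their hypotheses.
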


\begin{proof}
We divide the proof into two steps, essentially following the strategy presented in the proof of~\cite{M12}*{Corollary~12.27}.

\smallskip

\textit{Step~1}. Let $F\subset\R^n$ be a set with locally finite fractional Caccioppoli $\alpha$-perimeter in~$\R^n$. We claim that
\begin{equation}\label{eq:claim_perim_intersection_ball}
|D^\alpha\chi_{F\cap B_R}|(\R^n)\le|D^\alpha\chi_F|(B_R)+3\mu_{n, \alpha} P_\alpha(B_R) 
\quad \forall R>0.
\end{equation}
Indeed, let $R'<R$ and, recalling \cref{remark:density_test_in_frac_Sobolev}, let $(u_k)_{k\in\N}\subset C^\infty_c(\R^n)$ be such that $\supp(u_k)\Subset B_R$ and $0\le u_k\le 1$ for all $k\in\N$ and also $u_k\to\chi_{B_{R'}}$ in $W^{\alpha,1}(\R^n)$ as $k\to+\infty$. If $\phi\in C^\infty_c(\R^n;\R^n)$ with $\|\phi\|_{L^\infty(\R^n;\R^n)}\le1$, then
\begin{equation*}
\begin{split}
\int_F u_k\,\div^\alpha\phi\,dx
&=\int_F \div^\alpha(u_k\phi)\,dx
-\int_F \phi\cdot\nabla^\alpha u_k\,dx
-\int_F \div^\alpha_{\mathrm{NL}}(u_k,\phi)\,dx\\
&\le\int_F \div^\alpha(u_k\phi)\,dx
+3\mu_{n, \alpha} [u_k]_{W^{\alpha,1}(\R^n)}\\
&\le|D^\alpha\chi_F|(B_{R'})
+3\mu_{n, \alpha} [u_k]_{W^{\alpha,1}(\R^n)}\\
&\le|D^\alpha\chi_F|(B_R)
+3\mu_{n, \alpha} [u_k]_{W^{\alpha,1}(\R^n)}
\end{split}
\end{equation*}
by \cref{lem:Leibniz_frac_div}. Passing to the limit as $k\to+\infty$, we conclude that
\begin{equation*}
\int_{F\cap B_{R'}} \div^\alpha\phi\,dx
\le|D^\alpha\chi_F|(B_R)
+3\mu_{n, \alpha} P_\alpha(B_{R'})
\end{equation*}
and thus
\begin{equation*}
|D^\alpha\chi_{F\cap B_{R'}}|(\R^n)\le|D^\alpha\chi_F|(B_R)+3\mu_{n, \alpha} P_\alpha(B_R) 
\end{equation*}
by \cref{result:Gauss-Green}. Since $\chi_{F\cap B_{R'}}\to\chi_{F\cap B_R}$ in $L^1(\R^n)$ as $R'\to R$, the claim in~\eqref{eq:claim_perim_intersection_ball} follows by \cref{result:frac_Caccioppoli_perimeter_is_lsc}.

\smallskip

\textit{Step~2}. By~\eqref{eq:uniform_bound_compactness_loc_Caccioppoli} and~\eqref{eq:claim_perim_intersection_ball}, we can apply \cref{result:compactness_Caccioppoli} to $(E_k\cap B_j)_{k\in\N}$ for each fixed $j\in\N$. By a standard diagonal argument, we find a subsequence $(E_{k_h})_{h\in\N}$ and a sequence $(F_j)_{j\in\N}$ of sets with finite fractional Caccioppoli $\alpha$-perimeter such that $\chi_{E_{k_h}\cap B_j}\to\chi_{F_j}$ in $L^1(\R^n)$ as $h\to+\infty$ for each $j\in\N$. Up to null sets, we have $F_j\subset F_{j+1}$, so that $\chi_{E_{k_h}}\to\chi_E$ in $L^1_{\loc}(\R^n)$ with $E:=\bigcup_{j\in\N}F_j$. The conclusion thus follows by \cref{result:frac_Caccioppoli_perimeter_is_lsc}.
\end{proof}

\subsection{Fractional reduced boundary}

Thanks to the scaling property of the fractional divergence, we have 
\begin{equation} \label{scaling_prop_eq} 
D^{\alpha} \chi_{\lambda E} = \lambda^{n - \alpha} (\delta_{\lambda})_{\#}D^{\alpha} \chi_{E}
\quad
\text{on}\ \lambda\Omega, 
\end{equation} 
where $\delta_{\lambda}(x) = \lambda x$ for all $x\in\R^n$ and $\lambda>0$. Indeed, we can compute 
\begin{equation*} 
\int_{\lambda E} \div^{\alpha} \phi \, dx 
= \lambda^{n} \int_E (\div^{\alpha} \varphi)\circ\delta_\lambda\, dx 
= \lambda^{n - \alpha} \int_E \div^{\alpha} (\varphi\circ\delta_\lambda) \, dx 
\end{equation*}
for all $\phi\in C^{\infty}_{c}(\Omega; \R^{n})$. In analogy with the classical case, we are thus led to the following definition.

\begin{definition}[Fractional reduced boundary] \label{def:redb_alpha} 
Let $\alpha\in (0, 1)$ and let $\Omega\subset\R^n$ be an open set. If $E\subset\R^n$ is a set with finite fractional Caccioppoli $\alpha$-perimeter in $\Omega$, then we say that a point $x\in\Omega$ belongs to the \emph{fractional reduced boundary} of $E$ (inside $\Omega$), and we write $x\in\redb^\alpha E$, if 
\begin{equation*}
x\in\supp(D^\alpha \chi_{E})
\qquad\text{and}\qquad
\exists\lim_{r\to0}\frac{D^\alpha\chi_E(B_r(x))}{|D^\alpha\chi_E|(B_r(x))}\in\mathbb{S}^{n-1}.
\end{equation*}
We thus let
\begin{equation*}
\nu_E^\alpha\colon\Omega\cap\redb^\alpha E\to\mathbb{S}^{n-1},
\qquad
\nu_E^\alpha(x):=\lim_{r\to0}\frac{D^\alpha\chi_E(B_r(x))}{|D^\alpha\chi_E|(B_r(x))},
\quad
x\in\Omega\cap\redb^\alpha E,
\end{equation*}
be the (\emph{measure theoretic}) \emph{inner unit fractional normal} to~$E$ (inside $\Omega$).
\end{definition}

As a consequence of \cref{def:redb_alpha} and arguing similarly as in the proof of \cref{result:Lip_test}, if $E\subset\R^n$ is a set with finite fractional Caccioppoli $\alpha$-perimeter in $\Omega$, then the following Gauss--Green formula
\begin{equation}\label{eq:Caccioppoli_Gauss-Green}
\int_E\div^\alpha\phi\, dx
=-\int_{\Omega\cap\redb^\alpha E}\phi\cdot\nu^\alpha_E\ d|D^\alpha\chi_E|,
\end{equation}
holds for any $\varphi \in \Lip_{c}(\Omega; \R^{n})$.

\subsection{Sets of finite fractional perimeter are fractional Caccioppoli sets}

In analogy with the classical case and with the inclusion $W^{\alpha,1}(\R^n)\subset BV^\alpha(\R^n)$, we can show that sets with finite fractional $\alpha$-perimeter have finite fractional Caccioppoli $\alpha$-perimeter. Recall that the \emph{fractional $\alpha$-perimeter} of a set~$E\subset\R$ in an open set $\Omega\subset\R^n$ is defined as
\begin{equation*}
P_{\alpha}(E; \Omega) 
:= \int_{\Omega} \int_{\Omega} \frac{|\chi_{E}(x) - \chi_{E}(y)|}{|x - y|^{n + \alpha}} \, dx \, dy + 2 \int_{\Omega} \int_{\R^{n} \setminus \Omega} \frac{|\chi_{E}(x) - \chi_{E}(y)|}{|x - y|^{n + \alpha}} \, dx \, dy,
\end{equation*}
see~\cite{CF17} for an account on this subject.

\begin{proposition}[Sets of finite fractional perimeter are fractional Caccioppoli sets]\label{result:Sobolev_is_Caccioppoli}
Let $\alpha\in(0,1)$ and let $\Omega\subset\R^n$ be an open set. If $E\subset\R^n$ satisfies $P_\alpha(E;\Omega)<+\infty$, then $E$ is a set with finite fractional Caccioppoli $\alpha$-perimeter in $\Omega$ with
\begin{equation}\label{eq:FFPS_is_Caccioppoli_estim}
|D^{\alpha}\chi_E|(\Omega) \le \mu_{n, \alpha} P_\alpha(E;\Omega)
\end{equation}
and
\begin{equation}\label{eq:FFPS_is_Caccioppoli_Gauss-Green}
\int_E\div^\alpha\phi\, dx = - \int_\Omega\phi\cdot\nabla^\alpha\chi_E\, dx
\end{equation}
for all $\phi\in\Lip_c(\Omega;\R^n)$, so that $D^\alpha\chi_E=\nu^\alpha_E\,|D^\alpha\chi_E|=\nabla^\alpha\chi_E\,\Leb{n}$. Moreover, if $E$ is such that $|E|<+\infty$ and $P(E)<+\infty$, then $\chi_{E} \in W^{\alpha, 1}(\R^{n})$ for any $\alpha \in (0, 1)$, and
\begin{equation}\label{eq:FFPS_is_Caccioppoli_repres_formula} 
\nabla^{\alpha} \chi_{E}(x) = \frac{\mu_{n, \alpha}}{n + \alpha - 1} \int_{\R^n} \frac{\nu_{E}(y)}{|y - x|^{n + \alpha - 1}} \, d |D \chi_{E}|(y)
\end{equation}
for $\Leb{n}$-a.e.\ $x\in\R^n$.
\end{proposition}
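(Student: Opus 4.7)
The strategy is to mirror the proof of \cref{result:Sobolev_subset_BV} applied to $f = \chi_E$, adapting the argument to the local setting where $\chi_E$ need not lie in $W^{\alpha,1}(\R^n)$ globally. First I would establish the Gauss--Green formula~\eqref{eq:FFPS_is_Caccioppoli_Gauss-Green} for $\phi \in \Lip_c(\Omega;\R^n)$. Starting from the representation~\eqref{eq:def_frac_div_1}, interchanging the $x$- and $y$-integrals via Fubini for each fixed $\eps>0$ (valid since $\phi$ is compactly supported and $\int_{\{|x-y|>\eps\}}|x-y|^{-(n+\alpha)}\,dx \le \frac{n\omega_n}{\alpha}\eps^{-\alpha}$), and using the cancellation~\eqref{eq:cancellation_kernel} to insert $-\chi_E(y)$, one obtains
\begin{equation*}
\int_E\div^\alpha\phi\,dx = -\mu_{n,\alpha}\lim_{\eps\to 0}\int_{\R^n}\phi(y)\cdot\int_{\{|x-y|>\eps\}}\frac{(x-y)(\chi_E(x)-\chi_E(y))}{|x-y|^{n+\alpha+1}}\,dx\,dy.
\end{equation*}

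The passage $\eps\to 0$ is handled by dominated convergence. The $y$-integrand is dominated uniformly in $\eps$ by $|\phi(y)|\int_{\R^n}\frac{|\chi_E(x)-\chi_E(y)|}{|x-y|^{n+\alpha}}\,dx$, and the crucial estimate
\begin{equation*}
\int_{\supp\phi}\int_{\R^n}\frac{|\chi_E(x)-\chi_E(y)|}{|x-y|^{n+\alpha}}\,dx\,dy \le \int_\Omega\int_\Omega + \int_\Omega\int_{\R^n\setminus\Omega} \le P_\alpha(E;\Omega),
\end{equation*}
which uses $\supp\phi\subset\Omega$ together with the very definition of $P_\alpha(E;\Omega)$, provides the needed $L^1$ majorant. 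The inner limit equals $\mu_{n,\alpha}^{-1}\nabla^\alpha\chi_E(y)$ for $\Leb n$-a.e.\ $y\in\Omega$ (by another application of dominated convergence on the $x$-integral, again thanks to the above bound), yielding~\eqref{eq:FFPS_is_Caccioppoli_Gauss-Green}. Taking $\|\phi\|_\infty\le 1$ and supremum gives~\eqref{eq:FFPS_is_Caccioppoli_estim}, and \cref{result:Gauss-Green} then identifies $D^\alpha\chi_E = \nabla^\alpha\chi_E\,\Leb n$ on $\Omega$.

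For the final part, with $|E|<+\infty$ and $P(E)<+\infty$, we have $\chi_E \in L^1(\R^n)\cap BV(\R^n)$. I would directly apply the second half of \cref{result:Sobolev_subset_BV} with $f=\chi_E$: this gives $\chi_E \in W^{\alpha,1}(\R^n)$ for every $\alpha\in(0,1)$, together with the representation~\eqref{eq:weak_frac_grad_repr} for $\nabla^\alpha\chi_E$; substituting the classical De Giorgi decomposition $D\chi_E = \nu_E\,|D\chi_E|$ then produces~\eqref{eq:FFPS_is_Caccioppoli_repres_formula}. The main obstacle is the Fubini interchange in the first step: since $\chi_E$ need not be globally in $W^{\alpha,1}(\R^n)$, one cannot appeal directly to \cref{result:Sobolev_frac_enough} to reduce everything to absolutely convergent integrals. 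The remedy is to retain the $\eps$-regularisation throughout the swap and to exploit the precise match between the support condition $\supp\phi\subset\Omega$ and the two pieces of $P_\alpha(E;\Omega)$, so that the local hypothesis suffices even though the global fractional seminorm of $\chi_E$ may be infinite.
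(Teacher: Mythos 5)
Your proposal is correct and follows essentially the same route as the paper: Fubini at fixed $\eps$, the cancellation \eqref{eq:cancellation_kernel} to insert $-\chi_E(y)$, dominated convergence justified by the bound $\int_\Omega\int_{\R^n}\frac{|\chi_E(x)-\chi_E(y)|}{|x-y|^{n+\alpha}}\,dx\,dy\le P_\alpha(E;\Omega)$, and then \cref{result:Gauss-Green} to identify $D^\alpha\chi_E=\nabla^\alpha\chi_E\,\Leb{n}$ and obtain \eqref{eq:FFPS_is_Caccioppoli_estim}. The last part, deducing \eqref{eq:FFPS_is_Caccioppoli_repres_formula} from the second half of \cref{result:Sobolev_subset_BV} applied to $\chi_E\in BV(\R^n)$ together with the polar decomposition $D\chi_E=\nu_E\,|D\chi_E|$, is exactly the paper's argument as well.
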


\begin{proof}
Note that $\nabla^\alpha\chi_E\in L^1(\Omega;\R^n)$, because
\begin{equation*}
\begin{split}
\int_\Omega|\nabla^\alpha\chi_E|\,dx
&\le\mu_{n, \alpha} \int_\Omega\int_{\R^n}\frac{|\chi_E(y)-\chi_E(x)|}{|y-x|^{n+\alpha}}\,dy\,dx\\
&\le\mu_{n, \alpha} \int_\Omega\int_\Omega\frac{|\chi_E(y)-\chi_E(x)|}{|y-x|^{n+\alpha}}\,dy\,dx
+\mu_{n, \alpha} \int_\Omega\int_{\R^{n} \setminus \Omega}\frac{|\chi_E(y)-\chi_E(x)|}{|y-x|^{n+\alpha}}\,dy\,dx\\
&\le\mu_{n, \alpha} P_\alpha(E;\Omega).
\end{split}
\end{equation*}
Now let $\phi \in \Lip_c(\Omega;\R^{n})$ be fixed. By Lebesgue's Dominated Convergence Theorem, by~\eqref{eq:cancellation_kernel} and by Fubini's Theorem (applied for each fixed $\eps > 0$), we can compute
\begin{align*} 
\int_E \div^{\alpha} \varphi \, dx  
& = \mu_{n, \alpha} \lim_{\eps \to 0} \int_E \int_{\{|x -y| > \eps \}} \frac{(y - x) \cdot \varphi(y) }{|y - x|^{n + \alpha + 1}} \, dy \, dx \\
& = \mu_{n, \alpha} \lim_{\eps \to 0} \int_{\Omega} \int_{\{|x -y| > \eps \}} \varphi(y) \cdot \frac{(y - x)\, \chi_E(x) }{|y - x|^{n + \alpha + 1}} \, dx \, dy\\
& = -\mu_{n, \alpha} \lim_{\eps \to 0} \int_\Omega \int_{\{|x -y| > \eps \}} \varphi(y) \cdot \frac{(y - x) (\chi_E(y) - \chi_E(x)) }{|y - x|^{n + \alpha + 1}} \, dx \, dy\\
&=- \int_\Omega \varphi \cdot \nabla^\alpha\chi_E\, dy.
\end{align*}
Thus~\eqref{eq:FFPS_is_Caccioppoli_estim} and~\eqref{eq:FFPS_is_Caccioppoli_Gauss-Green} follow by \cref{result:Gauss-Green} and \cref{def:redb_alpha}. Finally, \eqref{eq:FFPS_is_Caccioppoli_repres_formula} follows from~\eqref{eq:weak_frac_grad_repr}, since $\chi_E\in BV(\R^n)$.
\end{proof}

\noindent
At the present moment, we do not know if $|D^{\alpha}\chi_E|(\Omega)<+\infty$ implies that $P_\alpha(E;\Omega)<+\infty$.

\begin{remark}[$\redb^\alpha E$ is not $\Leb{n}$-negligible in general]
It is important to notice that, by \cref{result:Sobolev_is_Caccioppoli}, we have 
\begin{equation*}
P_\alpha(E;\Omega)<+\infty
\implies
\Leb{n}(\Omega\cap\redb^\alpha E)>0
\end{equation*}
including even the case $\chi_E\in BV(\R^n)$. This shows a substantial difference between the standard \emph{local} De Giorgi's perimeter measure $|D\chi_E|$ and the \emph{non-local} fractional De Giorgi's perimeter measure $|D^\alpha\chi_E|$: the former is supported on a $\Leb{n}$-negligible set contained in the topological boundary of~$E$, while the latter, in general, can be supported on a set of positive Lebesgue measure and, for this reason, cannot be expected to be contained in the topological boundary of~$E$. 
\end{remark}

\begin{remark}[Fractional reduced boundary and precise representative]
We let
\begin{equation*}
u^{*}(x) := 
\begin{cases} 
\displaystyle \lim_{r \to 0} \frac{1}{|B_{r}(x)|} \int_{B_{r}(x)} u(y) \, dy & \text{if the limit exists and is finite}, \\[5mm]
0 & \text{otherwise}, 
\end{cases}
\end{equation*}
be the \emph{precise representative} of a function $u \in L^{1}_{\loc}(\R^{n}; \R^{m})$. Note that $u^{*}$ is well defined at any Lebesgue point of~$u$.
By \cref{result:Sobolev_is_Caccioppoli}, if $P_{\alpha}(E; \Omega) < +\infty$ then $D^{\alpha} \chi_{E} = \nabla^{\alpha} \chi_{E} \Leb{n}$ with $\nabla^{\alpha} \chi_{E} \in L^{1}(\Omega; \R^{n})$. Therefore the set
\begin{equation*}
\mathcal{R}^\alpha _\Omega E :=\set*{x \in \Omega : |(\nabla^{\alpha} \chi_{E})^{*}(x)| = |\nabla^{\alpha} \chi_{E}|^{*}(x) \neq 0}
\end{equation*}
is such that
\begin{equation} \label{eq:redb_alpha_Lebesgue_inclusion}
\mathcal{R}^\alpha _\Omega E \subset \Omega \cap \redb^{\alpha} E
\end{equation}
and
\begin{equation*}
\nu_{E}^{\alpha}(x) = \frac{(\nabla^{\alpha} \chi_{E})^{*}(x)}{|\nabla^{\alpha} \chi_{E}|^{*}(x)}
\qquad
\text{for all $x\in \mathcal{R}^\alpha _\Omega E$}.
\end{equation*}
\end{remark}

The following simple example shows that the inclusion in~\eqref{eq:redb_alpha_Lebesgue_inclusion} and the inequality in \eqref{eq:FFPS_is_Caccioppoli_estim} can be strict.

\begin{example}\label{example:interval_a_b} 
Let $n = 1$, $\alpha \in (0, 1)$ and $a, b \in \R$, with $a < b$. It is easy to see that $\chi_{(a, b)} \in W^{\alpha, 1}(\R)$. By~\eqref{eq:FFPS_is_Caccioppoli_repres_formula}, for any $x \neq a, b$ we have that
\begin{align*} 
\nabla^{\alpha} \chi_{(a, b)}(x) & = \frac{\mu_{1, \alpha}}{\alpha} \int_{\R} \frac{1}{|x - y|^{\alpha}} \, d \left (\delta_{a} - \delta_{b} \right )(y) \\
& = \frac{2^{\alpha}} {\alpha \sqrt{\pi}} \frac{\Gamma\left ( 1 + \frac{\alpha}{2} \right )}{\Gamma\left ( \frac{1 - \alpha}{2} \right )} \left ( \frac{1}{|x - a|^{\alpha}} - \frac{1}{|x - b|^{\alpha}} \right ).
\end{align*}
We claim that 
\begin{equation}\label{eq:frac_redb_a_b}
\redb^{\alpha} (a, b) = \R \setminus\set*{\frac{a + b}{2}}
\end{equation}
while
\begin{equation}\label{eq:special_frac_bound_a_b}
\mathcal{R}^\alpha_{\R} (a,b) = \R \setminus \set*{ a, \frac{a + b}{2}, b},
\end{equation}
so that inclusion~\eqref{eq:redb_alpha_Lebesgue_inclusion} is strict. Finally, we also claim that
\begin{equation}\label{eq:strict_ineq_one_dim}
\|\nabla^{\alpha} \chi_{(a, b)}\|_{L^{1}(\R)} < \mu_{1, \alpha} P_{\alpha}((a, b)).
\end{equation}
Indeed, notice that
\begin{equation*} 
\nabla^{\alpha} \chi_{(a, b)}(x) \ge 0 
\end{equation*}
if and only if $x \le \frac{a + b}{2}$, so that
\begin{equation*}
\lim_{r \to 0} \frac{\displaystyle\int_{x - r}^{x + r} \nabla^{\alpha} \chi_{(a, b)}(y) \, dy}{\displaystyle\int_{x - r}^{x + r} |\nabla^{\alpha} \chi_{(a, b)}(y)| \, dy}
=
\begin{cases}
1 & \text{if $x < \displaystyle\frac{a + b}{2}$},\\[5mm]
-1 & \text{if $x > \displaystyle\frac{a + b}{2}$}.
\end{cases}
\end{equation*}
If $x = \frac{a + b}{2}$, then
\begin{equation*} 
\int_{\frac{a + b}{2} - r}^{\frac{a + b}{2} + r} \nabla^{\alpha} \chi_{(a, b)}(y) \, dy = 0
\qquad
\forall r>0,
\end{equation*}
and claim~\eqref{eq:frac_redb_a_b} follows. In particular, we have
\begin{equation*} 
\nu_{(a, b)}^{\alpha}(x) = 
\begin{cases} 
1 & \text{if} \ \ x < \displaystyle\frac{a + b}{2}, \\[5mm]
- 1 & \text{if} \ \ x > \displaystyle\frac{a + b}{2}. 
\end{cases}
\end{equation*}
On the other hand, it is clear that 
\begin{equation*}
\lim_{r \to 0} \frac{1}{2r} \int_{a - r}^{a + r} \nabla^{\alpha} \chi_{(a, b)}(y) \, dy = + \infty
\end{equation*}
and
\begin{equation*}
\lim_{r \to 0} \frac{1}{2r} \int_{b - r}^{b + r} \nabla^{\alpha} \chi_{(a, b)}(y) \, dy = - \infty,
\end{equation*}
so that claim~\eqref{eq:special_frac_bound_a_b} follows. To prove~\eqref{eq:strict_ineq_one_dim}, note that
\begin{equation} \label{eq:frac_per_a_b}
P_{\alpha}((a, b)) = \frac{4}{\alpha (1 - \alpha)} (b - a)^{1 - \alpha}
\end{equation}
since $P_{\alpha}((a, b)) = (b - a)^{1 - \alpha} P_{\alpha}((0, 1))$ by the scaling property of the fractional perimeter and 
\begin{align*}
P_{\alpha}((0, 1)) & = 2 \int_{\R \setminus (0, 1)} \int_{0}^{1} \frac{1}{|y - x|^{1 + \alpha}} \, dy \, dx \\
& = \frac{2}{\alpha} \int_{\R \setminus (0, 1)} \left [ \frac{\sgn(x - y)}{|y - x|^\alpha} \right ]_{y=0}^{y=1} \, dx \\
& = \frac{2}{\alpha} \int_{\R \setminus (0, 1)} \frac{\sgn(x - 1)}{|1 - x|^{\alpha}}  - \frac{\sgn(x)}{|x|^{\alpha}}  \, dx \\
& = \frac{2}{\alpha} \int_{1}^{\infty} \frac{1}{(x - 1)^{\alpha}} - \frac{1}{x^{\alpha}} \, dx + \frac{2}{\alpha}\int_{- \infty}^{0} \frac{1}{(- x)^{\alpha}} - \frac{1}{(1 - x)^{\alpha}} \, dx  \\
& = \frac{4}{\alpha} \int_{0}^{\infty} \frac{1}{x^{\alpha}} - \frac{1}{(1 + x)^{\alpha}} \, dx = \frac{4}{\alpha (1 - \alpha)}.
\end{align*}
On the other hand, we have
\begin{equation} \label{eq:L_1_nabla_alpha_chi_a_b}
\|\nabla^{\alpha} \chi_{(a, b)}\|_{L^{1}(\R)} 
= \frac{2^{1 + \alpha} \mu_{1, \alpha}}{\alpha (1 - \alpha)} (b - a)^{1 - \alpha}.
\end{equation}
Indeed, $\|\nabla^{\alpha} \chi_{(a, b)}\|_{L^{1}(\R)} =  (b - a)^{1 - \alpha} \|\nabla^{\alpha} \chi_{(0, 1)}\|_{L^{1}(\R)}$ by~\eqref{scaling_prop_eq} and 
\begin{align*}
\frac{\alpha}{\mu_{1, \alpha}} \|\nabla^{\alpha} \chi_{(0, 1)}\|_{L^{1}(\R)}  & = \int_{\R} \left | \frac{1}{|x|^{\alpha}} - \frac{1}{|x - 1|^{\alpha}} \right | \, dx \\
& = \int_{1}^{\infty} \left | \frac{1}{x^{\alpha}} - \frac{1}{(x - 1)^{\alpha}} \right | \, dx + \int_{0}^{1} \left | \frac{1}{x^{\alpha}} - \frac{1}{(1 - x)^{\alpha}} \right | \, dx  \\
&\quad + \int_{- \infty}^{0} \left | \frac{1}{(-x)^{\alpha}} - \frac{1}{(1 - x)^{\alpha}} \right | \, dx \\
& = \int_{1}^{\infty} \frac{1}{(x - 1)^{\alpha}} - \frac{1}{x^{\alpha}} \, dx + \int_{\frac{1}{2}}^{1}  \frac{1}{(1 - x)^{\alpha}} - \frac{1}{x^{\alpha}} \, dx\\
& \quad + \int_{0}^{\frac{1}{2}} \frac{1}{x^{\alpha}} - \frac{1}{(1 - x)^{\alpha}} \, dx + \int_{- \infty}^{0} \frac{1}{(-x)^{\alpha}} - \frac{1}{(1 - x)^{\alpha}} \, dx \\
& = 2 \int_{0}^{\infty} \frac{1}{x^{\alpha}} - \frac{1}{(1 + x)^{\alpha}} \, dx + 2 \int_{0}^{\frac{1}{2}} \frac{1}{x^{\alpha}} - \frac{1}{(1 - x)^{\alpha}} \, dx \\
& = \frac{2}{1 - \alpha} \left ( 1 + 2^{\alpha - 1} + 2^{\alpha - 1} - 1 \right ) = \frac{2^{1 + \alpha}}{1 - \alpha}.
\end{align*}
Combining~\eqref{eq:frac_per_a_b} and~\eqref{eq:L_1_nabla_alpha_chi_a_b}, we get \eqref{eq:strict_ineq_one_dim}.
\end{example}

Thanks to \cref{example:interval_a_b} above, we know that inequality~\eqref{eq:D_alpha_chi_E_P_alpha_intro} is strict for $E=(a,b)$ with $a,b\in\R$, $a<b$. We conclude this section proving that this fact holds for all sets $E\subset\R$ such that $\chi_E\in W^{\alpha,1}(\R)$.

\begin{proposition}
Let $\alpha\in(0,1)$. If $\chi_E\in W^{\alpha,1}(\R)$, then $|D^\alpha\chi_E|(\R)<\mu_{1,\alpha}P_\alpha(E)$.
\end{proposition}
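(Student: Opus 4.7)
The plan is to exploit the one-dimensional nature of the problem together with strict cases of the triangle inequality. By the Proposition establishing that sets of finite fractional perimeter are fractional Caccioppoli sets (equation~\eqref{eq:FFPS_is_Caccioppoli_Gauss-Green}), we know that $D^{\alpha}\chi_{E}=\nabla^{\alpha}\chi_{E}\,\Leb{1}$, so $|D^{\alpha}\chi_{E}|(\R)=\|\nabla^{\alpha}\chi_{E}\|_{L^{1}(\R)}$. First I would compute $\nabla^{\alpha}\chi_{E}$ pointwise via formula~\eqref{eq:def_frac_grad_3}: for $\Leb{1}$-a.e.\ $x\in E$ one has $\chi_{E}(y)-\chi_{E}(x)=-\chi_{\R\setminus E}(y)$, hence
\[
\nabla^{\alpha}\chi_{E}(x)=-\mu_{1,\alpha}\int_{\R\setminus E}\frac{\sgn(y-x)}{|y-x|^{1+\alpha}}\,dy,
\]
and an analogous identity holds for $\Leb{1}$-a.e.\ $x\in\R\setminus E$ with $E$ in place of $\R\setminus E$ and opposite sign.

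The key pointwise comparison is the following: setting $a_{+}(x):=\int_{(\R\setminus E)\cap(x,+\infty)}|y-x|^{-1-\alpha}\,dy$ and $a_{-}(x):=\int_{(\R\setminus E)\cap(-\infty,x)}|y-x|^{-1-\alpha}\,dy$, one has, for $x\in E$,
\[
\int_{\R\setminus E}\frac{1}{|y-x|^{1+\alpha}}\,dy-|\nabla^{\alpha}\chi_{E}(x)|/\mu_{1,\alpha}
=(a_{+}(x)+a_{-}(x))-|a_{+}(x)-a_{-}(x)|=2\min\{a_{+}(x),a_{-}(x)\}.
\]
I would then observe that, since $\chi_{E}\in W^{\alpha,1}(\R)\subset L^{1}(\R)$, one has $|E|<+\infty$; consequently $\R\setminus E$ cannot be bounded above nor below (otherwise $E$ would contain a half-line and thus have infinite measure). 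Therefore, for \emph{every} $x\in\R$, both $(\R\setminus E)\cap(x,+\infty)$ and $(\R\setminus E)\cap(-\infty,x)$ have infinite Lebesgue measure, so both $a_{+}(x)$ and $a_{-}(x)$ are strictly positive, and hence $\min\{a_{+}(x),a_{-}(x)\}>0$ for every $x\in E$.

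To conclude, I would integrate. The triangle inequality gives
\[
\int_{E}|\nabla^{\alpha}\chi_{E}|\,dx\le\mu_{1,\alpha}\int_{E}\int_{\R\setminus E}\frac{dy\,dx}{|y-x|^{1+\alpha}},\qquad
\int_{\R\setminus E}|\nabla^{\alpha}\chi_{E}|\,dx\le\mu_{1,\alpha}\int_{\R\setminus E}\int_{E}\frac{dy\,dx}{|y-x|^{1+\alpha}},
\]
whose right-hand sides sum to $\mu_{1,\alpha}P_{\alpha}(E)$. Both integrands on $E$ are in $L^{1}(E)$ (the right-hand one by Tonelli and finiteness of $P_{\alpha}(E)$), the pointwise gap is strictly positive everywhere on $E$, and $|E|>0$ (the case $|E|=0$ being trivial as both sides vanish, but then $\chi_{E}\notin W^{\alpha,1}(\R)$ is excluded unless one also excludes it from the hypothesis of strict inequality). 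Hence the first inequality above is strict, and summing yields $|D^{\alpha}\chi_{E}|(\R)<\mu_{1,\alpha}P_{\alpha}(E)$.

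The main obstacle is the usual one when passing from pointwise strict to integral strict inequality: it must be verified that the pointwise gap is bounded below on a set of positive measure and that all quantities involved are integrable. Here this is cost-free because $\min\{a_{+},a_{-}\}>0$ holds on all of $E$ with $|E|>0$, and the dominating integrand $\int_{\R\setminus E}|y-x|^{-1-\alpha}\,dy$ is $L^{1}(E)$ as a consequence of $P_{\alpha}(E)<+\infty$.
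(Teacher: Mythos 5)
Your proposal is correct, and it rests on exactly the same observation as the paper's proof: at a point $x\in E$ the value $|\nabla^\alpha\chi_E(x)|/\mu_{1,\alpha}$ is the absolute \emph{difference} $|a_+(x)-a_-(x)|$ of the right and left contributions of $\R\setminus E$, while the fractional-perimeter integrand contributes their \emph{sum}, and these can only coincide if one of the two one-sided parts of $\R\setminus E$ is null, which $|E|<+\infty$ forbids. The difference is purely in the packaging: the paper assumes equality $|D^\alpha\chi_E|(\R)=\mu_{1,\alpha}P_\alpha(E)$, deduces a.e.\ equality in the triangle inequality, and reaches a contradiction via the squaring trick, whereas you run the argument directly by quantifying the pointwise gap $2\min\{a_+,a_-\}>0$ and integrating it over $E$. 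Your direct route costs you the extra verifications that the paper's contradiction sidesteps — that $a_++a_-\in L^1(E)$ (so there is no $\infty-\infty$ issue and the strict pointwise inequality survives integration) and that the gap is positive on a set of positive measure — and you supply both correctly via Tonelli and $P_\alpha(E)<+\infty$, together with $D^\alpha\chi_E=\nabla^\alpha\chi_E\,\Leb{1}$ from the proposition on sets of finite fractional perimeter. One small blemish: your parenthetical on $|E|=0$ is garbled, since $\chi_\varnothing=0$ does lie in $W^{\alpha,1}(\R)$ and in that degenerate case the strict inequality genuinely fails ($0<0$); but the paper's statement and proof implicitly exclude this trivial case as well (its ``contradiction'' with $|E|<+\infty$ likewise presupposes $|E|>0$), so this is not a gap relative to the paper's own standard.
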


\begin{proof}
We argue by contradiction. Assume $\chi_E\in W^{\alpha,1}(\R)$ is such that $|D^\alpha\chi_E|(\R)=\mu_{1,\alpha}P_\alpha(E)$. Then
\begin{equation}\label{eq:equality_absurd_strict_ineq}
\int_{\R}\int_{\R} \abs*{f_E(x,y)}\,dy\,dx
=\int_{\R}\abs*{\int_{\R} f_E(x,y)\sgn(y-x)\,dy}\,dx
\end{equation}
where
\begin{equation*}
f_E(x,y):=\frac{\chi_E(y)-\chi_E(x)}{|y-x|^{1+\alpha}} \qquad\forall x,y\in\R,\ x\ne y.
\end{equation*}
From~\eqref{eq:equality_absurd_strict_ineq} we deduce that
\begin{equation*}
\int_{\R} \abs*{f_E(x,y)}\,dy
=\abs*{\int_{\R} f_E(x,y)\sgn(y-x)\,dy}
\end{equation*}
for a.e.\ $x\in\R$. If $x\in E$, then $f_E(x,y)\le0$ for all $y\in\R$, $y\ne x$, and thus
\begin{equation*}
\int_x^{+\infty}|f_E(x,y)|\,dy
+\int_{-\infty}^x|f_E(x,y)|\,dy
=
\abs*{\int_x^{+\infty}|f_E(x,y)|\,dy
-\int_{-\infty}^x |f_E(x,y)|\,dy}
\end{equation*}
for a.e.\ $x\in E$. Squaring both sides and simplifying, we get that
\begin{equation*}
\left(\int_x^{+\infty}|f_E(x,y)|\,dy\right)
\left(\int_{-\infty}^x|f_E(x,y)|\,dy\right)=0,
\end{equation*}
so that either $|E^c\cap(x,+\infty)|=0$ or $|E^c\cap(-\infty,x)|=0$ for a.e.\ $x\in E$, contradicting the fact that $|E|<+\infty$.
\end{proof}

\section{Existence of blow-ups for fractional Caccioppoli sets}
\label{sec:blow-ups}

In this section we prove existence of blow-ups for sets with locally finite fractional Caccioppoli $\alpha$-perimeter. We follow the approach presented in~\cite{EG15}*{Section~5.7}. 

We start with the following technical preliminary result.

\begin{lemma}\label{lem:frac_grad_cutoff_ball} 
Let $\alpha \in (0, 1)$. For all $\eps, r > 0$ and $x \in \R^{n}$ we define
\begin{equation*} 
h_{\eps, r, x}(y) := 
\begin{cases} 1 & \text{if} \ \ 0 \le |y - x| \le r, \\[3mm]
\dfrac{r + \eps - |y - x|}{\eps} & \text{if} \ \ r < |y - x| < r + \eps, \\[4mm] 
0 & \text{if} \ \ |y -x| \ge r + \eps. 
\end{cases} 
\end{equation*}
Then $\nabla^\alpha h_{\eps,r,x}\in L^1(\R^n;\R^n)$ with
\begin{equation} \label{eq:frac_grad_cutoff_ball} 
\nabla^{\alpha} h_{\eps, r, x}(y) 
= \frac{\mu_{n, \alpha}}{\eps (n + \alpha - 1)} \int_{B_{r + \eps}(x) \setminus B_r(x)} \frac{x - z}{|x - z|}|z - y|^{1 - n - \alpha}  \, dz 
\end{equation}
for $\Leb{n}$-a.e. $y \in \R^{n}$.
\end{lemma}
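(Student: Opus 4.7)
The plan is to obtain the formula as a direct application of the representation formula \eqref{eq:frac_nabla_repres} from \cref{prop:frac_div_repr}, after identifying the classical almost-everywhere gradient of $h_{\eps,r,x}$.

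First I would observe that $h_{\eps, r, x} \in \Lip_c(\R^n)$: it is supported in $\overline{B_{r+\eps}(x)}$ and is piecewise linear in $|y-x|$ with Lipschitz constant $1/\eps$. Thus $h_{\eps,r,x}$ satisfies the hypothesis of \cref{prop:frac_div_repr} and we have $\nabla^{\alpha} h_{\eps,r,x} \in L^1(\R^n;\R^n) \cap L^\infty(\R^n;\R^n)$ together with
\begin{equation*}
\nabla^{\alpha} h_{\eps,r,x}(y) = \frac{\mu_{n, \alpha}}{n + \alpha - 1} \int_{\R^{n}} \frac{\nabla h_{\eps,r,x}(z)}{|z - y|^{n +\alpha-1}} \, dz
\end{equation*}
for all $y\in\R^n$.

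Next I would compute $\nabla h_{\eps,r,x}$ pointwise $\Leb{n}$-a.e.\ from the definition. On the open set $\{|z-x|<r\}$ and on $\{|z-x|>r+\eps\}$, the function is locally constant, hence $\nabla h_{\eps,r,x}(z)=0$ there. On the open annulus $A:=B_{r+\eps}(x)\setminus \overline{B_r(x)}$, we have $h_{\eps,r,x}(z)=\eps^{-1}(r+\eps-|z-x|)$, and therefore
\begin{equation*}
\nabla h_{\eps,r,x}(z) = -\frac{1}{\eps}\,\frac{z-x}{|z-x|} = \frac{1}{\eps}\,\frac{x-z}{|x-z|},\qquad z\in A.
\end{equation*}
The two spherical pieces $\{|z-x|=r\}$ and $\{|z-x|=r+\eps\}$ are $\Leb{n}$-negligible, so $\nabla h_{\eps,r,x}$ is zero outside $A$ up to an $\Leb{n}$-null set.

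Finally, substituting this expression into the representation formula yields
\begin{equation*}
\nabla^{\alpha} h_{\eps,r,x}(y) = \frac{\mu_{n, \alpha}}{\eps(n + \alpha - 1)} \int_{A} \frac{x-z}{|x-z|}\,|z - y|^{1-n-\alpha}\, dz,
\end{equation*}
which is precisely \eqref{eq:frac_grad_cutoff_ball}, valid for $\Leb{n}$-a.e.\ $y\in\R^n$. There is no real obstacle here: the only mildly delicate point is to justify that even though $h_{\eps,r,x}$ is only Lipschitz (not $C^1$), the representation formula of \cref{prop:frac_div_repr} applies, but this is already covered by that corollary, whose statement is formulated in the class $\Lip_c(\R^n)$.
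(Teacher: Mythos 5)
Your proof is correct and follows essentially the same route as the paper: identify $h_{\eps,r,x}\in\Lip_c(\R^n)$, compute its classical gradient $\Leb{n}$-a.e., and substitute it into a Riesz-potential representation of $\nabla^{\alpha}$. The only (immaterial) difference is the citation: you invoke \eqref{eq:frac_nabla_repres} for $\Lip_c$ functions, whereas the paper uses \eqref{eq:weak_frac_grad_repr} for $BV$ functions, and the two coincide here since $Dh_{\eps,r,x}=\nabla h_{\eps,r,x}\,\Leb{n}$.
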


\begin{proof} 
Clearly $h_{\eps, r, x} \in\Lip_c(\R^{n})$ and 
\begin{equation*} 
\nabla h_{\eps, r, x}(y) =  - \frac{1}{\eps} \frac{y - x}{|y - x|}\, \chi_{B_{r + \eps}(x) \setminus B_r(x)}(y). 
\end{equation*}
Therefore by~\eqref{eq:weak_frac_grad_repr} we get 
\begin{equation*} 
\nabla^{\alpha} h_{\eps, r, x}(y) 
= - \frac{1}{\eps}\frac{\mu_{n, \alpha}}{n + \alpha - 1} \int_{\R^{n}} \frac{1}{|z - y|^{n + \alpha - 1}} \frac{z - x}{|z - x|} \chi_{B_{r+\eps}(x) \setminus B_r(x)}(z) \, dz 
\end{equation*}
for $\Leb{n}$-a.e.\ $y \in \R^{n}$. By \cref{result:Sobolev_subset_BV}, we get $\nabla^\alpha h_{\eps,r,x}\in L^1(\R^n;\R^n)$. 
\end{proof}

We now proceed with the following formula for integration by parts on balls, see~\cite{EG15}*{Lemma~5.2} for the analogous result in the classical setting.

\begin{theorem}[Integration by parts on balls] \label{th:int_by_parts_E_ball} 
Let $\alpha \in (0, 1)$. If $E\subset\R^n$ is a set with locally finite fractional Caccioppoli $\alpha$-perimeter in~$\R^n$, then 
\begin{equation} \label{eq:int_by_parts_E_ball} 
\int_{E \cap B_r(x)} \div^{\alpha} \phi \, dy 
+ \int_{E} \phi\, \cdot \nabla^{\alpha} \chi_{B_r(x)}\, dy  
+ \int_{E} \div^{\alpha}_{\rm NL} (\chi_{B_r(x)}, \phi) \, dy
=-\int_{B_r(x)} \phi \cdot \, d D^{\alpha} \chi_{E}
\end{equation}
for all $\phi \in\Lip_c(\R^{n}; \R^{n})$, $x \in \redb^{\alpha} E$ and for $\Leb{1}$-a.e.\ $r > 0$.
\end{theorem}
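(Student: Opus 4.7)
The natural strategy is to apply the Leibniz rule for $\div^\alpha$ (\cref{lem:Leibniz_frac_div}) to the Lipschitz cutoff $h_{\eps, r, x}$ from \cref{lem:frac_grad_cutoff_ball} and then let $\eps \to 0^+$. Since $h_{\eps,r,x}\phi \in \Lip_c(\R^n;\R^n)$, Leibniz's rule gives
\begin{equation*}
\div^\alpha(h_{\eps,r,x}\phi) = h_{\eps,r,x}\div^\alpha\phi + \phi\cdot\nabla^\alpha h_{\eps,r,x} + \div^\alpha_{\rm NL}(h_{\eps,r,x},\phi),
\end{equation*}
and integrating over $E$ together with the Gauss--Green formula~\eqref{eq:Caccioppoli_Gauss-Green} applied to the Lipschitz test field $h_{\eps,r,x}\phi$ (its extension from $C^\infty_c$ to $\Lip_c$ tests being obtained by mollification and \cref{result:commutation_mollifier}, exactly as in the proof of \cref{result:Lip_test}) yields the $\eps$-version of~\eqref{eq:int_by_parts_E_ball}:
\begin{equation*}
\int_E h_{\eps,r,x}\div^\alpha\phi\,dy + \int_E \phi\cdot\nabla^\alpha h_{\eps,r,x}\,dy + \int_E \div^\alpha_{\rm NL}(h_{\eps,r,x},\phi)\,dy = -\int_{\R^n} h_{\eps,r,x}\phi\cdot dD^\alpha\chi_E.
\end{equation*}

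I would then restrict to the co-countable set of radii $r > 0$ with $|D^\alpha\chi_E|(\partial B_r(x)) = 0$, noting that at most countably many $r$ can fail since $|D^\alpha\chi_E|$ is locally finite on $\R^n$. For such $r$, $h_{\eps,r,x} \to \chi_{B_r(x)}$ both $\Leb{n}$-a.e.\ and $|D^\alpha\chi_E|$-a.e., with $0 \le h_{\eps,r,x} \le 1$. Dominated convergence then handles the first integral (majorant $|\div^\alpha\phi|\in L^1(\R^n)$ by \cref{prop:frac_div_repr}) and the right-hand side (integration against the finite Radon measure $|D^\alpha\chi_E|$ restricted to the compact set $\supp\phi\cup\overline{B_{r+1}(x)}$).

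The main obstacle is the passage to the limit in the two middle integrals. I would establish the stronger convergence $h_{\eps,r,x} \to \chi_{B_r(x)}$ in $W^{\alpha,1}(\R^n)$, which combined with~\eqref{eq:frac_nabla_repr_Lip_L1_estimate} and the continuity of $\div^\alpha_{\rm NL}$ in its first slot (\cref{rem:div_nabla_NL_extension} with $p=1$, $q=\infty$) yields $\nabla^\alpha h_{\eps,r,x} \to \nabla^\alpha\chi_{B_r(x)}$ in $L^1(\R^n;\R^n)$ and $\div^\alpha_{\rm NL}(h_{\eps,r,x},\phi) \to \div^\alpha_{\rm NL}(\chi_{B_r(x)},\phi)$ in $L^1(\R^n)$, after which the middle integrals over $E$ pass to the limit. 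Since $g_\eps := h_{\eps,r,x} - \chi_{B_r(x)}$ is supported in the shrinking annulus $B_{r+\eps}(x)\setminus B_r(x)$, takes values in $[0,1]$, and tends pointwise to zero, the convergence $[g_\eps]_{W^{\alpha,1}(\R^n)} \to 0$ can be verified by splitting the Gagliardo double integral according to the position of each variable relative to the annulus and applying dominated convergence on each piece. Alternatively, Fubini together with the Riesz potential representation~\eqref{eq:frac_nabla_repres} reduces the middle terms to pairings of $\nabla h_{\eps,r,x}\,\Leb{n}$ with continuous Riesz potentials of $\phi\chi_E$, and the desired limit then follows from the weak convergence $\nabla h_{\eps,r,x}\,\Leb{n}\weakto D\chi_{B_r(x)}$ (after localization by a cutoff, since these Riesz potentials need not have compact support).
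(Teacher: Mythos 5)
Your skeleton is the same as the paper's: test the Gauss--Green formula~\eqref{eq:Caccioppoli_Gauss-Green} with $h_{\eps,r,x}\phi$, expand with the Leibniz rule of \cref{lem:Leibniz_frac_div}, and let $\eps\to0$ along radii with $|D^\alpha\chi_E|(\partial B_r(x))=0$; the first term and the right-hand side are handled exactly as in the paper. Where you genuinely diverge is the two middle terms. The paper uses the explicit representation~\eqref{eq:frac_grad_cutoff_ball}, Fubini, polar coordinates and the Lebesgue Differentiation Theorem in the radial variable for $\int_E\phi\cdot\nabla^\alpha h_{\eps,r,x}\,dy$ (this is where its ``$\Leb1$-a.e.\ $r$'' enters for that term), and a pointwise dominated-convergence argument for the nonlocal term. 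You instead propose the single stronger statement $h_{\eps,r,x}\to\chi_{B_r(x)}$ in $W^{\alpha,1}(\R^n)$ and then invoke the $L^1$-bounds $\|\nabla^\alpha g\|_{L^1}\le\mu_{n,\alpha}[g]_{W^{\alpha,1}}$ and $\|\div^\alpha_{\rm NL}(g,\phi)\|_{L^1}\le 2\mu_{n,\alpha}\|\phi\|_{L^\infty}[g]_{W^{\alpha,1}}$. If the $W^{\alpha,1}$-convergence is proved, this is a cleaner route and even yields the middle limits for \emph{every} $r>0$, the a.e.\ restriction then coming only from the condition $|D^\alpha\chi_E|(\partial B_r(x))=0$.

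The one step you cannot leave at the level of ``splitting and dominated convergence'' is precisely $[g_\eps]_{W^{\alpha,1}(\R^n)}\to0$ for $g_\eps:=h_{\eps,r,x}-\chi_{B_r(x)}$. On the piece where both variables lie in $\{|y-x|\ge r\}$ there is no $\eps$-uniform integrable majorant: the natural bound $|g_\eps(z)-g_\eps(y)|\le g_{\eps_0}(z)+g_{\eps_0}(y)$ produces the kernel $\bigl(g_{\eps_0}(z)+g_{\eps_0}(y)\bigr)|z-y|^{-n-\alpha}$, whose diagonal singularity is not integrable, while a Lipschitz bound costs a factor $\eps^{-1}$; so dominated convergence alone does not close this piece (it needs the quantitative estimate $\lesssim|A_\eps|\,\eps^{-\alpha}\sim\eps^{1-\alpha}$). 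The clean fix is the interpolation inequality $[g]_{W^{\alpha,1}(\R^n)}\le C_{n,\alpha}\|g\|_{L^1(\R^n)}^{1-\alpha}\,|Dg|(\R^n)^{\alpha}$, obtained from $\int_{\R^n}|g(z+h)-g(z)|\,dz\le\min\{2\|g\|_{L^1},|h|\,|Dg|(\R^n)\}$ and optimisation in the splitting radius (the same trick used in the proof of \cref{result:BV_alpha_W_beta_embedding}). Since $\|g_\eps\|_{L^1}\lesssim r^{n-1}\eps$ while $|Dg_\eps|(\R^n)\lesssim r^{n-1}$ stays bounded --- it does \emph{not} vanish, because $g_\eps$ has a unit jump across $\partial B_r(x)$, which is also why the cruder bound $[g]_{W^{\alpha,1}}\le c_{n,\alpha}\|g\|_{BV}$ of~\eqref{eq:W_alpha_norm_bound_BV} is insufficient --- one gets $[g_\eps]_{W^{\alpha,1}}\lesssim r^{n-1}\eps^{1-\alpha}\to0$, and your argument is complete. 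Your alternative suggestion via weak-$*$ convergence of $\nabla h_{\eps,r,x}\,\Leb{n}$ against Riesz potentials is essentially the paper's computation in disguise and would require the same care (continuity/localisation of the potential, or differentiation in $r$) that the paper resolves with the Lebesgue Differentiation Theorem.
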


\begin{proof} 
Fix $\eps, r > 0$, $x \in \redb^{\alpha} E$ and $\phi \in \Lip_{c}(\R^{n}; \R^{n})$ and let $h_{\eps, r, x}$ be as in \cref{lem:frac_grad_cutoff_ball}. On the one hand, by~\eqref{eq:Caccioppoli_Gauss-Green} we have 
\begin{equation} \label{eq:IBP_E_1} 
\int_{E} \div^{\alpha}(\phi\, h_{\eps, r, x}) \, dy 
= - \int_{\redb^{\alpha} E} (h_{\eps, r, x}\,\phi) \cdot \, d D^{\alpha} \chi_{E}.
\end{equation}
Since $h_{\eps, r, x}(y) \to \chi_{\closure{B_r(x)}}(y)$ as $\eps \to 0$ for any $y \in \R^{n}$ and $|D^{\alpha} \chi_{E}|(\de B_r(x)) = 0$ for $\Leb{1}$-a.e.\ $r > 0$, we can compute 
\begin{equation*}
\lim_{\eps\to0}\int_{\redb^{\alpha} E} (h_{\eps, r, x}\,\phi) \cdot \, d D^{\alpha} \chi_{E}
=\int_{B_r(x)} \phi \cdot \, d D^{\alpha} \chi_{E}.
\end{equation*}
On the other hand, by \cref{result:Sobolev_frac_enough} and \cref{lem:Leibniz_frac_div}, we have
\begin{equation} \label{eq:Leibniz_rule_h_phi} 
\div^{\alpha}(\phi \, h_{\eps, r, x}) 
= h_{\eps, r, x}\, \div^{\alpha}\phi 
+ \phi \cdot \nabla^{\alpha} h_{\eps, r, x} 
+ \div_{\rm NL}^{\alpha}(h_{\eps, r, x}, \phi). 
\end{equation}
We deal with each term of the right-hand side of~\eqref{eq:Leibniz_rule_h_phi} separately. For the first term, since $0\le h_{\eps, r, x}\le\chi_{B_{r+1}(x)}$ for all $\eps\in(0,1)$ and $h_{\eps, r, x} \to \chi_{B_r(x)}$ in $L^{1}(\R^{n})$ as $\eps \to 0$, by \cref{prop:frac_div_repr} and Lebesgue's Dominated Convergence Theorem we can compute
\begin{equation} \label{eq:IBP_conv_1} 
\lim_{\eps\to0}\int_{E} h_{\eps, r, x} \,\div^{\alpha}\phi \, dy 
=\int_{E \cap B_r(x)} \div^{\alpha}\phi \, dy. 
\end{equation}
For the second term, by~\eqref{eq:frac_grad_cutoff_ball} we have
\begin{equation*} 
\int_{E} \phi(y) \cdot \nabla^{\alpha} h_{\eps, r, x}(y) \, dy 
= \frac{\mu_{n, \alpha}}{\eps (n + \alpha - 1)} \int_{E} \phi(y) \cdot \int_{B_{r+\eps}(x) \setminus B_r(x)} \frac{x - z}{|x - z|} |z - y|^{1 - n - \alpha}\, dz \, dy. 
\end{equation*}
By Fubini's Theorem, we can compute
\begin{align*} 
\int_E & \phi(y)\cdot \int_{B_{r+\eps}(x) \setminus B_r(x)} \frac{x - z}{|x - z|} |z - y|^{1 - n - \alpha}\, dz \, dy \\
& = \int_{B_{r+\eps}(x) \setminus B_r(x)}\frac{x - z}{|x - z|}\cdot \int_E \phi(y)\,|z - y|^{1 - n - \alpha} \, dy \, dz \\
& = \int_{r}^{r + \eps} \int_{\partial B_{\rho}(x)} \frac{x - z}{|x - z|}\cdot \int_E \phi(y)\,|z - y|^{1 - n - \alpha} \, dy \, d \Haus{n - 1}(z) \, d \rho.
\end{align*}
By Lebesgue's Differentiation Theorem, we have
\begin{equation*}
\begin{split}
\lim_{\eps\to0}\frac{1}{\eps}\int_E & \phi(y)\cdot \int_{B_{r+\eps}(x) \setminus B_r(x)} \frac{x - z}{|x - z|}\, |z - y|^{1 - n - \alpha}\, dz \, dy\\
&=\lim_{\eps\to0}\frac{1}{\eps}\int_{r}^{r + \eps} \int_{\partial B_{\rho}(x)} \frac{x - z}{|x - z|}\cdot \int_E \phi(y)\,|z - y|^{1 - n - \alpha} \, dy \, d \Haus{n - 1}(z) \, d \rho\\
&=\int_{\partial B_r(x)} \frac{x - z}{|x - z|}\cdot \int_E \phi(y)\,|z - y|^{1 - n - \alpha} \, dy \, d \Haus{n - 1}(z)\\
&=\int_E \phi(y)\cdot\int_{\partial B_r(x)} \frac{x - z}{|x - z|}\,|z - y|^{1 - n - \alpha}\, d \Haus{n - 1}(z)\, dy\\
&=\int_E \phi(y)\cdot\int_{\R^n} |z - y|^{1 - n - \alpha}\, dD\chi_{B_r(x)}(z)\, dy
\end{split}
\end{equation*}
for $\Leb{1}$-a.e.\ $r > 0$. Therefore, by~\eqref{eq:weak_frac_grad_repr}, we get that 
\begin{equation} \label{eq:IBP_conv_3} 
\begin{split}
\lim_{\eps\to0}\int_{E} & \phi \cdot \nabla^{\alpha}  h_{\eps, r, x} \, dy \\
& = \frac{\mu_{n, \alpha}}{n + \alpha - 1} \int_E\phi(y) \cdot\int_{\R^{n}} |z - y|^{1 - n - \alpha} \, d D \chi_{B_{r}(x)}(z)  \, dy \\
& = \int_E \,\phi \cdot \nabla^{\alpha} \chi_{B_{r}(x)} \, dy  
\end{split}
\end{equation}
for $\Leb{1}$-a.e.\ $r > 0$. 
Finally, for the third term, note that
\begin{equation*}
\abs*{\frac{(z - y) \cdot (\phi(z) - \phi(y)) (h_{\eps, r, x}(z) - h_{\eps, r, x}(y))}{|z - y|^{n + \alpha + 1}}} 
\le 2 \frac{|\phi(z) - \phi(y)|}{|z - y|^{n + \alpha}} \in L^{1}_{z}(\R^{n})
\end{equation*} 
for all $y\in\R^n$, so that
\begin{equation*}
\lim_{\eps\to0}\div_{\rm NL}^{\alpha}(h_{\eps, r, x}, \phi)(y) 
= \div^{\alpha}_{\rm NL}(\chi_{B_r(x)}, \phi)(y) 
\end{equation*}
for $\Leb{n}$-a.e.\ $y\in\R^n$ by Lebesgue's Dominated Convergence Theorem. Since
\begin{equation*}
\abs*{\div_{\rm NL}^{\alpha}(h_{\eps, r, x}, \phi)(y)} 
\le 2 \int_{\R^{n}}\frac{|\phi(z) - \phi(y)|}{|z - y|^{n + \alpha}}\,dz \in L^{1}_y(\R^{n}),
\end{equation*}
again by Lebesgue's Dominated Convergence Theorem we can compute
\begin{equation} \label{eq:IBP_conv_2} 
\lim_{\eps\to0}\int_{E} \div_{\rm NL}^{\alpha}(h_{\eps, r, x}, \phi) \, dy 
=\int_{E} \div^{\alpha}_{\rm NL}(\chi_{B_{r}(x)}, \phi) \, dy. 
\end{equation}
Combining~\eqref{eq:IBP_E_1}, \eqref{eq:Leibniz_rule_h_phi}, \eqref{eq:IBP_conv_1}, \eqref{eq:IBP_conv_3} and~\eqref{eq:IBP_conv_2}, we obtain~\eqref{eq:int_by_parts_E_ball}.
\end{proof}

We can now deduce the following decay estimates for the fractional De Giorgi's perimeter measure, see~\cite{EG15}*{Lemma~5.3} for the analogous result in the classical setting.

\begin{theorem}[Decay estimates]\label{th:decay_D_alpha_E_B} 
Let $\alpha \in (0, 1)$. There exist $A_{n, \alpha}, B_{n, \alpha} > 0$ with the following property. Let $E\subset\R^n$ be a set with locally finite fractional Caccioppoli $\alpha$-perimeter in~$\R^n$. For any $x \in \redb^{\alpha} E$, there exists $r_x > 0$ such that
\begin{equation} \label{eq:decay_D_alpha_E_B_1} 
|D^{\alpha} \chi_{E}|(B_r(x)) \le A_{n, \alpha} r^{n - \alpha} 
\end{equation}
and
\begin{equation} \label{eq:decay_D_alpha_E_B_2} 
|D^{\alpha} \chi_{E \cap B_r(x)}|(\R^{n}) \le B_{n, \alpha} r^{n - \alpha} 
\end{equation}
for all $r \in (0, r_x)$.
\end{theorem}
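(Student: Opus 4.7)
The plan is to deduce \eqref{eq:decay_D_alpha_E_B_1} by testing the integration by parts formula on $B_r(x)$ from \cref{th:int_by_parts_E_ball} against a cut-off of the constant vector $\nu^\alpha_E(x)$, and then to obtain \eqref{eq:decay_D_alpha_E_B_2} by combining with the perimeter estimate from Step~1 of the proof of \cref{result:compactness_Caccioppoli_local}. As a preliminary reduction, since $|\nu^\alpha_E(x)| = 1$, by \cref{def:redb_alpha} one may choose $r_x > 0$ so that
\begin{equation*}
\nu^\alpha_E(x) \cdot D^\alpha \chi_E(B_r(x)) \ge \tfrac{1}{2}\, |D^\alpha \chi_E|(B_r(x))
\qquad
\forall r \in (0, r_x),
\end{equation*}
so that \eqref{eq:decay_D_alpha_E_B_1} reduces to bounding $\nu^\alpha_E(x) \cdot D^\alpha \chi_E(B_r(x))$ by a constant multiple of $r^{n-\alpha}$.

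To obtain this bound, I will fix $r \in (0, r_x)$ with $|D^\alpha \chi_E|(\partial B_r(x)) = 0$ (which holds for $\Leb{1}$-a.e.\ such $r$) and, for each $R > r$, set $\phi_R := \nu^\alpha_E(x)\,\eta_{R,x}$, where $\eta_{R,x}(y) := \eta_R(y - x)$ is the cut-off in \eqref{eq:def_cut-off} centered at $x$. Then $\phi_R \in \Lip_c(\R^n; \R^n)$ with $\|\phi_R\|_{L^\infty} \le 1$ and $\phi_R \equiv \nu^\alpha_E(x)$ on $B_r(x)$, so \cref{th:int_by_parts_E_ball} yields
\begin{equation*}
\nu^\alpha_E(x) \cdot D^\alpha \chi_E(B_r(x)) = - \int_{E \cap B_r(x)} \div^\alpha \phi_R\, dy - \int_E \phi_R \cdot \nabla^\alpha \chi_{B_r(x)}\, dy - \int_E \div^\alpha_{\mathrm{NL}}(\chi_{B_r(x)}, \phi_R)\, dy.
\end{equation*}
Since $\nu^\alpha_E(x)$ is constant, $\div^\alpha \phi_R = \nu^\alpha_E(x) \cdot \nabla^\alpha \eta_{R,x}$, and since $\eta_{R,x}(y) = 1$ for $y \in B_r(x)$ while $\eta_{R,x}(z) \neq 1$ only for $|z - x| > R$ (hence $|z - y| > R - r$ whenever $y \in B_r(x)$), a direct estimate from \eqref{eq:def_frac_grad_3} gives $\|\nabla^\alpha \eta_{R,x}\|_{L^\infty(B_r(x))} \le \mu_{n,\alpha}\,\tfrac{n \omega_n}{\alpha}(R-r)^{-\alpha}$, so the first term tends to $0$ as $R \to +\infty$. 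The remaining two terms are bounded \emph{uniformly in $R$} by $\mu_{n,\alpha} P_\alpha(B_r(x))$ and by $2\mu_{n,\alpha} P_\alpha(B_r(x))$ respectively, thanks to \cref{result:Sobolev_is_Caccioppoli} applied to $\chi_{B_r(x)} \in W^{\alpha,1}(\R^n)$ and to the $L^1$-estimate of \cref{lem:Leibniz_frac_div} (with $\|\phi_R\|_{L^\infty} \le 1$). Letting $R \to +\infty$ and using the scaling $P_\alpha(B_r(x)) = r^{n-\alpha} P_\alpha(B_1)$ therefore gives $|\nu^\alpha_E(x) \cdot D^\alpha \chi_E(B_r(x))| \le 3\mu_{n,\alpha} P_\alpha(B_1)\, r^{n-\alpha}$, and combined with the preliminary reduction this yields \eqref{eq:decay_D_alpha_E_B_1} with $A_{n,\alpha} := 6\mu_{n,\alpha} P_\alpha(B_1)$ first for $\Leb{1}$-a.e.\ $r \in (0, r_x)$ and then for every such $r$ upon approximating $r$ from below by admissible radii, via continuity from below of the Radon measure $|D^\alpha \chi_E|$ on $\bigcup_{r' < r} B_{r'}(x) = B_r(x)$.

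For \eqref{eq:decay_D_alpha_E_B_2}, the translation-invariant version of the claim \eqref{eq:claim_perim_intersection_ball} proven in Step~1 of \cref{result:compactness_Caccioppoli_local}, applied to the ball $B_r(x)$ instead of $B_R$, gives
\begin{equation*}
|D^\alpha \chi_{E \cap B_r(x)}|(\R^n) \le |D^\alpha \chi_E|(B_r(x)) + 3\mu_{n,\alpha} P_\alpha(B_r(x)),
\end{equation*}
so that combining with \eqref{eq:decay_D_alpha_E_B_1} and $P_\alpha(B_r(x)) = r^{n-\alpha} P_\alpha(B_1)$ immediately yields \eqref{eq:decay_D_alpha_E_B_2} with $B_{n,\alpha} := A_{n,\alpha} + 3\mu_{n,\alpha} P_\alpha(B_1)$. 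The main obstacle in this plan is the impossibility of testing \cref{th:int_by_parts_E_ball} directly against the non-compactly-supported constant field $\nu^\alpha_E(x)$: the localisation via $\eta_{R,x}$ unavoidably introduces a spurious first-term contribution, but the key observation is that this is a pure ``far-field'' effect of order $(R-r)^{-\alpha}$, while the genuinely nonlocal contributions that remain are already of the correct order $r^{n-\alpha}$ uniformly in $R$.
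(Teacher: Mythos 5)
Your argument is correct, and for \eqref{eq:decay_D_alpha_E_B_1} it follows the same skeleton as the paper's proof: the definition of $\redb^{\alpha}E$ gives the $\tfrac12$-lower bound on small balls, the integration-by-parts formula of \cref{th:int_by_parts_E_ball} is tested against a field equal to $\nu^{\alpha}_{E}(x)$ on $B_r(x)$, and the second and third terms are bounded by $\mu_{n,\alpha}P_{\alpha}(B_r(x))$ and $2\mu_{n,\alpha}P_{\alpha}(B_r(x))$ exactly as in the paper, via \cref{result:Sobolev_is_Caccioppoli} and the $L^{1}$-estimate of \cref{lem:Leibniz_frac_div}, with the scaling $P_{\alpha}(B_r(x))=r^{n-\alpha}P_{\alpha}(B_1)$. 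The differences are these. The paper fixes once and for all a compactly supported $\phi$ with $\phi\equiv\nu^{\alpha}_{E}(x)$ on $B_1(x)$ and pays an extra $2\mu_{n,\alpha}P_{\alpha}(B_r(x))$ for the term $\int_{E\cap B_r(x)}\div^{\alpha}\phi\,dy$, while you use the dilating cut-offs $\phi_R=\nu^{\alpha}_{E}(x)\,\eta_{R,x}$ and let $R\to+\infty$, so that this term, of order $r^{n}(R-r)^{-\alpha}$, vanishes; your $L^\infty$ bound on $\nabla^{\alpha}\eta_{R,x}$ in $B_r(x)$ is correct, and this yields a slightly better constant ($6$ instead of $10$ times $\mu_{n,\alpha}P_{\alpha}(B_1)$), which is immaterial. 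One bookkeeping point you should make explicit: the exceptional null set of radii in \cref{th:int_by_parts_E_ball} may depend on the test field, so you should let $R$ range over a countable family (say integers $R>r$) and pick $r$ outside the countable union of the corresponding null sets before passing to the limit; the final continuity-from-below argument then recovers every $r\in(0,r_x)$, as in the paper. For \eqref{eq:decay_D_alpha_E_B_2} you take a genuinely different and shorter route: instead of re-running the integration-by-parts estimate with an arbitrary $\phi$ (the paper's Step~2, giving $13\mu_{n,\alpha}P_{\alpha}(B_1)$), you invoke the translation-invariant version of the localisation estimate \eqref{eq:claim_perim_intersection_ball} from Step~1 of \cref{result:compactness_Caccioppoli_local}, whose proof indeed works verbatim for balls centred at $x$ and uses nothing from the present theorem, so there is no circularity; combined with \eqref{eq:decay_D_alpha_E_B_1} it gives $B_{n,\alpha}=A_{n,\alpha}+3\mu_{n,\alpha}P_{\alpha}(B_1)$ and makes transparent that \eqref{eq:decay_D_alpha_E_B_2} is a formal consequence of \eqref{eq:decay_D_alpha_E_B_1} plus scaling, at the mild cost of importing an estimate from the compactness section rather than keeping the proof self-contained.
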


\begin{proof} 
We divide the proof in two steps, dealing with the two estimates separately.

\smallskip

\textit{Step~1: proof of~\eqref{eq:decay_D_alpha_E_B_1}}. 
Fix $x \in \redb^{\alpha} E$ and choose $\phi \in \Lip_{c}(\R^{n}; \R^{n})$ such that $\phi \equiv \nu^{\alpha}_{E}(x)$ in $B_1(x)$ and $\|\phi\|_{L^{\infty}(\R^{n}; \R^{n})} \le 1$. On the one hand, by \cref{def:redb_alpha}, there exists $r_x\in(0,1)$ such that
\begin{equation} \label{eq:decay_estimate_1} 
\int_{B_r(x)} \phi\cdot\, d D^{\alpha} \chi_{E} \ge \frac{1}{2} |D^{\alpha} \chi_{E}|(B_r(x)) 
\end{equation}
for all $r\in(0,r_x)$. On the other hand, by~\eqref{eq:int_by_parts_E_ball} we have
\begin{equation}\label{eq:decay_estimate_1.5}
\begin{split} 
\int_{B_r(x)} \phi\cdot\, d D^{\alpha} \chi_{E} 
&\le \abs*{\int_{E \cap B_{r}(x)} \div^{\alpha} \phi \, dy} 
+\abs*{ \int_{E} \phi\cdot \, d D^{\alpha} \chi_{B_r(x)} }\\
&\quad+\abs*{ \int_{E} \div^{\alpha}_{\rm NL} (\chi_{B_r(x)}, \phi) \, dy } 
\end{split}
\end{equation}
for $\Leb{1}$-a.e.\ $r\in(0,r_x)$. We now estimate the three terms in the right-hand side separately. For the first one, since $\phi(y) \equiv \nu_{E}^{\alpha}(x)$ in $B_r(x)$, we can estimate
\begin{align*} 
\left |  \int_{E \cap B_r(x)} \div^{\alpha} \phi(y) \, dy \right | 
& \le \mu_{n, \alpha} \int_{E \cap B_r(x)} \int_{\R^{n}} \frac{|\phi(z) - \phi(y)|}{|z - y|^{n + \alpha}} \, dz \, d y \\
& = \mu_{n, \alpha} \int_{E \cap B_r(x)} \int_{\R^{n} \setminus B_r(x)} \frac{|\phi(z) - \nu_{E}^{\alpha}(x)|}{|z - y|^{n + \alpha}} \, dz \, d y \\
& \le 2 \mu_{n, \alpha} \int_{B_r(x)} \int_{\R^{n} \setminus B_r(x)} \frac{1}{|z - y|^{n + \alpha}} \, dz \, d y \\
& = 2 \mu_{n, \alpha} P_{\alpha}(B_r(x))
\end{align*}
so that
\begin{equation} \label{eq:decay_estimate_2} 
\left |  \int_{E \cap B_r(x)} \div^{\alpha} \phi(y) \, dy \right | 
\le 2 \mu_{n, \alpha} P_{\alpha}(B_1)\, r^{n - \alpha}. 
\end{equation}
For the second term, by \cref{result:Sobolev_is_Caccioppoli} we can estimate
\begin{equation} \label{eq:decay_estimate_3} 
\left | \int_{E} \phi \cdot \, d D^{\alpha} \chi_{B_{r}(x)}  \right | 
\le |D^{\alpha} \chi_{B_r(x)}|(\R^{n})  
\le \mu_{n, \alpha} P_{\alpha}(B_r(x))
= \mu_{n, \alpha} P_{\alpha}(B_1)\, r^{n - \alpha}. 
\end{equation}
Finally, by \cref{lem:Leibniz_frac_div}, we can estimate
\begin{equation*}
\begin{split}
\abs*{\int_{E} \div^{\alpha}_{\rm NL}(\chi_{B_r(x)}, \phi) \, dy}
&\le\| \div^{\alpha}_{\rm NL}(\chi_{B_r(x)}, \phi)\|_{L^{1}(\R^{n})} \\
&\le 2 \mu_{n, \alpha} [\chi_{B_r(x)}]_{W^{\alpha, 1}(\R^{n})}\\
&=2 \mu_{n, \alpha}P_{\alpha}(B_r(x)) 
\end{split}
\end{equation*}
so that
\begin{equation} \label{eq:decay_estimate_4} 
\abs*{\int_{E} \div^{\alpha}_{\rm NL}(\chi_{B_r(x)}, \phi) \, dy}
\le 2 \mu_{n, \alpha} P_{\alpha}(B_1)\, r^{n - \alpha}.  
\end{equation}
Combining~\eqref{eq:decay_estimate_1}, \eqref{eq:decay_estimate_1.5}, \eqref{eq:decay_estimate_2}, \eqref{eq:decay_estimate_3} and \eqref{eq:decay_estimate_4}, we conclude that
\begin{equation} \label{eq:decay_estimate_5} 
|D^{\alpha} \chi_{E}|(B_r(x)) 
\le 10 \mu_{n, \alpha} P_{\alpha}(B_1) \, r^{n - \alpha} 
\end{equation}
for $\Leb{1}$-a.e.\ $r\in(0,r_x)$. Hence~\eqref{eq:decay_D_alpha_E_B_1} follows with $A_{n, \alpha} = 10 \mu_{n, \alpha} P_{\alpha}(B_1)$ for all $r\in(0,r_x)$ by a simple continuity argument.

\smallskip

\textit{Step~2: proof of~\eqref{eq:decay_D_alpha_E_B_2}}.
Fix $x \in \redb^{\alpha} E$ and $\phi \in \Lip_{c}(\R^{n}; \R^{n})$ with $\|\phi\|_{L^{\infty}(\R^{n}; \R^{n})} \le 1$. Again by~\eqref{eq:int_by_parts_E_ball} we can estimate
\begin{equation*} 
\abs*{\int_{E \cap B_r(x)} \div^{\alpha} \phi \, dy} 
\le |D^{\alpha} \chi_{E}|(B_r(x)) + |D^{\alpha} \chi_{B_r(x)}|(\R^n) 
+ \int_{E} |\div^{\alpha}_{\rm NL}(\chi_{B_r(x)}, \phi)| \, dy 
\end{equation*}
for $\Leb{1}$-a.e.\ $r\in(0,r_x)$. Using~\eqref{eq:decay_estimate_3}, \eqref{eq:decay_estimate_4} and~\eqref{eq:decay_estimate_5}, we conclude that
\begin{equation*} 
|D^{\alpha} \chi_{E \cap B_r(x)}|(\R^{n}) \le 13 \mu_{n, \alpha} P_{\alpha}(B_1) \, r^{n - \alpha}
\end{equation*}
for $\Leb{1}$-a.e.\ $r\in(0,r_x)$. Hence~\eqref{eq:decay_D_alpha_E_B_2} follows with $B_{n, \alpha} = 13 \mu_{n, \alpha} P_{\alpha}(B_1)$ for all $r\in(0,r_x)$ by a simple continuity argument. This concludes the proof.
\end{proof}

As an easy consequence of \cref{th:decay_D_alpha_E_B}, we can prove that 
\begin{equation*}
|D^{\alpha} \chi_{E}|\ll\Haus{n-\alpha}\res\redb^\alpha E
\end{equation*}
for any set~$E$ with locally finite fractional Caccioppoli $\alpha$-perimeter in $\R^{n}$.

\begin{corollary}[$|D^{\alpha} \chi_{E}|\ll\Haus{n-\alpha}\res\redb^\alpha E$] \label{cor:abs_continuity} 
Let $\alpha \in (0, 1)$. If $E$ is a set with locally finite fractional Caccioppoli $\alpha$-perimeter in~$\R^{n}$, then
\begin{equation} \label{eq:abs_cont_estimate}
|D^{\alpha} \chi_{E}| \le 2^{n - \alpha} \frac{A_{n, \alpha}}{\omega_{n - \alpha}} \Haus{n - \alpha} \res \redb^{\alpha} E,
\end{equation} 
where $A_{n, \alpha}$ is as in~\eqref{eq:decay_D_alpha_E_B_1}.
\end{corollary}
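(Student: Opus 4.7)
The plan is to combine the decay estimate of \cref{th:decay_D_alpha_E_B} with a standard differentiation result for Radon measures, after first showing that $|D^\alpha\chi_E|$ is concentrated on $\redb^\alpha E$.

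First I would argue that $|D^\alpha\chi_E|(\R^n \setminus \redb^\alpha E)=0$. Since $E$ has locally finite fractional Caccioppoli $\alpha$-perimeter, $D^\alpha\chi_E$ is a $\R^n$-valued Radon measure on $\R^n$ that is locally finite, so the Besicovitch--Lebesgue differentiation theorem applies to the pair $(D^\alpha\chi_E,|D^\alpha\chi_E|)$: for $|D^\alpha\chi_E|$-a.e.\ $x$ the limit
\begin{equation*}
\lim_{r\to 0}\frac{D^\alpha\chi_E(B_r(x))}{|D^\alpha\chi_E|(B_r(x))}
\end{equation*}
exists and is a unit vector in $\Sph^{n-1}$. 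Combining this with the trivial fact that $|D^\alpha\chi_E|$ is concentrated on $\supp(D^\alpha\chi_E)$ gives $|D^\alpha\chi_E|(\R^n\setminus\redb^\alpha E)=0$, by \cref{def:redb_alpha}.

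Second, I would use \cref{th:decay_D_alpha_E_B}: for every $x\in\redb^\alpha E$ there is $r_x>0$ such that $|D^\alpha\chi_E|(B_r(x))\le A_{n,\alpha}\, r^{n-\alpha}$ for all $r\in(0,r_x)$, hence
\begin{equation*}
\limsup_{r\to 0^+}\frac{|D^\alpha\chi_E|(B_r(x))}{r^{n-\alpha}}\le A_{n,\alpha}
\qquad\forall x\in\redb^\alpha E.
\end{equation*}

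Third, I would invoke the classical density comparison lemma for positive Radon measures: if $\mu$ is a positive Radon measure on $\R^n$ and $B\subset\R^n$ is a Borel set with $\limsup_{r\to 0^+}\mu(B_r(x))/r^s\le t$ for every $x\in B$, then $\mu(B)\le 2^s t\,\omega_s^{-1}\Haus{s}(B)$ (see for instance~\cite{EG15}*{Theorem~1.15} or~\cite{AFP00}*{Theorem~2.56}). Applying this with $\mu=|D^\alpha\chi_E|$, $s=n-\alpha$, $t=A_{n,\alpha}$, and intersecting any test Borel set $B$ with $\redb^\alpha E$ (which is lawful thanks to the first step) yields
\begin{equation*}
|D^\alpha\chi_E|(B)=|D^\alpha\chi_E|(B\cap\redb^\alpha E)\le \frac{2^{n-\alpha}A_{n,\alpha}}{\omega_{n-\alpha}}\,\Haus{n-\alpha}(B\cap\redb^\alpha E),
\end{equation*}
which is exactly~\eqref{eq:abs_cont_estimate}. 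The only genuinely non-trivial ingredient is the concentration statement in the first step, but this is a direct consequence of the Besicovitch--Lebesgue theorem; the remaining steps are essentially bookkeeping.
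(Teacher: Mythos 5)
Your proposal is correct and follows essentially the same route as the paper: the decay estimate of \cref{th:decay_D_alpha_E_B} gives the upper $(n-\alpha)$-density bound $A_{n,\alpha}/\omega_{n-\alpha}$ at every point of $\redb^\alpha E$, and the conclusion is then the standard density comparison theorem (\cite{AFP00}*{Theorem 2.56}), which is exactly what the paper invokes. Your first step, that $|D^\alpha\chi_E|$ is concentrated on $\redb^\alpha E$ by Besicovitch differentiation plus the support condition, is left implicit in the paper but is indeed needed to pass from the restriction $|D^\alpha\chi_E|\res\redb^\alpha E$ to the full measure, so spelling it out is a welcome (and correct) addition rather than a deviation.
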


\begin{proof}
By \eqref{eq:decay_D_alpha_E_B_1}, we have that
\begin{equation*} 
\Theta^{*}_{n - \alpha}(|D^{\alpha} \chi_{E}|, x) := \limsup_{r \to 0} \frac{|D^{\alpha} \chi_{E}|(B_{r}(x))}{\omega_{n - \alpha} r^{n - \alpha}} \le \frac{A_{n, \alpha}}{\omega_{n - \alpha}}
\end{equation*}
for any $x \in \redb^{\alpha} E$.
Therefore, \eqref{eq:abs_cont_estimate} is a simple application of \cite{AFP00}*{Theorem 2.56}.
\end{proof}

For any set~$E$ of locally finite fractional Caccioppoli $\alpha$-perimeter, \cref{cor:abs_continuity} enables us to obtain a lower bound on the Hausdorff dimension of~$\redb^{\alpha}E$.

\begin{proposition} 
Let $\alpha \in (0, 1)$. If~$E$ is a set with locally finite fractional Caccioppoli $\alpha$-perimeter in~$\R^{n}$, then 
\begin{equation} \label{eq:redb_alpha_dim_estimate} 
\dim_{\Haus{}}(\redb^{\alpha} E) \ge n - \alpha. 
\end{equation}
\end{proposition}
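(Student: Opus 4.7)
The plan is to combine the absolute continuity estimate in \cref{cor:abs_continuity} with the Besicovitch derivation theorem applied to the vector measure $D^{\alpha} \chi_{E}$. The key observation is that the definition of $\redb^{\alpha} E$ picks out exactly those points at which the Radon--Nikodym derivative of $D^{\alpha}\chi_{E}$ with respect to $|D^{\alpha}\chi_{E}|$ is well defined, has unit modulus, and lies in the support of $|D^{\alpha}\chi_{E}|$. By general measure theory these conditions hold at $|D^{\alpha}\chi_{E}|$-a.e.\ point, so
\begin{equation*}
|D^{\alpha}\chi_{E}|(\R^{n}\setminus\redb^{\alpha}E)=0.
\end{equation*}

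I would argue by contradiction. If $\redb^{\alpha} E = \varnothing$ there is nothing to prove (under the usual convention $\dim_{\Haus{}}(\varnothing) = -\infty$ the inequality is vacuous), so fix some $x_{0}\in\redb^{\alpha}E$, which lies in $\supp(D^{\alpha}\chi_{E})$ by \cref{def:redb_alpha} and thus satisfies $|D^{\alpha}\chi_{E}|(B_{r}(x_{0}))>0$ for every $r>0$. Suppose for contradiction that $\dim_{\Haus{}}(\redb^{\alpha}E) < n-\alpha$. Then by the definition of Hausdorff dimension we have $\Haus{n-\alpha}(\redb^{\alpha}E)=0$. Applying \cref{cor:abs_continuity}, which gives
\begin{equation*}
|D^{\alpha}\chi_{E}|\le 2^{n-\alpha}\frac{A_{n,\alpha}}{\omega_{n-\alpha}}\,\Haus{n-\alpha}\res\redb^{\alpha}E,
\end{equation*}
we deduce $|D^{\alpha}\chi_{E}|(\redb^{\alpha}E)=0$. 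Combined with the Besicovitch-based identity above, this forces $|D^{\alpha}\chi_{E}|(\R^{n})=0$, contradicting $|D^{\alpha}\chi_{E}|(B_{r}(x_{0}))>0$.

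I expect the only non-routine step to be the justification of $|D^{\alpha}\chi_{E}|(\R^{n}\setminus\redb^{\alpha}E)=0$, but this is a standard consequence of Besicovitch's differentiation theorem for vector-valued Radon measures (see e.g.\ \cite{AFP00}*{Corollary 2.23}): the polar decomposition $D^{\alpha}\chi_{E} = \sigma\,|D^{\alpha}\chi_{E}|$ with $|\sigma|=1$ $|D^{\alpha}\chi_{E}|$-a.e.\ ensures that the limit defining $\nu_{E}^{\alpha}$ exists and has unit norm at $|D^{\alpha}\chi_{E}|$-almost every point, while the fact that $|D^{\alpha}\chi_{E}|$-a.e.\ point belongs to the support of $D^{\alpha}\chi_{E}$ is a general property of Radon measures on $\R^{n}$. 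Once this is in place the argument reduces to a one-line chain of implications, and the inequality $\dim_{\Haus{}}(\redb^{\alpha}E)\ge n-\alpha$ follows immediately. No further calculation is required.
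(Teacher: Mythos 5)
Your proof is correct and follows essentially the same route as the paper: both arguments combine \cref{cor:abs_continuity} with the fact that $|D^{\alpha}\chi_{E}|$ is concentrated on $\redb^{\alpha}E$ and has positive mass there, which the paper asserts directly from \cref{def:redb_alpha} and you justify explicitly via Besicovitch differentiation and the support property of Radon measures, running the implication in contrapositive form. The only cosmetic difference is that your Besicovitch step is slightly more than needed, since \cref{cor:abs_continuity} alone already bounds the total mass $|D^{\alpha}\chi_{E}|(\R^{n})$ by a constant times $\Haus{n-\alpha}(\redb^{\alpha}E)$.
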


\begin{proof}
Since $|D^{\alpha} \chi_{E}|(\redb^{\alpha} E) > 0$ by \cref{def:redb_alpha}, by \cref{cor:abs_continuity} we conclude that $\Haus{n - \alpha}(\redb^{\alpha} E) > 0$, proving~\eqref{eq:redb_alpha_dim_estimate}.
\end{proof}

As another interesting consequence of \cref{cor:abs_continuity}, we are able to prove that assumption~\eqref{eq:coarea_int_finite} in~\cref{th:coarea_inequality} cannot be dropped.

\begin{corollary}[No coarea formula in $BV^{\alpha}(\R)$]
Let $\alpha\in(0,1)$. There exist $f\in BV^\alpha(\R^n)$ such that
\begin{equation}\label{eq:coarea_explosion}
\int_{\R} |D^{\alpha} \chi_{\{ f > t \}}|(\R^n) \, dt=+\infty.
\end{equation}
\end{corollary}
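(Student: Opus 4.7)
The plan is to argue by contradiction, exploiting the tension between \cref{cor:abs_continuity} — which constrains $|D^\alpha \chi_E|$ to vanish on sets of vanishing $(n-\alpha)$-dimensional Hausdorff measure — and the existence (already established in the paper) of $BV^\alpha$ functions whose fractional variation \emph{does} charge sets of lower Hausdorff dimension.

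First I would select $f\in BV^\alpha(\R^n)$ together with a Borel set $A\subset\R^n$ such that $\Haus{n-\alpha}(A)=0$ but $|D^\alpha f|(A)>0$. For $n=1$, the function $f:=f_{a,b,\alpha}$ from \cref{prop:BV_alpha_W_alpha_inclusion_R} and $A:=\{a,b\}$ work, since $D^\alpha f$ is proportional to $\delta_b-\delta_a$ and $\Haus{1-\alpha}(\{a,b\})=0$. For general $n$, choosing $u:=\chi_{B_1}\in BV(\R^n)$ and defining $f:=(-\Delta)^{\frac{1-\alpha}{2}}u$, \cref{result:correspondence_BV_alpha_bv}\,(ii) gives $f\in BV^\alpha(\R^n)$ with $D^\alpha f=Du$ concentrated on $\partial B_1$, a set of Hausdorff dimension $n-1<n-\alpha$, hence with $\Haus{n-\alpha}(\partial B_1)=0$.

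Assume for contradiction that \eqref{eq:coarea_int_finite} holds for this $f$, and set $E_t:=\{f>t\}$. By \cref{th:coarea_inequality}, the coarea identity and inequality \eqref{eq:weak_coarea_formula}--\eqref{eq:tot_var_coarea_inequality} imply
\begin{equation*}
|D^\alpha f|(A)\le\int_\R |D^\alpha\chi_{E_t}|(A)\,dt.
\end{equation*}
Since \eqref{eq:coarea_int_finite} forces $t\mapsto|D^\alpha\chi_{E_t}|(\R^n)$ to be finite for $\Leb{1}$-a.e.~$t$, each such $E_t$ has finite fractional Caccioppoli $\alpha$-perimeter in~$\R^n$; applying \cref{cor:abs_continuity} gives $|D^\alpha\chi_{E_t}|(A)\le C_{n,\alpha}\Haus{n-\alpha}(A\cap\redb^\alpha E_t)=0$ for a.e.~$t$. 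Tonelli's Theorem then yields $|D^\alpha f|(A)=0$, contradicting the choice of $A$ and proving \eqref{eq:coarea_explosion}.

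The only technical point requiring care is the measurability in~$t$ of $t\mapsto|D^\alpha\chi_{E_t}|(A)$, needed for Tonelli; this is implicit in \eqref{eq:coarea_int_finite} for $A=\R^n$, and for a general Borel set $A$ can be obtained from the lower semicontinuity of the fractional variation (\cref{result:frac_Caccioppoli_perimeter_is_lsc}) together with outer regularity. Otherwise the argument is short and purely dimensional: the coarea formula, if valid under mere integrability of the variations, would propagate the $(n-\alpha)$-dimensional absolute continuity of the level-set measures to $D^\alpha f$, which is incompatible with the existence of $BV^\alpha$ functions whose fractional gradient concentrates on sets of Hausdorff dimension at most $n-1<n-\alpha$.
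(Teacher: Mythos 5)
Your proof is correct and follows essentially the same route as the paper: take $f=(-\Delta)^{\frac{1-\alpha}{2}}u$ with $u\in BV(\R^n)$ (the paper uses a generic $\chi_E\in BV(\R^n)$, you use $\chi_{B_1}$), so that $|D^\alpha f|=|Du|$ concentrates on an $(n-1)$-dimensional set, and then combine the coarea inequality of \cref{th:coarea_inequality} with \cref{cor:abs_continuity} to reach the same contradiction. Your extra care (testing on the explicit $\Haus{n-\alpha}$-null set $A$, the measurability remark, and the one-dimensional example $f_{a,b,\alpha}$) only makes explicit what the paper's absolute-continuity phrasing leaves implicit, so no substantive difference remains.
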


\begin{proof}
Let $E\subset \R^n$ be such that $\chi_E\in BV(\R^n)$ and consider $f:=(-\Delta)^{\frac{1-\alpha}{2}}\chi_E$. By \cref{result:correspondence_BV_alpha_bv}, we know that $f\in BV^\alpha(\R^n)$ with $|D^\alpha f|=|D\chi_E|=\Haus{n-1}\res\redb E$. If
\begin{equation*}
\int_{\R} |D^{\alpha} \chi_{\{ f > t \}}|(\R^n) \, dt<+\infty
\end{equation*}
then
\begin{equation*}
|D^\alpha f|\le \int_{\R} |D^{\alpha} \chi_{\{ f > t \}}| \, dt
\end{equation*}
by \cref{th:coarea_inequality}. Thus $|D^{\alpha} f| \ll \Haus{n - \alpha}$ by  \cref{cor:abs_continuity}, so that $\Haus{n-1}(\redb E)=0$, which is clearly absurd.   
\end{proof}

\begin{remark}
If $f\in W^{\alpha,1}(\R^n)$, then 
\begin{equation*}
\int_{\R}|D^\alpha\chi_{\set{f>t}}|(\R^n)\,dt
\le\mu_{n,\alpha}\int_{\R}P_\alpha(\set{f>t})\,dt
=\mu_{n,\alpha}[f]_{W^{\alpha,1}(\R^n)}<+\infty
\end{equation*}
by \cref{result:Sobolev_is_Caccioppoli} and Tonelli's Theorem, so that~\eqref{eq:coarea_explosion} does not hold for all $f\in BV^\alpha(\R^n)$. We do not know if~\eqref{eq:tot_var_coarea_inequality} is an equality for some functions $f\in BV^\alpha(\R^n)$.
\end{remark}

We can now prove the existence of blow-ups for sets with locally finite fractional Caccioppoli $\alpha$-perimeter in~$\R^n$, see~\cite{EG15}*{Theorem~5.13} for the analogous result in the classical setting. Here and in the following, given a set~$E$ with locally finite fractional Caccioppoli $\alpha$-perimeter and $x\in\redb^\alpha E$, we let $\tang(E,x)$ be the set of all \emph{tangent sets of~$E$ at~$x$}, i.e.\ the set of all limit points in $L^1_{\loc}(\R^n)$-topology of the family $\set*{\frac{E - x}{r} : r>0}$ as $r\to0$.

\begin{theorem}[Existence of blow-up] \label{th:blow_up} 
Let $\alpha \in (0, 1)$. Let $E$ be a set with locally finite fractional Caccioppoli $\alpha$-perimeter in~$\R^n$. For any $x \in \redb^{\alpha} E$ we have $\tang(E,x)\ne\varnothing$.
\end{theorem}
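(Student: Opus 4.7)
Fix $x \in \redb^{\alpha} E$ and, for every $r > 0$, set $E_r := \frac{E - x}{r}$. The goal is to apply the compactness result in \cref{result:compactness_Caccioppoli_local} to the family $\{E_r\}_{r > 0}$ along a sequence $r_j \to 0$; the only non-trivial thing to check is the uniform local bound on the fractional Caccioppoli $\alpha$-perimeter. Once this is established, any $L^{1}_{\loc}$-limit point produced by \cref{result:compactness_Caccioppoli_local} will belong to $\tang(E, x)$ by definition.

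The uniform bound is obtained by combining the scaling identity~\eqref{scaling_prop_eq} with the decay estimate~\eqref{eq:decay_D_alpha_E_B_1}. More precisely, since translation leaves the total variation measure invariant (up to translating its support) and since $E_r = \delta_{1/r}(E - x)$, the scaling relation~\eqref{scaling_prop_eq} yields
\begin{equation*}
|D^{\alpha} \chi_{E_r}|(B_R) = r^{\alpha - n}\, |D^{\alpha} \chi_{E}|\bigl(B_{Rr}(x)\bigr)
\end{equation*}
for every $R > 0$. By \cref{th:decay_D_alpha_E_B}, there exists $r_x > 0$ such that $|D^{\alpha} \chi_{E}|(B_\rho(x)) \le A_{n,\alpha}\,\rho^{n - \alpha}$ for all $\rho \in (0, r_x)$. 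Therefore, provided $R r < r_x$,
\begin{equation*}
|D^{\alpha} \chi_{E_r}|(B_R) \le r^{\alpha - n}\, A_{n, \alpha}\, (Rr)^{n - \alpha} = A_{n, \alpha}\, R^{n - \alpha},
\end{equation*}
which is independent of $r$. Hence, for any fixed $R > 0$, the quantity $|D^{\alpha} \chi_{E_r}|(B_R)$ stays bounded as $r \to 0^{+}$.

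Now fix any sequence $r_j \to 0^{+}$. The previous estimate shows that the sets $E_{r_j}$ have locally finite fractional Caccioppoli $\alpha$-perimeter in $\R^{n}$ and satisfy the uniform bound~\eqref{eq:uniform_bound_compactness_loc_Caccioppoli} required by \cref{result:compactness_Caccioppoli_local}. Applying that compactness result yields a subsequence (not relabelled) and a set $F \subset \R^{n}$ with locally finite fractional Caccioppoli $\alpha$-perimeter such that $\chi_{E_{r_j}} \to \chi_{F}$ in $L^{1}_{\loc}(\R^{n})$. By the very definition of $\tang(E, x)$ given just before the statement of \cref{th:blow_up}, we conclude that $F \in \tang(E, x)$, so $\tang(E, x) \neq \varnothing$. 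The only delicate point is the scaling computation above; everything else reduces to invoking the two cited results.
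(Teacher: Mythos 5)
Your proof is correct, and it rests on the same pillars as the paper's argument --- rescaling, a uniform local bound on the fractional variation coming from the decay estimates of \cref{th:decay_D_alpha_E_B}, and compactness --- but the implementation is organised differently. You combine the scaling identity~\eqref{scaling_prop_eq} (together with translation invariance of $\div^\alpha$) with the first decay estimate~\eqref{eq:decay_D_alpha_E_B_1} for $|D^{\alpha}\chi_{E}|(B_{r}(x))$, and then invoke the local compactness result \cref{result:compactness_Caccioppoli_local} as a black box, which already contains the truncation-by-balls and the diagonal extraction. The paper instead works with the truncated sets $E_{r}\cap B_{p}$ directly, bounds their total fractional variation via the second decay estimate~\eqref{eq:decay_D_alpha_E_B_2}, and applies the global compactness \cref{result:compactness_Caccioppoli} with an explicit diagonal argument, in effect re-running the relevant part of the proof of \cref{result:compactness_Caccioppoli_local} but with the sharper bound $B_{n,\alpha}p^{n-\alpha}$ on the truncations. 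Your route is a legitimate and slightly shorter alternative, since \cref{result:compactness_Caccioppoli_local} is already available at this point of the paper. One small point is worth making explicit: your bound $|D^{\alpha}\chi_{E_{r_j}}|(B_{R})\le A_{n,\alpha}R^{n-\alpha}$ holds only for the cofinitely many indices $j$ with $Rr_{j}<r_{x}$; the finitely many remaining terms are nevertheless finite because each $E_{r_j}$ has locally finite fractional Caccioppoli $\alpha$-perimeter (again by the scaling identity), so hypothesis~\eqref{eq:uniform_bound_compactness_loc_Caccioppoli} is indeed satisfied --- alternatively, simply discard finitely many terms of the sequence before applying \cref{result:compactness_Caccioppoli_local}.
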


\begin{proof} 
Fix $x \in \redb^{\alpha} E$. Up to a translation, we can assume $x = 0$. We set $E_{r} :=E/r= \set*{ y \in \R^{n} : r y \in E }$ for all $r>0$. We divide the proof in two steps.

\smallskip

\textit{Step~1}. For each $p\in\N$, we define $D_{r}^p := E_{r} \cap B_p$. By the $\alpha$-homogeneity of $\div^\alpha$, we have 
\begin{align*} 
\int_{D_{r}^p} \div^{\alpha} \phi \, dy 
=  r^{-n}\int_{E \cap B_{rp}} (\div^{\alpha} \phi)( r^{-1}z) \,dz
= r^{\alpha - n}\int_{E \cap B_{rp}} \div^{\alpha} (\phi(r^{-1}\cdot))  \, dz
\end{align*}
for all $\phi \in C^{\infty}_{c}(\R^{n}; \R^{n})$. By~\eqref{eq:decay_D_alpha_E_B_2}, we thus get
\begin{equation*}
|D^\alpha\chi_{D_r^p}|(\R^n)
=r^{\alpha - n}|D^\alpha\chi_{E\cap B_{rp}}|(\R^n)
\le B_{n,\alpha}p^{n-\alpha}
\end{equation*}
for all $r>0$ such that $rp<r_0$. Hence, for each fixed $p\in\N$, we have
\begin{equation*}
\sup_{r<r_0/p}|D^{\alpha} \chi_{D_{r}^p}|(\R^{n}) \le B_{n,\alpha}p^{n-\alpha}. 
\end{equation*}

\smallskip

\textit{Step~2}. Let $(r_k)_{k\in\N}$ be such that $r_k\to0$ as $k\to+\infty$ and let $E_k:=E_{r_k}$ and $D_k^p:=D_{r_k}^p$ for simplicity. By \textit{Step~1}, for each $p\in\N$ we know that 
\begin{equation*}
\sup_{r_k<r_0/p}|D^{\alpha} \chi_{D_k^p}|(\R^{n}) \le B_{n,\alpha}p^{n-\alpha}
\quad\text{and}\quad
D_k^p\subset B_p
\quad \forall k\in\N.
\end{equation*}
Thanks to \cref{result:compactness_Caccioppoli}, by a standard diagonal argument we find a subsequence $(D_{k_j}^p)_{j\in\N}$ and a sequence $(F_p)_{p\in\N}$ of sets with finite fractional Caccioppoli $\alpha$-perimeter such that $\chi_{D_{k_j}^p}\to\chi_{F_p}$ in $L^1(\R^n)$ as $j\to+\infty$ for each $p\in\N$. Up to null sets, we have $F_p\subset F_{p+1}$, so that $\chi_{E_{k_j}}\to\chi_F$ in $L^1_{\loc}(\R^n)$, where $F:=\bigcup_{p\in\N} F_p$. We thus conclude that $F\in\tang(E,x)$. 
\end{proof}

We now give a characterisation of the blow-ups of sets with locally finite fractional Caccioppoli $\alpha$-perimeter in~$\R^n$, see Claim~\#1 in the proof of~\cite{EG15}*{Theorem~5.13} for the result in the classical setting. 

\begin{proposition}[Characterisation of blow-ups] \label{prop:char_blow_up} 
Let $\alpha \in (0, 1)$. Let $E$ be a set with locally finite fractional Caccioppoli $\alpha$-perimeter in~$\R^n$ and let $x \in \redb^{\alpha} E$. If $F \in \tang(E,x)$, then $F$~is a set of locally finite fractional Caccioppoli $\alpha$-perimeter such that $\nu^{\alpha}_{F}(y)=\nu^\alpha_E(x)$ for $|D^\alpha\chi_F|$-a.e.\ $y\in\redb^\alpha F$.
\end{proposition}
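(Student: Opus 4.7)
The plan is to show the stronger statement $D^{\alpha}\chi_{F} = \nu\,|D^{\alpha}\chi_{F}|$ as vector Radon measures, where $\nu := \nu_{E}^{\alpha}(x)$; this directly implies the $|D^{\alpha}\chi_{F}|$-a.e.\ identification of $\nu_{F}^{\alpha}$ through Besicovitch differentiation. First, by definition of $F \in \tang(E,x)$, fix a sequence $r_{j}\to 0$ with $\chi_{E_{r_{j}}}\to \chi_{F}$ in $L^{1}_{\loc}(\R^{n})$, where $E_{r} := (E-x)/r$. By the scaling rule~\eqref{scaling_prop_eq} together with translation invariance of $D^{\alpha}$, one has
\begin{equation*}
|D^{\alpha}\chi_{E_{r}}|(B_{R}) = r^{\alpha-n}|D^{\alpha}\chi_{E}|(B_{rR}(x)) \qquad \forall R>0,
\end{equation*}
so the decay estimate~\eqref{eq:decay_D_alpha_E_B_1} of \cref{th:decay_D_alpha_E_B} yields $\sup_{j}|D^{\alpha}\chi_{E_{r_{j}}}|(B_{R}) \le A_{n,\alpha}R^{n-\alpha}$ for all $j$ large and all $R>0$. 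Applying \cref{result:frac_Caccioppoli_perimeter_is_lsc} on each $B_{R}$, $F$ has locally finite fractional Caccioppoli $\alpha$-perimeter and $D^{\alpha}\chi_{E_{r_{j}}} \weakto D^{\alpha}\chi_{F}$ in $\M(B_{R};\R^{n})$ for every $R$. Extracting a further diagonal subsequence (still denoted $r_{j}$) via Banach--Alaoglu, I may assume $|D^{\alpha}\chi_{E_{r_{j}}}|\weakstarto \mu_{F}$ in $\M(B_{R})$ for every $R$, with $|D^{\alpha}\chi_{F}|\le \mu_{F}$ by lower semicontinuity of total variation.

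The core step is the identification on centered balls. Fix $R>0$ avoiding the countable set of radii for which $\mu_{F}(\partial B_{R}) + |D^{\alpha}\chi_{F}|(\partial B_{R})>0$. By scaling and the very definition of $\nu = \lim_{r\to 0} D^{\alpha}\chi_{E}(B_{r}(x))/|D^{\alpha}\chi_{E}|(B_{r}(x))$,
\begin{equation*}
D^{\alpha}\chi_{E_{r_{j}}}(B_{R}) = r_{j}^{\alpha-n}D^{\alpha}\chi_{E}(B_{r_{j}R}(x)) = \bigl(\nu+o(1)\bigr)\,|D^{\alpha}\chi_{E_{r_{j}}}|(B_{R})
\end{equation*}
as $j\to\infty$, since $r_{j}R\to 0$. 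Passing to the limit on both sides (the scalar factor $|D^{\alpha}\chi_{E_{r_{j}}}|(B_{R})$ converges to $\mu_{F}(B_{R})$ and is uniformly bounded, while $D^{\alpha}\chi_{E_{r_{j}}}(B_{R})\to D^{\alpha}\chi_{F}(B_{R})$ by weak convergence) gives $D^{\alpha}\chi_{F}(B_{R}) = \nu\,\mu_{F}(B_{R})$. Taking modulus and combining with $|D^{\alpha}\chi_{F}(B_{R})|\le |D^{\alpha}\chi_{F}|(B_{R})\le \mu_{F}(B_{R})$ and $|\nu|=1$ sandwiches everything: both $\mu_{F}(B_{R})=|D^{\alpha}\chi_{F}|(B_{R})$ and $D^{\alpha}\chi_{F}(B_{R}) = \nu\,|D^{\alpha}\chi_{F}|(B_{R})$.

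To upgrade this centered-ball identity to the pointwise statement, write $D^{\alpha}\chi_{F} = \nu_{F}^{\alpha}\,|D^{\alpha}\chi_{F}|$, which holds $|D^{\alpha}\chi_{F}|$-a.e.\ with $|\nu_{F}^{\alpha}|=1$ by Besicovitch differentiation; recall that $|D^{\alpha}\chi_{F}|$-almost every point belongs to $\redb^{\alpha}F$ for the same reason. Dotting the identity with $\nu$ then gives
\begin{equation*}
\int_{B_{R}}\nu\cdot \nu_{F}^{\alpha}\,d|D^{\alpha}\chi_{F}| = |D^{\alpha}\chi_{F}|(B_{R})
\end{equation*}
for a.e.\ $R>0$. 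Since $\nu\cdot \nu_{F}^{\alpha}\le 1$ pointwise by Cauchy--Schwarz, the integrand must equal $1$ $|D^{\alpha}\chi_{F}|$-a.e.\ on $B_{R}$; as $|\nu|=|\nu_{F}^{\alpha}|=1$, equality in Cauchy--Schwarz forces $\nu_{F}^{\alpha}=\nu$ $|D^{\alpha}\chi_{F}|$-a.e.\ on $B_{R}$. Letting $R\to+\infty$ through the good set concludes the identification on all of $\R^{n}$, hence on $\redb^{\alpha}F$.

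The main obstacle lies in the centered-ball computation: one has to run two distinct weak (respectively weak$^{*}$) convergences along a common subsequence and carefully absorb the $o(1)$ coming from the sole hypothesis at $x$ — namely that the \emph{normalised} ratios of $D^{\alpha}\chi_{E}$ over shrinking balls converge to $\nu$ — to conclude not only the direction $D^{\alpha}\chi_{F}(B_{R}) = \nu\,\mu_{F}(B_{R})$ but also the equality $\mu_{F}(B_{R})=|D^{\alpha}\chi_{F}|(B_{R})$; this latter equality is essential because our weak$^{*}$ limit $\mu_{F}$ is only an upper bound for $|D^{\alpha}\chi_{F}|$ a priori. Once this is secured, the Cauchy--Schwarz rigidity step is immediate and requires no further analytical input.
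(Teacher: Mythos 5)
Your proof is correct and follows essentially the same route as the paper: blow-up sequence, the scaling identity for $D^{\alpha}\chi_{E_{r}}$, weak convergence of the rescaled measures, the defining property of $\nu_{E}^{\alpha}(x)$ on shrinking balls, and a rigidity argument (equality in Cauchy--Schwarz) forcing $\nu_{F}^{\alpha}\equiv\nu_{E}^{\alpha}(x)$ $|D^{\alpha}\chi_{F}|$-a.e. The only difference is bookkeeping: where the paper passes directly to $D^{\alpha}\chi_{E_{r_{k}}}(B_{L})\to D^{\alpha}\chi_{F}(B_{L})$ for a.e.\ $L$ and closes with a chain of inequalities via lower semicontinuity, you introduce the auxiliary weak$^{*}$ limit $\mu_{F}$ of the total variations to justify that convergence on non-charged spheres and to sandwich $\mu_{F}(B_{R})=|D^{\alpha}\chi_{F}|(B_{R})$ --- a slightly more explicit treatment of the same step.
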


\begin{proof} 
As in the proof of \cref{th:blow_up}, we assume $x = 0$ and we set $E_{r} = E/r$. By \cref{th:blow_up}, there exists $(r_k)_{k\in\N}$ such that $r_{k} \to 0$ as $k\to+\infty$ and $\chi_{E_{r_{k}}} \to \chi_{F}$ in $L^{1}_{\loc}(\R^{n})$. By \cref{result:frac_var_meas_is_lsc}, it is clear that $F$ has locally finite fractional Caccioppoli $\alpha$-perimeter in~$\R^n$. By \eqref{eq:weak_conv}, we get
\begin{equation*}
D^{\alpha} \chi_{E_{r_{k}}} \weakto D^{\alpha} \chi_{F}
\quad
\text{in $\mathcal{M}_{\rm loc}(\R^{n}; \R^{n})$}
\end{equation*}
as $k\to+\infty$. Thus, for $\Leb{1}$-a.e.\ $L > 0$, we have
\begin{equation} \label{eq:weak_conv_blow_up_ball}
D^{\alpha} \chi_{E_{r_{k}}}(B_{L}) \to D^{\alpha} \chi_{F}(B_{L})
\quad
\text{as $k\to+\infty$}.
\end{equation}
Since 
\begin{equation*}
D^{\alpha} \chi_{E_{r}} 
= r^{\alpha - n} (\delta_{\frac{1}{r}})_{\#} D^{\alpha} \chi_{E}
\qquad
\forall r>0,
\end{equation*}
we have that 
\begin{equation*}
|D^{\alpha} \chi_{E_{r_{k}}}| (B_{L}) 
= r_{k}^{\alpha - n} |D^{\alpha} \chi_{E}|(B_{r_{k} L})
\end{equation*}
and
\begin{equation*}
D^{\alpha} \chi_{E_{r_{k}}}(B_{L}) 
= r_{k}^{\alpha - n} D^{\alpha} \chi_{E}(B_{r_{k} L}).
\end{equation*}
Since $0\in\redb^\alpha E$, we thus get
\begin{equation}\label{eq:asymptotic_blow_up} 
\lim_{k\to+\infty}\frac{D^{\alpha} \chi_{E_{r_{k}}} (B_{L})}{|D^{\alpha} \chi_{E_{r_{k}}}| (B_{L})} 
=\lim_{k\to+\infty}\frac{D^{\alpha} \chi_{E}(B_{r_{k} L})}{|D^{\alpha} \chi_{E}|(B_{r_{k} L})} 
=\nu_{E}^{\alpha}(0).
\end{equation}
Therefore, by \cref{result:frac_var_meas_is_lsc},  \eqref{eq:weak_conv_blow_up_ball} and~\eqref{eq:asymptotic_blow_up}, we obtain that
\begin{align*} 
|D^{\alpha} \chi_{F}|(B_{L}) 
& \le \liminf_{k \to + \infty} |D^{\alpha} \chi_{E_{r_{k}}}|(B_{L}) \\
& = \lim_{k \to + \infty} \int_{B_{L}} \nu_{E}^{\alpha}(0) \cdot d D^{\alpha} \chi_{E_{r_{k}}} \\
& = \int_{B_{L}} \nu_{E}^{\alpha}(0) \cdot d D^{\alpha} \chi_{F} \\
& = \int_{B_{L}} \nu_{E}^{\alpha}(0) \cdot \nu_{F}^{\alpha} \ d |D^{\alpha} \chi_{F}| \\
&\le |D^{\alpha} \chi_{F}|(B_{L})
\end{align*}
for $\Leb{1}$-a.e.\ $L > 0$. We thus get that $\nu_{F}^{\alpha}(y) = \nu_{E}^{\alpha}(0)$ for $|D^\alpha\chi_F|$-a.e.\ $y \in B_{L} \cap \redb^{\alpha} F$ and $\Leb{1}$-a.e.\ $L > 0$, so that the conclusion follows.
\end{proof}

\appendix

\section{\texorpdfstring{$C^\infty_c(\R^n)$}{Cˆinfty-c(Rˆn)} is dense in \texorpdfstring{$W^{\alpha,1}(\R^n)$}{Wˆ{alpha,1}(Rˆn)}}

The density of $C^\infty_c(\R^n)$ in $W^{\alpha,p}(\R^n)$ for all $\alpha\in(0,1)$ and $1\le p<+\infty$ is stated without proof in~\cite{DiNPV12}*{Theorem~2.4}. For the proof of this result, the authors in~\cite{DiNPV12} refer to~\cite{A75}*{Theorem~7.38}, where unfortunately the case $p=1$ is not explicitly proved. This result is also stated in~\cite{DD12}*{Proposition~4.27}, but the proof is given for the case $n=1$. For the sake of clarity, we spend some words on the proof of the density of $C^\infty_c(\R^n)$ in $W^{\alpha,1}(\R^n)$ for all $\alpha\in(0,1)$.

\begin{theorem}[$C^\infty_c(\R^n)$ is dense in $W^{\alpha,1}(\R^n)$]\label{remark:density_test_in_frac_Sobolev}
Let $\alpha\in(0,1)$. If $f\in W^{\alpha,1}(\R^n)$, then there exists $(f_k)_{k\in\N}\subset C^\infty_c(\R^n)$ such that $f_k\to f$ in $W^{\alpha,1}(\R^n)$ as $k\to+\infty$.
\end{theorem}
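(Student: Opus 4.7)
The plan is to prove the density by a standard two-step truncate-and-mollify argument, using the cut-off functions $(\eta_R)_{R>0}$ from \eqref{eq:def_cut-off} and the mollifiers $(\rho_\eps)_{\eps>0}$ from \eqref{eq:def_rho_eps}. Specifically, given $f\in W^{\alpha,1}(\R^n)$ and $\delta>0$, I will choose $R$ large so that $f\eta_R$ is close to $f$ in $W^{\alpha,1}(\R^n)$, then choose $\eps$ small so that $(f\eta_R)*\rho_\eps$ is close to $f\eta_R$ in $W^{\alpha,1}(\R^n)$. The function $(f\eta_R)*\rho_\eps$ is in $C^\infty_c(\R^n)$, with support contained in $B_{R+1+\eps}$.

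For the truncation step, I would write
\begin{equation*}
(f-f\eta_R)(x)-(f-f\eta_R)(y)=(1-\eta_R(x))(f(x)-f(y))+f(y)(\eta_R(y)-\eta_R(x)),
\end{equation*}
so that $[f-f\eta_R]_{W^{\alpha,1}(\R^n)}$ is bounded by the sum of
\begin{equation*}
\int_{\R^n}\int_{\R^n}(1-\eta_R(x))\,\frac{|f(x)-f(y)|}{|x-y|^{n+\alpha}}\,dx\,dy
\end{equation*}
and
\begin{equation*}
\int_{\R^n}|f(y)|\int_{\R^n}\frac{|\eta_R(x)-\eta_R(y)|}{|x-y|^{n+\alpha}}\,dx\,dy.
\end{equation*}
The first quantity tends to $0$ as $R\to+\infty$ by Lebesgue's Dominated Convergence Theorem, since $0\le 1-\eta_R\le 1$, $1-\eta_R\to 0$ pointwise, and $[f]_{W^{\alpha,1}(\R^n)}<+\infty$. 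For the second, splitting the inner integral into the regions $\{|x-y|\le 1\}$ and $\{|x-y|>1\}$ and using $\Lip(\eta_R)\le 2$ together with $\|\eta_R\|_{L^\infty}\le 1$, as in \eqref{eq:Lip_nabla_estim_1} and \eqref{eq:Lip_nabla_estim_2}, yields a uniform bound $\int_{\R^n}|\eta_R(x)-\eta_R(y)|\,|x-y|^{-n-\alpha}\,dx\le C_{n,\alpha}$ for all $y$ and $R$. Since for each fixed $y\in\R^n$ this inner integral tends to $0$ as $R\to+\infty$ by dominated convergence (the integrand vanishes pointwise eventually), a further application of dominated convergence using $|f|\in L^1(\R^n)$ gives convergence of the outer integral.

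For the mollification step, rewrite the Gagliardo seminorm via translations, namely $[g]_{W^{\alpha,1}(\R^n)}=\int_{\R^n}\|\tau_h g-g\|_{L^1(\R^n)}|h|^{-n-\alpha}\,dh$ where $\tau_h g=g(\cdot+h)$. Applied to $g=f*\rho_\eps-f$, and noting $\tau_h g-g=\rho_\eps*(\tau_h f-f)-(\tau_h f-f)$, standard properties of mollification give that $\|\rho_\eps*(\tau_h f-f)-(\tau_h f-f)\|_{L^1(\R^n)}\to 0$ as $\eps\to 0$ for each fixed $h$, while this quantity is uniformly bounded by $2\|\tau_h f-f\|_{L^1(\R^n)}$. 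Since the latter, divided by $|h|^{n+\alpha}$, is integrable over $\R^n$ by the assumption $f\in W^{\alpha,1}(\R^n)$, a last application of dominated convergence gives $[f*\rho_\eps-f]_{W^{\alpha,1}(\R^n)}\to 0$, and combined with $\|f*\rho_\eps-f\|_{L^1(\R^n)}\to 0$, we conclude $f*\rho_\eps\to f$ in $W^{\alpha,1}(\R^n)$.

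The main obstacle is arranging the two estimates cleanly so that dominated convergence applies at the right stages; the argument is otherwise completely standard. Combining the two steps proves the theorem.
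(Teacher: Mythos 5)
Your proof is correct, and it follows the same overall truncate-and-mollify skeleton as the paper, but with a genuinely different arrangement: the paper first quotes \cite{L09}*{Proposition~14.5} for the density of $C^\infty(\R^n)\cap W^{\alpha,1}(\R^n)$ and then only proves the cut-off step for a function that is already smooth, whereas you truncate an arbitrary $f\in W^{\alpha,1}(\R^n)$ first and then prove the mollification convergence yourself. Your truncation estimate is essentially identical to the paper's (same splitting into a term with factor $1-\eta_R$ and a term involving $\int_{\R^n}|\eta_R(x)-\eta_R(y)|\,|x-y|^{-n-\alpha}\,dx$, with the uniform bound coming from the estimates as in \eqref{eq:Lip_nabla_estim_1}--\eqref{eq:Lip_nabla_estim_2}); in fact you make explicit the pointwise vanishing of the inner integral for fixed $y$, a point the paper leaves implicit. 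The real divergence is in the smoothing step: instead of citing Leoni, you prove $[\rho_\eps* g-g]_{W^{\alpha,1}(\R^n)}\to0$ directly by rewriting the Gagliardo seminorm as $\int_{\R^n}\|\tau_h v-v\|_{L^1(\R^n)}\,|h|^{-n-\alpha}\,dh$ with $\tau_h v:=v(\cdot+h)$, using that convolution commutes with translations and that $\|\rho_\eps*(\tau_h g-g)-(\tau_h g-g)\|_{L^1(\R^n)}\le 2\|\tau_h g-g\|_{L^1(\R^n)}$ serves as an integrable dominating function in $h$; this is correct and self-contained. Reversing the order also buys you that the final approximant $(f\eta_R)*\rho_\eps$ lies in $C^\infty_c(\R^n)$ automatically, while the paper's order needs the cut-off estimate only for smooth functions. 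In short: same two ingredients, but your version replaces the external reference by a short dominated-convergence argument, at the cost of a slightly longer write-up; both are valid proofs of the theorem.
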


\begin{proof}
The proof of the density of $C^\infty(\R^n)\cap W^{\alpha,1}(\R^n)$ in $W^{\alpha,1}(\R^n)$ via a standard convolution argument is given in full details in~\cite{L09}*{Proposition~14.5} (actually, in the more general setting of \emph{Besov spaces}, see~\cite{L09}*{Section~14.8} for the relation with fractional Sobolev spaces). Thus, to conclude, we just need to show the density of $C^\infty_c(\R^n)$ in $C^\infty(\R^n)\cap W^{\alpha,1}(\R^n)$. To this aim, let $f\in C^\infty(\R^n)\cap W^{\alpha,1}(\R^n)$ be fixed. For all $R>0$, consider a cut-off function $\eta_R\in C^\infty_c(\R^n)$ defined as in \eqref{eq:def_cut-off}.
Then $f\eta_R\in C^\infty_c(\R^n)$ and the conclusion clearly follows if we show that 
\begin{equation}\label{eq:density_proof_cut-off}
\lim_{R\to+\infty}[f(1-\eta_R)]_{W^{\alpha,1}(\R^n)}=0.
\end{equation}
Indeed, we have
\begin{equation*}
\begin{split}
[f(1-\eta_R)]_{W^{\alpha,1}(\R^n)}
&\le\int_{\R^n}\int_{\R^n}\frac{|f(y)-f(x)|}{|y-x|^{n+\alpha}}\,(1-\eta_R(y))\,dy\,dx\\
&\quad+\int_{\R^n}|f(x)|\int_{\R^n}\frac{|\eta_R(y)-\eta_R(x)|}{|y-x|^{n+\alpha}}\,dy\,dx.
\end{split}
\end{equation*}
For the first term in the right-hand side, we easily get that
\begin{equation*}
\lim_{R\to+\infty}\int_{\R^n}\int_{\R^n}\frac{|f(y)-f(x)|}{|y-x|^{n+\alpha}}\,(1-\eta_R(y))\,dy\,dx=0
\end{equation*}
by Lebesgue's Dominated Convergence Theorem, since $[f]_{W^{\alpha,1}(\R^n)}<+\infty$. For the second term in the right-hand side, as in~\eqref{eq:Lip_nabla_estim_1} and~\eqref{eq:Lip_nabla_estim_2} we can estimate
\begin{equation*}
\int_{\R^n}\frac{|\eta_R(y)-\eta_R(x)|}{|y-x|^{n+\alpha}}\,dy
\le\Lip(\eta_R)\int_0^1 r^{-\alpha}\,dr
+2\int_1^{+\infty} r^{-(1+\alpha)}\,dr
\end{equation*} 
for all $x\in\R^n$, so that 
\begin{equation*}
\lim_{R\to+\infty}\int_{\R^n}|f(x)|\int_{\R^n}\frac{|\eta_R(y)-\eta_R(x)|}{|y-x|^{n+\alpha}}\,dy\,dx=0
\end{equation*}
again by Lebesgue's Dominated Convergence Theorem, since $f\in L^1(\R^n)$. Thus~\eqref{eq:density_proof_cut-off} follows and the proof is complete.
\end{proof}


\begin{bibdiv}
\begin{biblist}

\bib{A75}{book}{
   author={Adams, Robert A.},
   title={Sobolev spaces},
   note={Pure and Applied Mathematics, Vol. 65},
   publisher={Academic Press [A subsidiary of Harcourt Brace Jovanovich,
   Publishers], New York-London},
   date={1975},
}

\bib{AFP00}{book}{
   author={Ambrosio, Luigi},
   author={Fusco, Nicola},
   author={Pallara, Diego},
   title={Functions of bounded variation and free discontinuity problems},
   series={Oxford Mathematical Monographs},
   publisher={The Clarendon Press, Oxford University Press, New York},
   date={2000},
}

\bib{A64}{book}{
   author={Artin, Emil},
   title={The Gamma function},
   series={Translated by Michael Butler. Athena Series: Selected Topics in
   Mathematics},
   publisher={Holt, Rinehart and Winston, New York-Toronto-London},
   date={1964},
   pages={vii+39},
}


\bib{CS18}{article}{
   author={Comi, Giovanni E.},
   author={Stefani, Giorgio},
   title={A distributional approach to fractional Sobolev spaces and fractional variation: asymptotics},
    status={in preparation},
   date={2019}
}

\bib{CF17}{article}{
   author={Cozzi, Matteo},
   author={Figalli, Alessio},
   title={Regularity theory for local and nonlocal minimal surfaces: an overview},
   conference={
      title={Nonlocal and nonlinear diffusions and interactions: new methods
      and directions},
   },
   book={
      series={Lecture Notes in Math.},
      volume={2186},
      publisher={Springer, Cham},
   },
   date={2017},
   pages={117--158},
}

\bib{DD12}{book}{
   author={Demengel, Fran\c{c}oise},
   author={Demengel, Gilbert},
   title={Functional spaces for the theory of elliptic partial differential
   equations},
   series={Universitext},
   publisher={Springer, London; EDP Sciences, Les Ulis},
   date={2012},
}

\bib{DiNPV12}{article}{
   author={Di Nezza, Eleonora},
   author={Palatucci, Giampiero},
   author={Valdinoci, Enrico},
   title={Hitchhiker's guide to the fractional Sobolev spaces},
   journal={Bull. Sci. Math.},
   volume={136},
   date={2012},
   number={5},
   pages={521--573},
}

\bib{EG15}{book}{
   author={Evans, Lawrence C.},
   author={Gariepy, Ronald F.},
   title={Measure theory and fine properties of functions},
   series={Textbooks in Mathematics},
   edition={Revised edition},
   publisher={CRC Press, Boca Raton, FL},
   date={2015},
}

\bib{H59}{article}{
   author={Horv\'ath, J.},
   title={On some composition formulas},
   journal={Proc. Amer. Math. Soc.},
   volume={10},
   date={1959},
   pages={433--437},
}

\bib{L09}{book}{
   author={Leoni, Giovanni},
   title={A first course in Sobolev spaces},
   series={Graduate Studies in Mathematics},
   volume={105},
   publisher={American Mathematical Society, Providence, RI},
   date={2009},
}


\bib{M12}{book}{
   author={Maggi, Francesco},
   title={Sets of finite perimeter and geometric variational problems},
   series={Cambridge Studies in Advanced Mathematics},
   volume={135},
   publisher={Cambridge University Press, Cambridge},
   date={2012},
}


\bib{S98}{article}{
   author={Samko, Stefan G.},
   title={A new approach to the inversion of the Riesz potential operator},
   journal={Fract. Calc. Appl. Anal.},
   volume={1},
   date={1998},
   number={3},
   pages={225--245},
}

\bib{SKM93}{book}{
   author={Samko, Stefan G.},
   author={Kilbas, Anatoly A.},
   author={Marichev, Oleg I.},
   title={Fractional integrals and derivatives},
   publisher={Gordon and Breach Science Publishers, Yverdon},
   date={1993},
}

\bib{SSS15}{article}{
   author={Schikorra, Armin},
   author={Shieh, Tien-Tsan},
   author={Spector, Daniel},
   title={$L^p$ theory for fractional gradient PDE with $VMO$ coefficients},
   journal={Atti Accad. Naz. Lincei Rend. Lincei Mat. Appl.},
   volume={26},
   date={2015},
   number={4},
   pages={433--443},
}

\bib{SSS18}{article}{
   author={Schikorra, Armin},
   author={Shieh, Tien-Tsan},
   author={Spector, Daniel E.},
   title={Regularity for a fractional $p$-Laplace equation},
   journal={Commun. Contemp. Math.},
   volume={20},
   date={2018},
   number={1},
   pages={1750003, 6},
}

\bib{SSVanS17}{article}{
   author={Schikorra, Armin},
   author={Spector, Daniel},
   author={Van Schaftingen, Jean},
   title={An $L^1$-type estimate for Riesz potentials},
   journal={Rev. Mat. Iberoam.},
   volume={33},
   date={2017},
   number={1},
   pages={291--303},
}

\bib{SS15}{article}{
   author={Shieh, Tien-Tsan},
   author={Spector, Daniel E.},
   title={On a new class of fractional partial differential equations},
   journal={Adv. Calc. Var.},
   volume={8},
   date={2015},
   number={4},
   pages={321--336},
}

\bib{SS18}{article}{
   author={Shieh, Tien-Tsan},
   author={Spector, Daniel E.},
   title={On a new class of fractional partial differential equations II},
   journal={Adv. Calc. Var.},
   volume={11},
   date={2018},
   number={3},
   pages={289--307},
}

\bib{S18}{article}{
   author={\v{S}ilhav\'y, Miroslav},
   title={Fractional vector analysis based on invariance requirements (Critique of coordinate approaches)},
   date={2019},
   journal={M. Continuum Mech. Thermodyn.},
   pages={1--22},
}

\bib{S70}{book}{
   author={Stein, Elias M.},
   title={Singular integrals and differentiability properties of functions},
   series={Princeton Mathematical Series, No. 30},
   publisher={Princeton University Press, Princeton, N.J.},
   date={1970},
}

\bib{Z18}{article}{
   author={Z\"{u}st, Roger},
   title={Functions of bounded fractional variation and fractal currents},
   journal={Geom. Funct. Anal.},
   volume={29},
   date={2019},
   number={4},
   pages={1235--1294},
}

\end{biblist}
\end{bibdiv}

\end{document}